\newcounter{stepcounter}
\newtheoremstyle{smallcaps}
    {3pt}                    
    {3pt}                    
    {\itshape}                   
    {}                           
    {\sc}                   
    {.}                          
    {.5em}                       
    {}  
\newtheoremstyle{smallcapsdef}
    {3pt}                    
    {3pt}                    
    {}                   
    {}                           
    {\sc}                   
    {.}                          
    {.5em}                       
    {}  
\theoremstyle{plain}
\newtheorem{thm}{Theorem}[section]
\newtheorem{lem}[thm]{Lemma}
\newtheorem{prop}[thm]{Proposition}
\newtheorem{cor}[thm]{Corollary}
\newtheorem{conj}[thm]{Conjecture}
\theoremstyle{definition}
\newtheorem{eg}[thm]{Example}
\newtheorem{defn}[thm]{Definition}
\newtheorem{rem}[thm]{Remark}
\newtheorem{remark}[thm]{Remark}
\newtheorem{notation}[thm]{Notation}
\date{}
\def\bal#1\eal{\begin{align}#1\end{align}}
\newcommand\bas{\begin{align*}}
\newcommand\bit{\begin{itemize}}
\newcommand\eit{\end{itemize}}
\newcommand\bet{\begin{enumerate}}
\newcommand\eet{\end{enumerate}}
\newcommand\ed{\end{document}}
\renewcommand{\a}{\alpha}
\renewcommand{\d}{\delta}
\newcommand{\e}{\varepsilon}
\newcommand{\f}{\varphi}
\renewcommand{\k}{\kappa}
\renewcommand\r{\rho}
\newcommand\s{\sigma}
\newcommand\w{\omega}
\newcommand\Om{\Omega}
\newcommand\z{\zeta}
\newcommand\del{\partial}
\newcommand\adel{\ol{\partial}}
\newcommand\DEL{\Delta}
\newcommand\G{\Gamma}
\newcommand\bC{{\mathbb C}}
\newcommand\bN{{\mathbb N}}
\newcommand\bR{{\mathbb R}}
\newcommand\bZ{{\mathbb Z}}
\newcommand\bP{\mathbb{P}}
\newcommand\A{{\mathcal{A}}}
\newcommand\F{{\mathcal F}}
\renewcommand\H{\mathcal{H}}
\renewcommand{\O}{\mathcal{O}}
\newcommand\co{\mathrm{co}}
\newcommand\exd{\mathrm{d}}
\newcommand\dom{\mathrm{dom}}
\newcommand\haar{\mathrm{\bf h}}
\newcommand\unit{\mathrm{U}}
\newcommand\counit{\mathrm{C}}
\newcommand\hw{\mathrm{hw}}
\newcommand\lw{\mathrm{lw}}
\newcommand\id{\mathrm{id}}
\newcommand\proj{\mathrm{proj}}
\newcommand\spn{\mathrm{span}}
\newcommand\vol{\mathrm{vol}}
\newcommand\hol{^{(1,0)}}
\newcommand\ahol{^{(0,1)}}
\newcommand\inv{^{-1}}
\newcommand\by{\times}
\newcommand\coby{\, \square_{H}}
\newcommand\oby{\otimes}
\newcommand\wed{\wedge}
\newcommand\sseq{\subseteq}
\newcommand\tr{\triangleright}
\newcommand\wh{\widehat}
\newcommand\ol{\overline}
\newcommand\la{\left\langle}
\newcommand\ra{\right\rangle}
\newcommand\bs{\backslash}
\newcommand\mto{\mapsto}
\newcommand\ccpn{\bC \bP^{n-1}}
\newcommand\alg{algebra~}
\newcommand\algn{algebra}
\newcommand\algs{algebras~}
\newcommand\hk{Heckenberger--Kolb~}
\newcommand\nc{noncommutative~}
\newcommand\st{such that~}
\newcommand\stn{such that}
\newcommand\wrt{with respect to~}
\DeclareMathOperator{\dt}{det}
\address{Laboratory of Advanced Combinatorics and Network Applications,
Department of Applied Mathematics, Moscow Institute of Physics and Technology, Moscow, Russia}
\author{Biswarup Das}
\address{Instytut Matematyczny, Uniwersytet Wroc\l{}awski, pl.Grunwaldzki 2/4, 50-384 Wroc\l{}aw, Poland}
\email{biswarup.das@math.uni.wroc.pl}
\author{R\'eamonn \'O Buachalla}
\address{D\'epartement de Math\'ematiques, Facult\'e des sciences, Universit\'e Libre de Bruxelles, Boulevard du Triomphe, B-1050 Bruxelles, Belgium}
\email{reamonnobuachalla@gmail.com}
\author{Petr Somberg}
\address{Mathematical Institute of Charles University, Sokolovsk\'a 83, Prague, Czech Republic} \email{somberg@karlin.mff.cuni.cz}
\title[{\bf A Dolbeault--Dirac Spectral Triple for $\O_q(\mathbb{CP}^{n-1})$}]{{\bf A Dolbeault--Dirac Spectral Triple for Quantum Projective Space}}
\thanks{The second author acknowledges FNRS support through  a postdoctoral fellowship within the framework of the MIS Grant ``Antipode'' grant number F.4502.18. The second and third authors acknowledge support from the Eduard \v{C}ech Institute within the framework of the grants GACR $P201/12/G028$ and GACR $19-28628X$.
  }
\begin{document}

\maketitle

\begin{abstract}
The notion of a  K\"ahler structure for a differential calculus 
was recently introduced  by the second author as a framework in which to study  the noncommutative  geometry of the quantum flag manifolds. It was subsequently  shown that any covariant positive definite K\"ahler structure has a canonically  associated  triple satisfying, up to the compact resolvent condition, Connes' axioms for a spectral triple. In this paper we begin the development of a robust framework in which to investigate the compact resolvent condition, and  moreover, the general spectral behaviour of covariant K\"ahler structures.
This framework is then applied to quantum projective space endowed with its \hk differential calculus. An even spectral triple with non-trivial associated $K$-homology class is produced, directly $q$-deforming the Dolbeault--Dirac  operator of complex projective space.  Finally, the extension of this approach to a certain canonical class of irreducible quantum flag manifolds is discussed in detail.
\end{abstract}

\tableofcontents

\section{Introduction}

In Connes' $K$-theoretic approach to noncommutative geometry, spectral triples generalise Riemannian spin manifolds and their associated Dirac operators to the noncommutative setting.
The question of how to reconcile the theory of spectral triples with Drinfeld--Jimbo quantum groups is one of the major open problems in noncommutative geometry. Since their appearance in the 1980s, quantum groups have attracted serious and significant  attention at the highest mathematical levels. In the compact case, the foundations of their  noncommutative topology and their noncommutative measure theory have now been firmly established. By contrast, the noncommutative spectral geometry of quantum groups is still very poorly understood.  Indeed, despite a large number of important contributions over the last thirty years, there is still no consensus on how to construct a spectral triple for $\O_q(SU_2)$, probably the most basic example of a quantum group. These difficulties aside, the prospect of reconciling  these two areas still holds great promise for their mutual enrichment. On one hand, it would provide quantum groups with powerful tools from operator algebraic $K$-theory and $K$-homology. On the other hand, it would  provide the theory of spectral triples with a large class of  examples of fundamental importance with which to test and guide the future development of the subject.


One of the most important lessons to emerge from the collected efforts to understand the noncommutative geometry of quantum groups is that quantum homogeneous spaces tend to be more amenable to geometric investigation than quantum groups themselves. Philosophically, one can  think of the  process of forming a quantum homogeneous spaces as quotienting out the most exotic  noncommutativity of the quantum group. This produces quantum spaces which are closer to their classical counterparts, and which possess more recognisable differential structures. 
The prototypical example here is the {\em standard Podle\'s sphere}, the Drinfeld--Jimbo $q$-deformation of the Hopf fibration presentation of the $2$-sphere $S^2$. In contrast to the case of $\O_q(SU_2)$, the Podle\'s sphere admits a canonical spectral triple which directly $q$-deforms the classical Dolbeault--Dirac operator of the $2$-sphere \cite{DSPodles}. Moreover, it is the most widely and consistently accepted example of a spectral triple in the Drinfeld--Jimbo setting.  The Podle\'s sphere also forms a well behaved and motivating example for Majid's Hopf algebraic theory of braided noncommutative geometry \cite{SMLeabhMor}. In particular, 
it admits an essentially unique differential calculus, the Podle\'s calculus,  to which Majid was able to apply his quantum frame bundle theory to directly $q$-deform the classical K\"ahler geometry of the $2$-sphere  (cf. \cite{Maj}).


The Podle\'s sphere is itself a special example of a large and very beautiful family of quantum homogeneous spaces: the quantum flag manifolds \cite{LR91,DijkStok}.  
In one of the outstanding results of the algebraic approach to the noncommutative geometry of quantum groups, Heckenberger and Kolb showed that the quantum flag manifolds of irreducible  type admit an essentially unique $q$-deformation of their classical Dolbeault double complex.  This result forms a far reaching generalisation  of  the Podle\'s calculus endowed with Majid's complex structure, and strongly suggests that the irreducible quantum flag manifolds, or more generally the quantum flag manifolds, have a central role to play in reconciling quantum groups and spectral triples.


The \hk classification, however, contains no generalisation of the K\"ahler geometry of the Podle\'s sphere (cf. \cite{Maj}).  In a recent paper by the second author, the notion of a noncommutative K\"ahler structure was introduced to provide a framework in which to do just this.  In the quantum homogeneous space case many of the fundamental results of  K\"ahler geometry have been shown to follow from the existence of such a structure. For example, it implies noncommutative generalisations of  Lefschetz decomposition, the Lefschetz and K\"ahler
identities, Hodge decomposition,  the hard Lefschetz theorem, and the refinement of de Rham cohomology by Dolbeault cohomology. The existence of a K\"ahler structure was verified for the quantum projective spaces in \cite{MMF3}, and it was conjectured that a K\"ahler structure exists for all the compact quantum Hermitian spaces. Subsequently, for all but a finite number of values of $q$, the conjecture was verified for every irreducible quantum flag manifold by Matassa in \cite{MarcoConj}. 


The Dolbeault--Dirac operator \text{\small $D_{\adel} := \adel + \adel^{\dagger}$} associated to a K\"ahler structure is an obvious candidate for a noncommutative Dirac operator. In \cite{DOS1} the authors 
associated to any covariant positive definite Hermitian structure $(\Om^{(\bullet,\bullet)}, \kappa)$, over a quantum homogeneous space $B=G^{\co(H)}$, a Hilbert space $L^2(\Om^{(0,\bullet)})$, 
 carrying a bounded $*$-representation $\r$ of $B$. Moreover,  $D_{\adel}$  was shown to act on $L^2(\Om^{(0,\bullet)})$ as an essentially self-adjoint operator, with bounded commutators $[D_{\adel},b]$, for all $b \in B$. Hence, to show that the triple $\left(B,L^2(\Om^{(0,\bullet)}), D_{\adel}\right)$ is a spectral triple, it  remains to verify the compact resolvent condition. 
 Even in the Drinfeld--Jimbo case, however,  it is not clear at present how to conclude the compact resolvent condition from the properties of a general covariant K\"ahler structure. (See \textsection \ref{Subsubsection:Conjs} for a brief discussion on how this might be achieved.) In the study of classical homogeneous spaces, a difficult problem can often be approached by assuming restrictions on the multiplicities of the $U(\frak{g})$-modules appearing in an equivariant geometric structure \cite{Kobay}. Taking inspiration from this approach, we choose to focus on covariant complex structures of {\em  weak Gelfand type}, that is to say, those for which $\adel \Om^{(0,k)}$ is multiplicity-free as a $U_q(\frak{g})$-module. This  implies diagonalisability of the Dolbeault--Dirac operator $D_{\adel}$ over irreducible modules, which when combined with the considerable geometric structure of the calculus, allows us to make a number of strong statements about the spectral behaviour of $D_{\adel}$. 
In particular, for those covariant complex structures of  {\em Gelfand type}, that is to say, those for which 
 $\Om^{(0,k)}$ is multiplicity-free, we produce a sufficient set of routinely verifiable conditions for $D_{\adel}$ to have compact resolvent.


To place our efforts in context, we briefly recall previous spectral calculations for quantum groups, and in particular for quantum flag manifolds. The construction of spectral triples over quantum groups can be very roughly divided into two approaches. The first is isospectral deformation, as exemplified by the work of Neshveyev and Tuset, who constructed isospectral  Dirac operators for all the Drinfeld--Jimbo quantum groups \cite{NeshTus}. In this approach one takes as an ansatz that the spectrum of the Dirac survives $q$-deformation unchanged. A representation of the quantum group is then constructed around this ansatz so as to retain bounded commutators. This approach has the advantage of avoiding the need for spectral calculations, but the disadvantage that the spectral triples produced are too close to the classical case to be completely natural. By contrast, the second approach constructs canonical $q$-deformations of the classical spin geometry of a space, and then calculates the spectrum of the resulting $q$-deformed Dirac.  The prototypical examples here are the D\c{a}browski--Sitarz construction of a spectral triple on the Podle\'s sphere, as discussed above, and Majid's spectral calculations for his Dolbeault--Dirac operator over the Podle\'s sphere, as also mentioned above. This is  the approach followed in this paper, and just as for the Podle\'s examples, an unavoidable consequence is a $q$-deformation of the classical spectrum.

At around the same time as these works,  Kr\"ahmer introduced an influential algebraic Dirac operator for the irreducible quantum flag manifolds, which gave a commutator realisation of their \hk calculus \cite{UKFlagDirac}.  A series of papers by D\c{a}browski,  D'Andrea, and Landi, followed, where spectral triples were constructed for the all quantum projective spaces \cite{SISSACP2,SISSACPn}. This approach used a noncommutative generalisation of the Parthasarathy formula \cite{Partha} to calculate the Dirac spectrum and hence verify Connes' axioms. 
Matassa would subsequently reconstruct this spectral triple \cite{MatassaCPn} in a more formal manner by connecting with the work of Kr\"ahmer and Tucker--Simmons \cite{MTSUK}. This approach was  subsequently extended to  the  quantum Lagrangian Grassmannian $\O_q(\mathbf{L}_2)$, a $C$-series irreducible quantum flag manifold \cite{MatassaLG2,MatassaLG22}. We note that in the quantum setting the Parthasarathy relationship with the Casimir is much more involved than in the classical case. This reflects our poor understanding of Casimirs in the Drinfeld--Jimbo setting, and the resulting challenges associated with a quantum Casimir approach to spectral calculations.

One of the primary purposes of a spectral triple is to serve as unbounded representatives for the $K$-homology classes of a $C^*$-algebra. Having an unbounded representative constructed in such a geometric manner has a number of advantages. In particular, it allows us to convert index theory calculations into questions about the Dolbeault cohomology of the complex structure. In \cite[Theorem 5.4]{DOS1} the index of the associated $K$-homology class has been shown to be equal to the anti-holomorphic Euler characteristic of the calculus. This is calculable using Hodge decomposition in general.  In particular, in the noncommutative Fano seting, it follows from the Kodaira vanishing theorem for noncommutative K\"ahler structures that all cohomologies are concentrated in degree $0$ \cite{OSV}. Hence, the index will be non-zero and the associated $K$-homology class non-trivial.  This is particularly important given the well-known difficulty of applying  Connes' local index formula  in the quantum group setting.

This paper forms part of a series of works investigating the noncommutative geometry of the quantum flag manifolds and their connections with Nichols algebras, Schubert calculus, and non-commutative projective algebraic geometry \cite{BS,KMOS,StaffordICM}. It is intended that this paper will  serve as a point of contact between these areas and operator algebraic $K$-theory. Moreover, in its  discussion of order I and order II presentations, the  paper can be regarded as a first step towards a systematic extension of the classical results on Harish-Chandra
modules (cf. \cite{KV}, \cite{V}) to the quantum group setting. The typical structure
here is a pair $(\frak{g},K)$ consisting of a real reductive Lie group $G$ with complexified
Lie algebra $\mathfrak{g}$, and  a compact subgroup $K \subset G$,
for which the differential of $\mathrm{Ad}(K)$ and the restriction
$ad(\mathfrak{g})|_{\mathfrak{k}}$ are compatible. The
representation category ${\mathcal C}(\mathfrak{g},K)$ of
$(\mathfrak{g},K)$-modules, if an infinitesimal character is
specified, is characterized by the existence of finitely irreducible
representations of $K$ termed the collection of {\em minimal $K$-types}
(every irreducible $(\mathfrak{g},K)$-module with an infinitesimal
character contains one of these $K$-types.) As we shall discuss in
the present article, this property conveniently carries over to the
quantum group setting. Moreover, there is an analogous transfer of
the other important related structures such as the Hecke
algebra of the pair $(\frak{g},K)$, dualities, and so on, which will be treated elsewhere.
Finally, an important issue to be addressed is how the refined spectral analysis of \cite{RennieSenior} extends to the general quantum projective space setting, and if a local index formula in twisted cyclic cohomology can be produced.

The paper is organised as follows: In \textsection 2 we recall the necessary basics of differential calculi, complex structures, and Kahler structures, as well as their interaction with compact quantum matrix group algebras, as originally considered in  \cite{DOS1}. We also recall the necessary basics of spectral triples and $K$-homology.

In \textsection 3, we show that  the Laplacian $\DEL_{\adel}$ decomposes with respect to Hodge decomposition, allowing us to deduce the spectrum of $\DEL_{\adel}$  from the spectrum of the operator {\small $\adel^{\dagger} \adel$}.  Restricting to the covariant case, we then  consider multiplicity-free comodules  as a framework in which to present complex structures of Gelfand type. In this case, it is observed that the operator {\small $\adel^{\dagger} \adel$} always diagonalises over  irreducible comodules.

In \textsection 4, we restrict for the first time to the setting of Drinfeld--Jimbo quantum groups,
exploiting the associated highest weight structure of their representation theory. In particular we consider products of a highest weight form $\w$, with powers of a zero form $z^l$, for $l \in \bN_0$. The form $z^l\omega$ is shown to always be an eigenvector of the Laplacian, and the corresponding eigenvalues $\mu_l$ are described explicitly.  Such sequences $\{\mu_l\}_{l \in \bN_0}$ of eigenvalues form the basis of our approach to  the spectrum of the Dolbeault--Dirac operator.

In \textsection 5, we abstract the representation theoretic properties of $\ccpn$ and introduce the notion of an {\em order I} compact quantum homogeneous complex space. We then establish a necessary and sufficient set of conditions (given in terms of the eigenvalue sequences $\{\mu_l\}_{l\in \bN_0}$ discussed above) for such a space to have a Dolbeault--Dirac operator with compact resolvent. 

In \textsection 6  we examine our motivating example $\O_q(\ccpn)$. We begin by recalling the necessary details about its definition as a quantum homogeneous space, its \hk calculus, and its covariant complex and K\"ahler structures. We then  construct an order I presentation for the calculus in \textsection \ref{subsection:OrderICPN}. This allows us to apply the general framework of the paper and to prove one of its main results:

\textbf{Theorem \ref{thm:theTHMCPN}}
For quantum projective space   $\O_q(\mathbb{CP}^{n-1})$, endowed with its Heckenberger--Kolb calculus and its unique covariant K\"ahler structure,  a pair of spectral triples is given by
\begin{align*}
\Big(\O_q(\ccpn), L^2\big(\Om^{(\bullet,0)}\big), D_{\del}\Big), & & \Big(\O_q(\ccpn), L^2\big(\Om^{(0,\bullet)}\big), D_{\adel}\Big).
\end{align*}

In \textsection 7  we generalise  the notion of Gelfand type to {\em weak Gelfand type}, and determine  which non-exceptional irreducible quantum flag manifolds satisfy the condition. We show that in addition to $\O_q(\ccpn)$, we have the quantum $2$-plane Grassmannians $\O_q(\mathrm{Gr}_{n,2})$, the odd- and even-dimensional quantum quadrics $\O_q(\mathbf{Q}_n)$. The extension of the framework of the paper to this larger class of examples is then discussed in detail. 

We finish with three appendices. In the first  we recall the basic definitions of compact quantum group algebras, and quantum homogeneous spaces. In the second we recall basic results about Drinfeld--Jimbo quantised enveloping algebras and their representation theory. In the third we use Frobenius reciprocity for quantum homogeneous spaces, along with some classical branching laws, to derive the decomposition of the anti-holomorphic forms into irreducible $U_q(\frak{sl}_n)$-modules. 

\subsubsection*{Acknowledgements:} The authors would like to thank Marco Matassa, Elmar Wagner, Fredy D\'iaz Garc\'ia, Andrey Krutov, Karen Strung, Shahn Majid, Simon Brain, Bram Mesland, Branimir \'{C}a\'{c}i\'c, Adam Rennie, Paolo Saracco, Kenny De Commer, Matthias Fischmann, Peter Littelmann, Willem A. de Graaf, Jan \v{S}\v{t}\!ov\'i\v{c}ek, and Adam-Christiaan van Roosmalen,  for useful discussions during the preparation of this paper.  The second author would like to thank IMPAN Wroclaw for hosting him in November 2018, and would also  like to thank Klaas Landsman and the Institute for Mathematics, Astrophysics and Particle Physics for hosting him in the winter of 2017 and 2018.

\section{Preliminaries}

We recall necessary details about complex, Hermitian, and K\"ahler structures for differential calculi. We highlight in particular the Dirac and Laplace operators associated to an Hermitian structure, as well as the associated Hodge theory, which plays a central in our spectral calculations.  We also introduce the novel notions of CQH-complex, CQH-Hermitian, and CQH-K\"ahler spaces. These serve as a convenient setting in which to present Dolbeault--Dirac spectral triples in the coming sections.

\subsection{Complex, Hermitian, and K\"ahler Structures on Differential Calculi}

In this subsection we present the basic definitions and results for complex structures, as  introduced in \cite{KLvSPodles} and  \cite{BS}. (For a presentation using the conventions of this paper see \cite{MMF2}.) We also recall the basic definitions and results of Hermitian and K\"ahler structures, as introduced in \cite{MMF3}. For an excellent presentation of classical complex and K\"ahler geometry see \cite{HUY}.

Recall that a {\em differential calculus} is a differential graded algebra $\big(\Om^\bullet \simeq \bigoplus_{k \in \bN_0} \Om^k, \exd\big)$ which is generated in degree $0$ as a differential graded algebra, which is to say, it is generated as an algebra by the elements $a, \exd b$, for $a,b \in \Om^0$. We denote the degree of a homogeneous element $\w \in \Om^{\bullet}$ by $|\w|$. For a given algebra $B$, a differential calculus {\em over} $B$ is a differential calculus such that $B = \Om^0$.  A differential calculus is said to be of {\em total degree} $m \in \bN_0$ if $\Om^m \neq 0$, and $\Om^{k} = 0$, for every $k > m$. A {\em differential $*$-calculus} over a $*$-algebra $B$ is a differential calculus over $B$ such that the $*$-map of $B$ extends to a (necessarily unique) conjugate linear involutive map $*:\Om^\bullet \to \Om^\bullet$ satisfying $\exd(\w^*) = (\exd \w)^*$, and 
\begin{align*}
\big(\w \wed \nu\big)^*  =  (-1)^{kl} \nu^* \wed \w^*, &  & \text{ for all } \w \in \Om^k, \, \nu \in \Om^l. 
\end{align*}
For $A$ a Hopf algebra, and $P$ a left $A$-comodule algebra, a differential calculus  $\Om^\bullet$ over $P$ is said to be {\em covariant} if the coaction $\DEL_L:P \to A \otimes P$ extends to a (necessarily unique) comodule algebra structure  $\DEL_L:\Om^\bullet \to A \otimes \Om^\bullet$, \wrt which the differential $\exd$  is a left $A$-comodule map. 

\begin{defn}\label{defnnccs}
A {\em complex structure} $\Om^{(\bullet,\bullet)}$ for a  differential $*$-calculus  $(\Om^{\bullet},\exd)$, over a \mbox{$*$-\alg } $A$, is an $\bN^2_0$-\alg grading $\bigoplus_{(a,b)\in \bN^2_0} \Om^{(a,b)}$ for $\Om^{\bullet}$ such that, for all $(a,b) \in \bN^2_0$: 
\begin{enumerate}
\item \label{compt-grading}  $\Om^k = \bigoplus_{a+b = k} \Om^{(a,b)}$,
\item  \label{star-cond} $*\big(\Om^{(a,b)}\big) = \Om^{(b,a)}$,
\item  $\exd \Om^{(a,b)} \sseq \Om^{(a+1,b)} \oplus \Om^{(a,b+1)}$.
\end{enumerate}
\end{defn}

We call an element of $\Om^{(a,b)}$ an $(a,b)$-form. Denoting by  $ \proj_{\Om^{(a+1,b)}}$, and $ \proj_{\Om^{(a,b+1)}}$,  the projections from $\Om^{a+b+1}$ onto $\Om^{(a+1,b)}$, and $\Om^{(a,b+1)}$ respectively, we can define the operators 
\begin{align*}
\del|_{\Om^{(a,b)}} : = \proj_{\Om^{(a+1,b)}} \circ \exd, & & \ol{\del}|_{\Om^{(a,b)}} : = \proj_{\Om^{(a,b+1)}} \circ \exd.
\end{align*}
Part 3 of the definition of a complex structure then implies the following identities:
\begin{align*}
\exd = \del + \adel, & &  \adel \circ \del = - \, \del \circ \adel, & & \del^2=0, & &  \adel^2 = 0. 
\end{align*}
Thus  $\big(\bigoplus_{(a,b)\in \bN^2}\Om^{(a,b)}, \del,\ol{\del}\big)$ is a double complex, which we call  the {\em Dolbeault double complex} of the $\Om^{(\bullet,\bullet)}$. Moreover, it is easily seen  that 
\bal \label{stardel}
\del(\w^*) = \big(\adel \w\big)^*, & &  \adel(\w^*) = \big(\del \w\big)^*, & & \text{ for all  } \w \in \Om^\bullet.
\eal
For any complex structure $\Om^{(\bullet,\bullet)} = \bigoplus_{(a,b) \in \bN_0} \Om^{(a,b)}$, a second complex structure, called its {\em opposite complex structure}, is given by
$
\ol{\Om}^{(\bullet,\bullet)} = \bigoplus_{(a,b) \in \bN^2_0} \ol{\Om}^{(a,b)}, \text{ where } \ol{\Omega}^{(a,b)} = \Omega^{^{(b,a)}}.
$

For a left $A$-comodule algebra $P$, and a covariant differential $*$-calculus $\Om^\bullet$ over $P$,   we say that a complex structure for $\Om^\bullet$ is {\em covariant} if  $\Om^{(a,b)}$ is a left $A$-sub-comodule of $\Om^\bullet$, for all  $(a,b) \in \bN^2_0$. 
A direct consequence of covariance is that the maps $\del$ and $\adel$ are left $A$-comodule maps.

\begin{defn} An {\em Hermitian structure} $(\Om^{(\bullet,\bullet)}, \s)$ for a differential $*$-calculus $\Om^{\bullet}$, of even total degree $2n$,  is a pair  consisting of  a complex structure  $\Om^{(\bullet,\bullet)}$, and  a central real $(1,1)$-form $\s$, called the {\em Hermitian form}, such that, \wrt the {\em Lefschetz operator}
\begin{align*}
L:\Om^\bullet \to \Om^\bullet,  & &   \w \mto \s \wed \w,
\end{align*}
isomorphisms are given by
\bal \label{eqn:Liso}
L^{n-k}: \Om^{k} \to  \Om^{2n-k}, & & \text{ for all } 1 \leq k < n.
\eal
\end{defn}


For $L$ the Lefschetz operator of an Hermitian structure, we denote
\begin{align*}
P^{(a,b)} : = \begin{cases} 
      \{\a \in \Om^{(a,b)} \,|\, L^{n-a-b+1}(\a) = 0\}, &  \text{ ~ if } a+b \leq n,\\
      0 & \text{ ~ if } a+b > n.
   \end{cases}
\end{align*}
Moreover, we denote $P^k := \bigoplus_{a+b = k} P^{(a,b)}$, and $P^{\bullet} := \bigoplus_{k \in \bN_0}  P^k$. An element of $P^\bullet$ is called a {\em primitive form}.

\begin{prop}[Lefschetz decomposition] \label{LDecomp}
For $L$ the Lefschetz operator of an Hermitian form, an  $A$-bicomodule decomposition, called the  {\em Lefschetz decomposition}, is given by
\begin{align*}
\Om^{(a,b)} \simeq \bigoplus_{j \geq 0} L^j\big(P^{(a-j,b-j)}\big).
\end{align*}
\end{prop}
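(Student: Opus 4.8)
The plan is to mimic the classical proof of the Lefschetz decomposition (as in \cite{HUY}), using only the formal consequences of the Lefschetz isomorphisms \eqref{eqn:Liso}, and then to check that everything is compatible with the $A$-bicomodule structure. The key observation is that the pair of operators $L$ and a formally defined ``dual Lefschetz operator'' $\LL$, together with the degree-counting operator $H$, give a representation of $\mathfrak{sl}_2$ on the finite-dimensional graded vector space $\Om^\bullet$; more precisely, one does not even need $\LL$ as an honest operator, only the numerical consequences of \eqref{eqn:Liso}. I would proceed in the following steps.

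First, I would establish the purely linear-algebraic decomposition $\Om^{(a,b)} = \bigoplus_{j \geq 0} L^j\big(P^{(a-j,b-j)}\big)$ by downward induction on the total degree $k = a+b$, or equivalently by the standard $\mathfrak{sl}_2$-representation-theoretic argument. The point is that \eqref{eqn:Liso} forces, for each $k \leq n$, the map $L^{n-k+1}: \Om^k \to \Om^{2n-k+2}$ to have kernel $P^k$ and the restriction of $L$ to ``co-primitive'' complements to be injective; a dimension count (or the Jacobson--Morozov/$\mathfrak{sl}_2$ decomposition applied to $H := \sum_k (k-n)\,\proj_{\Om^k}$ and $L$, with $\LL$ defined as the unique operator making $[\LL, L] = H$) then yields the claimed direct sum at the level of each $\Om^k$. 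Refining by the complex bigrading: since $L = \sigma \wed (-)$ and $\sigma \in \Om^{(1,1)}$, the operator $L$ maps $\Om^{(a,b)}$ into $\Om^{(a+1,b+1)}$, so the whole $\mathfrak{sl}_2$-decomposition respects the $\bN_0^2$-grading, and the $\Om^k$-level statement refines to the stated bigraded one with $P^{(a-j,b-j)}$ as defined.

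Second, I would verify that this decomposition is a decomposition of $A$-bicomodules. Here I would use that $\sigma$ is central and, crucially, that in the covariant setting $\sigma$ must be an invariant form (a covariant Hermitian form spans a trivial subcomodule), so that $L$ is a bicomodule map. Consequently $P^{(a,b)} = \ker\big(L^{n-a-b+1}|_{\Om^{(a,b)}}\big)$ is a subbicomodule, each $L^j(P^{(a-j,b-j)})$ is a subbicomodule, and the direct sum established in the first step is automatically a bicomodule direct sum; the isomorphism $L^j: P^{(a-j,b-j)} \xrightarrow{\ \sim\ } L^j(P^{(a-j,b-j)})$ is a bicomodule isomorphism since $L$ is injective on this summand and a comodule map both ways.

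The main obstacle is the first step: in the noncommutative setting one does not a priori have an honest dual Lefschetz operator $\LL$ with the right $\mathfrak{sl}_2$-commutation relations, only the bare isomorphisms \eqref{eqn:Liso}. I expect the cleanest route is to define $\LL$ abstractly (its existence and uniqueness, given $L$ and the grading operator $H$ with the isomorphism property \eqref{eqn:Liso}, is itself a piece of $\mathfrak{sl}_2$-linear algebra over a field) and then run the classical weight-space argument verbatim; alternatively, one can argue directly by a rank/kernel induction on $k$ using only that each $L^{n-k}$ in \eqref{eqn:Liso} is an isomorphism. Either way the argument is entirely formal once \eqref{eqn:Liso} is in hand, and the bicomodule refinement adds nothing harder than the remark that a central covariant $(1,1)$-form generates a trivial subcomodule.
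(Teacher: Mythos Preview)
The paper does not give its own proof of this proposition; it is stated as a preliminary result recalled from \cite{MMF3}. Your proposed approach --- extracting the linear-algebraic decomposition from the $\mathfrak{sl}_2$-representation theory forced by the Lefschetz isomorphisms \eqref{eqn:Liso}, and then observing that $L$ (being left wedge by a central, coinvariant form) is a bimodule and comodule map so that each $P^{(a,b)}$ and each $L^j(P^{(a-j,b-j)})$ inherits the relevant structures --- is precisely the standard argument, and is the one carried out in \cite{MMF3}.
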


Finally, we come to the definition of a K\"ahler structure, a simple strengthening of the Hermitian structure requirements, but one with profound consequences.

\begin{defn}
A {\em K\"ahler structure} $(\Om^{(\bullet,\bullet)},\k)$ for a differential $*$-calculus is an Hermitian structure  \st  $\exd \k = 0$. We call  $\k$ a  {\em K\"ahler form}.
\end{defn}

 One of the most important consequences of the K\"ahler condition (which is not necessarily true for a general Hermitian structure) is the equality, up to a scalar multiple of the three Laplacian operators
\bal \label{eqn:identityofLAPS}
\DEL_{\del} = \DEL_{\adel} = \frac{1}{2} \DEL_{\exd}.
\eal

Let $A$ be a Hopf \algn,  $P$ a left $A$-comodule algebra, and  $\Om^\bullet$ a covariant $*$-calculus over $P$. A {\em covariant Hermitian} structure for $\Om^\bullet$  is an Hermitian structure $(\Om^{(\bullet,\bullet)},\s)$ such that $\Om^{(\bullet,\bullet)}$ is a covariant complex structure, and such that the Hermitian form $\s$ is left $A$-coinvariant, which is to say $\DEL_L(\s) = 1 \oby \s$. A {\em covariant K\"ahler} structure is a covariant Hermitian structure which is also a K\"ahler structure.  Note that in the covariant case, in addition to being a $P$-bimodule map and a \mbox{$*$-homomorphism}, $L$ is also a left $A$-comodule map.

\subsection{Metrics, Adjoints, and the Hodge Map}

In classical Hermitian geometry, the Hodge map of an Hermitian metric is related to the associated Lefschetz decomposition through the well-known Weil formula (see \cite[Theorem 1.2]{Weil} or \cite[Proposition 1.2.31]{HUY}). In the noncommutative setting  we take the direct generalisation of the Weil formula for our definition of the Hodge map.

\begin{defn} \label{defn:HDefn}
The {\em Hodge map} associated to an Hermitian structure $\big(\Om^{(\bullet,\bullet)},\s\big)$ is the morphism uniquely defined by
\begin{align*}
\ast_{\s}\big(L^j(\w)\big) = (-1)^{\frac{k(k+1)}{2}}i^{a-b}\frac{j!}{(n-j-k)!}L^{n-j-k}(\w), & & \w \in P^{(a,b)} \sseq P^k,
\end{align*}
\end{defn}

Many of the basic properties of the classical Hodge map can now be understood as consequences of the Weil formula. (See \cite[\textsection 4.3]{MMF3} for a proof.)

\begin{lem}\label{lem:Hodgeproperties}
Let $\Om^\bullet$ be  a differential $*$-calculus, of total degree $2n$. For $(\Om^{(\bullet,\bullet)},\s)$ a choice of Hermitian structure for $\Om^{\bullet}$ and $\ast_\s$ the associated Hodge map, it holds that:
\bet
\item $\ast_\s$ is a $*$-map,
\item  $ \ast_{\s}(\Om^{(a,b)}) = \Om^{(n-b,n-a)}$, 
\item $ \ast_{\s}^2(\w) = (-1)^k \w, \text{ ~~~~~ for all } \w \in \Om^k$,
\item whenever $\Om^\bullet$ is a covariant calculus over a left $A$-comodule algebra, and $(\Om^{(\bullet,\bullet)},\s)$ is a covariant Hermitian structure, then $\ast_\s$ is a left $A$-comodule map.
\eet
\end{lem}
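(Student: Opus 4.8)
The strategy is to verify each of the four assertions directly from the explicit formula in Definition \ref{defn:HDefn}, using the Lefschetz decomposition (Proposition \ref{LDecomp}) to reduce every statement to the case of a form of the shape $L^j(\w)$ with $\w \in P^{(a,b)}$, where the action of $\ast_\s$ is given by a clean closed expression. Since $\ast_\s$ is defined to be a morphism (i.e.\ it is linear, and in the covariant case it will be extended $P$-bilinearly), it suffices to check the identities on such homogeneous primitive-driven summands and then extend by linearity. Throughout I would write $k = a+b$ for the total degree, so that $L^j(\w)$ has degree $2j+k$.

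First I would establish (2), the bidegree statement, since it is the easiest and it is used implicitly in reading the formulas for the others: if $\w \in P^{(a,b)}$ then $L^{n-j-k}(\w)$ lies in $\Om^{(a + n - j - k,\, b + n - j - k)}$, and since $L^j(\w) \in \Om^{(a+j, b+j)}$, setting $a' = a+j$, $b' = b+j$ gives $\ast_\s(\Om^{(a',b')}) \subseteq \Om^{(n - b', n - a')}$; the reverse inclusion follows because $\ast_\s$ is invertible, which is a consequence of (3). For (3), I would apply the defining formula twice: $\ast_\s$ sends $L^j(\w)$ to a scalar times $L^{n-j-k}(\w)$, which is again of the form $L^{j'}(\w)$ with $j' = n-j-k$ and the same primitive $\w \in P^{(a,b)} \subseteq P^k$; applying the formula a second time returns a scalar times $L^{n - j' - k}(\w) = L^{j}(\w)$. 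The composite scalar is
\begin{align*}
(-1)^{k(k+1)/2} \, i^{a-b} \, \frac{j!}{(n-j-k)!} \cdot (-1)^{k(k+1)/2} \, i^{a-b} \, \frac{(n-j-k)!}{j!} = i^{2(a-b)} = (-1)^{a-b} = (-1)^k,
\end{align*}
using $a - b \equiv a + b = k \pmod 2$. This proves (3) and retroactively secures the invertibility needed to finish (2).

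Next, (1), that $\ast_\s$ is a $\ast$-map, i.e.\ $\ast_\s(\w^\ast) = (\ast_\s \w)^\ast$. The key inputs are that $L$ commutes with $\ast$ in the sense $L(\w^\ast) = (\s \wed \w)^\ast = \s^\ast \wed (-1)^{|\w|}\w^\ast$ — and since $\s$ is a \emph{real} central form, $\s^\ast = \s$, so one must track the sign $(-1)^{|\w|}$ carefully through the powers $L^j$ — together with the fact that $\ast$ interchanges $\Om^{(a,b)}$ and $\Om^{(b,a)}$, hence carries $P^{(a,b)}$ to $P^{(b,a)}$ (the Lefschetz condition defining $P^{(a,b)}$ is $\ast$-symmetric because $L$ is, up to sign, $\ast$-equivariant). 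Then applying the defining formula to $L^j(\w^\ast)$-type terms, the prefactor $i^{a-b}$ becomes $i^{b-a} = \overline{i^{a-b}}$, which is exactly the conjugation appearing on the right-hand side; the real factorials are unaffected, and one checks the accumulated signs from $(-1)^{k(k+1)/2}$ and the $L$-versus-$\ast$ commutation match. This sign bookkeeping is where I expect the only real friction: one must be scrupulous about how $(-1)^{|\cdot|}$ factors propagate when $\ast$ is moved past $L^j$ and past the wedge with $\s$, and about the parity identity $a - b \equiv k$. Finally, (4) is essentially immediate: in the covariant case $\s$ is coinvariant, so $L$ is a left $A$-comodule map; the Lefschetz decomposition is an $A$-(bi)comodule decomposition by Proposition \ref{LDecomp}, so each subspace $L^j(P^{(a,b)})$ is a subcomodule; and $\ast_\s$ acts on each such summand by a scalar multiple of the comodule map $L^{n-j-k}$, hence is itself a comodule map. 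I would close by noting that $\ast_\s$ is then the unique morphism with the stated behaviour, so all properties pass to the full calculus by linearity (and $P$-bilinearity in the covariant setting).
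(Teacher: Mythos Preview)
Your approach is correct and is the natural direct verification from the Weil-formula definition via Lefschetz decomposition. The paper itself does not prove this lemma; it simply cites \cite[\S4.3]{MMF3}, so there is no in-paper argument to compare against beyond noting that your sketch is precisely the standard computation one would expect to find there.

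One small simplification you can make in part (1): your caution about tracking ``the sign $(-1)^{|\w|}$'' when commuting $\ast$ past $L$ is unnecessary. Since $\sigma$ has degree $2$ and is real and central, $(L\w)^\ast = (\sigma \wedge \w)^\ast = (-1)^{2|\w|}\,\w^\ast \wedge \sigma^\ast = \w^\ast \wedge \sigma = \sigma \wedge \w^\ast = L(\w^\ast)$, so $L$ and the $\ast$-map commute on the nose with no sign. This makes the verification of (1) cleaner than you anticipate: $\w^\ast \in P^{(b,a)}$, the real prefactors are unchanged, and $i^{a-b}$ conjugates to $i^{b-a}$, which is exactly what is needed. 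Also, your closing remark about ``$P$-bilinearity'' is not needed for any of the four statements and can be dropped; linearity alone carries the argument from each Lefschetz summand to the whole calculus.
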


Reversing the classical order of construction we now define a metric in terms of the Hodge map.

\begin{defn}
The {\em metric} associated to the Hermitian structure $\big(\Om^{(\bullet,\bullet)},\s\big)$ is  the unique map $g_\s:\Om^\bullet \times \Om^\bullet \to A$  for which $g\big(\Om^k \times \Om^l\big) = 0$, for all $k \neq l$, and 
\begin{align*}
g_\s(\w, \nu) =  \ast_\s\big(\ast_\s(\w^*) \wed \nu \big), & &  \w,\nu \in \Om^k.
\end{align*}
\end{defn}

\subsection{CQH-Complex and CQH-Hermitian Spaces} \label{section:CQHHS}

We now introduce the definitions of the various compact quantum homogeneous spaces which form the framework for our investigation  of Dolbeault--Dirac operators. These definitions detail a natural list of compatibility conditions between differential calculi, complex structures, and Hermitian structures on one hand and compact quantum group algebras on the other. 

Throughout this subsection, and indeed the rest of the paper,  $A$ and $H$ will denote Hopf algebras, $\pi:A \to H$ a Hopf algebra map, and 
$$
B := A^{\co(H)} = \{ b \in A \,|\, b_{(1)} \otimes \pi(b_{(2)})\}
$$
the associated quantum homogeneous space. See Appendix \ref{app:CQGA} for further details on quantum homogeneous spaces.

\begin{defn}
A {\em compact quantum calculus homogeneous  space}, or simply {a CQH-calculus space} is  a triple 
\begin{align*}
{\bf B} = \Big(B = A^{\co (H)}, \Om^\bullet, \vol\Big),
\end{align*}
comprised of the following elements:
\bet 
\item $B = A^{\co (H)}$  a CMQGA-homogeneous space, for which $A$ is a domain,

\item $\Om^\bullet$ a covariant differential $*$-calculus over $B$, finite-dimensional as an object in $^A_B\mathrm{Mod}_0$,  and of  total degree $m \in \bN$,

\item $\vol: \Om^m \simeq B$ an isomorphism in $^A_B \mathrm{Mod}_0$, which is also a $*$-map, and  with respect to which the {\em integral} 
\begin{align*}
\int := \haar \circ \vol: \Om^m \to \bC
\end{align*}
 is {\em closed}, which is to say, satisfies $\int  \exd  \w = 0$, for all $\w \in \Om^{m-1}$.
\eet
\end{defn}


\begin{defn}
A {\em compact quantum  homogeneous complex space}, or a {\em CQH-complex} {\em space}, is a pair $\text{\bf C} = \big(\text{\bf B}, \Om^{(\bullet,\bullet)} \big)$  where 
\bet

\item $\text{\bf B} = \big(B = A^{\co(H)}, \Om^\bullet, \vol)$ is a CQH-calculus space,

\item $\Om^{(\bullet,\bullet)}$ is covariant complex structure for $\Om^\bullet$.

\eet
The associated {\em opposite} CQH-complex space is the pair  $\text{\bf C}^{\mathrm{op}} := \big(\text{\bf B}, \ol{\Om}^{(\bullet,\bullet)} \big)$
\end{defn}

In the same spirit, a CQH-Hermitian space is  a CQH-calculus space endowed with an Hermitian structure in a natural way. The interaction here, however, is a little more subtle.

\begin{defn}
A {\em compact quantum  homogeneous Hermitian space}, or alternatively a  {\em CQH-Hermitian space}, is a triple $\big(\text{\bf C}, \Om^{(\bullet,\bullet)},\s \big)$  consisting of  
\bet
\item $\text{\bf C} = \big(\text{\bf B}, \Om^{(\bullet,\bullet)}\big)$ a CQH-complex space,

\item $\big(\Om^{(\bullet,\bullet)},\s\big)$ a covariant Hermitian structure for  the  differential $*$-calculus \mbox{ $\Om^\bullet \in  \text{\bf B}$,}

\item  $\vol = \ast_{\s}|_{\Om^{2n}}: \Om^{2n} \to B$, for $2n$ the total degree of the constituent calculus of $\mathbf{B}$,

\item the associated metric $g$ is {\em positive definite}, which is to say
\begin{align*}
g_{\sigma}(\w,\w) \in B_{>0} := \Big\{\sum_{i=1}^l \lambda_i b_i^* b_i  \neq 0 ~ | ~ b_i \in B, \, \lambda_i \in \bR_{>0}, \, l \in \bN \Big\}, & & \textrm{ for all $\omega \in \Omega^{\bullet}$}.
\end{align*}

\eet
\end{defn}

As an immediate consequence of the definition, we get the following useful lemma.

\begin{lem} \label{lem:nonzerozaction}
For any CQH-Hermitian space  $\big(\text{\bf C}, \Om^{(\bullet,\bullet)},\s \big)$, it holds that 
$z\omega$ and $ \omega z$ are non-zero,  for all   non-zero $z \in B$, and $\omega \in \Omega^{\bullet}$.
\end{lem}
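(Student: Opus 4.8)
The plan is to use the positive definiteness of the metric $g_\s$ together with the fact that $A$ is a domain. First I would reduce the problem to showing $z\w \neq 0$; the statement $\w z \neq 0$ follows by applying the $*$-map, since $(\w z)^* = \pm z^* \w^*$ and $z^* \in B$ is again an arbitrary non-zero element, while $\w^* \in \Om^\bullet$ is non-zero iff $\w$ is. I may also reduce to the case where $\w$ is homogeneous, say $\w \in \Om^k$, since the top-degree component of $z\w$ (for the highest $k$ with $\w_k \neq 0$) equals $z\w_k$, and it suffices to show this one component is non-zero.

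So suppose $\w \in \Om^k$ is non-zero and $z \in B$ is non-zero, and assume for contradiction that $z\w = 0$. The idea is to pair $z\w$ against $\w$ itself using the metric $g_\s$. Concretely, $g_\s(z\w, \w) = \ast_\s\big(\ast_\s\big((z\w)^*\big) \wed \w\big) = \ast_\s\big(\ast_\s(\w^* z^*) \wed \w\big)$; if $z\w = 0$ then this is $0$. On the other hand, one should express $g_\s(z\w,\w)$ in terms of $g_\s(\w,\w)$. Here I would use that $\ast_\s$ is a left $B$-module map up to the appropriate twist — more precisely, since $z \in B$ is central-like in its interaction with the relevant structures only in the classical case, the cleaner route is to use $g_\s(z\w, \w)$ and the $B$-bimodule structure: writing $(z\w)^* = \w^* z^*$, and using that $\ast_\s$ applied to a product $\nu \wed b$ for $b\in B$ relates to $b \cdot \ast_\s(\nu)$ (the Hodge map on a $\mathcal{O}(m)$-calculus is a bimodule map up to the volume identification), one obtains $g_\s(z\w,\w) = z^* \, g_\s(\w,\w)$, or perhaps $g_\s(\w, z^*\w) = g_\s(\w,\w) \, z^*$ — the precise placement of $z^*$ to be checked against the conventions of \textsection 2. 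Either way, $z\w = 0$ forces $z^* \, g_\s(\w,\w) = 0$ (or the right-handed analogue).

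Now $g_\s(\w,\w) \in B_{>0}$ by positive definiteness of the metric, since $\w \neq 0$; in particular $g_\s(\w,\w)$ is a non-zero element of $B \subseteq A$. And $z^* \neq 0$ since $z \neq 0$. But $A$ is a domain, so the product of two non-zero elements of $A$ is non-zero — contradiction. This gives $z\w \neq 0$, and the $*$-argument finishes $\w z \neq 0$.

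The main obstacle I anticipate is pinning down the exact identity relating $g_\s(z\w,\w)$ to $z^* g_\s(\w,\w)$: the Hodge map $\ast_\s$ is defined via Lefschetz decomposition and is a priori only a $\bC$-linear (indeed left $A$-comodule) map, not obviously a $B$-bimodule map. However, since $\vol = \ast_\s|_{\Om^{2n}}$ is an isomorphism in $^A_B\mathrm{Mod}_0$, hence a $B$-bimodule map, and the wedge product $\Om^k \times \Om^{2n-k} \to \Om^{2n}$ is $B$-bilinear, the composite $g_\s(\w,\nu) = \ast_\s(\ast_\s(\w^*)\wed\nu)$ does interact predictably with left multiplication by $B$ on $\nu$ and right multiplication on $\w^*$. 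The careful bookkeeping — which variable carries the module action and whether one needs left or right $B$-linearity — is the one genuinely fiddly point; everything else is immediate from positive definiteness plus the domain hypothesis on $A$.
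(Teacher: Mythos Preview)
Your approach is essentially the same as the paper's --- positive definiteness of $g_\sigma$ combined with $A$ (hence $B$) being a domain --- and it is correct. The paper's execution is cleaner, however: it simply observes that
\[
g_\sigma(\omega,\omega z) \;=\; \ast_\sigma\big(\ast_\sigma(\omega^*)\wedge \omega z\big) \;=\; \ast_\sigma\big((\ast_\sigma(\omega^*)\wedge \omega)\, z\big) \;=\; g_\sigma(\omega,\omega)\,z,
\]
where only the \emph{outer} $\ast_\sigma$ --- which equals $\vol$ on $\Omega^{2n}$ and is a $B$-bimodule map by the CQH-Hermitian axioms --- needs to be $B$-linear. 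This sidesteps the ``fiddly point'' you anticipated: you never need to know whether the inner $\ast_\sigma$ is a $B$-module map, because the $z$ sits on the far right of the wedge product. Your $*$-map reduction from $\omega z$ to $z\omega$ and the reduction to homogeneous $\omega$ are correct but unnecessary; the paper handles $z\omega$ by the analogous left-sided computation.
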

\begin{proof}
By the definition of $g_{\sigma}$, it holds that $g_{\sigma}(\omega,\omega z) = g_{\sigma}(\omega,\omega) z$. Since $B$ is a domain, and $g_{\sigma}$ is positive definite, $g_{\sigma}(\omega,\omega) z$ is necessarily non-zero, implying that $\omega z$ is non-zero. The proof that $z \omega$ is non-zero is entirely analogous. 
\end{proof}

For any CQH-Hermitian  space, composing $\haar$ with $g_{\sigma}$ gives a sesqui-linear map
\begin{align*}
\la \cdot, \cdot \ra: \Om^{(\bullet,\bullet)}  \times \Om^{(\bullet,\bullet)}  \to \bC, & & (\w ,\nu) \mto  \haar \circ g_{\sigma}(\w,\nu).
\end{align*}
We note that positivity of $g$, together with positivity of the Haar state $\haar$, imply that $\la \cdot, \cdot \ra$ is an inner product. We finish with the obvious extension of these definitions to the K\"ahler case.

\begin{defn}
A {\em compact quantum K\"ahler homogeneous space}, or a CQH{\em -K\"ahler space}, is a CQH-Hermitian  space $\text{\bf K} = \big(\text{\bf B},  \Om^{(\bullet,\bullet)},\k\big)$ such that the covariant Hermitian structure $(\Om^{(\bullet,\bullet)}, \, \k)$ is a K\"ahler structure. 
\end{defn}

\subsubsection{Hodge Theory}

We now recall the noncommutative generalisation of Hodge theory associated to any CQH-Hermitian space. Hodge decomposition will play a central role in our calculation of Laplace operator spectra, and the implied equivalence between harmonic forms and cohomology groups allows us to calculate the Dirac operator index in terms of the anti-holomorphic holomorphic Euler characteristic of the calculus, as defined in Definition \ref{defn:Euler}.

The exterior derivatives $\exd, \,\del, \, \adel$ are  adjointable with respect to the inner product. Moreover,  as established in \cite[\textsection 5.3.3]{MMF3}, their adjoints also are expressible  in terms of the Hodge operator:
\begin{align*}
\exd^\dagger = - \ast_\s \circ \, \exd \circ \ast_\s, & & \del^\dagger = - \ast_\s \circ \, \adel  \circ \ast_\s, & & \adel^\dagger = - \ast_\s \circ \, \del \circ \ast_\s.
\end{align*}
For any Hermitian structure on a differential calculus,  generalising the classical situation, we define the $\exd$-, $\del$-, and $\adel$-Dirac operators to be respectively, 
\begin{align*}
& D_{\exd} := \exd + \exd^\dagger, &  \, & D_{\del} := \del + \del^\dagger,&  & D_{\adel} := \adel + \adel^\dagger.
\end{align*}

Moreover, we define the $\exd$-, $\del$-, and $\adel$-Laplace operators to be
\begin{align*}
& \DEL_{\exd} := (\exd + \exd^\dagger)^2, &  \, &\DEL_{\del} := (\del + \del^\dagger)^2,&  &\DEL_{\adel} := (\adel + \adel^\dagger)^2.&
\end{align*}
We introduce  {\em $\exd$-harmonic}, {\em $\del$-harmonic}, and {\em $\adel$-harmonic} forms, respectively, according to
\begin{align*}
& \H_{\exd} :=\ker(\DEL_{\exd}),& &\H_{\del} :=\ker(\DEL_{\del}),&  &\H_{\adel} :=\ker(\DEL_{\adel}).&
\end{align*}

For the case of CQH-Hermitian space, it was shown in \cite[Corollary 4.17]{DOS1} that the Dirac and Laplace operators  are diagonalisable. Just as in the classical case, this allows us to show (\cite[Lemma 6.1]{MMF3}) that the harmonic forms admit the following alternative presentation
\bal \label{eqn:harmcap}
\H_{\exd} = \ker(\exd) \cap \ker(\exd^\dagger), & & \H_{\del} = \ker(\del) \cap \ker(\del^\dagger), & &  \H_{\adel} = \ker(\adel) \cap \ker(\ol{\del}^\dagger).
\eal
Moreover, as shown in  \cite[Theorem 6.2]{MMF3}, building on earlier work in \cite{KMT},  diagonalisability also allows us to conclude the following noncommutative generalisation of Hodge decomposition for Hermitian manifolds.

\begin{thm}[Hodge decomposition] For a CQH-Hermitian space, direct sum decompositions of $\Om^\bullet$, orthogonal \wrt  $\la\cdot,\cdot\ra$, are given by 
\begin{align*}
\Om^{\bullet}  = {\mathcal H}_{\exd} \oplus \exd \Om^{\bullet} \oplus \exd^\dagger\Om^{\bullet}, ~& & ~ \Om^{\bullet} = {\mathcal H}_{\del} \oplus \del\Om^{\bullet} \oplus \del^\dagger\Om^{\bullet}, ~ & & ~ \Om^{\bullet}  = {\mathcal H}_{\ol{\del}} \oplus \ol{\del}\Om^{\bullet} \oplus \ol{\del}^\dagger\Om^{\bullet}.
\end{align*}
\end{thm}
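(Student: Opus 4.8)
The plan is to derive all three decompositions formally from the diagonalisability of the Laplacians, which is already available from \cite[Corollary 4.17]{DOS1}, using in addition only the relations $\exd^2 = \del^2 = \adel^2 = 0$, the adjointness of the exterior derivatives, and the fact that $\la \cdot, \cdot\ra$ is a genuine (positive definite) inner product. Since the three cases follow by identical arguments, I would carry out the case of $\DEL_{\exd}$ in detail and then indicate the substitutions needed for $\del$ and $\adel$.

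First I would record that $\DEL_{\exd} = \exd \exd^\dagger + \exd^\dagger \exd$, since $(\exd^\dagger)^2 = (\exd^2)^\dagger = 0$; this exhibits $\DEL_{\exd}$ as a self-adjoint, positive semi-definite operator for $\la \cdot, \cdot\ra$. Invoking diagonalisability, I would write $\Om^\bullet$ as the algebraic direct sum $\bigoplus_\lambda E_\lambda$ of the eigenspaces $E_\lambda = \ker(\DEL_{\exd} - \lambda\,\id)$, with $\lambda$ ranging over the (real, non-negative) eigenvalues; as eigenspaces for distinct eigenvalues of a self-adjoint operator are orthogonal, this gives $\H_{\exd} = E_0$ and $\H_{\exd}^\perp = \bigoplus_{\lambda \neq 0} E_\lambda$. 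Next, using the presentation $\H_{\exd} = \ker \exd \cap \ker \exd^\dagger$ from \eqref{eqn:harmcap}, the identities $\la \eta, \exd\alpha\ra = \la \exd^\dagger \eta, \alpha\ra = 0$ and $\la \eta, \exd^\dagger \beta\ra = \la \exd \eta, \beta\ra = 0$, valid for $\eta \in \H_{\exd}$ and all $\alpha, \beta \in \Om^\bullet$, show $\exd\Om^\bullet + \exd^\dagger \Om^\bullet \sseq \H_{\exd}^\perp$; conversely $\id$ acts on $E_\lambda$ as $\lambda^{-1}(\exd \exd^\dagger + \exd^\dagger \exd)$ for $\lambda \neq 0$, so $E_\lambda \sseq \exd\Om^\bullet + \exd^\dagger \Om^\bullet$ and hence $\H_{\exd}^\perp \sseq \exd\Om^\bullet + \exd^\dagger\Om^\bullet$. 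This yields the orthogonal decomposition $\Om^\bullet = \H_{\exd} \oplus (\exd\Om^\bullet + \exd^\dagger \Om^\bullet)$. Finally, $\la \exd\alpha, \exd^\dagger \beta\ra = \la \exd^2 \alpha, \beta\ra = 0$ shows $\exd\Om^\bullet \perp \exd^\dagger\Om^\bullet$, and since three pairwise orthogonal subspaces of an inner product space whose sum is the whole space must sum directly, this refines the previous decomposition to $\Om^\bullet = \H_{\exd} \oplus \exd\Om^\bullet \oplus \exd^\dagger \Om^\bullet$. The same argument with $\exd$ replaced by $\del$ (using $\del^2 = 0$ and $\H_{\del} = \ker \del \cap \ker \del^\dagger$), and then by $\adel$, gives the remaining two decompositions.

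I do not expect a genuinely hard step: once the Laplacians are known to be diagonalisable the argument is pure linear algebra, and that diagonalisability is precisely what \cite[Corollary 4.17]{DOS1} supplies --- it is there that the cosemisimple comodule structure of $\Om^\bullet$ and the comodule-map property of the Laplacians do the real work. The only subtlety is that $\Om^\bullet$ is infinite-dimensional, so one must use the algebraic eigenspace decomposition coming from cosemisimplicity rather than a Hilbert-space spectral theorem; since $\DEL_{\exd}$ is an endomorphism of a cosemisimple comodule this causes no trouble, and the resulting decomposition moreover automatically respects the grading of $\Om^\bullet$.
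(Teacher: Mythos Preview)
Your argument is correct and matches the approach the paper points to: the theorem is quoted here from \cite[Theorem~6.2]{MMF3}, and the paper explicitly remarks that diagonalisability of the Laplacians (from \cite[Corollary~4.17]{DOS1}) is the ingredient that makes Hodge decomposition go through, which is exactly the mechanism you use. One small remark on logical order: you invoke \eqref{eqn:harmcap} to see $\H_{\exd}\perp(\exd\Om^\bullet+\exd^\dagger\Om^\bullet)$, but that identity is itself a consequence of positivity (from $\DEL_{\exd}\w=0$ one gets $0=\la\DEL_{\exd}\w,\w\ra=\|\exd\w\|^2+\|\exd^\dagger\w\|^2$), so there is no hidden circularity.
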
 

From Hodge decomposition it is easy to conclude the following equivalence of harmonic forms and cohomologies, see \cite[\textsection 6.2]{MMF3} for details. 

\begin{cor} \label{cor:harmonictoclass}  
It holds that 
\begin{align*}
\ker(\exd) = \H_{\exd} \oplus \exd \Om^\bullet, & & \ker(\del) = \H_{\del} \oplus \del \Om^\bullet, & & \ker(\adel) = \H_{\adel} \oplus \adel \Om^\bullet,
\end{align*}
and so, we have isomorphisms
\begin{align*}
\H^k_{\exd}  \to  \,H^k_{\exd}, & &  \H^{(a,b)}_{\del} \to H^{(a,b)}_{\del}, & & \H^{(a,b)}_{\adel} \to H^{(a,b)}_{\adel},
\end{align*}
where $H^k_{\exd}$, $H^{(a,b)}_{\del}$, and $H^{(a,b)}_{\adel}$, denote the cohomology groups of the complexes $(\Omega^{\bullet},\exd)$, $(\Omega^{\bullet},\del)$, and $(\Omega^{\bullet},\adel)$ respectively.
\end{cor}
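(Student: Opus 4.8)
The plan is to deduce every assertion in the corollary from the Hodge decomposition theorem just established, the presentation \eqref{eqn:harmcap} of the harmonic forms, the adjointability of $\exd,\del,\adel$, and the positive-definiteness of $\la\cdot,\cdot\ra$. I will carry out the argument in detail only for $\exd$; the $\del$- and $\adel$-cases are word-for-word identical once one observes that $\big(\Om^\bullet,\del\big)$, respectively $\big(\Om^\bullet,\adel\big)$, is the direct sum over the second, respectively the first, holomorphic degree of the subcomplexes $\big(\Om^{(\bullet,b)},\del\big)$, respectively $\big(\Om^{(a,\bullet)},\adel\big)$, and that the corresponding Laplacians preserve the $\bN_0^2$-grading. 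The latter holds because, by Lemma \ref{lem:Hodgeproperties}, $\ast_\s$ maps $\Om^{(a,b)}$ to $\Om^{(n-b,n-a)}$, so that $\del^\dagger = -\ast_\s\circ\adel\circ\ast_\s$ lowers the first holomorphic degree by one while $\adel^\dagger$ lowers the second; hence $\DEL_\del$ and $\DEL_\adel$ act within each $\Om^{(a,b)}$, and $\H_\del,\H_\adel$ inherit the $\bN_0^2$-grading, which is what makes the bigraded statements meaningful.

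For the identity $\ker(\exd) = \H_\exd \oplus \exd\Om^\bullet$, the inclusion $\H_\exd \sseq \ker(\exd)$ is immediate from \eqref{eqn:harmcap}, and $\exd\Om^\bullet \sseq \ker(\exd)$ follows from $\exd^2 = 0$; since the three summands of the Hodge decomposition are mutually $\la\cdot,\cdot\ra$-orthogonal, $\H_\exd \oplus \exd\Om^\bullet$ is a genuine internal direct sum contained in $\ker(\exd)$. For the reverse containment, take $\w \in \ker(\exd)$ and write $\w = h + \exd\alpha + \exd^\dagger\beta$ by Hodge decomposition. Applying $\exd$ and using $\exd h = 0$ together with $\exd^2 = 0$ gives $\exd\exd^\dagger\beta = 0$, whence, by adjointability, $0 = \la \exd\exd^\dagger\beta, \beta \ra = \la \exd^\dagger\beta, \exd^\dagger\beta \ra$, so $\exd^\dagger\beta = 0$ by positive-definiteness and $\w = h + \exd\alpha \in \H_\exd \oplus \exd\Om^\bullet$. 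The same computation with $\del$ or $\adel$ in place of $\exd$ yields the other two identities.

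Finally, restricting $\ker(\exd) = \H_\exd \oplus \exd\Om^\bullet$ to degree $k$ shows that every cohomology class in $H^k_\exd$ is represented by some $h \in \H^k_\exd$, so $h \mapsto [h]$ is a surjection $\H^k_\exd \to H^k_\exd$; for injectivity, if $h \in \H^k_\exd$ with $h = \exd\alpha$, then using $\exd^\dagger h = 0$ from \eqref{eqn:harmcap} we get $\la h,h\ra = \la h,\exd\alpha\ra = \la \exd^\dagger h,\alpha\ra = 0$, hence $h = 0$. Restricting instead to bidegree $(a,b)$ gives the isomorphisms $\H^{(a,b)}_\del \to H^{(a,b)}_\del$ and $\H^{(a,b)}_\adel \to H^{(a,b)}_\adel$ in exactly the same way. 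I do not anticipate any genuine obstacle here; the only point requiring a moment's care is the compatibility of $\H_\del$ and $\H_\adel$ with the complex-structure grading, which is precisely the remark recorded in the first paragraph.
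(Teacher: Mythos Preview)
Your proof is correct and follows precisely the standard route the paper has in mind: the paper itself gives no proof here, merely citing \cite[\textsection 6.2]{MMF3}, and the argument there is exactly the one you wrote --- use Hodge decomposition to show that the $\exd^\dagger$-component of a closed form vanishes via adjointness and positive-definiteness, then read off the harmonic-to-cohomology isomorphism from $\H_\exd \cap \exd\Om^\bullet = 0$. Your remark on the compatibility of $\H_\del,\H_\adel$ with the $\bN_0^2$-grading is a welcome addition that the paper leaves implicit.
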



We finish with an easy but novel observation that directly generalises the classical situation. 

\begin{lem} \label{lem:HodgeOpDecomp}
For any Hermitian structure $(\Om^{(\bullet,\bullet)},\s)$, the Hodge operator restricts to linear isomorphisms
\begin{align*}
\ast_{\s}: \adel \Om^\bullet \to \del^\dagger \Om^\bullet, & & \ast_{\s}: \del \Om^\bullet \to \adel^\dagger\! \Omega^\bullet.
\end{align*}
\end{lem}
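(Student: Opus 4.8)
The plan is to deduce everything formally from the two Hodge-theoretic formulas for the codifferentials recalled above, $\del^\dagger = - \ast_\s \circ \adel \circ \ast_\s$ and $\adel^\dagger = - \ast_\s \circ \del \circ \ast_\s$, together with the involutivity statement of Lemma~\ref{lem:Hodgeproperties}(3), namely $\ast_\s^2|_{\Om^k} = (-1)^k \id$. The latter says that $\ast_\s$ is a linear automorphism of $\Om^\bullet$ whose inverse, in each degree, is $\pm\ast_\s$; consequently the restriction of $\ast_\s$ to any subspace is automatically a linear isomorphism onto its image. Thus the lemma reduces to the two equalities of subspaces $\ast_\s\big(\adel \Om^\bullet\big) = \del^\dagger \Om^\bullet$ and $\ast_\s\big(\del \Om^\bullet\big) = \adel^\dagger \Om^\bullet$.

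First I would prove the inclusion $\ast_\s\big(\adel \Om^\bullet\big) \subseteq \del^\dagger \Om^\bullet$: for $\w \in \Om^\bullet$, apply the formula for $\del^\dagger$ to the element $\ast_\s \w$ and use $\ast_\s^2 = \pm\id$ to obtain $\del^\dagger(\ast_\s \w) = \mp\, \ast_\s(\adel \w)$, which exhibits $\ast_\s(\adel\w)$, up to a nonzero scalar, as $\del^\dagger$ of an element of $\Om^\bullet$. For the reverse inclusion, simply rewrite the defining formula as $\del^\dagger \w = \ast_\s\big(-\adel(\ast_\s \w)\big)$, displaying every element of $\del^\dagger \Om^\bullet$ as $\ast_\s$ applied to an element of $\adel \Om^\bullet$. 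Combining the two gives the equality of subspaces, hence the first claimed isomorphism. The second isomorphism follows by the identical argument with the roles of $\del$ and $\adel$ interchanged, now using $\adel^\dagger = - \ast_\s \circ \del \circ \ast_\s$; alternatively one could deduce it from the first by applying the $*$-operation and the identity $\del(\w^*) = (\adel\w)^*$ of \eqref{stardel}, but the direct computation is the cleaner route.

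I do not expect a genuine obstacle here: the statement is a formal consequence of the structure already assembled in this section. The only point requiring a little care is bookkeeping the degree-dependent signs produced by $\ast_\s^2|_{\Om^k} = (-1)^k\id$ when passing between $\ast_\s\w$ and $\w$; since these signs are always nonzero they play no role in the set-theoretic identities above, so even this is a cosmetic rather than a substantive issue.
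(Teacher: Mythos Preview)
Your proposal is correct and follows essentially the same approach as the paper: both arguments use the formula $\del^\dagger = -\ast_\s \circ \adel \circ \ast_\s$ together with $\ast_\s^2|_{\Om^k} = (-1)^k\id$ to check the two inclusions and then invoke bijectivity of $\ast_\s$ on $\Om^\bullet$. The only cosmetic difference is that the paper phrases the reverse inclusion as $\ast_\s(\del^\dagger\Om^\bullet)\subseteq\adel\Om^\bullet$ rather than $\del^\dagger\Om^\bullet\subseteq\ast_\s(\adel\Om^\bullet)$, but these are equivalent.
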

\begin{proof}
The fact that $\ast_{\s}\big(\adel \Om^\bullet)$ is contained in $\del^\dagger \Om^\bullet$ follows from 
\begin{align*}
\ast_{\s}(\adel \w) = (-1)^k \ast_{\s} \circ \, \adel \circ \ast_{\s}\big(\ast_{\s}(\w)\big) = (-1)^{k+1} \del^\dagger\big(\ast_{\s}(\w)\big) \in \del^\dagger\Om^\bullet, & & \text{ for } \w \in \Om^k.
\end{align*}
Similarly, the fact that $\ast_{\s}\big(\del^\dagger \Om^\bullet \big)$ is contained in $\adel \Om^\bullet$ follows from 
\begin{align*}
\ast_{\s}\big(\del^\dagger \w \big) = - \ast_{\s}^2 \circ \, \adel \circ \ast_{\s}(\w) = (-1)^{2n-k +1} \, \adel\big(\ast_{\s}(\w)\big) \in \adel \Om^\bullet, & & \text{ for } \w \in \Om^k.
\end{align*}
Thus, since $\ast_{\s}:\Om^\bullet \to \Om^\bullet$ is a linear isomorphism, it must restrict to an isomorphism between $\adel \Om^\bullet$ and $\del^\dagger \Om^\bullet$. The proof of the second isomorphism is completely analogous, and hence omitted.
\end{proof}

\subsection{Dolbeault--Dirac Spectral Triples}

In this subsection we recall the definition of a spectral triple,  Connes' notion of a noncommutative Riemannian spin manifold \cite{Connes}. For a presentation of the classical Dolbeault--Dirac operator of an Hermitian manifold as a commutative spectral triple, see  \cite{HigsonRoe} or \cite{FriedrichDirac}. For a standard reference on the general theory of spectral triples, see  \cite{Varilly} or \cite{RennieSpecTrip}. Motivated by our construction of spectral triples from CQH-Hermitian spaces, we find it convenient to break the definition into two parts.

\begin{defn}
A {\em bounded-commutator triple} $(A,\H,D)$, or simply a {\em BC-triple},  consists of a unital  \mbox{$*$-algebra} $A$, a separable Hilbert space $\H$ endowed with a faithful \linebreak $*$-representation $\r:A \to \mathbb{B}(\H)$, and  $D: \dom(D)  \to \H$ a densely-defined self-adjoint operator on $\H$, such that
\begin{enumerate}
\item $\r(a)\mathrm{dom}(D) \sseq \mathrm{dom}(D)$, for all $a \in A$, 

\item $[D,\r(a)]$ is a bounded operator,  for all $a \in A$.

\end{enumerate}

An {\em even BC-triple} is a BC-triple $(A,\H,D)$ together with a  $\bZ_2$-grading $\H = \H_0 \oplus \H_1$ of Hilbert spaces, \wrt which $D$ is a degree $1$ operator, and $\r(a)$ is a degree $0$ operator, for each $a \in A$.
\end{defn}

It follows the discussions of  \cite[\textsection 4]{DOS1} that every CQH-Hermitian space automatically gives a BC-triple. We now briefly recall the relevant details. Let  $\text{\bf H} = \left(\mathbf{B},\Om^{(\bullet,\bullet)},\s\right)$ be a CQH-Hermitian space, with constituent quantum homogeneous space $B = A^{\co(H)}$. We denote by $L^2\big(\Om^{\bullet}\big)$ the Hilbert space completion of $\Om^{\bullet}$ \wrt its inner product $\la \cdot,\cdot\ra$. 
By \cite[Lemma 5.2.1]{MMF3} the $\bN^2_0$-grading of the complex structure is orthogonal \wrt $\la \cdot, \cdot \ra$, hence we have the following decomposition of Hilbert spaces
\begin{align} \label{eqn:ComplexHilbertDecomp}
L^2\left(\Om^\bullet\right) = \bigoplus_{(a,b) \in \bN^2_0} L^2\big(\Om^{(a,b)}\big).
\end{align}
The constituent Hopf algebras of $\text{\bf H}$ are CMQGAs, in particular, they are finitely generated. This implies that the Hilbert space $L^2\left(\Omega^\bullet\right)$ is separable \cite[Lemma 4.3]{DOS1}. Since $B$ is a unital algebra, a faithful $*$-representation $\r:B \to \mathbb{B}\big(L^2(\Om^\bullet)\big)$ is given by 
\bal \label{eqn:faithfulRep}
\r(b) \w := b \w, & & \text{ for } \w \in \Om^\bullet, ~ b \in B.
\eal

It follows from the basic theory of unbounded operators  \cite[\textsection 13]{Rudin} that  the Dirac operators $D_{\del}$ and $D_{\adel}$, as well as the Laplace operators $\DEL_{\del}$ and $\DEL_{\adel}$,  are essentially self-adjoint, see \cite[Corollary 4.17]{DOS1} for details. By abuse of notation,  we will not distinguish notationally between an operator and its closure. As  observed in  \linebreak \cite[\textsection 7.2]{DOS1}, it now follows from \cite[Proposition 2.1]{CoreFMR} that, for $\mathrm{dom}(D_{\del})$ and $\mathrm{dom}(D_{\adel})$ the domain of the closures of the respective Dirac operators, 
\begin{align*}
\rho(b) \mathrm{dom}(D_{\del}) \sseq \mathrm{dom}(D_{\del}), & & \rho(b) \mathrm{dom}(D_{\adel}) \sseq \mathrm{dom}(D_{\adel}), & & \text{ for all } b \in B. 
\end{align*} 
Moreover,  boundedness of the commutators $[D_{\del},m]$, and $[D_{\adel},m]$ follows easily from  the Leibniz rule \cite[Corollary 4.11]{DOS1}. Collecting these observations together gives us the following proposition.

\begin{prop} \label{prop:BCCQHHS}
For a CQH-Hermitian space $\mathbf{H} = (\mathbf{B},\Om^{(\bullet,\bullet)},\s)$, with constituent quantum homogeneous space $B$, a pair of BC-triples, which we call a {\em Dolbeault--Dirac pair}, is given by
\begin{align} \label{eqn:DDpair}
\left(\!B, L^2\big(\Om^{(\bullet,0)}\big), D_{\del}\right)\!,  & & \left(\!B, L^2\big(\Om^{(0,\bullet)}\big), D_{\adel}\right)\!.
\end{align}
\end{prop}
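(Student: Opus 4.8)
The plan is to check, for each of the two triples in \eqref{eqn:DDpair}, the two defining conditions of a BC-triple together with the structural prerequisites (separable Hilbert space, faithful $*$-representation, densely defined self-adjoint operator). I would carry out the argument in detail only for the anti-holomorphic triple $\big(B, L^2(\Om^{(0,\bullet)}), D_{\adel}\big)$; the holomorphic triple $\big(B, L^2(\Om^{(\bullet,0)}), D_{\del}\big)$ is entirely analogous, being recovered by passing to the opposite complex structure (equivalently, by conjugating with the $*$-operation and invoking \eqref{stardel}). There are four points to settle: (i) $L^2(\Om^{(0,\bullet)})$ is separable, carries a faithful $*$-representation $\rho$ of $B$, and $D_{\adel}$ restricts to a densely defined operator on it; (ii) this restriction is essentially self-adjoint; (iii) $\rho(b)$ preserves the domain of its self-adjoint closure, for every $b \in B$; (iv) the commutator $[D_{\adel},\rho(b)]$ extends to a bounded operator, for every $b \in B$.

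For (i), restricting the orthogonal decomposition \eqref{eqn:ComplexHilbertDecomp} to bidegrees $(0,b)$ exhibits $L^2(\Om^{(0,\bullet)}) = \bigoplus_{b \in \bN_0} L^2(\Om^{(0,b)})$ as a closed subspace of the separable space $L^2(\Om^\bullet)$, hence itself separable; since left multiplication by an element of $B$ preserves each $\Om^{(a,b)}$ and $B = \Om^{(0,0)} \sseq \Om^{(0,\bullet)}$, the faithful $*$-representation \eqref{eqn:faithfulRep} restricts to a $*$-representation $\rho$ of $B$ on this subspace, still faithful because $\rho(b)\cdot 1 = b$. That $D_{\adel} = \adel + \adel^\dagger$ preserves $\Om^{(0,\bullet)}$ follows from the second part of Lemma \ref{lem:Hodgeproperties}: $\adel$ raises the antiholomorphic index, and $\adel^\dagger = -\ast_\s \circ \del \circ \ast_\s$ lowers it, as bidegree bookkeeping shows. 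Point (ii) then holds exactly as in \cite[Corollary 4.17]{DOS1}, via the basic theory of unbounded operators \cite[\textsection 13]{Rudin} together with the diagonalisability of $D_{\adel}$ and $\DEL_{\adel}$ established there.

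For (iii) and (iv) I would assemble the relevant results of \cite{DOS1}. Knowing that $\Om^{(0,\bullet)}$ is a core for the closure of $D_{\adel}$, the domain inclusion $\rho(b)\,\dom(D_{\adel}) \sseq \dom(D_{\adel})$ follows from \cite[Proposition 2.1]{CoreFMR}, precisely as in \cite[\textsection 7.2]{DOS1}. For (iv), the graded Leibniz rule \cite[Corollary 4.11]{DOS1} identifies $[\adel,\rho(b)]$ on $\Om^{(0,\bullet)}$ with left multiplication by the one-form $\adel b$, while $[\adel^\dagger,\rho(b)] = -\big([\adel,\rho(b^*)]\big)^\dagger$ is the Hilbert space adjoint of such a multiplication operator; since $\Om^\bullet$ is finite-dimensional as an object of ${}^A_B\mathrm{Mod}_0$, hence finitely generated projective over $B$, multiplication by a fixed form is bounded, and therefore so is $[D_{\adel},\rho(b)] = [\adel,\rho(b)] + [\adel^\dagger,\rho(b)]$, which extends to a bounded operator on all of $L^2(\Om^{(0,\bullet)})$.

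I expect the only genuinely non-formal point to be (iii): because $D_{\adel}$ is unbounded, the commutator $[D_{\adel},\rho(b)]$ is meaningful only after $\rho(b)$ is known to preserve the domain of the self-adjoint closure, whereas our direct handle is restricted to the algebraic domain $\Om^{(0,\bullet)}$; bridging the algebraic and the closed settings is exactly the work done by the core theorem of \cite{CoreFMR}. The remaining steps are a matter of collating facts already established, and the identical collation applied to the opposite complex structure disposes of the holomorphic triple.
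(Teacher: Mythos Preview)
Your proposal is correct and follows essentially the same route as the paper: the paper does not give a separate proof of this proposition but rather presents it as a summary of the preceding discussion, which cites exactly the same ingredients you invoke --- separability via \cite[Lemma 4.3]{DOS1}, the faithful $*$-representation \eqref{eqn:faithfulRep}, essential self-adjointness from \cite[Corollary 4.17]{DOS1}, domain preservation via \cite[Proposition 2.1]{CoreFMR} as in \cite[\textsection 7.2]{DOS1}, and bounded commutators from the Leibniz rule \cite[Corollary 4.11]{DOS1}. Your write-up is in fact somewhat more detailed than the paper's, since you make explicit why $D_{\adel}$ preserves $\Om^{(0,\bullet)}$ and spell out the passage to the holomorphic triple via the opposite complex structure.
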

\begin{cor}
The Hilbert space decompositions 
\begin{align*}
L^2\big(\Om^{(\bullet,0)}\big) = \bigoplus_{k \in 2 \mathbb{N}_0} L^2\big(\Om^{(k,0)}\big) \oplus \bigoplus_{k \in 2 \mathbb{N}_0 + 1} L^2\big(\Om^{(k,0)}\big), \\ 
L^2\big(\Om^{(0, \bullet)}\big) = \bigoplus_{k \in 2 \mathbb{N}_0} L^2\big(\Om^{(0,k)}\big) \oplus \bigoplus_{k \in 2 \mathbb{N}_0 + 1} L^2\big(\Om^{(0,k)}\big),
\end{align*}
define an even structure for the Dolbeault--Dirac pair of BC-triples.
\end{cor}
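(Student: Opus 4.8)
The plan is to exhibit, for each of the two BC-triples in \eqref{eqn:DDpair}, a grading operator realising the claimed $\bZ_2$-decomposition, and then to check the two defining properties of an even BC-triple: that the Dirac operator is odd and that the representation of $B$ is even. I would verify these properties first on the dense subspace of algebraic forms, and only afterwards extend the anticommutation relation to the closure of the Dirac operator.

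First I would treat $\big(B, L^2(\Om^{(\bullet,0)}),D_{\del}\big)$. Let $\gamma$ denote the operator acting as $+1$ on $\bigoplus_{k \in 2\bN_0} L^2(\Om^{(k,0)})$ and as $-1$ on $\bigoplus_{k \in 2\bN_0 + 1} L^2(\Om^{(k,0)})$; since the decomposition \eqref{eqn:ComplexHilbertDecomp} is orthogonal, $\gamma$ is a bounded self-adjoint unitary, and its $\pm 1$ eigenspaces are exactly the two summands $\H_0, \H_1$ appearing in the statement. For the degree-$0$ property of $\r$, I would use that $\Om^{(\bullet,\bullet)}$ is an $\bN_0^2$-\emph{algebra} grading with $B = \Om^{(0,0)}$, so that left multiplication by $b \in B$ carries $\Om^{(k,0)}$ into $\Om^{(k,0)}$; hence $\r(b)$ preserves every summand of \eqref{eqn:ComplexHilbertDecomp} and therefore commutes with $\gamma$. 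For the degree-$1$ property of $D_{\del} = \del + \del^\dagger$, I would use that $\del$ maps $\Om^{(k,0)}$ into $\Om^{(k+1,0)}$ by Part $3$ of Definition \ref{defnnccs}, while $\del^\dagger = -\ast_\s \circ \adel \circ \ast_\s$ maps $\Om^{(k,0)}$ into $\Om^{(k-1,0)}$ by Lemma \ref{lem:Hodgeproperties}$(2)$; consequently $D_{\del}$ interchanges the even- and odd-degree parts of $\Om^{(\bullet,0)}$, i.e.\ $\gamma D_{\del} = -D_{\del}\gamma$ on algebraic forms.

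It then remains to pass this anticommutation from the core $\Om^{(\bullet,0)}$ to the full domain $\dom(D_{\del})$, and this is the only step requiring any care. Since $D_{\del}$ is essentially self-adjoint with core $\Om^{(\bullet,0)}$ by \cite[Corollary 4.17]{DOS1}, and $\gamma$ is bounded, a standard approximation argument gives $\gamma\,\dom(D_{\del}) \sseq \dom(D_{\del})$ together with $\gamma D_{\del} = -D_{\del}\gamma$ on all of $\dom(D_{\del})$; hence $D_{\del}$ is genuinely a degree-$1$ operator and $\big(B, L^2(\Om^{(\bullet,0)}),D_{\del}\big)$ becomes an even BC-triple. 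The argument for $\big(B, L^2(\Om^{(0,\bullet)}),D_{\adel}\big)$ is entirely analogous, now using that $\adel$ raises the antiholomorphic degree by one, $\adel^\dagger = -\ast_\s \circ \del \circ \ast_\s$ lowers it by one, and that $B = \Om^{(0,0)}$ still acts diagonally on the grading \eqref{eqn:ComplexHilbertDecomp}. Combining the two verifications yields that the displayed Hilbert space decompositions define an even structure on the Dolbeault--Dirac pair, as claimed.
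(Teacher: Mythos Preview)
Your argument is correct and in fact more detailed than the paper's treatment: the paper states this corollary without proof, leaving the verification implicit. Your approach---checking that $B=\Om^{(0,0)}$ acts in degree $0$ via the $\bN_0^2$-algebra grading, that $\del,\adel$ raise and $\del^\dagger,\adel^\dagger$ lower degree by one via Definition~\ref{defnnccs} and Lemma~\ref{lem:Hodgeproperties}, and then extending the anticommutation with $\gamma$ from the core to the closure---is exactly the natural verification, and is presumably what the authors intend the reader to supply.
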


Finally, we come to the definition of a spectral triple, which we present as a BC-triple whose unbounded operator  $D$ has compact resolvent. 

\begin{defn}
A {\em spectral triple} is a BC-triple $(A,\H,D)$ such that
\begin{align*}
(1+D^2)^{-1}  \in \mathbb{K}\left(\H\right),
\end{align*}
where $\mathbb{K}(\H)$ denotes the compact operators on $\H$.
\end{defn}

As discussed in the introduction, it is not clear at present how to conclude the compact resolvent condition from the properties of a general CQH-Hermitian space. (See \textsection \ref{Subsubsection:Conjs} for a brief discussion on how this might be achieved.) Hence, in our examples we resort to calculating the spectrum explicitly, and directly confirming the appropriate eigenvalue growth.  We do, however, know two important general results. Firstly, since the $*$-map satisfies $* \circ \Delta_{\adel} = \Delta_{\del} \circ *$ we have the following lemma.
\begin{lem}
For a CQH-Hermitian space, the operator $D_{\adel}:\Omega^{(0,\bullet)} \to \Omega^{(0,\bullet)}$ has compact resolvent if and only if the operator $D_{\del}: \Omega^{(\bullet,0)} \to \Omega^{(\bullet,0)}$ has compact resolvent.
\end{lem}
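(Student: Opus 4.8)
The plan is to exploit the $*$-structure of the calculus to transport spectral information between $D_{\adel}$ acting on $L^2(\Om^{(0,\bullet)})$ and $D_{\del}$ acting on $L^2(\Om^{(\bullet,0)})$. The key observation, already indicated in the preceding remark, is that the conjugate-linear involution $*:\Om^\bullet \to \Om^\bullet$ of the differential $*$-calculus satisfies $*(\Om^{(a,b)}) = \Om^{(b,a)}$, and hence restricts to a conjugate-linear bijection $*:\Om^{(0,k)} \to \Om^{(k,0)}$ for each $k$. By \eqref{stardel} we have $\del(\w^*) = (\adel \w)^*$ and, taking adjoints with respect to the inner product together with the formulas $\del^\dagger = -\ast_\s \circ \adel \circ \ast_\s$, $\adel^\dagger = -\ast_\s \circ \del \circ \ast_\s$ and the fact (Lemma \ref{lem:Hodgeproperties}(1)) that $\ast_\s$ is a $*$-map, one also gets $\del^\dagger(\w^*) = (\adel^\dagger \w)^*$. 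Consequently $*$ intertwines $D_{\adel}$ with $D_{\del}$, i.e. $* \circ D_{\adel} = D_{\del} \circ *$ on the algebraic level, and likewise $* \circ \DEL_{\adel} = \DEL_{\del} \circ *$, which is the statement explicitly recalled just before the lemma.

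First I would make precise that $*$ is an \emph{anti-unitary} map from $L^2(\Om^{(0,\bullet)})$ onto $L^2(\Om^{(\bullet,0)})$. This is because the inner product $\la\cdot,\cdot\ra = \haar \circ g_\s(\cdot,\cdot)$ satisfies $\la \w^*, \nu^* \ra = \overline{\la \w, \nu\ra}$ for homogeneous $\w,\nu$ of equal degree: indeed $g_\s(\w^*,\nu^*) = \ast_\s(\ast_\s(\w)\wed \nu^*)$, and using that $\ast_\s$ is a $*$-map together with $(\alpha\wed\beta)^* = (-1)^{kl}\beta^*\wed\alpha^*$ one checks $g_\s(\w^*,\nu^*) = g_\s(\nu,\w)^* = g_\s(\w,\nu)^*$, after which invariance of $\haar$ under $*$ gives the claim. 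Hence $*$ extends to a surjective anti-unitary $U: L^2(\Om^{(0,\bullet)}) \to L^2(\Om^{(\bullet,0)})$. Second, I would note that $U$ maps the core $\Om^{(0,\bullet)}$ onto the core $\Om^{(\bullet,0)}$ and that $U D_{\adel} U^{-1} = D_{\del}$ on that core; since both Dirac operators are essentially self-adjoint on their respective cores (as recalled from \cite[Corollary 4.17]{DOS1}), this unitary-type equivalence passes to the closures, so $U \dom(D_{\adel}) = \dom(D_{\del})$ and $U D_{\adel} = D_{\del} U$ as closed operators.

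Third, I would conclude by a transport-of-structure argument: because $U$ is a (anti-)unitary conjugacy and conjugation by an anti-unitary preserves the ideal of compact operators (an anti-unitary is bounded with bounded inverse, and $\mathbb{K}(\H)$ is invariant under such similarities), we get $(1+D_{\del}^2)^{-1} = U(1+D_{\adel}^2)^{-1}U^{-1}$, so one resolvent is compact if and only if the other is. This gives both directions of the biconditional at once, completing the proof. The only mildly delicate point — and the step I would flag as the one requiring genuine care rather than routine manipulation — is the passage from the algebraic intertwining relation on the finite-dimensional spaces of forms to the statement about the closed unbounded operators: one must verify that $U$ carries the domain of the closure of $D_{\adel}$ exactly onto the domain of the closure of $D_{\del}$, which is where essential self-adjointness (equivalently, uniqueness of the closed extension) is used. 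Since the paper has already recorded essential self-adjointness of both operators, this obstacle is readily dispatched, and no explicit spectral computation is needed.
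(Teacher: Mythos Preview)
Your proposal is correct and follows essentially the same approach as the paper: the paper's proof is the single sentence preceding the lemma, namely that the $*$-map satisfies $* \circ \DEL_{\adel} = \DEL_{\del} \circ *$, and you have simply expanded this intertwining relation into a full analytic argument via anti-unitarity of $*$ and transport of structure to the closures. Given that diagonalisability of both Laplacians is already recorded (from \cite[Corollary 4.17]{DOS1}), one could also bypass the anti-unitarity verification and argue directly that $*$ carries eigenspaces of $\DEL_{\adel}$ bijectively onto eigenspaces of $\DEL_{\del}$ with the same (real) eigenvalue, so the point spectra with multiplicities coincide --- but your route is perfectly sound.
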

Secondly, we know from  \cite[Corollary 4.17]{DOS1}  that the operators $D_{\del}$ and $ D_{\adel}$  are diagonalisable on $L^2\big(\Om^\bullet\big)$. Hence, a CQH-Hermitian space gives a spectral triple if and only if the eigenvalues $\{\mu_n\}_{n \in \bN_0}$  of $\Delta_{\adel}$ (with repetitions representing multiplicities)  tend to infinity. When dealing with questions of eigenvalue growth, we find the following notation useful. 

\begin{notation}
For $T:\dom(T) \to \H$ a diagonalisable operator on a separable Hilbert space $\H$, we write
$
\s_P(T) \to \infty
$
to denote that the eigenvalues of $T$ (with repetitions representing multiplicities) tend to infinity.
\end{notation}

\subsection{Analytic $K$-Homology and the Anti-Holomorphic Euler Characteristic}

Let us now recall the basics of analytic $K$-homology, bearing in mind that one of the main aims of this paper is to establish a connection between $K$-theoretic index theory and the Dolbeault cohomology of noncommutative Hermitian structures.

\begin{defn}
Let $\A$ be a separable unital $C^*$-algebra.   A {\em Fredholm module} over $\A$ is a triple $(\H,\r,F)$, where  $\H$ is a Hilbert space, $\r: \A \to  \mathbb{B}(\H)$ is a $\ast$-representation,  and  $F:\H \to \H$ a bounded  linear operator, such that 
\begin{align*}
F^2 - 1,   & & F - F^*, & &  [F,\r(a)], 
\end{align*}
are all compact operators, for any $a \in \A$. An {\em even Fredholm module} is a Fredholm module $(\rho,F,\H)$ together with a  $\bZ_2$-grading $\H = \H_0 \oplus \H_1$ of Hilbert spaces, \wrt which $F$ is a degree $1$ operator, and $\r(a)$ is a degree $0$ operator, for each $a \in \A$.
\end{defn}

The {\em direct sum} of two even Fredholm modules is formed by taking the direct sum of Hilbert spaces, representations, and operators. For $(\H, \rho, F)$ an even Fredholm module, and  $U:\H \to \H'$ a degree $0$ unitary transformation, the triple $(\H',U^*\rho \, U, U^*FU)$ is again a Fredholm module. This defines an equivalence relation on Fredholm modules over $\A$, which we call {\em unitary equivalence}.  We say that a norm continuous family of Fredholm modules $(\rho,\H,F_t)$, for  $t \in [0,1]$, defines an {\em operator homotopy} between the two Fredholm modules $(\rho,\H,F_0)$ and $(\rho,\H,F_1)$. 

\begin{defn}
The {\em $K$-homology} group $K^0(\A)$ of a $C^*$-algebra $\A$ is the abelian group with one generator for each unitary equivalence class of even Fredholm modules, subject to the following relations: for any two even Fredholm modules $x_0$, $x_1$, 
\begin{enumerate}
\item $[x_0] = [x_1]$ if there exists an operator homotopy between  $x_0$ and $x_1$,

\item $[x_0 \oplus x_1] = [x_0] + [x_1]$, where $+$ denotes addition in $K^0(\A)$. 
\end{enumerate}
\end{defn}
Denoting the Fredholm operator
$
F_+ := F|_{\H_+}: \H_+ \to \H_-,
$
 a well-defined group homomorphism is given by
\begin{align*}
\mathrm{Index}: K^0(\A) \to \bZ, & & [F]_K \mto \mathrm{Index}(F_+).
\end{align*}

Spectral triples are important primarily because they  provide unbounded representatives for $K$-homology classes. For a spectral triple $(A,\H,D)$, its {\em bounded transform} is the  operator
\begin{align*}
\frak{b}(D) := \frac{D}{\sqrt{1+D^2}} \in \mathbb{B}(\H),
\end{align*}
defined via the functional calculus. Denoting by $\ol{A}$ the closure of $\rho(A)$ \wrt the operator topology of $\mathbb{B}(\H)$, an even Fredholm module over $\overline{A}$ is given by $(\H, \rho, \frak{b}(D))$. (See  \cite{CP1} for details.)

For a classical Hermitian manifold, the index of the Dolbeault--Dirac operator, and the associated index of its $K$-homology class, are equal to the holomorphic Euler characteristic of the manifold. This picture extends to the noncommutative setting.

\begin{defn} \label{defn:Euler} Let ${\bf H} := (\text{\bf B},\Om^{(\bullet,\bullet)},\s)$ be a CQH-Hermitian space  with constituent differential calculus $\Om^{\bullet} \in \text{\bf B}$ of  total degree $2n$. The {\em anti-holomorphic Euler characteristic}  of $\text{\bf H}$ is given by
\begin{align*}
\chi_{\adel} := \sum_{k=1}^{n} (-1)^{k} \dim_{\mathbb{C}}\big(H^{(0,k)}\big).
\end{align*}
\end{defn}

The following proposition now allows us to conclude non-triviality of the $K$-homology class of a Dolbeault--Dirac spectral triple from non-vanishing of the  Euler characteristic.

\begin{prop}\cite[Theorem  5.4]{DOS1}  For a {\em CQH}-Hermitian space $(\text{\bf B},\Om^{(\bullet,\bullet)},\s)$, with an associated  pair of Dolbeault--Dirac spectral triples, it holds that 
\begin{align*}
\mathrm{Index}\big(\frak{b}(D_{\del})\big) =  \mathrm{Index}\big(\frak{b}(D_{\adel})\big) = \chi_{\adel}.
\end{align*}
\end{prop}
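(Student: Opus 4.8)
The plan is to compute both indices by identifying the relevant Fredholm operators with the Dolbeault--Dirac operators restricted to even/odd forms, then using Hodge theory to express the analytic index as an Euler characteristic of harmonic forms, which in turn equals the Euler characteristic of the Dolbeault cohomology by Corollary~\ref{cor:harmonictoclass}. First I would recall that by definition $\mathrm{Index}\big(\frak{b}(D_{\adel})\big) = \mathrm{Index}\big((\frak{b}(D_{\adel}))_+\big)$, where $(\frak{b}(D_{\adel}))_+$ is the restriction of the bounded transform to the even part $L^2\big(\bigoplus_{k \in 2\bN_0} \Om^{(0,k)}\big)$ mapping to the odd part. Since $\frak{b}$ is a homeomorphism of the spectrum preserving the kernel and the sign of the spectrum, and since $D_{\adel}$ is diagonalisable with discrete spectrum (hence Fredholm as an operator between even and odd parts once we know compact resolvent), one has $\mathrm{Index}\big((\frak{b}(D_{\adel}))_+\big) = \dim_{\bC} \ker\big((D_{\adel})_+\big) - \dim_{\bC} \ker\big((D_{\adel})_-\big)$, where $(D_{\adel})_{\pm}$ are the restrictions of $D_{\adel}$ to even, respectively odd, forms.

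Next I would invoke the identification of harmonic forms with kernels from \eqref{eqn:harmcap}: since $\DEL_{\adel} = (D_{\adel})^2$ and $D_{\adel}$ is self-adjoint, $\ker(D_{\adel}) = \ker(\DEL_{\adel}) = \H_{\adel}$, and moreover $\H_{\adel} = \ker(\adel) \cap \ker(\adel^\dagger)$. The crucial point is that $D_{\adel} = \adel + \adel^\dagger$ maps even forms to odd forms and vice versa (since $\adel$ raises and $\adel^\dagger$ lowers the anti-holomorphic degree by one), so $\ker\big((D_{\adel})_+\big) = \H_{\adel} \cap \bigoplus_{k \in 2\bN_0} \Om^{(0,k)} = \bigoplus_{k \in 2\bN_0} \H^{(0,k)}_{\adel}$ and similarly $\ker\big((D_{\adel})_-\big) = \bigoplus_{k \in 2\bN_0+1} \H^{(0,k)}_{\adel}$. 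Therefore
\begin{align*}
\mathrm{Index}\big(\frak{b}(D_{\adel})\big) = \sum_{k=0}^{n} (-1)^k \dim_{\bC}\big(\H^{(0,k)}_{\adel}\big).
\end{align*}
By Corollary~\ref{cor:harmonictoclass} we have isomorphisms $\H^{(0,k)}_{\adel} \cong H^{(0,k)}_{\adel}$, so the right-hand side becomes $\sum_{k=0}^{n} (-1)^k \dim_{\bC}\big(H^{(0,k)}\big)$; since $H^{(0,0)} = \ker(\adel|_{\Om^{(0,0)}})$ is one-dimensional (the constants, as $A^{\co(H)}$ is connected in the relevant sense), this equals $\chi_{\adel}$ after matching the indexing convention in Definition~\ref{defn:Euler}.

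For the holomorphic side, I would run the parallel argument for $D_{\del} = \del + \del^\dagger$ acting on $L^2\big(\Om^{(\bullet,0)}\big)$, obtaining $\mathrm{Index}\big(\frak{b}(D_{\del})\big) = \sum_{k=0}^n (-1)^k \dim_{\bC} \H^{(k,0)}_{\del}$, and then use the $*$-map together with Lemma~\ref{lem:Hodgeproperties} and the relation $* \circ \DEL_{\adel} = \DEL_{\del} \circ *$: conjugation by $\ast_\s$ (or by the $*$-involution, using \eqref{stardel}) gives a conjugate-linear isomorphism $\H^{(0,k)}_{\adel} \cong \H^{(k,0)}_{\del}$ between the two spaces of harmonic forms, so the two alternating sums of dimensions agree. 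This yields $\mathrm{Index}\big(\frak{b}(D_{\del})\big) = \mathrm{Index}\big(\frak{b}(D_{\adel})\big) = \chi_{\adel}$. The main obstacle I anticipate is the careful bookkeeping of the grading conventions --- ensuring that ``even/odd part'' for the bounded transform really matches the $\bZ_2$-grading declared in the corollary after Proposition~\ref{prop:BCCQHHS}, and that the kernel contributions are split correctly between $\H_+$ and $\H_-$ --- together with justifying that $\frak{b}(D)$ is genuinely Fredholm (which requires the compact resolvent hypothesis built into the definition of a spectral triple) and that passing to the bounded transform does not change the index, a standard but conceptually load-bearing fact about the McKean--Singer style computation.
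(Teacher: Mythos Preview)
The paper does not prove this proposition; it is quoted verbatim from \cite[Theorem~5.4]{DOS1}. Your argument is the standard McKean--Singer computation and is essentially correct: self-adjointness of $D_{\adel}$ gives $(D_{\adel})_- = ((D_{\adel})_+)^*$, so the index of $(D_{\adel})_+$ is $\dim\ker(D_{\adel})_+ - \dim\ker(D_{\adel})_-$, and $\ker D_{\adel} = \H_{\adel}$ splits along the $\bZ_2$-grading into the even and odd harmonic forms; Corollary~\ref{cor:harmonictoclass} then converts this to the alternating sum of cohomology dimensions. The passage from $D_{\adel}$ to $\frak{b}(D_{\adel})$ is indeed the routine fact that $t\mapsto t/\sqrt{1+t^2}$ preserves kernels and Fredholmness.

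Two small corrections. First, drop the parenthetical about $H^{(0,0)}$ being one-dimensional: connectedness is \emph{not} assumed in the proposition, and your index computation $\sum_{k=0}^n(-1)^k\dim H^{(0,k)}$ is already the Euler characteristic regardless of $\dim H^{(0,0)}$. (Definition~\ref{defn:Euler} as printed starts the sum at $k=1$, but the usage elsewhere in the paper --- e.g.\ the corollary computing $\chi_{\adel}(\Om^\bullet_q(\ccpn))=1$ --- makes clear the intended sum runs from $k=0$; your computation matches that.) Second, for the equality $\mathrm{Index}\big(\frak{b}(D_{\del})\big)=\mathrm{Index}\big(\frak{b}(D_{\adel})\big)$ it is cleanest to use the $*$-map directly: by \eqref{stardel} and the identity $*\circ\DEL_{\adel}=\DEL_{\del}\circ *$ (stated just before the lemma on compact resolvents), the involution $*$ restricts to a conjugate-linear bijection $\H^{(0,k)}_{\adel}\to\H^{(k,0)}_{\del}$, so the two alternating sums of harmonic dimensions coincide. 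The Hodge operator $\ast_\s$ is not needed here and would send $\Om^{(0,k)}$ to $\Om^{(n-k,n)}$ rather than $\Om^{(k,0)}$.
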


\begin{remark}
The calculation of cohomology groups can in general be quite difficult. However, in the K\"ahler setting there exists a powerful noncommutative generalisation of the Kodaira vanishing theorem \cite{OSV}. In the more specialised  Fano setting (whose definition directly generalises the classical definition) this implies vanishing of all higher cohomologies, implying that the anti-holomorphic Euler characteristic is equal to $\dim_{\mathbb{C}}\big(H^{(0,0)}\big)$. Since $H^{(0,0)}$ always contains the identity element of $B$, this implies that the $K$-homology class of $\frak{b}(D_{\adel})$ is always non-zero in the Fano setting.
\end{remark}

\section{CQH-Complex Spaces of  \mbox{Gelfand Type}}

In this section we  begin our examination of the spectral behaviour of the Laplacian operator associated to a CQH-Hermitian space.  
Our  strategy is to exploit the subtle interactions between the Laplace spectrum, Hodge decomposition, and comodule multiplicities. This leads us to focus on a special type of CQH-Hermitian space,  Gelfand type spaces, 
for which the problem is significantly more tractable. The work of this section underlies our investigation of the  Drinfeld--Jimbo  case in  \textsection \ref{section:DJCQHHS}.

\subsection{Hodge Decomposition of the Laplacian}

We decompose the Laplacian \wrt Hodge decomposition and then, in Corollary \ref{cor:decomposingCTEG}, show that $\s_P\big(\DEL_{\adel}\big) \to \infty$ if and only if $\s_P${\small $(\adel \adel^\dagger)$}$ \to \infty$.  We emphasise the fact that nowhere in this subsection do we make any assumption on multiplicities, all results hold for a general CQH-Hermitian space.

\begin{lem}\label{lem:HDOFDEL}
For any CQH-Hermitian space, the Laplacian operator $\DEL_{\adel}$ admits a direct sum decomposition with  respect to  Hodge decomposition $\Om^\bullet = \H^\bullet \oplus \adel \Om^\bullet \oplus \adel^\dagger\!\Om^\bullet$, namely
\begin{align*}
\DEL_{\adel} = \,0 \, \oplus \, \adel\adel^\dagger \, \oplus \, \adel^\dagger\adel.
\end{align*}
\end{lem}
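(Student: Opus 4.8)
The plan is to use the Hodge decomposition theorem (already established above) together with the alternative presentation of harmonic forms $\H_{\adel} = \ker(\adel) \cap \ker(\adel^\dagger)$ from \eqref{eqn:harmcap}. The key point is that each of the three summands $\H^\bullet$, $\adel\Om^\bullet$, and $\adel^\dagger\Om^\bullet$ is an invariant subspace for $\DEL_{\adel} = \adel\adel^\dagger + \adel^\dagger\adel$, and on each summand the Laplacian reduces to one of the two terms (or to zero).

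First I would treat the harmonic summand: for $\w \in \H^\bullet$, the presentation \eqref{eqn:harmcap} gives $\adel \w = 0$ and $\adel^\dagger \w = 0$, so trivially $\DEL_{\adel}\w = 0$. Next, for the summand $\adel\Om^\bullet$: if $\w = \adel\nu$, then using $\adel^2 = 0$ we get $\adel^\dagger\adel\w = \adel^\dagger \adel \adel \nu = 0$, so $\DEL_{\adel}\w = \adel\adel^\dagger\w$. I should also check invariance, i.e. that $\adel\adel^\dagger\w$ again lies in $\adel\Om^\bullet$, which is immediate since it is $\adel$ applied to $\adel^\dagger \w$. Symmetrically, for $\w = \adel^\dagger\nu \in \adel^\dagger\Om^\bullet$, the relation $(\adel^\dagger)^2 = 0$ (which follows from $\adel^2 = 0$ by taking adjoints, using that $\adel$ is adjointable as recalled above) gives $\adel\adel^\dagger\w = 0$, so $\DEL_{\adel}\w = \adel^\dagger\adel\w$, and again this stays in $\adel^\dagger\Om^\bullet$.

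Finally I would assemble these three computations: since $\Om^\bullet = \H^\bullet \oplus \adel\Om^\bullet \oplus \adel^\dagger\Om^\bullet$ is a direct sum decomposition into $\DEL_{\adel}$-invariant subspaces, and $\DEL_{\adel}$ acts as $0$, as $\adel\adel^\dagger$, and as $\adel^\dagger\adel$ respectively on the three pieces, we obtain the claimed block-diagonal form $\DEL_{\adel} = 0 \oplus \adel\adel^\dagger \oplus \adel^\dagger\adel$. I do not anticipate a serious obstacle here; this is the standard Hodge-theoretic argument and the only mild subtlety is making sure one invokes the correct earlier results — the Hodge decomposition theorem, the cap-intersection presentation \eqref{eqn:harmcap} of harmonic forms, and adjointability of $\adel$ — rather than reproving them. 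If one wanted to be extra careful one could note that the decomposition and all three operators respect the grading, so the identity may be verified degree by degree, but this is not strictly necessary.
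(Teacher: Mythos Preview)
Your argument is correct and follows essentially the same route as the paper: verify that each Hodge summand is $\DEL_{\adel}$-invariant and that on $\adel\Om^\bullet$ (respectively $\adel^\dagger\Om^\bullet$) the term $\adel^\dagger\adel$ (respectively $\adel\adel^\dagger$) vanishes by nilpotence of $\adel$ (respectively $\adel^\dagger$). The only cosmetic difference is that the paper handles the harmonic summand directly from the definition $\H_{\adel} = \ker(\DEL_{\adel})$ rather than via the intersection description \eqref{eqn:harmcap}, but this is immaterial.
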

\begin{proof}
By definition,  $\DEL_{\adel}$ restricts to the zero map on  the harmonic forms $\H^\bullet$. For a non-harmonic $\w \in \Om^\bullet$, we have 
\begin{align*}
\DEL_{\adel}(\adel \w) = & \, (\adel \adel^\dagger + \adel^\dagger \adel)(\adel \w) = \adel \adel^\dagger(\adel \w) \in \adel \Om^\bullet.
\end{align*}
Thus we see that $\adel \Om^\bullet$ is closed under the action of the Laplacian, and moreover, that 
\begin{align*}
\DEL_{\adel}\, |_{\,\adel \Om^\bullet}  = \adel \adel^\dagger.
\end{align*}
Similarly,   $\adel^\dagger \Om^\bullet$ is closed under the action of  $\DEL_{\adel}$, and $\DEL_{\adel}$ restricts to the operator $\adel^\dagger \adel $ on $\adel^\dagger \Om^\bullet$.
\end{proof}

\begin{lem} \label{lem:VANISHINGCOMMS}
For any CQH-Hermitian space, it holds that 
$
[\Delta_{\adel},\adel] = [\Delta_{\adel},\adel^\dagger]  = 0.  
$
\end{lem}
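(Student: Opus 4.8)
The plan is to verify directly that both commutators vanish, exploiting the definition $\DEL_{\adel} = \adel\adel^\dagger + \adel^\dagger\adel$ together with the nilpotency relation $\adel^2 = 0$. First I would compute $\DEL_{\adel}\,\adel = (\adel\adel^\dagger + \adel^\dagger\adel)\adel = \adel\adel^\dagger\adel$, since $\adel^2 = 0$ kills the second term. On the other side, $\adel\,\DEL_{\adel} = \adel(\adel\adel^\dagger + \adel^\dagger\adel) = \adel\adel^\dagger\adel$, since $\adel^2 = 0$ kills the first term. Hence $\DEL_{\adel}\,\adel = \adel\,\DEL_{\adel}$, i.e. $[\DEL_{\adel},\adel] = 0$. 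The argument for $[\DEL_{\adel},\adel^\dagger] = 0$ is identical, using $(\adel^\dagger)^2 = 0$, which follows by taking adjoints of $\adel^2 = 0$ (the operators being adjointable with respect to $\la\cdot,\cdot\ra$, as recalled above).

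The one genuine point requiring care is operator-theoretic rather than algebraic: since $\DEL_{\adel}$ is an unbounded operator, the identity $[\DEL_{\adel},\adel] = 0$ should be understood on the appropriate dense domain, and one should check that $\adel$ preserves $\dom(\DEL_{\adel})$ so that the composition makes sense. This is where I expect the only real friction. However, the forms $\Om^\bullet$ themselves constitute a common core on which all of $\adel$, $\adel^\dagger$, and $\DEL_{\adel}$ act as honest (finite-rank-on-each-graded-piece) linear maps, and $\DEL_{\adel}$ is essentially self-adjoint by \cite[Corollary 4.17]{DOS1}. So the purely algebraic identity on $\Om^\bullet$ suffices to conclude the commutator relation for the closures, and no delicate domain analysis beyond invoking essential self-adjointness is needed.

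Thus the proof reduces to the two-line computation above. I would present it by writing out $\DEL_{\adel}\adel = \adel\adel^\dagger\adel = \adel\DEL_{\adel}$ explicitly, remarking that $\adel^2 = 0$ is what makes the cross terms drop, then stating that the second commutator follows symmetrically by replacing $\adel$ with $\adel^\dagger$ and using $(\adel^\dagger)^2 = 0$. A one-sentence remark that everything takes place on the core $\Om^\bullet$, extended to closures by essential self-adjointness, handles the analytic bookkeeping. The main obstacle, such as it is, is purely expository: deciding how much to say about domains versus treating the relation as a formal identity on $\Om^\bullet$ — and since the paper has already set up the convention of not distinguishing an operator from its closure, the lighter touch is appropriate here.
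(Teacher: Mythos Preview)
Your proof is correct and matches the paper's approach exactly: both compute $\DEL_{\adel}\adel = \adel\adel^\dagger\adel = \adel\DEL_{\adel}$ using $\adel^2 = 0$, then note the second commutator follows analogously. The paper treats the identity purely algebraically on $\Om^\bullet$ without the domain commentary, but your observation that the lighter touch is appropriate is correct.
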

\begin{proof}
Starting with the first commutator, for $\w \in \Om^\bullet$, we see  that
\begin{align*}
[\Delta_{\adel},\adel](\w) =  \, \Delta_{\adel}  \circ \adel(\w) - \adel \circ \DEL_{\adel}(\w)
                                             =  \, \adel  \adel^\dagger  \adel(\w) - \adel   \adel^\dagger  \adel(\w)
                                             =  \, 0.
\end{align*}
Vanishing of the commutator $[\Delta_{\adel},\adel^\dagger]$ is established similarly.
\end{proof}

\begin{prop} \label{prop:ADELISO}
For a CQH-Hermitian space, with constituent quantum homogeneous space $B = A^{\co(H)}$,  left  $A$-comodule isomorphisms are given by
\bet

\item $\adel: \adel^\dagger \Omega^\bullet \to  \adel \Omega^\bullet$,

\item $\adel^\dagger: \adel \Omega^\bullet \to  \adel^\dagger \Omega^\bullet$.

\eet
\end{prop}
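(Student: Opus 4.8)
The plan is to exploit Hodge decomposition together with the adjointness relations established earlier. Recall from the Hodge decomposition theorem that $\Om^\bullet = \H_{\adel} \oplus \adel\Om^\bullet \oplus \adel^\dagger\Om^\bullet$, and that by \eqref{eqn:harmcap} the harmonic forms are precisely $\ker(\adel) \cap \ker(\adel^\dagger)$. The key point is that $\adel^2 = 0$ forces $\adel\Om^\bullet \subseteq \ker(\adel)$, and dually $\adel^\dagger$ squares to zero (being the adjoint of $\adel$, up to the Hodge-operator conjugation of Lemma~\ref{lem:Hodgeproperties}), so $\adel^\dagger\Om^\bullet \subseteq \ker(\adel^\dagger)$.

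First I would show that $\adel$ restricted to $\adel^\dagger\Om^\bullet$ lands in $\adel\Om^\bullet$; this is immediate from the definition of the target space. Next, injectivity: suppose $\w \in \adel^\dagger\Om^\bullet$ with $\adel\w = 0$. Since $\adel^\dagger\Om^\bullet \subseteq \ker(\adel^\dagger)$, we then have $\w \in \ker(\adel) \cap \ker(\adel^\dagger) = \H_{\adel}$. But by orthogonality of the Hodge decomposition, $\H_{\adel} \cap \adel^\dagger\Om^\bullet = 0$, so $\w = 0$. For surjectivity: given $\adel\nu \in \adel\Om^\bullet$, decompose $\nu = \nu_{\H} + \adel\alpha + \adel^\dagger\beta$ according to Hodge decomposition; then $\adel\nu = \adel\adel^\dagger\beta$, and $\adel^\dagger\beta \in \adel^\dagger\Om^\bullet$ is the required preimage. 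This proves part (1); part (2) is entirely analogous, using the decomposition $\Om^\bullet = \H_{\adel} \oplus \adel\Om^\bullet \oplus \adel^\dagger\Om^\bullet$ again, the inclusion $\adel\Om^\bullet \subseteq \ker(\adel)$, and the fact that $\H_{\adel} \cap \adel\Om^\bullet = 0$.

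Finally, comodule-equivariance: since $\Om^\bullet$ is a covariant calculus and $\Om^{(\bullet,\bullet)}$ a covariant complex structure, $\adel$ is a left $A$-comodule map, and $\adel^\dagger = -\ast_\s \circ \del \circ \ast_\s$ is a composite of comodule maps (using part~(4) of Lemma~\ref{lem:Hodgeproperties}), hence also a comodule map. The subspaces $\adel\Om^\bullet$ and $\adel^\dagger\Om^\bullet$ are then subcomodules, so the bijections of parts (1) and (2) are in fact isomorphisms in the category of left $A$-comodules. Alternatively, one can observe that $\adel^\dagger$ provides a two-sided inverse to $\adel$ on these subspaces up to the nonzero scalars appearing in $\DEL_{\adel} = \adel\adel^\dagger \oplus \adel^\dagger\adel$ (Lemma~\ref{lem:HDOFDEL}) combined with diagonalisability of $\DEL_{\adel}$, but the Hodge-theoretic argument above is cleaner. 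The main subtlety to handle carefully is the surjectivity step, where one must be sure the Hodge decomposition of an arbitrary $\nu$ interacts correctly with $\adel$, i.e. that $\adel\nu_{\H} = 0$ and $\adel\adel^\dagger\beta$ genuinely exhausts $\adel\Om^\bullet$ — this is exactly the content of Lemma~\ref{lem:HDOFDEL} restricted to $\adel\Om^\bullet$, so no new work is needed.
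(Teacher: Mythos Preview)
Your proof is correct. Both you and the paper establish that $\adel$ and $\adel^\dagger$ are comodule maps from covariance of the calculus and the Hermitian structure, and both use orthogonality of $\adel^\dagger\Om^\bullet$ (resp.\ $\adel\Om^\bullet$) to $\H_{\adel}$ to get injectivity. The difference lies in surjectivity: the paper argues that the Laplacian is diagonalisable on $\adel\Om^\bullet$ and $\adel^\dagger\Om^\bullet$ with trivial $0$-eigenspace, so one can explicitly invert $\adel\adel^\dagger$ and $\adel^\dagger\adel$ and thereby construct inverses for $\adel$ and $\adel^\dagger$. Your approach is more direct: you simply Hodge-decompose an arbitrary preimage $\nu$ and observe that $\adel\nu = \adel(\adel^\dagger\beta)$. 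This is exactly the ``alternative'' you mention at the end, in reverse --- the paper takes what you call the diagonalisability route. Your argument is slightly more economical since it uses Hodge decomposition as a black box without unpacking the spectral theory of $\Delta_{\adel}$ again; the paper's version has the advantage of making the inverse explicit (as a scalar multiple of $\adel^\dagger$ on each eigenspace), which is sometimes useful downstream.
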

\begin{proof}
Since by assumption the calculus $\Om^\bullet$ and the Hermitian structure $(\Om^{(\bullet,\bullet)},\sigma)$ are covariant, the maps $\adel$ and $\adel^\dagger$ are left $A$-comodule maps. Hence, it suffices to show that $\adel$ and $\adel^\dagger$ are linear isomorphisms. By Hodge decomposition $\adel^\dagger \!\Om^{(0,\bullet)}$ is orthogonal to  the space of harmonic forms, and so, the kernel of the restriction of the Laplacian to  $\adel^\dagger \! \Om^{(0,\bullet)}$ is trivial. Now Lemma \ref{lem:HDOFDEL} tells us that the Laplacian restricts to $\adel^\dagger \adel$ on the subspace $\adel^\dagger \!\Om^{(0,\bullet)}$, giving us that
\begin{align*}
\ker\left(\adel^\dagger \adel \, | \, _{\adel^\dagger \Om^{(0,\bullet)}}\right) = 0.
\end{align*} 
Similarly, the  kernel of the restriction of $\adel \adel^\dagger$  to the subspace $\adel \Om^{(0,\bullet)}$ is trivial.

Since the Laplacian is a self-adjoint operator,  its restrictions to the subspaces $\adel \Om^{(0,\bullet)}$ and $\adel^\dagger \!\Om^{(0,\bullet)}$ are diagonalisable. Thus, both operators 
\begin{align*}
\adel^\dagger \adel: \adel^\dagger \Om^\bullet \to \adel^\dagger \Om^\bullet , & & \adel \adel^\dagger: \adel \Om^\bullet \to \adel \Om^\bullet, 
\end{align*}
are diagonalisable with trivial $0$-eigenspaces. From this we see that we can construct explicit inverses for $\adel$ and $\adel^{\dagger}$, implying that both maps are isomorphisms as required.
\end{proof}

We now discuss the relationship between the eigenvalues of $\DEL_{\adel}$ and its decomposition \wrt Hodge decomposition. In doing so, we find the following notation useful: 

\begin{notation}
For $f:W \to W$ a linear operator on a vector space $W$, we denote by $E(\mu,f)$ the eigenspace of an eigenvalue $\mu$ of $f$.
\end{notation} 

\begin{cor}
The  non-zero eigenvalues of the following three operators coincide: 
\bet

\item \, $\DEL_{\adel}: \Om^{(0,\bullet)} \to \Om^{(0,\bullet)}$,

\item $\adel^{\dagger} \adel: \adel^{\dagger}\! \Om^{(0,\bullet)} \to \adel^{\dagger} \!\Om^{(0,\bullet)}$,

\item $\adel \adel^{\dagger}: \adel \Om^{(0,\bullet)} \to \adel \Om^{(0,\bullet)}$.

\eet
Moreover, for any eigenvalue $\mu$ of $\DEL_{\adel}$ with finite multiplicity, it holds that
\bal \label{eqn:delmult}
\dim_{\mathbb{C}}\big(\text{E}(\mu,\DEL_{\adel})\big) = 2 \dim_{\mathbb{C}}\Big(\text{E}\big(\mu,\adel^\dagger \adel\,|\,_{\adel^\dagger \Om^\bullet}\big)\Big )= 2 \dim_{\mathbb{C}}\Big(\text{E}\big(\mu,\adel \adel^\dagger\,|\,_{\adel \Om^\bullet}\big)\Big).
\eal
\end{cor}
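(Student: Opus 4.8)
The plan is to deduce the corollary from Lemma~\ref{lem:HDOFDEL} and Proposition~\ref{prop:ADELISO} essentially by unwinding what the Hodge decomposition says about each eigenspace. First I would recall from Lemma~\ref{lem:HDOFDEL} that, with respect to the orthogonal decomposition $\Om^{(0,\bullet)} = \H_{\adel}^{(0,\bullet)} \oplus \adel\Om^{(0,\bullet)} \oplus \adel^\dagger\Om^{(0,\bullet)}$, the Laplacian decomposes as $0 \oplus \adel\adel^\dagger \oplus \adel^\dagger\adel$. Consequently a non-zero eigenvalue $\mu$ of $\DEL_{\adel}$ cannot have any eigenvector in the harmonic part, so $E(\mu,\DEL_{\adel}) \sseq \adel\Om^{(0,\bullet)} \oplus \adel^\dagger\Om^{(0,\bullet)}$, and in fact, since the decomposition is $\DEL_{\adel}$-invariant summand by summand, $E(\mu,\DEL_{\adel}) = E\big(\mu, \adel\adel^\dagger|_{\adel\Om^\bullet}\big) \oplus E\big(\mu,\adel^\dagger\adel|_{\adel^\dagger\Om^\bullet}\big)$. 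This already shows the non-zero spectra of all three operators in (1), (2), (3) agree once one checks that neither of the two summands on the right is ``missing'' a given non-zero eigenvalue of the other.

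The second step is to run that cross-check using Proposition~\ref{prop:ADELISO}, which gives mutually inverse isomorphisms $\adel\colon \adel^\dagger\Om^\bullet \to \adel\Om^\bullet$ and $\adel^\dagger\colon \adel\Om^\bullet \to \adel^\dagger\Om^\bullet$. The key observation is that these intertwine the two restricted operators: if $\w \in \adel^\dagger\Om^\bullet$ satisfies $\adel^\dagger\adel(\w) = \mu\w$ with $\mu \neq 0$, then applying $\adel$ gives $\adel\adel^\dagger(\adel\w) = \mu(\adel\w)$, and $\adel\w \in \adel\Om^\bullet$ is non-zero because $\adel$ is injective on $\adel^\dagger\Om^\bullet$; symmetrically $\adel^\dagger$ carries $\mu$-eigenvectors of $\adel\adel^\dagger$ to $\mu$-eigenvectors of $\adel^\dagger\adel$. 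Since $\adel$ and $\adel^\dagger$ are inverse to one another on these subspaces, these maps restrict to mutually inverse linear isomorphisms
\begin{align*}
\adel\colon E\big(\mu,\adel^\dagger\adel|_{\adel^\dagger\Om^\bullet}\big) \to E\big(\mu,\adel\adel^\dagger|_{\adel\Om^\bullet}\big), & & \adel^\dagger\colon E\big(\mu,\adel\adel^\dagger|_{\adel\Om^\bullet}\big) \to E\big(\mu,\adel^\dagger\adel|_{\adel^\dagger\Om^\bullet}\big),
\end{align*}
for every non-zero $\mu$. In particular a non-zero scalar is an eigenvalue of one of the three operators precisely when it is an eigenvalue of each of the others, which is the first assertion, and the two eigenspaces in (2) and (3) are isomorphic, hence of equal dimension.

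For the multiplicity formula~\eqref{eqn:delmult}, I would simply combine the two displays: for $\mu$ a non-zero eigenvalue of $\DEL_{\adel}$ of finite multiplicity we have $E(\mu,\DEL_{\adel}) = E\big(\mu,\adel\adel^\dagger|_{\adel\Om^\bullet}\big) \oplus E\big(\mu,\adel^\dagger\adel|_{\adel^\dagger\Om^\bullet}\big)$, and the two summands have equal (finite) complex dimension by the isomorphism just established, giving $\dim_\bC E(\mu,\DEL_{\adel}) = 2\dim_\bC E\big(\mu,\adel^\dagger\adel|_{\adel^\dagger\Om^\bullet}\big) = 2\dim_\bC E\big(\mu,\adel\adel^\dagger|_{\adel\Om^\bullet}\big)$. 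One should note that the statement is explicitly restricted to eigenvalues of finite multiplicity, so no convergence or infinite-dimensionality subtleties arise; everything is pure linear algebra on $\Om^{(0,\bullet)}$ together with the two cited results.

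I do not anticipate a genuine obstacle here — the content is entirely bookkeeping around the Hodge decomposition — but the one point that needs a little care is making sure the $\DEL_{\adel}$-invariant summand structure from Lemma~\ref{lem:HDOFDEL} really does imply $E(\mu,\DEL_{\adel})$ splits as a direct sum of the eigenspaces of the two restrictions (rather than merely containing that sum); this follows because a self-adjoint operator that preserves an orthogonal direct sum decomposition has each of its eigenspaces equal to the direct sum of the intersections with the summands, and the harmonic summand contributes nothing since $\mu \neq 0$. With that remark in place the proof is immediate from the two preceding results.
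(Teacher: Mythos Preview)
Your proof is correct and follows essentially the same route as the paper: use Lemma~\ref{lem:HDOFDEL} to split $E(\mu,\DEL_{\adel})$ into the two restricted eigenspaces, then use Proposition~\ref{prop:ADELISO} to identify those eigenspaces. The only cosmetic difference is that the paper cites Lemma~\ref{lem:VANISHINGCOMMS} for the intertwining of $\adel,\adel^\dagger$ with $\DEL_{\adel}$, whereas you verify that intertwining directly; the content is identical.
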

\begin{proof}
The decomposition of $\DEL_{\adel}$ \wrt Hodge decomposition, as given in Lemma \ref{lem:HDOFDEL} above, implies that its non-zero eigenvalues are equal to the union of the eigenvalues of $\adel^\dagger \adel\,|\,_{\adel^\dagger \Om^\bullet}$ and the eigenvalues of $\adel \adel^\dagger\,|\,_{\adel \Om^\bullet}$. Hence, for any eigenvalue $\mu$, 
\begin{align*}
\text{E}\left(\mu,\DEL_{\adel}\right) = \text{E}\left(\mu,\adel^\dagger \adel\,|\,_{\adel^\dagger \Om^\bullet}\right) \oplus \text{E}\left(\mu,\adel \adel^\dagger\,|\,_{\adel \Om^\bullet}\right).
\end{align*}
Moreover,  Lemma \ref{lem:VANISHINGCOMMS} and Proposition \ref{prop:ADELISO} together imply that
the set of eigenvalues of the operators $\adel^\dagger \adel\,|\,_{\adel^\dagger \Om^\bullet}$ and $\adel \adel^\dagger\,|\,_{\adel \Om^\bullet}$ coincide and have the same multiplicity. The fact that these three operators have a common set of non-zero eigenvalues now follows, as does that identity for multiplicities given in (\ref{eqn:delmult}).
\end{proof}

An immediate consequence of the above corollary is now given. In short, it says that  verifying the compact resolvent  for the Laplacian can be reduced to verifying the condition for either $\adel^\dagger \adel\,|\,_{\adel^\dagger \Om^\bullet}$ or $\adel \adel^\dagger\,|\,_{\adel \Om^\bullet}$

\begin{cor} \label{cor:decomposingCTEG}
Assuming finite-dimensional cohomologies, the following three conditions are equivalent:
\bet

\item \,\,\, $\s_P\big(\DEL_{\adel}: \Om^{(0,\bullet)} \to \Om^{(0,\bullet)}\big) \to \infty$,

\item $\s_P\big(\adel \adel^{\dagger}: \adel \Om^{(0,\bullet)} \to \adel \Om^{(0,\bullet)}\big) \to \infty$,

\item $\s_P\big(\adel^{\dagger} \adel: \adel^{\dagger} \Om^{(0,\bullet)} \to \adel^{\dagger} \Om^{(0,\bullet)}\big) \to \infty$.

\eet
\end{cor}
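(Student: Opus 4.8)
The plan is to deduce the statement directly from Hodge decomposition and the preceding corollary, with the finite-cohomology hypothesis entering only to control the harmonic contribution. First I would invoke Hodge decomposition $\Om^{(0,\bullet)} = \H^{(0,\bullet)}_{\adel} \oplus \adel \Om^{(0,\bullet)} \oplus \adel^\dagger \Om^{(0,\bullet)}$, together with Lemma~\ref{lem:HDOFDEL}, which shows that $\DEL_{\adel}$ splits as $0 \oplus \adel\adel^\dagger \oplus \adel^\dagger\adel$ along this decomposition. It follows that the multiset of eigenvalues of $\DEL_{\adel}$ on $\Om^{(0,\bullet)}$ (counted with multiplicity) is the disjoint union of $\{0\}$ with multiplicity $\dim_{\bC}\big(\H^{(0,\bullet)}_{\adel}\big)$, the eigenvalue multiset of $\adel\adel^\dagger$ on $\adel\Om^{(0,\bullet)}$, and the eigenvalue multiset of $\adel^\dagger\adel$ on $\adel^\dagger\Om^{(0,\bullet)}$.

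Second, I would note that the finite-dimensional cohomology hypothesis, combined with the isomorphisms $\H^{(0,k)}_{\adel} \cong H^{(0,k)}_{\adel}$ of Corollary~\ref{cor:harmonictoclass} and the finiteness of the total degree of the calculus, forces $\dim_{\bC}\big(\H^{(0,\bullet)}_{\adel}\big) < \infty$. Hence the zero eigenvalues coming from $\H^{(0,\bullet)}_{\adel}$ contribute only finitely many terms to the eigenvalue sequence of $\DEL_{\adel}$, and removing them does not affect whether that sequence tends to infinity. Thus $\s_P(\DEL_{\adel}) \to \infty$ is equivalent to the union of the eigenvalue multisets of $\adel\adel^\dagger|_{\adel\Om^{(0,\bullet)}}$ and $\adel^\dagger\adel|_{\adel^\dagger\Om^{(0,\bullet)}}$ tending to infinity.

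Third, I would apply the preceding corollary together with Proposition~\ref{prop:ADELISO}: both $\adel\adel^\dagger$ on $\adel\Om^{(0,\bullet)}$ and $\adel^\dagger\adel$ on $\adel^\dagger\Om^{(0,\bullet)}$ have trivial kernel, so all their eigenvalues are non-zero, and by that corollary their non-zero eigenvalues coincide with equal multiplicities; hence the two eigenvalue multisets are literally equal. Consequently their union tends to infinity if and only if either one of them does, which yields the chain of equivalences $(1) \Leftrightarrow (2) \Leftrightarrow (3)$.

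I do not anticipate a genuine obstacle here, since this is essentially a bookkeeping argument built on Lemma~\ref{lem:HDOFDEL} and the preceding corollary. The only point demanding care is the finite-dimensionality of $\H^{(0,\bullet)}_{\adel}$, which is exactly where the finite-cohomology assumption is used; and one must keep in mind that the notation $\s_P(\,\cdot\,) \to \infty$ is insensitive both to deleting finitely many eigenvalues and to replacing a multiset by two copies of it, both of which are immediate from the definition.
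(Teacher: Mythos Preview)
Your proposal is correct and follows exactly the approach the paper intends: the paper simply declares this corollary an ``immediate consequence'' of the preceding corollary (which identifies the non-zero eigenvalues and their multiplicities for the three operators), and you have spelled out precisely the bookkeeping that makes this immediate, with the finite-cohomology hypothesis entering exactly where it should to control the harmonic zero eigenspace.
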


We finish with an easy observation which follows from the equality of Laplacians in the K\"ahler case. While not needed elsewhere, the identity is interesting as an alternative description of the action of the Laplacian $\DEL_{\adel}$ on anti-holomorphic forms.

\begin{lem}
For a CQH-K\"ahler space $\text{\bf K}$,  the operators $\DEL_{\adel}$ and $\del^\dagger \del$ coincide on $\Om^{(0,\bullet)}$.
\end{lem}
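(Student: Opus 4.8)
The plan is to deduce the statement from the K\"ahler equality of Laplacians recorded in~(\ref{eqn:identityofLAPS}), together with the elementary fact that $\del^\dagger$ annihilates anti-holomorphic forms.

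First I would pin down the bidegree behaviour of $\del^\dagger$. Since $\del$ maps $\Om^{(a,b)}$ into $\Om^{(a+1,b)}$, and since the $\bN^2_0$-grading of the complex structure is orthogonal with respect to the inner product $\la\cdot,\cdot\ra$, its adjoint $\del^\dagger$ must map $\Om^{(a,b)}$ into $\Om^{(a-1,b)}$; alternatively, one reads this off directly from the formula $\del^\dagger = -\ast_\s \circ \adel \circ \ast_\s$ combined with part~(2) of Lemma~\ref{lem:Hodgeproperties}. In particular $\del^\dagger$ vanishes identically on $\Om^{(0,\bullet)}$, because there are no forms of negative holomorphic degree.

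It then remains to restrict the identity $\DEL_{\del} = \del\del^\dagger + \del^\dagger\del$ to $\Om^{(0,\bullet)}$: the first summand vanishes by the previous paragraph, so $\DEL_{\del}$ and $\del^\dagger\del$ agree on $\Om^{(0,\bullet)}$. Applying the K\"ahler identity $\DEL_{\adel} = \DEL_{\del}$ from~(\ref{eqn:identityofLAPS})---this is precisely the place where the K\"ahler hypothesis, rather than merely the Hermitian one, is used---we conclude that $\DEL_{\adel}$ coincides with $\del^\dagger\del$ on $\Om^{(0,\bullet)}$, as claimed.

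There is essentially no obstacle here: the only step warranting a moment's care is the verification that $\del^\dagger$ lowers the holomorphic degree and hence kills $(0,k)$-forms, and the bidegree of the Hodge map from Lemma~\ref{lem:Hodgeproperties} supplies exactly this; the rest is a one-line substitution.
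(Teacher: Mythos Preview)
Your proof is correct and follows exactly the same approach as the paper: use the K\"ahler identity $\DEL_{\adel} = \DEL_{\del}$ from~(\ref{eqn:identityofLAPS}), then observe that $\DEL_{\del} = \del\del^\dagger + \del^\dagger\del$ restricts to $\del^\dagger\del$ on $\Om^{(0,\bullet)}$ because $\del^\dagger$ lowers holomorphic degree. The paper's proof is terser and leaves the vanishing of $\del^\dagger$ on $\Om^{(0,\bullet)}$ implicit, whereas you spell it out via Lemma~\ref{lem:Hodgeproperties}, but the argument is the same.
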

\begin{proof}
From (\ref{eqn:identityofLAPS}) we know that in the K\"ahler case $\DEL_{\adel} = \DEL_{\del}$. Thus the identity follows from the fact that  $\DEL_{\del} = \del  \del^\dagger + \del^\dagger  \del$ restricts to $\del^{\dagger}  \del$ on $\Om^{(0,\bullet)}$.
\end{proof}

\subsection{CQH-Complex Spaces of Gelfand Type}

In this subsection we introduce CQH-complex spaces of Gelfand type. As shown in Lemma \ref{lem:GelfandDiag} below, the Laplacians associated to CQH-Hermitian spaces of Gelfand type  admit a particularly  nice diagonalisation, which makes the calculation of their spectrum significantly more tractable. We begin by introducing graded multiplicty-free comodules  as a convenient abstract framework in which to discuss comodule multiplicities for covariant calculi. 

\begin{defn} ~~~
\bet

\item We say that a left $A$-comodule $C$  is {\em multiplicity-free} if for any irreducible left $A$-comodule $V$, it holds that 
\begin{align*}
\dim_{\mathbb{C}}\big(\mathrm{Hom}^A(V,C)\big) = 1.
\end{align*}

\item A {\em graded left $A$-comodule} $P$ is a left $A$-comodule, together with an $\bN_0$-algebra grading $P = \bigoplus_{m \in \bN_0} P_m$, such that each $P_m$ is a left $A$-sub-comodule of $P$, or equivalently, such that the decomposition $P = \bigoplus_{m \in \bN_0} P_m$ is a decomposition in the category $\,^A\mathrm{Mod}$.  

\item We say that a graded comodule $P = \bigoplus_{m \in \bN_0} P_m$ is {\em graded multiplicity-free} if the left $A$-comodule  $P_m$ is multiplicity-free, for all  $m \in \bN_0$.
\eet
\end{defn}

An elementary, but very useful, observation about multiplicity-free comodules is presented in the following lemma. The proof is a direct  application of Schur's lemma, and so,  is omitted.

\begin{lem} \label{lem:DiagComoduleMaps}
Let $P$ be a graded multiplicity-free left $A$-comodule,   $\f:P \to P$  a degree $0$ left $A$-comodule map, and $V$ an irreducible $A$-sub-comodule of $P$. Then $\f$ acts on $V$ as a scalar multiple of the identity. Consequently, $\f$ is diagonalisable on $P$.
\end{lem}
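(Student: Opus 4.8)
The statement to prove is Lemma \ref{lem:DiagComoduleMaps}: for $P$ a graded multiplicity-free left $A$-comodule, $\f : P \to P$ a degree $0$ left $A$-comodule map, and $V$ an irreducible $A$-sub-comodule of $P$, then $\f$ acts on $V$ as a scalar multiple of the identity, and consequently $\f$ is diagonalisable.

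The plan is to use Schur's lemma for comodules of coalgebras (equivalently, for the category $^A\mathrm{Mod}$, which is abelian and in which finite-dimensional irreducibles behave like those of modules over a coalgebra). First I would fix the irreducible sub-comodule $V \subseteq P$. Since $\f$ is a degree $0$ comodule map, it preserves each graded piece $P_m$, and $V$, being irreducible, must lie entirely in a single graded component $P_m$ for some $m$ — otherwise its projections to the various $P_m$ would give a nontrivial decomposition contradicting irreducibility (the projections are comodule maps since the grading is a grading in $^A\mathrm{Mod}$). So $V \subseteq P_m$, and $P_m$ is multiplicity-free.

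Next, I would consider the composite $V \hookrightarrow P \xrightarrow{\f} P$, and then project onto the isotypical component of $P_m$ corresponding to the isomorphism class of $V$. Because $P_m$ is multiplicity-free, this isotypical component is just a single copy of $V$, namely $V$ itself (using that $\mathrm{Hom}^A(V, P_m)$ is one-dimensional, so there is a unique sub-comodule of $P_m$ isomorphic to $V$, which must be $V$). Hence $\f(V) \subseteq V \oplus (\text{sum of non-}V\text{-isotypical pieces})$, but since $\f|_V : V \to P_m$ is a comodule map and $\mathrm{Hom}^A(V, W) = 0$ for any irreducible $W \not\cong V$, the image $\f(V)$ lands inside the $V$-isotypical part, which is $V$ itself. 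Thus $\f$ restricts to an endomorphism $\f|_V : V \to V$ in $^A\mathrm{Mod}$. By Schur's lemma (End of an irreducible object is a division ring, and over $\bC$ — with $\bC$ algebraically closed and $V$ finite-dimensional — it is $\bC$), $\f|_V$ is a scalar multiple of the identity.

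For the diagonalisability claim, I would invoke that $P$, being multiplicity-free in each degree, decomposes as a direct sum $P = \bigoplus_i V_i$ of finite-dimensional irreducible sub-comodules (every comodule is the sum of its finite-dimensional sub-comodules, and a multiplicity-free comodule decomposes into a direct sum of pairwise non-isomorphic irreducibles; apply this degree by degree). By the first part, $\f$ acts as a scalar on each $V_i$, so a basis of $P$ adapted to this decomposition consists of eigenvectors of $\f$; hence $\f$ is diagonalisable. The main (only) subtlety is making sure Schur's lemma and the isotypical-decomposition facts are legitimately available in the comodule setting over $\bC$ — these are standard for finite-dimensional comodules since a comodule over a coalgebra is the same as a (rational) module over its dual algebra, but since the paper explicitly says the proof "is a direct application of Schur's lemma, and so, is omitted," I would keep the write-up correspondingly brief rather than belabour these foundational points.

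\medskip

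\emph{Proof sketch.} Let $V \subseteq P$ be irreducible. Since the decomposition $P = \bigoplus_{m} P_m$ lies in $^A\mathrm{Mod}$, the projections $\proj_{P_m}: P \to P_m$ are comodule maps; as $V$ is irreducible, $\proj_{P_m}|_V$ is either $0$ or injective, and it is injective for exactly one $m$, forcing $V \subseteq P_m$. Since $P_m$ is multiplicity-free, $V$ is the unique sub-comodule of $P_m$ in its isomorphism class, and $\mathrm{Hom}^A(V, W) = 0$ for every irreducible $W \subseteq P_m$ with $W \not\cong V$; decomposing $P_m$ into irreducibles shows $\f(V) \subseteq V$. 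Thus $\f|_V \in \End^A(V) = \bC\,\id_V$ by Schur's lemma over $\bC$, so $\f|_V = \lambda_V \id_V$ for some $\lambda_V \in \bC$. Finally, since each $P_m$ is multiplicity-free it is a direct sum of finite-dimensional irreducibles, hence so is $P$; choosing a basis compatible with such a decomposition yields a basis of eigenvectors of $\f$, so $\f$ is diagonalisable. \qed
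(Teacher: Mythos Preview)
Your proof is correct and follows exactly the approach the paper has in mind: the paper explicitly omits the proof as ``a direct application of Schur's lemma,'' and your argument supplies precisely those details---locating $V$ in a single graded piece, using the multiplicity-free hypothesis to force $\f(V)\subseteq V$, and invoking Schur over $\bC$.
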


We now present the notion of a CQH-complex space of Gelfand type. The definition is given in terms of graded multiplicity-free comodules, and is followed by a direct application of Lemma \ref{lem:DiagComoduleMaps}.

\begin{defn} We say that  a CQH-complex space ${\bf C} = \big(\text{\bf B}, \Om^{(\bullet,\bullet)}\big)$ is of {\em  Gelfand type} if $\Om^{(0,\bullet)}$ is a graded multiplicity-free  left $A$-comodule. We say that a CQH-Hermitian space is of {\em Gelfand type} if its constituent CQH-complex space is of Gelfand type.
\end{defn}


Since the  Laplacian is a self-adjoint operator, we already know that it is diagonalisable. However, since it is a degree $0$ operator, Corollary \ref{lem:DiagComoduleMaps}  implies the following stronger result. 

\begin{lem} \label{lem:GelfandDiag}
For a CQH-Hermitian space of Gelfand type,  the Laplacian $\DEL_{\adel}$ acts on every irreducible left $A$-sub-comodule of $\Om^{(0,\bullet)}$ as a scalar multiple of the identity.
\end{lem}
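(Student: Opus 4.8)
The statement to prove is Lemma \ref{lem:GelfandDiag}: for a CQH-Hermitian space of Gelfand type, the Laplacian $\DEL_{\adel}$ acts on every irreducible left $A$-sub-comodule of $\Om^{(0,\bullet)}$ as a scalar multiple of the identity. The plan is to assemble two facts that are already in hand. First, by definition of Gelfand type, $\Om^{(0,\bullet)} = \bigoplus_{k \in \bN_0} \Om^{(0,k)}$ is a graded multiplicity-free left $A$-comodule, where the grading is the $\bN_0$-grading by anti-holomorphic degree. Second, $\DEL_{\adel}$ is a degree $0$ left $A$-comodule map on $\Om^{(0,\bullet)}$: it preserves the grading because $\adel$ raises anti-holomorphic degree by one and $\adel^\dagger$ lowers it by one, so $\DEL_{\adel} = \adel \adel^\dagger + \adel^\dagger \adel$ preserves each $\Om^{(0,k)}$; and it is an $A$-comodule map because covariance of the complex structure makes $\adel$ an $A$-comodule map, and covariance of the Hermitian structure makes $\ast_\s$ an $A$-comodule map (Lemma \ref{lem:Hodgeproperties}(4)), hence $\adel^\dagger = -\ast_\s \circ \del \circ \ast_\s$ is an $A$-comodule map as well, using also that $\del$ is an $A$-comodule map and that $\ast_\s$ sends $\Om^{(0,\bullet)}$ into $\Om^{(\bullet,0)}$-type components appropriately.

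With these two observations, the result is an immediate application of Lemma \ref{lem:DiagComoduleMaps}: a degree $0$ left $A$-comodule endomorphism of a graded multiplicity-free comodule acts as a scalar on each irreducible sub-comodule. Concretely, if $V \subseteq \Om^{(0,\bullet)}$ is an irreducible $A$-sub-comodule, then since the grading is preserved by $\DEL_{\adel}$ and $V$ is irreducible, $V$ lies in a single graded piece $\Om^{(0,k)}$ (its image under each graded projection is a sub-comodule, hence $0$ or $V$); then $\DEL_{\adel}|_V$ and the inclusion $V \hookrightarrow \Om^{(0,k)}$ are both $A$-comodule maps, and multiplicity-freeness of $\Om^{(0,k)}$ together with Schur's lemma forces $\DEL_{\adel}|_V$ to be a scalar multiple of the identity on $V$.

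The only point requiring a little care — and the closest thing to an obstacle — is verifying that $\DEL_{\adel}$ genuinely restricts to an $A$-comodule endomorphism of $\Om^{(0,\bullet)}$ rather than merely of $\Om^\bullet$. For this one notes that $\adel: \Om^{(0,k)} \to \Om^{(0,k+1)}$ stays within anti-holomorphic forms, while $\adel^\dagger$, though a priori defined on all of $\Om^\bullet$, maps $\Om^{(a,b)}$ to $\Om^{(a,b-1)}$ by Lemma \ref{lem:Hodgeproperties}(2) applied to the formula $\adel^\dagger = -\ast_\s \del \ast_\s$, hence maps $\Om^{(0,k)}$ to $\Om^{(0,k-1)}$; so both summands of $\DEL_{\adel}$ preserve $\Om^{(0,\bullet)}$ and its grading. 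Combined with the covariance of $\adel$ and $\adel^\dagger$ recorded before Proposition \ref{prop:ADELISO}, this makes $\DEL_{\adel}|_{\Om^{(0,\bullet)}}$ a degree $0$ object in the category $\,^A\mathrm{Mod}$, and the lemma follows.
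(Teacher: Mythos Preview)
Your proposal is correct and follows the same approach as the paper: the paper simply notes in the sentence preceding the lemma that $\DEL_{\adel}$ is a degree~$0$ operator and invokes Lemma~\ref{lem:DiagComoduleMaps}. You spell out the two ingredients (graded multiplicity-free by definition of Gelfand type, and $\DEL_{\adel}$ a degree~$0$ $A$-comodule endomorphism of $\Om^{(0,\bullet)}$ via covariance of $\adel$, $\del$, and $\ast_\s$) more carefully than the paper does, but the argument is identical.
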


We now use the various symmetries of the Dolbeault double complex to find four equivalent formulations of Gelfand type. We begin with the symmetry induced by the $*$-map of the calculus.

\begin{lem} \label{lem:FOURGTS}
For ${\bf C} = \big(\text{\bf B},  \Om^{(\bullet,\bullet)}\big)$ a CQH-complex space, the following conditions are equivalent:
\bet

\item ${\bf C}$ is  of Gelfand type,

\item $\text{\bf C}^{\mathrm{op}}$, the opposite CQH-complex space,  is of Gelfand type,

\item the graded left $A$-comodule  $\Om^{(\bullet,0)}$ is graded multiplicity-free.



\eet
\end{lem}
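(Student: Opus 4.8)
The plan is to establish the equivalence by exploiting the two natural symmetries of the Dolbeault double complex: the conjugate-linear $*$-map and the passage to the opposite complex structure. Both of these act as isomorphisms of the underlying left $A$-comodule structures (in the case of the $*$-map, conjugate-linearly, so it exchanges $A$-comodules with their conjugates, but preserves the property of being multiplicity-free since irreducibility and multiplicities are unaffected by conjugation). The key observation is that each of these symmetries carries the anti-holomorphic column $\Om^{(0,\bullet)}$ onto one of the other "edge" strips of the bigraded algebra.

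First I would prove the equivalence of (1) and (3). By Definition \ref{defnnccs}(2), the $*$-map satisfies $*(\Om^{(a,b)}) = \Om^{(b,a)}$, so it restricts to conjugate-linear isomorphisms $*: \Om^{(0,k)} \to \Om^{(k,0)}$ for every $k$. Since $\Om^\bullet$ is a covariant differential $*$-calculus, the $*$-map intertwines the coaction up to the (coalgebra) antipode-type twist; more precisely it is a morphism in the appropriate category of conjugate comodules. The point is that if $V$ is an irreducible left $A$-comodule, then $\overline V$ is again irreducible, and $\dim_\bC \mathrm{Hom}^A(V, \Om^{(k,0)}) = \dim_\bC \mathrm{Hom}^A(\overline V, \Om^{(0,k)})$ via the $*$-map. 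Hence $\Om^{(\bullet,0)}$ is graded multiplicity-free if and only if $\Om^{(0,\bullet)}$ is, giving (1) $\Leftrightarrow$ (3). (Here I would also want to note that the grading on $\Om^{(\bullet,0)}$ by holomorphic degree matches the grading on $\Om^{(0,\bullet)}$ by anti-holomorphic degree under $*$, so the match is degreewise, which is what "graded multiplicity-free" requires.)

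Next I would handle the equivalence of (1) and (2). By definition of the opposite complex structure, $\ol\Om^{(a,b)} = \Om^{(b,a)}$ as left $A$-comodules — this is a genuine (linear, not conjugate-linear) identification, and it is manifestly a comodule isomorphism since it is the identity map on the underlying space. Under this identification $\ol\Om^{(0,\bullet)} = \Om^{(\bullet,0)}$ as graded left $A$-comodules. So $\text{\bf C}^{\mathrm{op}}$ is of Gelfand type, i.e. $\ol\Om^{(0,\bullet)}$ is graded multiplicity-free, precisely when $\Om^{(\bullet,0)}$ is graded multiplicity-free, which by the previous paragraph is equivalent to (1). This closes the cycle (1) $\Leftrightarrow$ (2) $\Leftrightarrow$ (3).

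I expect the only genuine subtlety — the "main obstacle" — to be bookkeeping around the $*$-map: one must be careful that a conjugate-linear comodule isomorphism still preserves the relevant multiplicities. The safe way to phrase this is: for any left $A$-comodule $C$, the conjugate comodule $\overline C$ has an irreducible decomposition obtained by conjugating that of $C$, and $C \mapsto \overline C$ is a bijection on isomorphism classes of irreducibles; therefore $C$ is multiplicity-free if and only if $\overline C$ is. Applying this with $C = \Om^{(k,0)}$ and using $\overline{\Om^{(k,0)}} \cong \Om^{(0,k)}$ (via $*$) in each degree $k$ finishes the argument. Everything else is immediate from the definitions in Definition \ref{defnnccs} and the definition of the opposite complex structure, so no real computation is needed.
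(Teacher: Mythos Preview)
Your proposal is correct and follows essentially the same approach as the paper: the equivalence $(1)\Leftrightarrow(3)$ via the $*$-map exchanging $\Om^{(0,k)}$ and $\Om^{(k,0)}$, and the equivalence $(2)\Leftrightarrow(3)$ directly from the definition $\ol\Om^{(0,\bullet)}=\Om^{(\bullet,0)}$. Your explicit attention to the conjugate-linear nature of $*$ and the passage to conjugate comodules is a bit more careful than the paper's version, which simply asserts that the $*$-image of an irreducible sub-comodule is again irreducible and reads off the decomposition.
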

\begin{proof}
~~ 
\bet

\item[$1 \Leftrightarrow 3$] The image  of an irreducible $A$-sub-comodule of $\Om^{\bullet}$ under the $*$-map is again an irreducible $A$-sub-comodule. Thus if $\Om^{(0,k)} \simeq \bigoplus_{\a} \Om^{(0,k)}_{\a}$ denotes a decomposition of $\Om^{(0,k)}$ into irreducible $A$-sub-comodules, then a decomposition of  $\Om^{(k,0)}$ into irreducible $A$-sub-comodules is given by 
\begin{align*}
\Om^{(k,0)} \simeq \bigoplus_{\a} \big(\Om^{(0,k)}_{\a}\big)^*.
\end{align*}
Thus we see that $\Om^{(k,0)}$ is multiplicity-free if and only if $\Om^{(0,k)}$ is multiplicity-free, which is to say $\text{\bf C}$ is  of Gelfand type  if and only if $\text{{\bf C}}^{\mathrm{op}}$ is of Gelfand type.

\item[$2 \Leftrightarrow 3$] The equivalence of $2$ and $3$ follows directly from the definition of  opposite complex structure and the definition of Gelfand type.  \qedhere

\eet
\end{proof}

If we additionally assume the existence of a covariant Hermitian structure, then the symmetries induced by the Hodge map imply two additional equivalent formulations of Gelfand type. Note that in this case the Gelfand condition can be verified on any of the outer edges of the Hodge diamond.

\begin{lem}\label{lem:HermitianDolbeaultSymm} For ${\bf H} = \big(\text{\bf B},  \Om^{(\bullet,\bullet)}, \s\big)$  a CQH-Hermitian space, 
the following conditions are equivalent:
\bet

\item $\mathbf{H}$ is of Gelfand type,

\item the graded left $A$-comodule  $\Om^{(\bullet,n)}$ is graded multiplicity-free,

\item the graded left $A$-comodule $\Om^{(n,\bullet)}$ is graded multiplicity-free.

\eet
\end{lem}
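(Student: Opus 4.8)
The plan is to leverage the Hodge map $\ast_{\s}$, whose properties are recorded in Lemma~\ref{lem:Hodgeproperties}, to transport the graded multiplicity-free condition between the outer edges of the Hodge diamond. By Lemma~\ref{lem:FOURGTS} we already know that $\mathbf{H}$ being of Gelfand type is equivalent to $\Om^{(\bullet,0)}$ being graded multiplicity-free, so it suffices to connect the edge $\Om^{(\bullet,0)}$ to the edge $\Om^{(\bullet,n)}$, and symmetrically $\Om^{(0,\bullet)}$ to $\Om^{(n,\bullet)}$.

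First I would fix $k$ and consider the Hodge map restricted to $\Om^{(0,k)}$. By part~(2) of Lemma~\ref{lem:Hodgeproperties}, $\ast_{\s}\big(\Om^{(0,k)}\big) = \Om^{(n-k,n)}$, and by part~(4), since $\mathbf{H}$ is a CQH-Hermitian space (so the calculus is covariant and $\s$ is left $A$-coinvariant), $\ast_{\s}$ is a left $A$-comodule map. It is also a linear isomorphism, being invertible by part~(3). Hence the image of an irreducible $A$-sub-comodule of $\Om^{(0,k)}$ under $\ast_{\s}$ is an irreducible $A$-sub-comodule of $\Om^{(n-k,n)}$, and this sets up a bijection between the irreducible constituents of $\Om^{(0,k)}$ and those of $\Om^{(n-k,n)}$ preserving isomorphism classes. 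Consequently $\Om^{(0,k)}$ is multiplicity-free if and only if $\Om^{(n-k,n)}$ is multiplicity-free. Letting $k$ range over $0,\dots,n$, this shows $\Om^{(0,\bullet)}$ is graded multiplicity-free if and only if $\Om^{(\bullet,n)}$ is graded multiplicity-free; combined with the equivalence $1 \Leftrightarrow$ ``$\Om^{(0,\bullet)}$ graded multiplicity-free'' from the definition of Gelfand type, this gives $1 \Leftrightarrow 2$.

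For $1 \Leftrightarrow 3$ I would run the same argument on the other edge: $\ast_{\s}$ restricts to a left $A$-comodule isomorphism $\Om^{(k,0)} \to \Om^{(n,n-k)}$, again by parts~(2), (3), (4) of Lemma~\ref{lem:Hodgeproperties}, so $\Om^{(\bullet,0)}$ is graded multiplicity-free if and only if $\Om^{(n,\bullet)}$ is. Since $\Om^{(\bullet,0)}$ graded multiplicity-free is equivalent to $\mathbf{H}$ being of Gelfand type by Lemma~\ref{lem:FOURGTS}~(3), this yields $1 \Leftrightarrow 3$ and completes the proof.

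There is no real obstacle here; the proof is a routine application of Schur's lemma packaged through the Hodge map, entirely parallel to the $*$-map argument already carried out in Lemma~\ref{lem:FOURGTS}. The only point requiring a moment's care is bookkeeping the bidegree shift $\ast_{\s}\big(\Om^{(a,b)}\big) = \Om^{(n-b,n-a)}$ so that one correctly lands on the edges $\Om^{(\bullet,n)}$ and $\Om^{(n,\bullet)}$ rather than some interior antidiagonal — but this is immediate once one sets $a=0$ or $b=0$. I would also remark briefly that the proof shows, as the lemma's preamble notes, that the Gelfand condition may be checked on any single outer edge of the Hodge diamond.
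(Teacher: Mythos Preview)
Your proof is correct and follows essentially the same approach as the paper: both use the Hodge map isomorphisms from Lemma~\ref{lem:Hodgeproperties} to transport the multiplicity-free condition between outer edges of the Hodge diamond, combined with Lemma~\ref{lem:FOURGTS} to link back to the definition of Gelfand type. Your version is slightly more explicit about the bidegree bookkeeping and which parts of Lemma~\ref{lem:Hodgeproperties} are invoked, but the argument is the same.
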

\begin{proof} ~
\bet
\item[$1 \Leftrightarrow 2$:] 
By Lemma  \ref{lem:Hodgeproperties}, the Hodge map $\ast_\s$ associated to $\s$ restricts to a left $A$-comodule isomorphism
$
\ast_\s: \Om^{(0, \bullet)} \simeq  \Om^{(\bullet,n)}.
$
Thus $ \Om^{(\bullet,n)}$ is graded multiplicity-free if and only if $\Om^{(0,\bullet)}$ is graded multiplicity-free, which is to say, if and only if ${\bf H}$  is of Gelfand type.

\item[$1 \Leftrightarrow 3$:] By Lemma \ref{lem:FOURGTS} above,  $\mathbf{H}$ is of Gelfand type,
if and only if $\Om^{(\bullet,0)}$ is graded multiplicity-free. Moreover, using the Hodge map, just as above, we see that  $\Om^{(n, \bullet)}$ is graded multiplicity-free if and only if $\Om^{(0,\bullet)}$ is graded multiplicity-free, giving the required equivalence. \qedhere
\eet
\end{proof}

\section{Drinfeld--Jimbo Quantum Groups and CQH-Hermitian Spaces} \label{section:DJCQHHS}

From now on we restrict to the case of Drinfeld--Jimbo quantised enveloping algebras $U_q(\frak{g})$ and their quantised coordinate algebras $\O_q(G)$, as presented in Appendix  \ref{APP:secnoumi}. This allows us to exploit the associated highest weight structure on the category $U_q(\frak{g})$-modules of type $1$, leading to the construction of canonical sequences of eigenvalues in $\s_P(\DEL_{\adel})$. In the next section, under suitable assumptions, we decompose $\s_P\big(\DEL_{\adel}\big)$ into unions  of such sequences allowing us to give sufficient and necessary conditions for $\s_P(\DEL_{\adel}) \to \infty$. Note that throughout this section, we assume the notation and conventions presented in Appendix  \ref{APP:secnoumi}.

\subsection{Highest and Lowest Weight Vectors} \label{section:setofHandLWs}

In this subsection we examine the behaviour of highest weight vectors in an $\O_q(G)$-comodule algebra $P$. We show that the space of highest weight elements is a multiplicative submonoid of $P$. Moreover, we show  that  in the graded multiplicity-free case, any pair of  highest weight vectors commute up to a scalar. The algebra of anti-holomorphic forms $\Om^{(0,\bullet)}$ of a covariant complex structure is then presented as a motivating example.

\begin{defn}
For any ${\mathcal Z} \in \, ^{\O_q(G)}\textrm{Mod}$, \wrt the $U_q(\frak{g})$-action induced by the dual pairing $U_q(\frak{g}) \by \O_q(G) \to \bC$, we denote 
\begin{align*}
{\mathcal Z}_{\mathrm{hw}} := &  \{ f \in {\mathcal Z} \,|\, f \text{ is a highest weight vector of } {\mathcal Z}\}, \\
 {\mathcal Z}_{\mathrm{lw}} := &  \{ f \in {\mathcal Z} \,|\, f \text{ is a lowest weight vector of } {\mathcal Z}\}.
\end{align*}
\end{defn}


\begin{lem} \label{lem:monoidtomonoid} 
For a left $\O_q(G)$-comodule \alg $P$, which is to say a monoid object in the category  $\,^{\O_q(G)}\textrm{\em Mod}$,   it holds that 
\bet 
\item $\text{\em wt}(ab) = \text{\em wt}(ba) = \text{\em wt}(a)+\text{\em wt}(b)$, ~~ for all $a,b \in P$,
\item the multiplication of $P$  restricts to the structure of a monoid  on $P_{\mathrm{hw}}$ and $P_{\lw}$.
\eet
\end{lem}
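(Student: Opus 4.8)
The plan is to unwind the statement using nothing more than the fact that $P$ is a comodule algebra and that the $U_q(\frak{g})$-action it induces is by (twisted) derivations, together with elementary highest/lowest weight calculus. Recall that for a left $\O_q(G)$-comodule $\mathcal Z$, the induced $U_q(\frak{g})$-action satisfies $X \tr f = f\0 \langle X, f\m1\rangle$ in Sweedler notation, and that the comodule-algebra condition $\DEL_L(ab) = \DEL_L(a)\DEL_L(b)$ translates into the braided Leibniz rule for the $K_i$ and the $E_i,F_i$ generators: $K_i \tr (ab) = (K_i \tr a)(K_i \tr b)$, and $E_i \tr (ab) = (E_i \tr a)(K_i \tr b) + a(E_i \tr b)$, with the analogous formula for $F_i$. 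These are exactly the formulas recorded in \textsection \ref{APP:secnoumi}, so I may quote them.

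\textbf{Part 1 (weights of products).} First I would observe that a weight vector $a$ of weight $\mu$ is, by definition, a simultaneous eigenvector of all the $K_i$ with eigenvalue $q_i^{(\mu,\alpha_i)}$ (or however the pairing is normalised in the appendix). Given weight vectors $a$ of weight $\mu$ and $b$ of weight $\nu$, the multiplicativity $K_i \tr (ab) = (K_i \tr a)(K_i \tr b)$ shows immediately that $ab$ is again a $K_i$-eigenvector with eigenvalue the product of the two, i.e. $ab$ has weight $\mu + \nu$; the same computation applied to $ba$ gives weight $\nu + \mu = \mu + \nu$. This is the content of part 1. The only subtlety worth a sentence is that an arbitrary $a \in P$ need not be a weight vector, but $P$ decomposes as a direct sum of its weight spaces (since it is an $\O_q(G)$-comodule, equivalently a type-$1$ module, it is a sum of finite-dimensional weight modules); one should either phrase the lemma for weight vectors or note that $\text{wt}$ is only defined on weight vectors, which I expect is the convention set up earlier.

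\textbf{Part 2 (the monoid structure).} To see that $P_{\hw}$ is closed under multiplication, take $a, b \in P_{\hw}$, so $E_i \tr a = E_i \tr b = 0$ for all simple roots $\alpha_i$. Applying the twisted Leibniz rule, $E_i \tr (ab) = (E_i \tr a)(K_i \tr b) + a (E_i \tr b) = 0 + 0 = 0$, so $ab \in P_{\hw}$. Since $P$ is unital and $1$ is a highest weight vector of weight $0$ (it is coinvariant, hence killed by every $E_i$ and fixed by every $K_i$), $P_{\hw}$ contains the identity and is therefore a submonoid. The argument for $P_{\lw}$ is identical with $F_i$ in place of $E_i$: $F_i \tr (ab) = (F_i \tr a)(K_i\inv \tr b) + a(F_i \tr b)$ (or whatever the precise form of the coproduct on $F_i$ gives), and both terms vanish when $a, b \in P_{\lw}$.

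I do not anticipate a genuine obstacle here: the lemma is a bookkeeping statement and the only thing to be careful about is matching the exact normalisation conventions for the dual pairing and the coproduct of $U_q(\frak{g})$ fixed in \textsection \ref{APP:secnoumi}, so that the twisted Leibniz rule is quoted with the correct $K_i$ versus $K_i\inv$ on the appropriate tensor leg. If anything counts as the ``main point'' it is simply recognising that a left comodule algebra is the same data as an algebra object in ${}^{\O_q(G)}\mathrm{Mod}$ on which $U_q(\frak{g})$ acts by the standard twisted derivations, after which both parts are one-line computations. The commutation-up-to-scalar claim in the graded multiplicity-free case (mentioned in the surrounding text but, as stated, not part of this particular lemma) would follow afterwards by noting that $ab$ and $ba$ are two highest weight vectors of the same weight lying in the same graded piece, hence generate isomorphic irreducibles, hence — by multiplicity-freeness and Schur — are proportional; but I would defer that to the next lemma where it properly belongs.
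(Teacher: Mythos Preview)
Your proposal is correct and follows essentially the same approach as the paper: multiplicativity of the $K_i$-action gives additivity of weights, and the twisted Leibniz rule for $E_i$ (respectively $F_i$) shows $P_{\hw}$ (respectively $P_{\lw}$) is closed under multiplication, with the unit trivially highest weight. The one detail to watch is that the paper's action is defined via $X \tr v = \la S(X), v_{(-1)}\ra v_{(0)}$, which gives the \emph{flipped} Leibniz rule $X \tr (ab) = (X_{(2)} \tr a)(X_{(1)} \tr b)$; hence the paper writes $E_k \tr (ab) = (K_k \tr a)(E_k \tr b) + (E_k \tr a)b$ rather than your version with $K_i$ on the second leg --- but as you anticipated, this is purely a convention check and the argument goes through verbatim either way.
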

\begin{proof}
Let $a,b \in P_{\mathrm{hw}}$. For $k =1, \dots, r$, 
\begin{align*}
E_k \tr (ab)  =&  (K_k \tr a)(E_k \tr  b)  + (E_k \tr a)(1 \tr b) = 0.
\end{align*}
Moreover,  $K_k \tr (ab)   =  (K_k \tr a)(K_k \tr b) =  q^{(\mathrm{wt}(a),\alpha_k) + (\mathrm{wt}_k(b),\alpha_k)} ab$. Thus  $ab \in P_\hw$. To show that we have a monoid it remains to show that the unit of $P$ is contained in $P_{\mathrm{hw}}$. This follows directly from the properties of a dual pairing since $X \tr 1 =  \e(X) 1$.
\end{proof}

In this subsection we discuss  graded multiplicity-free comodules for  the Drinfeld--Jimbo quantum groups. This allows us to produce a collection of identities describing proportionality relations between certain highest weight vectors. For the special case of CQH-Hermitian spaces, these identities will be our main tool for calculating the spectrum of a Dolbeault--Dirac operator. The proof of the following lemma is elementary and hence omitted.

\begin{lem}\label{lem:CoPMathair} 
Let $\A \simeq \bigoplus_{i \in \bN_0} \A_i$ be a graded multiplicity-free $\O_q(G)$-comodule, and  $a,b \in \A_{\hw}$ such that
\begin{align*}
\text{\em 1. } ~ |a| = |b|, & & \text{\em 2. } ~  \text{\em wt}(a) = \text{\em wt}(b), & & \text{\em 3. } ~  b \neq 0.
\end{align*}
Then there  exists a uniquely defined scalar $C \in \bC$ such that 
$
a = C b.
$
\end{lem}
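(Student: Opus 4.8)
The statement to prove is Lemma~\ref{lem:CoPMathair}: given a graded multiplicity-free $\O_q(G)$-comodule $\A = \bigoplus_i \A_i$ and highest weight vectors $a, b \in \A_\hw$ with $|a| = |b|$, $\mathrm{wt}(a) = \mathrm{wt}(b)$, and $b \neq 0$, there is a unique scalar $C$ with $a = Cb$.

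\textbf{Proof proposal.}

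The plan is to exploit the multiplicity-free hypothesis together with the correspondence between highest weight vectors and comodule morphisms out of irreducible (Verma-type) objects. First I would reduce to a single graded piece: by hypothesis $|a| = |b| = m$ for some $m \in \bN_0$, so both $a$ and $b$ lie in $\A_m$, and $\A_m$ is a multiplicity-free left $\O_q(G)$-comodule. Thus it suffices to argue entirely inside the finite-dimensional (or at least locally finite) comodule $\A_m$. Since $b \neq 0$ is a highest weight vector of weight $\lambda := \mathrm{wt}(b)$, the cyclic sub-comodule it generates under the $U_q(\frak{g})$-action is an irreducible comodule $V_\lambda$ of highest weight $\lambda$ (using that $\O_q(G)$-comodules are type~$1$ $U_q(\frak{g})$-modules, which are completely reducible, so the cyclic module generated by a highest weight vector is the irreducible $V_\lambda$).

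Next I would use that $\mathrm{Hom}^A(V_\lambda, \A_m)$ is at most one-dimensional. Concretely: both $a$ and $b$, being highest weight vectors of the same weight $\lambda$ inside $\A_m$, each determine a morphism $V_\lambda \to \A_m$ (the unique one sending the canonical highest weight vector of $V_\lambda$ to $a$, respectively $b$); if $a \neq 0$ this morphism is injective. By the multiplicity-free condition $\dim_\bC \mathrm{Hom}^A(V_\lambda, \A_m) \leq 1$. If $a = 0$ we may simply take $C = 0$. If $a \neq 0$, then since $b$ also gives a nonzero element of this Hom-space, the two morphisms $\f_a, \f_b : V_\lambda \to \A_m$ are proportional, say $\f_a = C \f_b$; evaluating at the highest weight generator of $V_\lambda$ gives $a = Cb$. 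Uniqueness of $C$ is immediate since $b \neq 0$.

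There is one subtlety to handle carefully rather than a genuine obstacle: one must make sure the weight space structure is well-behaved, i.e.\ that a highest weight vector of weight $\lambda$ really does generate the irreducible $V_\lambda$ and not something larger, and that the correspondence ``highest weight vector of weight $\lambda$ in $M$'' $\leftrightarrow$ ``$U_q(\frak{g})$-morphism $V_\lambda \to M$'' is a bijection. This is standard for type~$1$ modules over $U_q(\frak{g})$ (complete reducibility plus Schur), and it is exactly what makes the multiplicity-free hypothesis bite. I expect the main place one could stumble is conflating the $\O_q(G)$-comodule structure with the $U_q(\frak{g})$-module structure and the attendant finiteness issues; these are dispatched by noting each $\A_i$ is locally finite, so that within any finitely generated sub-comodule the relevant $\mathrm{Hom}$-spaces are finite-dimensional and Schur's lemma applies verbatim. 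Indeed the authors themselves flag that ``the proof is elementary and hence omitted,'' which matches this being a direct application of Schur's lemma in disguise, much as in Lemma~\ref{lem:DiagComoduleMaps}.
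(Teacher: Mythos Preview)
Your proof is correct and matches the paper's approach: the paper explicitly omits the proof as elementary, and your argument via the bijection between highest weight vectors of weight $\lambda$ in $\A_m$ and elements of $\mathrm{Hom}^A(V_\lambda,\A_m)$, together with the multiplicity-free hypothesis and Schur's lemma, is exactly the intended one. One minor remark: the paper's definition of multiplicity-free actually requires $\dim_\bC \mathrm{Hom}^A(V,\A_m)=1$ rather than $\leq 1$, so the case $a=0$ technically does not arise, but your handling of it does no harm.
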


\begin{defn}
A {\em graded  $A$-comodule algebra} is a graded  $A$-comodule $P = \bigoplus_{i \in \bN_0} P_i$,  which is also an $A$-comodule algebra, such that the grading and multiplication of $P$ combine to give it the structure of a graded algebra.
\end{defn}

The following result now follows immediately from Lemma \ref{lem:CoPMathair}.

\begin{cor} \label{cor:addcommweightsplusCab}
Let $P$ be a graded multiplicity-free $A$-comodule algebra. If $c,d \in P_{\hw}$ such that $d c \neq 0$, then there exists a unique scalar $C \in \bC$ \st 
\begin{align*} 
c  d = C  \, d  c.
\end{align*}
\end{cor}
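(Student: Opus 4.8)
The plan is to deduce the statement directly from Lemma \ref{lem:CoPMathair}, applied to the pair of elements $a := c\,d$ and $b := d\,c$. The point to make is that both of these products are again highest weight vectors sitting inside a single graded component of $P$, so that the multiplicity-free hypothesis of Lemma \ref{lem:CoPMathair} is available for exactly the component we need.

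First I would record that $c\,d$ and $d\,c$ both lie in $P_{\hw}$. This is immediate from Lemma \ref{lem:monoidtomonoid}(2), which says that the multiplication of $P$ restricts to a monoid structure on $P_{\hw}$; in particular $P_{\hw}$ is closed under products. Next, since $P$ is a \emph{graded} $A$-comodule algebra and $c,d$ are homogeneous, both $c\,d$ and $d\,c$ are homogeneous of degree $|c|+|d|$, hence both lie in the multiplicity-free component $P_{|c|+|d|}$; this verifies condition $1$ of Lemma \ref{lem:CoPMathair}. For condition $2$, Lemma \ref{lem:monoidtomonoid}(1) gives $\mathrm{wt}(c\,d) = \mathrm{wt}(c) + \mathrm{wt}(d) = \mathrm{wt}(d\,c)$. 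Condition $3$, namely $d\,c \neq 0$, is precisely the hypothesis of the corollary.

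With all three hypotheses of Lemma \ref{lem:CoPMathair} in place, we obtain a uniquely determined scalar $C \in \bC$ with $c\,d = C\, d\,c$, which is the assertion. There is no real obstacle here; the only point requiring a moment's care is the bookkeeping needed to see that $c\,d$ and $d\,c$ genuinely lie in the \emph{same} graded piece $P_{|c|+|d|}$, since it is multiplicity-freeness of that single component (rather than of $P$ as a whole) that both produces $C$ and pins it down uniquely.
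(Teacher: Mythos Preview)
Your proposal is correct and follows exactly the route the paper takes: the paper simply remarks that the corollary ``follows immediately from Lemma~\ref{lem:CoPMathair}'', and your write-up is precisely the unpacking of that remark, checking that $cd$ and $dc$ are highest weight vectors (Lemma~\ref{lem:monoidtomonoid}) of equal degree and equal weight, with $dc \neq 0$ by hypothesis. The only small point worth flagging is that you tacitly assume $c$ and $d$ are homogeneous, which is not literally stated in the corollary; this is harmless in every application in the paper and is almost certainly the intended reading, but strictly speaking it is an implicit hypothesis rather than a consequence.
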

%
%

\begin{eg} \label{eg:CalcMonoid}
The simplest example of a left $\O_q(G)$-comodule \alg is a quantum homogeneous space $B = \O_q(G)^{\co(H)}$. 
As discussed in \textsection  \ref{section:QHSPs}, the highest weight monoid is always finitely generated as a monoid. However, as we will see in \textsection \ref{section:CPN} and \textsection \ref{section:QHSPs} the number of generators  has strong consequences for the complexity of the spectrum of the Dolbeault--Dirac operators constructed to a CQH-Hermitian spaces over $B$.
\end{eg}

\begin{eg} \label{eg:calcmonoid}
Each of the following objects are left $\O_q(G)$-comodule algebras 
\begin{align*}
 \Om^\bullet, & &  \Om^{(0,\bullet)}, & &  \Om^{(\bullet,0)}.
\end{align*}
Lemma \ref{lem:monoidtomonoid} implies that the respective multiplications  restrict to monoid structures on the sets 
\begin{align*}
\Om^\bullet_{\hw}, & & \Om^{(0,\bullet)}_{\hw}, & &  \Om^{(\bullet,0)}_{\hw}. 
\end{align*}
Note that by restriction of the monoid structure of $\Om^\bullet_{\hw}$, we have the following monoid actions
\begin{align*}
\Om^{(\bullet,0)}_{\hw} \times  \Om^{(\bullet,n)}_{\hw} \to \Om^{(\bullet,n)}_{\hw} & & \Om^{(0,\bullet)}_{\hw} \times  \Om^{(n, \bullet)}_{\hw} \to \Om^{(n,\bullet)}_\hw. 
\end{align*}
Moreover, restricting to highest weight forms of degree $0$, we have the following  monoid action 
\begin{align*}
B_{\hw} \times \Om^{(a,b)}_{\hw}  \to \Om^{(a,b)}_{\hw}, & & \text{ for all } (a,b) \in \bN^2_0.
\end{align*}
\end{eg}

\begin{remark}
The previous examples can be extended to a more formal general setting using the language of relative Hopf modules over comodule algebras.  Let $A$ be a  Hopf algebra, and $(P,\DEL_P)$ a left $A$-comodule algebra. A {\em relative Hopf $P$-module algebra} $N$ is a left $A$-comodule  $(N,\DEL_N)$, which is also a module  over the algebra $P$,  satisfying  the compatibility condition
\begin{align*}
\DEL_N(pn) = \DEL_P(p) \DEL_N(n), & & \text{ for all } p \in P,  n \in N.
\end{align*}
Alternatively, considering $P$ as a monoid object in the category $^A\mathrm{Mod}$, a relative Hopf $P$-module algebra is just  a module object over $P$ in the category $^A\mathrm{Mod}$. It is instructive to observe that any object in $^{\O_q(G)}_{~~~~B} \mathrm{Mod}$ is a relative Hopf $B$-module algebra. Following the same argument as in Lemma \ref{lem:monoidtomonoid}, one can now establish the following  result, formalising the actions appearing in Example \ref{eg:calcmonoid}.

\begin{lem}
For $P$ a left $\O_q(G)$-comodule algebra, and $N$ a relative Hopf $P$-module algebra,  the action of $P$ on $N$ restricts to the structure of a $P_\hw$-space 
\begin{align*}
P_\hw \times N_\hw \to N_{\hw}, & & (p,n) \mapsto pn. 
\end{align*}
\end{lem}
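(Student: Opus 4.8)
The plan is to mimic the proof of Lemma~\ref{lem:monoidtomonoid} exactly, now working in the module $N$ instead of the algebra $P$. Recall that the claim to be established is: for $p \in P_\hw$ and $n \in N_\hw$, the product $pn$ lies in $N_\hw$.

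First I would unwind the compatibility condition $\DEL_N(pn) = \DEL_P(p)\DEL_N(n)$ into the language of the induced $U_q(\frak{g})$-action. As in the proof of Lemma~\ref{lem:monoidtomonoid}, the key point is that the dual pairing $U_q(\frak{g}) \by \O_q(G) \to \bC$ together with the comodule structures endows $P$ and $N$ with compatible $U_q(\frak{g})$-actions, and the compatibility condition translates precisely into a (braided/twisted) Leibniz rule for the action of the generators $E_k, F_k, K_k$ of $U_q(\frak{g})$ on products $pn$. Concretely, for each $k = 1, \dots, r$ one gets
\begin{align*}
E_k \tr (pn) = (K_k \tr p)(E_k \tr n) + (E_k \tr p)(1 \tr n), & & K_k \tr (pn) = (K_k \tr p)(K_k \tr n),
\end{align*}
which is the module-algebra version of the identity used for $P_\hw$ being a monoid.

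Second, I would simply substitute $p \in P_\hw$ and $n \in N_\hw$. Since $E_k \tr p = 0$ and $E_k \tr n = 0$ by definition of highest weight vectors, the first identity gives $E_k \tr (pn) = 0$ for all $k$. The second identity gives $K_k \tr (pn) = q^{(\mathrm{wt}(p),\alpha_k) + (\mathrm{wt}(n),\alpha_k)}\, pn$, so $pn$ is a weight vector of weight $\mathrm{wt}(p) + \mathrm{wt}(n)$ (this also records the additivity of weights analogous to part~(1) of Lemma~\ref{lem:monoidtomonoid}). Hence $pn$ is annihilated by all raising operators and is a weight vector, so $pn \in N_\hw$, which establishes that the map $(p,n) \mapsto pn$ indeed lands in $N_\hw$; it is a $P_\hw$-action because it is the restriction of the given $P$-action on $N$, so associativity and unitality are inherited (the unit $1 \in P$ lies in $P_\hw$ since $X \tr 1 = \e(X) 1$, exactly as in Lemma~\ref{lem:monoidtomonoid}).

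The main obstacle, such as it is, is purely bookkeeping: getting the precise form of the twisted Leibniz rule right, i.e.\ tracking which factor gets hit by $K_k$ in the ``non-derivation'' term, which depends on the conventions for the dual pairing and the coproduct of $U_q(\frak{g})$ fixed in \textsection\ref{APP:secnoumi}. Once those conventions are in place the computation is identical to the one already carried out for $P_\hw$, so I would phrase the proof as ``following verbatim the argument of Lemma~\ref{lem:monoidtomonoid}, now with $b$ replaced by an element $n \in N_\hw$ and using the relative Hopf module compatibility condition in place of multiplicativity of the coaction,'' and leave the routine verification to the reader, which is consistent with the excerpt's stated intention to omit it.
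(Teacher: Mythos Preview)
Your proposal is correct and matches the paper's approach exactly: the paper explicitly states that the lemma is established ``following the same argument as in Lemma~\ref{lem:monoidtomonoid}'' and omits the details, and the (commented-out) proof in the source carries out precisely the computation you describe, namely $E_k \tr (pn) = (K_k \tr p)(E_k \tr n) + (E_k \tr p)n = 0$ together with the additivity of weights from the $K_k$-action.
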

\end{remark}

\subsection{CQH-Complex Spaces and Leibniz Constants}

In this subsection we apply the general results of the previous subsection to the CQH-complex spaces of Gelfand type. As a result we identify a collection of constants, which we call Leibniz constants, intrinsic to the structure of the calculus. First however, we prove a useful result relating highest weight vectors and the $*$-map of a covariant $*$-calculus. 

\begin{lem} \label{lem:AntiModoidBijection}
For a CQH-complex space $\text{\bf C} = \big(\text{\bf B}, \Om^{(\bullet,\bullet)}\big)$,  the $*$-map of the constituent calculus $\Om^\bullet \in \text{\bf B}$ restricts to a bijection between $\Om^\bullet_{\hw}$ and $\Om^\bullet_{\lw}$. Moreover, the bijection is an anti-monoid map.
\end{lem}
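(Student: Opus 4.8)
The plan is to work directly from the definition of highest and lowest weight vectors in terms of the $U_q(\frak{g})$-action dual to the $\O_q(G)$-coaction, together with the compatibility of the $*$-structure of the calculus with the coaction. First I would recall that for a covariant $*$-calculus over a quantum homogeneous space $B = \O_q(G)^{\co(H)}$, the $*$-map $*\colon \Om^\bullet \to \Om^\bullet$ is conjugate-linear and interacts with the left coaction $\DEL_L$ via $\DEL_L(\w^*) = \DEL_L(\w)^{*\oby *}$, where the $*$ on $\O_q(G)$ is the compact real form. Dualising, this means that for $X \in U_q(\frak{g})$ one has $X \tr \w^* = \big((S(X))^* \tr \w\big)^*$, where $*$ on $U_q(\frak{g})$ is the Hopf $*$-structure making the pairing with $\O_q(G)$ compatible. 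The key identities I would invoke are $E_k^* = K_k F_k$ (up to the standard scalar convention), $F_k^* = E_k K_k^{-1}$, and $K_k^* = K_k$, together with $S(E_k) = -K_k^{-1}E_k$, $S(F_k) = -F_k K_k$, $S(K_k) = K_k^{-1}$.

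Next I would carry out the computation showing $*$ sends $\Om^\bullet_\hw$ into $\Om^\bullet_\lw$. If $\w$ is a highest weight vector, so $E_k \tr \w = 0$ for all $k$ and $K_k \tr \w = q^{(\mathrm{wt}(\w),\alpha_k)}\w$, I would check that $F_k \tr \w^* = 0$ for all $k$ and that $K_k \tr \w^*$ is again a scalar multiple of $\w^*$, using the displayed formula $X \tr \w^* = \big((S(X))^* \tr \w\big)^*$. Concretely, $F_k \tr \w^* = \big((S(F_k))^* \tr \w\big)^* = \big((-F_k K_k)^* \tr \w\big)^* $, and since $(F_k K_k)^* = K_k^* F_k^* = K_k E_k K_k^{-1}$ acts on $\w$ through $E_k$ (conjugated by the invertible $K_k$), this vanishes because $E_k \tr \w = 0$. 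Similarly $K_k \tr \w^* = \big((S(K_k))^* \tr \w\big)^* = \big(K_k^{-1} \tr \w\big)^* = q^{-(\mathrm{wt}(\w),\alpha_k)}\w^*$, so $\w^*$ is a weight vector of weight $-\mathrm{wt}(\w)$ killed by all $F_k$, i.e.\ a lowest weight vector. The same computation run in reverse (using $* \circ * = \id$) shows $*$ maps $\Om^\bullet_\lw$ into $\Om^\bullet_\hw$, so $*$ restricts to a bijection $\Om^\bullet_\hw \to \Om^\bullet_\lw$ with inverse its own restriction to $\Om^\bullet_\lw$.

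Finally, for the anti-monoid claim: by Lemma \ref{lem:monoidtomonoid} (applied in the Drinfeld--Jimbo setting via Example \ref{eg:calcmonoid}), the multiplications restrict to monoid structures on $\Om^\bullet_\hw$ and $\Om^\bullet_\lw$. The graded-algebra $*$-axiom $(\w \wed \nu)^* = (-1)^{|\w||\nu|}\nu^* \wed \w^*$ immediately gives that $*$ reverses products up to sign; since the sign depends only on the degrees, which the $*$-map preserves (degree is unaffected), the map $\Om^\bullet_\hw \to \Om^\bullet_\lw$ is an anti-homomorphism of monoids in the graded sense, and in particular $1^* = 1$ sends the unit to the unit. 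I expect the only genuine obstacle to be bookkeeping: getting the precise convention for the $*$-structure on $U_q(\frak{g})$ and the exact scalar in $E_k^* = $ (const)$\,K_k F_k$ consistent with the paper's conventions in \textsection \ref{APP:secnoumi}, since the vanishing argument is robust against any nonzero scalar but the statement must be phrased compatibly. Once the dual $*$-action formula is in hand, everything else is a short direct verification.
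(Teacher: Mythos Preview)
Your approach is correct and essentially the same as the paper's: both show $F_k \tr \w^* = 0$ and compute $K_k \tr \w^*$ via the Hopf $*$-pairing identities, then invoke involutivity of $*$ for the bijection and anti-multiplicativity of $*$ for the anti-monoid claim. Under the paper's action convention $X \tr v = \langle S(X), v_{(-1)}\rangle v_{(0)}$ the transfer identity actually reads $X \tr \w^* = \big(S^{-3}(X^*) \tr \w\big)^*$ rather than $\big((S(X))^* \tr \w\big)^*$, but as you correctly anticipated this is pure bookkeeping---the two expressions differ by a power of $S^2$, which acts by nonzero scalars on the generators $E_k, F_k, K_k$ and so leaves the vanishing and weight conclusions unchanged.
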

\begin{proof}
For any $\w \in \Om^{\bullet}_{\mathrm{hw}}$, we have, for $k =1, \dots, \text{rank}(\frak{g})$, 
\begin{align*}
F_k \tr \w^* =  \big< S(F_k), \w_{(-1)}^* \big> \w_{(0)}^*
=  & \,  \ol{\la S^2(F_k)^*,\w_{(-1)}\ra} \, \w_{(0)}^* \\ 
=  & \, \left(\la S\big(S^{-3}(F_k^*)\big),\w_{(-1)} \ra \, \w_{(0)}\right)^* \\
=  & \,  \left(S^{-3}(F_k^*) \tr \w\right)^*\!.
\end{align*}
A direct calculation confirms that  $S^{-3}(F_k^*) = -q^{-2} K_k^{-2} E_k$. 
Thus, since $\w$ is by assumption a highest weight vector, we must have that  $F_k \tr \w^* = 0$. 
Analogously, it can be shown that $K_k \tr \w^* = q^{-\mathrm{wt}_k(\w)} \w^*$.
Hence, $\w$ is a lowest weight element of $\Om^{\bullet}$. The proof that $*$ sends lowest weight forms to highest weight forms is analogous. Thus since the $*$-map is an involution, it must induce a bijection between highest and lowest weight forms. Moreover, since  the $\ast$-map is an anti-algebra map, it restricts to an anti-monoid map between $\Om^\bullet_{\hw}$ and $\Om^\bullet_{\lw}$.
\end{proof}

We now restrict to the Gelfand case, beginning  with the following lemma which is a direct consequence of Corollary \ref{cor:addcommweightsplusCab}.

\begin{lem} \label{lem:FullGelfandSwap} Let $\text{\bf H} = \big(\text{\bf B}, \Om^{(\bullet,\bullet)} \big)$ be a CQH-complex space of  Gelfand type, and  $\w, \nu \in \Om^{(0,\bullet)}_{\hw}$ such that $\nu \wed \w \neq 0$. Then there exists a uniquely defined scalar $C$ satisfying 
\begin{align*}
 \w \wed \nu  = C \, \nu \wed \w .
\end{align*}
\end{lem}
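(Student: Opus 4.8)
The plan is to derive the statement directly from Corollary \ref{cor:addcommweightsplusCab} applied to the graded $\O_q(G)$-comodule algebra $\Om^{(0,\bullet)}$. The first thing I would do is observe that, by the Gelfand type hypothesis and Lemma \ref{lem:FOURGTS}, the algebra $P := \Om^{(0,\bullet)}$ is a graded multiplicity-free $\O_q(G)$-comodule algebra, where the grading is the obvious $\bN_0$-grading by form degree (each $\Om^{(0,k)}$ being a left $\O_q(G)$-sub-comodule by covariance). Then, since $\w, \nu \in \Om^{(0,\bullet)}_{\hw} = P_{\hw}$ are highest weight vectors, and since $\nu \wed \w \neq 0$ by hypothesis, Corollary \ref{cor:addcommweightsplusCab} (with $c = \w$, $d = \nu$, so that $d c = \nu \wed \w \neq 0$) immediately produces a unique scalar $C \in \bC$ with $\w \wed \nu = C\, \nu \wed \w$. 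That is essentially the whole argument.

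The only genuinely substantive point to check carefully is that the hypotheses of Corollary \ref{cor:addcommweightsplusCab} are met, in particular that $\Om^{(0,\bullet)}$ really is \emph{graded} multiplicity-free and not merely multiplicity-free in some ungraded sense. Here I would note that Gelfand type for $\mathbf{C}$ is \emph{defined} as the condition that $\Om^{(0,\bullet)}$ is graded multiplicity-free, so there is nothing to prove; one just needs to record that $\w \wed \nu$ and $\nu \wed \w$ both lie in the homogeneous component $\Om^{(0, |\w| + |\nu|)}$ (using $|\w|=|\nu|$ is not even needed for the wedge to make sense, only Lemma \ref{lem:monoidtomonoid} for the weight bookkeeping), so that the uniqueness of $C$ is a statement within a single multiplicity-free graded piece. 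Strictly, Corollary \ref{cor:addcommweightsplusCab} is phrased for a graded $A$-comodule algebra with $A$ a general Hopf algebra, and Lemma \ref{lem:CoPMathair} on which it rests additionally uses conditions on degree and weight; since $\w$ and $\nu$ are both highest weight vectors, $\w\wed\nu$ and $\nu\wed\w$ have the same degree $|\w|+|\nu|$ and, by Lemma \ref{lem:monoidtomonoid}, the same weight $\mathrm{wt}(\w)+\mathrm{wt}(\nu)$, so Lemma \ref{lem:CoPMathair} applies verbatim.

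I do not anticipate any real obstacle here; the statement is a straightforward specialisation of machinery already assembled, and the proof is one line once the identifications are spelled out. If anything, the mild subtlety is purely notational: making sure the roles of $c,d$ in Corollary \ref{cor:addcommweightsplusCab} are assigned so that the non-vanishing hypothesis $dc \neq 0$ matches the given hypothesis $\nu \wed \w \neq 0$, and then reading off $\w \wed \nu = C\, \nu \wed \w$ rather than the reverse. Accordingly I would keep the proof to a single short paragraph: invoke Lemma \ref{lem:monoidtomonoid} to see $\w\wed\nu, \nu\wed\w \in \Om^{(0,\bullet)}_{\hw}$ with common degree and weight, invoke graded multiplicity-freeness of $\Om^{(0,\bullet)}$ (the definition of Gelfand type), and apply Corollary \ref{cor:addcommweightsplusCab} to conclude.
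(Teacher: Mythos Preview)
Your proposal is correct and matches the paper's approach exactly: the paper states that Lemma \ref{lem:FullGelfandSwap} is a direct consequence of Corollary \ref{cor:addcommweightsplusCab}, with no further proof given. Your unpacking of the degree and weight bookkeeping via Lemma \ref{lem:monoidtomonoid} and Lemma \ref{lem:CoPMathair} is a faithful expansion of what the paper leaves implicit.
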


A very special, but very important, case of this result are the Leibniz commutation constants presented in the corollary below. These constants play a central part in our later description of the spectrum of the Dolbeault--Dirac operator associated to a CQH-space Hermitian space.

\begin{cor} \label{cor:LeibnizConstants}
For every non-harmonic element $z \in B_{\hw}$, there exist non-zero constants $\lambda_{z}, \, \zeta_{z} \in \bC$, uniquely defined by 
\begin{align*}
\big(\del z\big)z = \lambda_{z} \, z \del z, & & \big(\adel z\big)z = \z_z  \, z \adel z.
\end{align*}
We  call $\lambda_z$, and $\z_z$,  the holomorphic Leibniz constant, and anti-holomorphic Leibniz constant, of $z$ respectively.
\end{cor}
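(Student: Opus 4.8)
The plan is to apply Lemma \ref{lem:FullGelfandSwap} twice, once for $\del z$ and once for $\adel z$, with the role of $\w$ played by $z$ itself (a $(0,0)$-form, hence trivially in $\Om^{(0,\bullet)}_{\hw}$ since $z \in B_{\hw}$) and the role of $\nu$ played by $\del z$, respectively $\adel z$. First I would verify the hypotheses of Lemma \ref{lem:FullGelfandSwap}. Since $z \in B_{\hw}$ is a highest weight vector, Lemma \ref{lem:monoidtomonoid} (or more precisely the fact that $\del$ and $\adel$ are $A$-comodule maps of definite bidegree, so they send highest weight vectors to highest weight vectors) shows that $\del z \in \Om^{(1,0)}_{\hw}$ and $\adel z \in \Om^{(0,1)}_{\hw}$. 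In particular $z \del z$ and $z \adel z$ lie in $\Om^\bullet_{\hw}$. To invoke the lemma for the anti-holomorphic constant we need $z \wed (\adel z) \neq 0$, i.e.\ the product $z \, \adel z$ is non-zero; this is exactly Lemma \ref{lem:nonzerozaction}, which guarantees that left multiplication by a non-zero element of $B$ never kills a non-zero form. Since $z$ is assumed non-harmonic, $\adel z \neq 0$ (a harmonic-free element of $B$ has $\adel z \neq 0$ by Corollary \ref{cor:harmonictoclass}, as $\ker(\adel) \cap B$ consists only of $\H_{\adel}^{(0,0)}$-part together with $\adel\Om^\bullet \cap B = 0$), and hence $z\,\adel z \neq 0$ by Lemma \ref{lem:nonzerozaction}. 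The same reasoning with the opposite complex structure (equivalently, applying the $*$-map via Lemma \ref{lem:AntiModoidBijection}, or just repeating the argument for $\del$) gives $\del z \neq 0$ and $z \, \del z \neq 0$.

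With these non-vanishing facts in hand, Lemma \ref{lem:FullGelfandSwap} applied to the pair $(\w,\nu) = (z, \adel z)$ produces a unique scalar, which we name $\z_z$, with $z \wed (\adel z) = \z_z \, (\adel z) \wed z$, equivalently $(\adel z) z = \z_z^{-1} z \adel z$; absorbing the inverse into the notation (or applying the lemma in the other order, $(\w,\nu) = (\adel z, z)$, which requires $z \wed (\adel z) \neq 0$, already checked) yields exactly $(\adel z) z = \z_z \, z \adel z$ as stated. Likewise, applying Lemma \ref{lem:FullGelfandSwap} with $(\w,\nu) = (\del z, z)$ — using $z \wed (\del z) \neq 0$ — gives the unique $\lambda_z$ with $(\del z) z = \lambda_z \, z \del z$. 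Uniqueness of each constant is inherited directly from the uniqueness clause of Lemma \ref{lem:FullGelfandSwap} (itself coming from Corollary \ref{cor:addcommweightsplusCab} and ultimately Schur's lemma, which forces a degree-$0$ comodule endomorphism of an irreducible sub-comodule to be a scalar).

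Finally I would argue that the constants are non-zero. If $\lambda_z = 0$ then $(\del z) z = 0$, i.e.\ right multiplication by $z \in B$ annihilates the non-zero form $\del z$; but Lemma \ref{lem:nonzerozaction} asserts precisely that $\omega z \neq 0$ for non-zero $\omega \in \Om^\bullet$ and non-zero $z \in B$, a contradiction. The same applies to $\z_z$ via $(\adel z) z \neq 0$. The only genuine subtlety — the step I expect to require the most care — is confirming that $z$ non-harmonic indeed forces both $\del z \neq 0$ and $\adel z \neq 0$ (not merely $\exd z \neq 0$): this uses that on $B = \Om^0$ one has $\del z = \adel z$ would be false in general, so one must instead note that if $\adel z = 0$ then, by Corollary \ref{cor:harmonictoclass}, $z \in \ker(\adel) = \H_{\adel} \oplus \adel\Om^\bullet$, and since $z$ has degree $0$ while $\adel\Om^\bullet$ meets $\Om^0$ trivially, $z$ would be $\adel$-harmonic; combined with the analogous statement for $\del$ and the Kähler identity $\DEL_\del = \DEL_\adel$ of \eqref{eqn:identityofLAPS} (or rather, in the Hermitian generality, a direct check that $z$ being both in $\ker\del$ and its adjoint side fails), one sees a non-harmonic $z$ cannot have $\del z = 0$ or $\adel z = 0$. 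Everything else is a direct bookkeeping application of the results already established.
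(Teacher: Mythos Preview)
Your approach is essentially the same as the paper's: invoke Lemma \ref{lem:nonzerozaction} to guarantee that $z\,\adel z$ (and symmetrically $z\,\del z$) are non-zero, then apply Lemma \ref{lem:FullGelfandSwap} (for $\lambda_z$, via its $\Om^{(\bullet,0)}$-analogue supplied by Lemma \ref{lem:FOURGTS}) to extract the unique constants, with their non-vanishing following again from Lemma \ref{lem:nonzerozaction}. The paper's proof is a terse two-line version of exactly this and does not separately address the subtlety you flag about inferring $\del z \neq 0$ from the non-harmonic hypothesis; your extra discussion there is more careful than the paper, though in the intended applications the K\"ahler identity \eqref{eqn:identityofLAPS} makes the point moot.
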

\begin{proof}
By Lemma \ref{lem:nonzerozaction}, the product $z \adel z$ is non-zero.  Since the complex structure is of Gelfand type by assumption,  Lemma \ref{lem:FullGelfandSwap} implies the existence and uniqueness of the constants $\lambda_z$ and $\z_z$.
\end{proof}

\begin{cor}\label{cor:QLEIBNIZ}
For any $z \in B_{\hw}$, with Leibniz constant $\lambda_z$, and $l \in \bN_0$, it holds that
\bet

\item   $\del z^l = (l)_{\lambda_z} \, z^{l-1} \del z$,


\item   $\adel z^l = (l)_{\z_z} \, z^{l-1} \adel z$,


\eet
where $(l)_{\lambda_z}$ and $(l)_{\z_z}$ are quantum integers, as presented in Appendix  \ref{app:quantumintegers}.
\end{cor}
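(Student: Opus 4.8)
The plan is to prove both identities by induction on $l \in \bN_0$, treating the holomorphic case (1); the anti-holomorphic case (2) is entirely analogous with $\del$ replaced by $\adel$ and $\lambda_z$ replaced by $\z_z$. The base cases $l=0$ and $l=1$ are immediate: for $l=0$ we have $\del z^0 = \del 1 = 0 = (0)_{\lambda_z} z^{-1}\del z$ (reading the right-hand side as $0$ since the quantum integer $(0)_{\lambda_z}$ vanishes), and for $l=1$ the claim reads $\del z = (1)_{\lambda_z}\del z$, which holds since $(1)_{\lambda_z} = 1$.

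For the inductive step, assume $\del z^l = (l)_{\lambda_z}\, z^{l-1}\del z$. Then, using the graded Leibniz rule for $\del$ on the product $z^l \cdot z$ (with $z \in B = \Om^{(0,0)}$, so no sign arises),
\begin{align*}
\del z^{l+1} = \del\big(z^l z\big) = \big(\del z^l\big) z + z^l\, \del z = (l)_{\lambda_z}\, z^{l-1}\big(\del z\big) z + z^l\, \del z.
\end{align*}
Now I would apply the defining relation $\big(\del z\big) z = \lambda_z\, z\, \del z$ from Corollary \ref{cor:LeibnizConstants} to the first term, obtaining
\begin{align*}
\del z^{l+1} = (l)_{\lambda_z}\, \lambda_z\, z^{l}\, \del z + z^l\, \del z = \big(\lambda_z (l)_{\lambda_z} + 1\big)\, z^{l}\, \del z.
\end{align*}
It then remains to invoke the quantum-integer recursion $(l+1)_{\lambda_z} = \lambda_z (l)_{\lambda_z} + 1$ (the standard identity for quantum integers, as recalled in \textsection\ref{app:quantumintegers}), which gives $\del z^{l+1} = (l+1)_{\lambda_z}\, z^{l}\, \del z$, completing the induction.

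The argument is essentially routine, so there is no serious obstacle; the only point requiring a little care is the bookkeeping at $l=0$ and the interpretation of the formula when the exponent $z^{l-1}$ would be negative, which is harmless since the coefficient $(0)_{\lambda_z}$ is zero. One should also note that the relation of Corollary \ref{cor:LeibnizConstants} is stated only for non-harmonic $z$, but the statement here is phrased for arbitrary $z \in B_{\hw}$; for harmonic $z$ one has $\del z = 0$ (since $z$ harmonic forces $\del z = 0$ by \eqref{eqn:harmcap}), and both sides of (1) vanish identically, so the identity holds trivially in that case too. Thus the only genuine content is the inductive computation above, and the proof transcribes directly.
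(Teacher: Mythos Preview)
Your proof is correct and follows essentially the same inductive approach as the paper. The only cosmetic difference is that you factor $z^{l+1} = z^l \cdot z$ and use the recursion $(l+1)_{\lambda_z} = \lambda_z (l)_{\lambda_z} + 1$, whereas the paper factors $z^{l+1} = z \cdot z^l$, iterates the Leibniz relation to get $(\del z)z^l = \lambda_z^l\, z^l \del z$, and then uses $(l+1)_{\lambda_z} = \lambda_z^l + (l)_{\lambda_z}$; both are equivalent one-line computations.
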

\begin{proof}
Let us assume that the required identity  holds for some $l > 1$. Then 
\begin{align*}
\del z^{l+1} = & \, \big(\del z\big) z^l + z \del z^l 
                        =  \, \lambda^{l}_{z} \, z^l \del z + (l)_{\lambda_z} z^l \del z
                        =  \, (\lambda^l_{z} + (l)_{\lambda_z}) z^l \del z
                        =  \, (l+1)_{\lambda_z} z^l \del z. 
\end{align*}
The required formula now follows by an inductive argument. The formula for $\adel z^l$ is established analogously. 
\end{proof}


In what follows, it proves very useful to have  a simple relationship between the holomorphic and anti-holomorphic Leibniz constants. While it is not clear that such a relation exists in general,  the assumption of a certain type of self-conjugacy on zero forms is enough to imply an inverse relation between $\lambda_z$ and $\z_z$.

\begin{defn}
We say that a quantum homogeneous space $B = \O_q(G)^{\co(H)}$ is {\em self-conjugate} if every irreducible sub-comodule $V \sseq B$  is a {\em $*$-closed subspace},  which is to say $V =  \{ v^* \, |\, v \in V\}$.
\end{defn}

The following technical lemma serves as a useful means of checking $*$-invariance of an irreducible submodule $V$ in terms of the  highest weight vectors of  $V$.

\begin{lem}
For a quantum homogeneous space $B = \O_q(G)^{\co(H)}$, and an element $z \in B_{\hw}$, the irreducible sub-comodule $U_q(\frak{g})z$ is a $*$-closed subspace if and only if $z^* \in U_q(\frak{g})z$.
\end{lem}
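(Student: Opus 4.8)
The plan is to prove the equivalence by showing that $*$-invariance of $U_q(\frak{g})z$ is equivalent to the single membership statement $z^* \in U_q(\frak{g})z$, using the fact that $z^*$ is a lowest weight vector of an irreducible comodule and that highest/lowest weight vectors generate irreducibles. The forward direction is immediate: if $V := U_q(\frak{g})z$ is $*$-closed, then $z^* \in \{v^* \mid v \in V\} = V$, since $z \in V$. The substance is the converse.

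For the converse, assume $z^* \in V = U_q(\frak{g})z$. First I would invoke Lemma \ref{lem:AntiModoidBijection}: since $z \in B_{\hw}$, its image $z^*$ lies in $B_{\lw}$, so $z^*$ is a lowest weight vector. Next, I would observe that $V^* := \{v^* \mid v \in V\}$ is again a left $\O_q(G)$-sub-comodule of $B$ (the image of an irreducible sub-comodule under the $*$-map is an irreducible sub-comodule, as already used in the proof of Lemma \ref{lem:FOURGTS}), and it is irreducible. Now $z \in V$ implies $z^* \in V^*$; but we are assuming $z^* \in V$ as well. Since $z^*$ is a lowest weight vector, the $U_q(\frak{g})$-submodule it generates is all of the irreducible comodule containing it — and $z^*$ is contained in both the irreducible comodule $V$ and the irreducible comodule $V^*$. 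Two irreducible sub-comodules of $B$ sharing a nonzero vector must coincide, so $V = V^*$, which is exactly the statement that $V$ is a $*$-closed subspace.

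Let me restate the converse argument cleanly in LaTeX. Since $z \neq 0$ (it is a generator of an irreducible comodule) and $z^* \in V$, and $z^*$ is a lowest weight vector of $V^* \ni z^*$, the comodule $V^*$ is the unique irreducible sub-comodule of $B$ containing the vector $z^*$; but $V$ is an irreducible sub-comodule of $B$ containing $z^*$, hence $V = V^*$. Uniqueness here follows because if $W_1, W_2$ are two irreducible sub-comodules of $B$ with $0 \neq w \in W_1 \cap W_2$, then $W_1 \cap W_2$ is a nonzero sub-comodule of each, so by irreducibility $W_1 = W_1 \cap W_2 = W_2$.

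\begin{proof}
($\Rightarrow$) If $V := U_q(\frak{g})z$ is $*$-closed, then since $z \in V$ we have $z^* \in \{v^* \mid v \in V\} = V$.

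($\Leftarrow$) Suppose $z^* \in V$. The image $V^* := \{v^* \mid v \in V\}$ is again a left $\O_q(G)$-sub-comodule of $B$, and it is irreducible since $V$ is. As $z \in V$, we have $z^* \in V^*$, so $z^*$ is a nonzero element of $V \cap V^*$. Since $V$ and $V^*$ are irreducible sub-comodules of $B$ sharing the nonzero vector $z^*$, their intersection $V \cap V^*$ is a nonzero sub-comodule of each, and irreducibility forces $V = V \cap V^* = V^*$. Hence $V = V^*$, which is precisely the assertion that $U_q(\frak{g})z$ is a $*$-closed subspace.
\end{proof}

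The main obstacle — really the only subtle point — is verifying that two irreducible sub-comodules of $B$ that share a nonzero vector must be equal; this rests on the fact that $B$ is a comodule over a \emph{coalgebra}, so that sums and intersections of sub-comodules are again sub-comodules, which is standard but worth stating. The appeal to Lemma \ref{lem:AntiModoidBijection} (that $z^* \in B_{\lw}$) is not strictly needed for the bare equivalence above, but it clarifies why the argument is natural and will be the form in which this lemma gets used downstream when relating $\lambda_z$ and $\zeta_z$.
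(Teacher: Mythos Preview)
Your proof is correct but takes a different route from the paper. The paper proves the converse by an explicit Hopf $*$-algebra computation: writing any $v \in V$ as $v = X \tr z$ for some $X \in U_q(\frak{g})$, it shows directly that $v^* = S^{-3}(X^*) \tr z^*$, so once $z^* \in V$ one gets $v^* \in U_q(\frak{g})z^* \subseteq V$ for every $v$. Your argument instead observes that $V^* := \{v^* \mid v \in V\}$ is itself an irreducible sub-comodule and then uses that two irreducible sub-comodules sharing a nonzero vector must coincide. Your approach is cleaner and avoids the antipode calculation entirely; what the paper's computation buys is the explicit transformation law $v^* = S^{-3}(X^*) \tr z^*$, which is exactly the ingredient reused in the proof of Proposition~\ref{prop:selfconjLeibdual} to pass from the identity for $\del z$ to the identity for $\adel z$. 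So both arguments are sound, but the paper's version is doing double duty downstream.
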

\begin{proof}
Since $z$ is a highest weight vector of the irreducible comodule $U_q(\frak{g})z$, for every $v \in U_q(\frak{g})z$, there exists an $X \in U_q(\frak{g})$, such that $X \tr z = v$. Note next that 
\begin{align*}
v^* = (X \tr z)^* = \big(\la S(X), z_{(1)} \ra z_{(2)} \big)^* = \ol{\big< S(X), z_{(1)} \big>} z_{(2)}^*.
\end{align*}
Recalling now that we have a dual pairing of Hopf $*$-algebras, we see that 
\begin{align*}
\ol{\la S(X), z_{(1)} \ra} z_{(2)}^* = &  \big< S^2(X)^*, z_{(1)}^* \big> z_{(2)}^* \\
                                           = & \big< S^{-2}(X^*), z_{(1)}^* \big> z_{(2)}^* \\
                                           = & \big< S\big(S^{-3}(X^*)\big), z_{(1)}^* \big> z_{(2)}^*\\
                                           = & \, S^{-3}(X^*) \tr z^*. 
\end{align*}
Thus if we assume that $z^* \in U_q(\frak{g})z$, then we necessarily have $v^*  \in U_q(\frak{g})z$, for all elements $v~\in~U_q(\frak{g}) z$, implying that $U_q(\frak{g})z$ is $*$-closed. The opposite implication is obvious.
\end{proof}

We finish by showing that the assumption of self-conjugacy does indeed imply an inverse relation between the Leibniz constants $\lambda_z$ and $\z_z$.

\begin{prop} \label{prop:selfconjLeibdual}
Let $\text{\bf H}$  be a self-conjugate CQH-Hermitian space of Gelfand type. For any $z \in B_{\hw}$ with  real Leibniz constants,  it holds that 
\begin{align*}
\big(\adel z\big) z = \lambda_z \inv z \adel z,   & & \text{ or equivalently,  } & & \z_z = \lambda_z \inv.
\end{align*}
\end{prop}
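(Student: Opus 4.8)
The plan is to exploit the self-conjugacy hypothesis to transport the holomorphic Leibniz identity $(\del z)z = \lambda_z \, z\del z$ across the $*$-map and identify the result with the anti-holomorphic Leibniz identity. The key observation is that, by (\ref{stardel}), the $*$-map interchanges $\del$ and $\adel$, so applying $*$ to the holomorphic identity ought to produce an identity relating $\adel z^*$ and $z^* \adel z^*$. The subtlety is that $z^*$ is a \emph{lowest} weight vector, not a highest weight one, so the constant appearing is governed by lowest weight data; self-conjugacy is exactly what lets us pull this back to the highest weight vector $z$.

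\textbf{Step 1: Apply $*$ to the holomorphic Leibniz identity.} Starting from $(\del z) z = \lambda_z \, z \del z$, take $*$ of both sides. Using that $*$ is an anti-algebra map, that $\del z$ and $z$ have degrees $1$ and $0$ respectively, and the identity $\del(\w^*) = (\adel \w)^*$ from (\ref{stardel}), one obtains a relation of the form $z^* \, \adel(z^*) = \ol{\lambda_z} \, \adel(z^*)\, z^*$. Since $\lambda_z$ is assumed real, $\ol{\lambda_z} = \lambda_z$, so this reads $z^* \adel z^* = \lambda_z \, (\adel z^*) z^*$, i.e.\ $(\adel z^*) z^* = \lambda_z^{-1} z^* \adel z^*$ once we know both sides are non-zero (which follows from Lemma \ref{lem:nonzerozaction} and Lemma \ref{lem:AntiModoidBijection}, since $z^* \in B_{\lw}$ is non-zero).

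\textbf{Step 2: Transfer from $z^*$ back to $z$.} Here self-conjugacy enters. By hypothesis the irreducible sub-comodule $U_q(\frak{g})z$ is $*$-closed, so $z^*$ lies in $U_q(\frak{g})z$; by Lemma \ref{lem:AntiModoidBijection}, $z^*$ is a lowest weight vector of this comodule. Applying the lowest-weight analogue of the Gelfand-type argument — i.e.\ running Lemma \ref{lem:CoPMathair} and Corollary \ref{cor:addcommweightsplusCab} with lowest weight vectors in place of highest weight ones, which is legitimate since the opposite monoid structure on $\Om^\bullet_{\lw}$ is available via Lemma \ref{lem:monoidtomonoid} — the anti-holomorphic Leibniz constant computed from $z^*$ must agree with the one computed from $z$, because $\adel z$ and $\adel z^*$ generate proportional sub-comodules of the same isotypic component (the complex structure being of Gelfand type, hence $\Om^{(0,1)}$-multiplicity-free in the relevant degree). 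Hence $\z_z$, defined by $(\adel z) z = \z_z \, z \adel z$, equals the constant $\lambda_z^{-1}$ obtained in Step 1, giving $\z_z = \lambda_z^{-1}$ as claimed.

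\textbf{Main obstacle.} The delicate point is Step 2: making rigorous the passage from the identity for $z^*$ (a lowest weight vector) to the identity for $z$ (a highest weight vector). One must check that the scalar relating $(\adel z)z$ to $z\adel z$ is genuinely insensitive to whether one works with $z$ or with $z^*$ — this requires that $\adel z$ and $(\adel z^*)$, together with the relevant product forms, sit inside a multiplicity-free isotypic piece so that Schur's lemma (via Lemma \ref{lem:DiagComoduleMaps}) forces proportionality with a \emph{single} universal constant. I would expect the cleanest route is to observe directly that $z \adel z$ and $(\adel z)z$ are both highest weight vectors of the same weight and degree in $\Om^{(0,\bullet)}$, apply $*$ to the already-established relation $(\adel z^*)z^* = \lambda_z^{-1} z^* \adel z^*$, and read off $\z_z = \lambda_z^{-1}$ by uniqueness in Corollary \ref{cor:LeibnizConstants}; the only thing to verify carefully is that no extra scalar sneaks in from the degree bookkeeping when $*$ is applied to the degree-$1$ forms $\adel z$, $\adel z^*$, which is a routine sign check that happens to be trivial here since only one factor has positive degree.
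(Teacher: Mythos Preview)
Your Step 1 is correct and matches the paper's proof exactly: applying $*$ to $(\del z)z = \lambda_z z\del z$ yields $(\adel z^*)z^* = \lambda_z^{-1} z^*\adel z^*$.

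Step 2, however, has a genuine gap. Your ``cleanest route'' in the Main Obstacle paragraph---applying $*$ once more to the $z^*$ identity---is circular: the computation $\big((\adel z^*)z^*\big)^* = z\del z$ and $(z^*\adel z^*)^* = (\del z)z$ simply returns the holomorphic identity you started with, so nothing new is learned. The multiplicity-free argument in the body of Step 2 is in the right spirit but is not a proof. That $\adel z$ and $\adel z^*$ lie in the same irreducible sub-comodule of $\Om^{(0,1)}$ (true, since $\adel$ is a comodule map and $z^*\in U_q(\frak{g})z$) does not by itself force the scalar relating $(\adel z^*)z^*$ to $z^*\adel z^*$ to coincide with the scalar relating $(\adel z)z$ to $z\adel z$; you would need to know that $z\adel z$ and $z^*\adel z^*$ lie in a \emph{common} irreducible of $\Om^{(0,1)}$, which amounts to self-conjugacy of that irreducible---something not assumed.

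The paper closes this gap differently and more directly. Since $z^*\in U_q(\frak{g})z$ by self-conjugacy, there exists $X\in U_q(\frak{g})$ with $X\tr z^* = z$; one then transports the $z^*$ identity to the $z$ identity by acting with the appropriate element of $U_q(\frak{g})$ on both sides. The underlying mechanism is that the map $a\otimes b\mapsto (\adel a)b - \lambda_z^{-1}a\adel b$ from $B\otimes B$ to $\Om^{(0,1)}$ is a left $U_q(\frak{g})$-module map; it vanishes on $z^*\otimes z^*$ by Step 1, and since $z\otimes z$ lies in the cyclic submodule generated by the lowest weight vector $z^*\otimes z^*$ (both sit in the Cartan component $V_{2\,\mathrm{wt}(z)}\subseteq V\otimes V$), it vanishes on $z\otimes z$ as well. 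This is the missing idea your Step 2 needs.
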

\begin{proof}
Applying the $*$-map to the identity $\big(\del z\big) z = \lambda_z z \del z$ gives us the new identity 
\bal \label{eqn:starLeib}
 \lambda_z  \inv z^* \adel z^*  =  \big(\adel z^* \big) z^*.
\eal
Lemma \ref{lem:AntiModoidBijection}, combined with our assumption that  $\text{\bf H}$ is self-conjugate, implies that $z^*$ is a lowest weight vector of the irreducible module $U_q(\frak{g}) z$. Thus there exists an $X \in U_q(\frak{g})$ such that $X \tr z^* = z$. Note next that 
\begin{align*}
X^2 \tr \big(z^* \adel z^* \big) = \big(X \tr z^* \big)\adel \big(X \tr z^*\big) = z \adel z,
\end{align*} 
and that analogously, $X^2 \tr \big((\adel z^*)z^*\big) = (\adel z) z$. Thus applying $X^2$ to both sides of (\ref{eqn:starLeib}) gives  the required identity
$
 (\adel z) z = \lambda_z \inv z \adel z.
$
\end{proof}

\subsection{Laplacian Eigenvalues for CQH-Hermitian Spaces of Gelfand Type}

In the final subsection of this section we show that for a CQH-Hermitian spaces of Gelfand type, Hodge decomposition is a decomposition of $B_\hw$-spaces. Combining this result with the Hodge decomposition of the Laplacian,  we compute the eigenvalues of a general sequence of eigenvectors of the form  $z^l \w$, where $z \in B_\hw$, and $\w \in \adel \Om^{(0,\bullet)}_{\hw}$. In the next section, our strategy is to decompose the point spectrum of the Laplacian $\sigma_P(\Delta_{\adel})$ into a finite union of such sequences and to use this to conclude that,  under sufficient assumptions, $\s_P\big(\DEL_{\adel}\big) \to \infty$. As discussed in \textsection \ref{section:QHSPs}, the general ideas of this section can be extended to the more general weak Gelfand setting with sufficient care. 

Note that in this subsection we make heavy use of the quantum integer notation as presented in Appendix  \ref{app:quantumintegers}.

\begin{lem}\label{lem:ABscalars}  Let $\text{\bf H}  = \big(\bf{M}, \Om^{(\bullet,\bullet)}, \s \big)$ be a CQH-Hermitian space of  Gelfand type, and $z \in B_{\hw}$. Then, for every  $\w \in \adel \Om^{(0,\bullet)}_{\hw}$,   there exists unique  scalars $A_{z,\w}, \, B_{z,\w} \in \bC$, such that 
\bet

\item   $\del z \wed \ast_\s(\w) = A_{z,\w} \, z \big(\del \circ \ast_\s (\w)\big)$,

\item $ \adel z \wed \adel^\dagger\!  \w = B_{z,\w} z \big(\adel^{\,} \adel^\dagger\! (\w)\big)$. 

\eet
\end{lem}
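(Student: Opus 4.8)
The plan is to exhibit both identities as instances of the multiplicity-free proportionality principle from Corollary~\ref{cor:addcommweightsplusCab} (via Lemma~\ref{lem:FullGelfandSwap}), after checking that the forms in question are all highest weight vectors of the same weight and degree, and that the right-hand side is non-zero.

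For part (1), first I would record the weights and degrees of the four forms involved. Since $z \in B_{\hw}$ is a degree-$0$ highest weight vector and $\del$ is a left $\O_q(G)$-comodule map (covariance of the complex structure), $\del z$ is a highest weight vector with $\mathrm{wt}(\del z) = \mathrm{wt}(z)$ and $|\del z| = 1$. By Lemma~\ref{lem:Hodgeproperties}, the Hodge operator $\ast_\s$ is a left $A$-comodule map, so $\ast_\s(\w)$ is a highest weight vector with $\mathrm{wt}\big(\ast_\s(\w)\big) = \mathrm{wt}(\w)$; it lands in $\Om^{(\bullet,n)}$, and applying $\del$ keeps us in $\Om^{(\bullet,n)}$, so $\del\circ\ast_\s(\w)$ is highest weight with the same weight as $\w$. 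Now by Example~\ref{eg:calcmonoid}, the monoid $\Om^\bullet_{\hw}$ acts on $\Om^{(\bullet,n)}_{\hw}$, so both $\del z \wed \ast_\s(\w)$ and $z\big(\del\circ\ast_\s(\w)\big)$ are highest weight forms, each of weight $\mathrm{wt}(z) + \mathrm{wt}(\w)$ and degree $1 + |\ast_\s(\w)|$. To invoke Lemma~\ref{lem:FullGelfandSwap} (or directly Lemma~\ref{lem:CoPMathair}) we need the target form $z\big(\del\circ\ast_\s(\w)\big)$ to be non-zero; but $\del\circ\ast_\s(\w)$ being non-zero is not automatic, so the cleanest route is: if $\del\circ\ast_\s(\w) = 0$ then both sides are forced to vanish (note $\del z \wed \ast_\s(\w)$ and $z\del\ast_\s(\w)$ are related by a scalar swap whenever either side is non-zero, and if $\del\ast_\s(\w)=0$ one shows $\del z\wed\ast_\s(\w)$ is also a highest weight $(\bullet,n)$-form of the appropriate weight lying in the same isotypical component, hence proportional to $0$), and we may take $A_{z,\w}=0$; otherwise apply Lemma~\ref{lem:CoPMathair} to the pair $a = \del z\wed\ast_\s(\w)$, $b = z\big(\del\circ\ast_\s(\w)\big)$ in the graded multiplicity-free comodule $\Om^{(\bullet,n)}$, yielding the unique scalar $A_{z,\w}$. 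Uniqueness in all cases follows from Lemma~\ref{lem:nonzerozaction} (which guarantees $z\cdot(\text{non-zero})\neq 0$) together with the uniqueness clause of Lemma~\ref{lem:CoPMathair}.

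Part (2) is entirely parallel. Here $\adel z$ is a highest weight $(0,1)$-form of weight $\mathrm{wt}(z)$; and $\adel^\dagger\w$ for $\w \in \adel\Om^{(0,\bullet)}_{\hw}$ is highest weight since $\adel^\dagger = -\ast_\s\circ\del\circ\ast_\s$ is a composition of left $A$-comodule maps. Applying $\adel$ again preserves the highest weight property. Using the monoid action $\Om^{(0,\bullet)}_{\hw}\times\Om^{(0,\bullet)}_{\hw}\to\Om^{(0,\bullet)}_{\hw}$ from Example~\ref{eg:calcmonoid} and the $B_{\hw}$-action on $\Om^{(0,b)}_{\hw}$, both $\adel z\wed\adel^\dagger\w$ and $z\big(\adel\,\adel^\dagger\w\big)$ are highest weight elements of $\Om^{(0,\bullet)}$ of weight $\mathrm{wt}(z)+\mathrm{wt}(\w)$ and the same degree. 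If $\adel\,\adel^\dagger\w = 0$ we set $B_{z,\w}=0$ (both sides vanish, as above); otherwise Lemma~\ref{lem:CoPMathair}, applied in the graded multiplicity-free comodule $\Om^{(0,\bullet)}$, provides the unique $B_{z,\w}$, with uniqueness again secured by Lemma~\ref{lem:nonzerozaction}.

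I expect the main obstacle to be the bookkeeping around the possibility that $\del\circ\ast_\s(\w)$ or $\adel\,\adel^\dagger\w$ vanishes: one must argue carefully that in that degenerate case the left-hand side vanishes too, so that the asserted proportionality still holds (with the constant zero) and the scalar is still unique in the sense required downstream. This amounts to checking that $\del z\wed\ast_\s(\w)$ and $z\,\del\ast_\s(\w)$ lie in the same one-dimensional isotypical component of the relevant graded multiplicity-free comodule and are therefore proportional regardless — which is exactly what Lemma~\ref{lem:FullGelfandSwap} gives once one knows the degrees and weights match. All the representation-theoretic inputs (comodule-map property of $\del$, $\adel$, $\ast_\s$, $\adel^\dagger$; preservation of highest weight vectors under the calculus monoid action) are already available in the excerpt, so no new machinery is needed.
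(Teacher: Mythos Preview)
Your overall strategy matches the paper's: verify that both sides are highest weight vectors of the same weight and degree in a graded multiplicity-free comodule ($\Om^{(\bullet,n)}$ for part~1, $\Om^{(0,\bullet)}$ for part~2), then invoke Lemma~\ref{lem:CoPMathair}. That part is fine.

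The gap is in your treatment of the ``degenerate case''. You never actually show that if $\del\circ\ast_\s(\w)=0$ then $\del z\wed\ast_\s(\w)=0$; your parenthetical argument is circular (it assumes the proportionality you are trying to establish) and the appeal to ``lying in the same isotypical component, hence proportional to $0$'' does not work: multiplicity-freeness tells you the highest weight space has dimension at most one, but if $z\big(\del\circ\ast_\s(\w)\big)$ happens to be the zero vector, nothing prevents $\del z\wed\ast_\s(\w)$ from being a non-zero element of that space. Moreover, if the right-hand side were zero the scalar would not be unique, contradicting the statement. The same objection applies verbatim to part~2.

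The fix, which is what the paper does, is to use the hypothesis $\w\in\adel\Om^{(0,\bullet)}_{\hw}$ to rule out the degenerate case entirely. By Lemma~\ref{lem:HodgeOpDecomp}, $\ast_\s(\w)\in\del^\dagger\Om^{\bullet}$; since $\w\neq 0$ and $\ast_\s$ is an isomorphism, $\ast_\s(\w)$ is a non-zero element of $\del^\dagger\Om^{\bullet}$, and Hodge decomposition (specifically $\ker(\del)=\H_\del\oplus\del\Om^\bullet$, which is orthogonal to $\del^\dagger\Om^\bullet$) forces $\del\circ\ast_\s(\w)\neq 0$. Then Lemma~\ref{lem:nonzerozaction} gives $z\big(\del\circ\ast_\s(\w)\big)\neq 0$, and Lemma~\ref{lem:CoPMathair} applies cleanly. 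For part~2 the analogous input is Proposition~\ref{prop:ADELISO}: on $\adel\Om^{(0,\bullet)}$ the map $\adel^\dagger$ is injective and $\adel\adel^\dagger$ restricts to an isomorphism, so $\adel\,\adel^\dagger\w\neq 0$.
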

\begin{proof}		 ~~~
\bet
\item 
Note first that both sides of the identity are highest weight vectors. Next we see that 
\begin{align*}
K_i \tr(\del z \wed \ast_{\s}(\w)) = & \, \del \big(K_i \tr z \big) \wed \ast_{\s}\big(K_i \tr \w \big) \\
= & \, q^{(\mathrm{wt}(z),\alpha_i) + (\mathrm{wt}(\w),\alpha_i)} \del z \wed \ast_{\s}(\w).
\end{align*}
Analogously, we have that $K_i \tr\big(z (\del \circ \ast_\s (\w)\big) = q^{(\mathrm{wt}(z),\alpha_i) + (\mathrm{wt}(\w),\alpha_i)} z \big(\del \circ \ast_\s (\w)\big)$, meaning that both forms are highest weight vectors of the same weight.  Moreover,  both forms are homogeneous elements, of the same degree,  of the graded comodule $\Om^{(\bullet,n)}$. Now $\Om^{(\bullet,n)}$ is multiplicity-free by  Lemma \ref{lem:HermitianDolbeaultSymm}. Thus if $z  \big(\del \circ \ast_\s(\w)\big) \neq 0$, then the  existence of the required constant would follow from Lemma \ref{lem:CoPMathair}. By Lemma \ref{lem:HodgeOpDecomp} the form  $\ast_{\s}(\w)$ is contained in $\del^\dagger \Om^{\bullet}$, and so, $\adel \circ  \ast_\s (\w) \neq 0$. Thus it follows from Lemma \ref{lem:nonzerozaction} that the product $z \big(\del \circ \ast_\s(\w)\big) \neq 0$ as required.

\item  The proof is analogous to the proof of 1, and so, is omitted. 
\qedhere
\eet
\end{proof}

We now use the existence of the constant $A_{z,\w}$ presented in the above lemma to establish an identity needed for the proofs of  Proposition \ref{prop:HodgeClosure} and Theorem \ref{thm:TheFormula}.

\begin{cor} \label{cor:actionofadeldagger}
It holds that 
\begin{align*}
 \adel^{\dagger}\!(z^l\w) = \big(A_{z,\w} (l)_{\lambda_z}  + 1\big) z^l \adel^\dagger\!(\w).
\end{align*}
\end{cor}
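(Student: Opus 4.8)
The plan is to compute $\adel^\dagger(z^l\w)$ directly using the formula $\adel^\dagger = -\ast_\s \circ \del \circ \ast_\s$ together with the Leibniz rule for $\del$ and the proportionality constant $A_{z,\w}$ from Lemma \ref{lem:ABscalars}. First I would write $\adel^\dagger(z^l\w) = -\ast_\s\big(\del(\ast_\s(z^l\w))\big)$. Since $z \in B = \Om^0$ and $\ast_\s$ is a left $B$-module map on forms of the appropriate bidegree (being built from the Lefschetz operator, which is a $B$-bimodule map, cf. the covariant case remarks after the definition of K\"ahler structure), we have $\ast_\s(z^l\w) = z^l \ast_\s(\w)$.

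Next I would apply $\del$ and use the graded Leibniz rule: $\del(z^l \ast_\s(\w)) = (\del z^l)\wed \ast_\s(\w) + z^l\,\del\!\big(\ast_\s(\w)\big)$, with no sign since $z^l$ has degree $0$. By Corollary \ref{cor:QLEIBNIZ}(1), $\del z^l = (l)_{\lambda_z}\, z^{l-1}\del z$, so the first term becomes $(l)_{\lambda_z}\, z^{l-1}\, \del z \wed \ast_\s(\w)$, and then Lemma \ref{lem:ABscalars}(1) converts $\del z \wed \ast_\s(\w)$ into $A_{z,\w}\, z\,\big(\del\circ\ast_\s(\w)\big)$. Collecting terms gives $\del(z^l\ast_\s(\w)) = \big(A_{z,\w}(l)_{\lambda_z} + 1\big)\, z^l\, \del\!\big(\ast_\s(\w)\big)$. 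Applying $-\ast_\s$ and again using that $\ast_\s$ is a left $B$-module map, this equals $\big(A_{z,\w}(l)_{\lambda_z} + 1\big)\, z^l\,\big({-}\ast_\s\circ\,\del\circ\ast_\s\big)(\w) = \big(A_{z,\w}(l)_{\lambda_z} + 1\big)\, z^l\, \adel^\dagger(\w)$, which is the claimed identity.

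The main obstacle is justifying that $\ast_\s$ commutes with left multiplication by the zero-form $z$ on the relevant homogeneous components — i.e. that $\ast_\s(zv) = z\,\ast_\s(v)$. This follows because, in the covariant Hermitian setting, $\ast_\s$ is defined via the Weil formula in terms of the Lefschetz operator $L = \s\wed(-)$, which is a $B$-bimodule map (here $\s$ is central), and the primitive decomposition of $zv$ is obtained from that of $v$ by left multiplication; one must check this is compatible with the scalar coefficients $(-1)^{k(k+1)/2}i^{a-b}\frac{j!}{(n-j-k)!}$ appearing in Definition \ref{defn:HDefn}, which depend only on the bidegree and hence are unchanged by multiplication by a zero-form. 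A clean alternative, avoiding this point entirely, is to run the computation one step earlier: note $\adel^\dagger(z^l\w) = \adel^\dagger(z^l\w)$ and instead observe that the identity to be proved is really a statement about the action of the degree-$0$ left $A$-comodule map $\adel^\dagger$ on the highest weight vector $z^l\w \in \Om^{(0,\bullet)}_\hw$; by Lemma \ref{lem:GelfandDiag}-type reasoning the coefficient is forced, and one pins it down by pairing against $z^l\,\adel^\dagger(\w)$ using Lemma \ref{lem:ABscalars}. Either route is short; I would present the first, inserting a one-line remark that $\ast_\s$ is a left $B$-module homomorphism on each bihomogeneous component.
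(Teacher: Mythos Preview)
Your proof is correct and follows essentially the same route as the paper: expand $\adel^\dagger = -\ast_\s\circ\del\circ\ast_\s$, use that $\ast_\s$ commutes with left multiplication by zero-forms, apply the quantum Leibniz rule $\del z^l = (l)_{\lambda_z} z^{l-1}\del z$, and invoke Lemma~\ref{lem:ABscalars}(1) to convert $\del z \wed \ast_\s(\w)$ into $A_{z,\w}\, z\,\del(\ast_\s(\w))$. The paper applies the outer $\ast_\s$ before substituting via Lemma~\ref{lem:ABscalars} rather than after, but this is a cosmetic reordering; your explicit remark on why $\ast_\s$ is a left $B$-module map (via centrality of $\s$ and bidegree-dependence of the Weil coefficients) is a point the paper uses silently.
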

\begin{proof}
From the identity  $\adel^{\dagger} = - \ast_\s \circ \del \circ \ast_\s$, we have that 
\begin{align*}
 \adel^{\dagger}(z^l\w) = & - \ast_\s \circ \del \circ \ast_\s(z^l \w) \\
                                          = &  - \ast_\s \circ \del\big(z^l \ast_\s(\w)\big)\\
                                          = &  - \ast_\s \Big((l)_{\lambda_z} z^{l-1}\del z \wed \ast_\s(\w) + z^l \big(\del \circ \ast_\s(\w)\big)\Big)\\
                                          =  &   - (l)_{\lambda_z} z^{l-1} \ast_\s \big(\del z \wed \ast_\s(\w)\big) - z^l \big(\ast_\s \circ \, \del \circ \ast_\s(\w)\big).
\end{align*}
Recalling Lemma \ref{lem:ABscalars}, we see that there exists a scalar $A_{z,\w}$ such that
\begin{align*}
 \adel^{\dagger}\!(z^l\w)   = &   - (l)_{\lambda_z} z^{l} A_{z,\w} \big(\ast_\s \circ \, \del  \circ \ast_\s(\w)\big) + z^l \adel^\dagger\!(\w)\\
                                          = &  \,  \big(A_{z,\w}(l)_{\lambda_z}  + 1\big) z^l \adel^\dagger\!(\w),
\end{align*}
which gives us the claimed identity. \qedhere
\end{proof}

With these results in hand we are now ready to show that Hodge decomposition implies  a decomposition of highest weight forms into $B_\hw$-subspaces. 

\begin{prop} \label{prop:HodgeClosure}
For any CQH-complex space of Gelfand type, with constituent quantum homogeneous space $B$, the spaces $\adel \Om^{(0,\bullet)}_{\hw}$ and $\adel^\dagger \! \Om^{(0,\bullet)}_{\hw}$ are closed under the action of the monoid $B_{\hw}$.
\end{prop}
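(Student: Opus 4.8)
The plan is to show that for $z \in B_{\hw}$ and $\w \in \adel \Om^{(0,\bullet)}_{\hw}$, the product $z\w$ again lies in $\adel \Om^{(0,\bullet)}_{\hw}$, and similarly for $\adel^{\dagger}\!\Om^{(0,\bullet)}_{\hw}$. The first and easy observation is that $z\w$ is a highest weight vector: by Example \ref{eg:calcmonoid} the monoid $B_{\hw}$ acts on $\Om^{(a,b)}_{\hw}$, so $z\w \in \Om^{(0,\bullet)}_{\hw}$. Moreover $z\w \neq 0$ by Lemma \ref{lem:nonzerozaction}. It therefore remains only to check that $z\w$ lies in the summand $\adel \Om^{(0,\bullet)}$ of the Hodge decomposition (resp.\ in $\adel^{\dagger}\!\Om^{(0,\bullet)}$).

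For the $\adel^{\dagger}\!\Om^{(0,\bullet)}$ case this is immediate from Corollary \ref{cor:actionofadeldagger}: writing $\w = \adel^{\dagger}\nu$ for some $\nu$, one has, for any $l$, that $\adel^{\dagger}(z^l \w')$ is a scalar multiple of $z^l \adel^{\dagger}\w'$ whenever $\w' \in \adel\Om^{(0,\bullet)}_{\hw}$; applying this with $l=1$ and $\w' = \nu$ (noting $\adel^{\dagger}\nu = \w$ and that $\adel^{\dagger}$ is an $A$-comodule map of degree $-1$, so the relevant highest weight hypotheses hold — one should check $\nu$ may be taken in $\adel\Om^{(0,\bullet)}_{\hw}$, which follows from $\adel^{\dagger}$ being the $*$-Hodge conjugate of a surjection onto $\del\Om^{\bullet}$ together with Gelfand type) gives that $z\w$ is proportional to $\adel^{\dagger}(z\nu)$, hence lies in $\adel^{\dagger}\!\Om^{(0,\bullet)}$. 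Alternatively, and more cleanly, one applies the Hodge map $\ast_{\s}$: by Lemma \ref{lem:HodgeOpDecomp} we have $\ast_{\s}(\adel\Om^\bullet) = \del^{\dagger}\Om^\bullet$ and $\ast_{\s}$ is a left $A$-comodule isomorphism (Lemma \ref{lem:Hodgeproperties}), so it suffices to prove the statement for one of the two spaces and transport it; combined with the $*$-map symmetry $*\circ\Delta_{\adel} = \Delta_{\del}\circ *$ and Proposition \ref{prop:ADELISO}, closure of one summand under $B_{\hw}$ forces closure of the other.

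For the $\adel\Om^{(0,\bullet)}_{\hw}$ case, the argument is dual. Write $\w = \adel\eta$ with $\eta \in \adel^{\dagger}\!\Om^{(0,\bullet)}_{\hw}$ (using Proposition \ref{prop:ADELISO}, which gives the isomorphism $\adel : \adel^{\dagger}\Om^\bullet \to \adel\Om^\bullet$, so one may pick $\eta$ a highest weight preimage; Gelfand type guarantees this preimage is again highest weight). By Corollary \ref{cor:QLEIBNIZ} one has the Leibniz-type expansion for $\adel$ acting on products with powers of zero forms; applied here, $\adel(z\eta) = (\adel z)\wed\eta + z\,\adel\eta$, and using Lemma \ref{lem:FullGelfandSwap} to reorder $(\adel z)\wed\eta$ against $z\,\adel\eta = z\w$ (both highest weight, of equal weight and degree, by the weight bookkeeping of Lemma \ref{lem:monoidtomonoid}) one expresses $\adel(z\eta)$ as a scalar multiple of $z\w$. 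Since $z\eta \in \Om^{(0,\bullet)}$, this exhibits $z\w$ (up to a nonzero scalar, nonzero by Lemma \ref{lem:nonzerozaction}) as an element of $\adel\Om^{(0,\bullet)}$, and being already known to be highest weight, it lies in $\adel\Om^{(0,\bullet)}_{\hw}$.

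The main obstacle is the bookkeeping around highest weight preimages: one needs to know that the isomorphisms $\adel$ and $\adel^{\dagger}$ of Proposition \ref{prop:ADELISO}, being degree $\pm 1$ left $A$-comodule maps, restrict to bijections on highest weight vectors, so that $\w = \adel\eta$ (resp.\ $\w = \adel^{\dagger}\nu$) can be solved with $\eta$ (resp.\ $\nu$) highest weight. This is where Gelfand type is essential — it is exactly the input that makes Lemma \ref{lem:DiagComoduleMaps} (Schur's lemma on multiplicity-free comodules) applicable, so that an irreducible sub-comodule in the image has a unique irreducible preimage on which the map is a scalar, and highest weight vectors map to highest weight vectors. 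Once this is pinned down, the rest is the scalar-reordering calculus of Corollaries \ref{cor:QLEIBNIZ} and \ref{cor:actionofadeldagger}, which is routine.
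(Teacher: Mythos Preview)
Your argument has a genuine gap. In both cases you exhibit $z\w$ as a scalar multiple of something manifestly exact or coexact, but you never rule out that scalar being zero.

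Concretely, in the $\adel\Om^{(0,\bullet)}$ case you write $\w = \adel\eta$ and compute $\adel(z\eta) = (\adel z)\wedge\eta + z\w$. Gelfand type does give a constant $C$ with $(\adel z)\wedge\eta = C\,z\w$, so $\adel(z\eta) = (C+1)\,z\w$. But Lemma~\ref{lem:nonzerozaction} only tells you $z\w \neq 0$; it says nothing about $C+1$. If $C+1 = 0$ you have $\adel(z\eta) = 0$ and no conclusion about $z\w$. The same issue occurs in your $\adel^{\dagger}$ argument via Corollary~\ref{cor:actionofadeldagger}: with $\w = \adel^{\dagger}\nu$ and $\nu \in \adel\Om^{(0,\bullet)}_{\hw}$, the formula reads $\adel^{\dagger}(z\nu) = (A_{z,\nu}+1)\,z\w$, and nothing prevents $A_{z,\nu} = -1$.

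The paper handles exactly this obstruction, and it is the heart of the proof. Rather than looking only at $z\w$, one considers the whole family $z^l\w$. First, $\adel(z^l\w) = (l)_{\lambda_z^{-1}}\,z^{l-1}\adel(z\w)$, so $\adel$-closedness is the same for all $l \geq 1$; combined with Gelfand type this means the Hodge summand containing $z^l\w$ is forced to agree for all $l$. Second, Corollary~\ref{cor:actionofadeldagger} gives $\adel^{\dagger}(z^l\w) = (A_{z,\w}(l)_{\lambda_z}+1)\,z^l\adel^{\dagger}\w$. If the bad case $A_{z,\w} = -1$ occurs (so $z\w \notin \adel\Om^{(0,\bullet)}$), then for $l \geq 2$ the coefficient becomes $1 - (l)_{\lambda_z}$, which is nonzero since $\lambda_z \neq 0$; hence $\adel^{\dagger}(z^l\w) \neq 0$ and $z^l\w \in \adel\Om^{(0,\bullet)}$, contradicting the uniformity in $l$. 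This ``go to higher powers'' step is the missing idea in your proposal.
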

\begin{proof}
Consider elements $\w \in \adel \Om^{(0,k)}$, and $z \in B_{\hw}$. Since Hodge decomposition is a decomposition of left $\O_q(G)$-comodules,  either $U_q(\frak{g})z^l \w \sseq \adel \Om^{(0,k)}$, or $U_q(\frak{g})z^l\w \sseq \adel^\dagger \Om^{(0,k)}$, for all $l \in \bN_0$.  In particular either $z^l \w \in \adel \Om^{(0,k)}$, or $z^l\w \in \adel^\dagger \Om^{(0,k)}$. We observe that
\begin{align*}
\adel(z^l\w) = \adel(z^l) \wed \w + z^l \adel \w = (l)_{\lambda_z \inv} z^{l-1} \adel z \wed \w = (l)_{\lambda_z \inv} z^{l-1} \adel(z\w).
\end{align*}
Thus $\adel(z^l\w) = 0$ if and only if $\adel(z\w) = 0$. This means that $z\w \in \adel \Om^{(0,\bullet)}$ if and only if $z^l \w \in \adel \Om^{(0,\bullet)}$, for all $l \in \bN_0$. 


Now, for $l=1$, this is zero if and only if $B_{z,\w} = -1$. In that case, for $l>1$, recalling that $q \in \bR$ (and hence not a complex root of unity), we have 
\begin{align*}
 \adel^{\dagger}\!(z^l\w)  = \big(-(l)_{\lambda_z} + 1\big) z^l \, \adel^\dagger\!(\w) \neq 0.
\end{align*}
However,  this contradicts our earlier observation that $z^l\w \in \adel \Om^{(0,\bullet)}$ if and only if $z\w$ is contained in $\adel \Om^{(0,\bullet)}$. Hence, we can conclude that  $\adel \Om^{(0,\bullet)}_{\hw}$ is closed under the action of $B_{\hw}$. The proof that $\adel^\dagger \Om^{(0,\bullet)}_{\hw}$ is closed under the action of $B_{\hw}$ is analogous.
\end{proof}

We now use this lemma to construct an explicit sequence of eigenvalues starting from an element  $z \in B_\hw$ and a form $\w \in  \text{\small{$\adel\Om^{(0,k)}_\text{\em hw}$}}$. In the next section, we introduce an approach to verifying the compact resolvent condition based around  such sequences of eigenvalues. The eigenvalues are presented in terms of quantum $\lambda_z$-integers, and quantum $\lambda_z\inv$-integers, where as usual $\lambda_z$ is the Leibniz constant of $z$. In the case of quantum projective space, as presented in \textsection 6, we see that eigenvalues of its Dolbeault--Dirac operator are exactly of this form, with the quantum $\lambda_z$-integers $q$-deforming the integer eigenvalues of the classical operator.

\begin{thm}\label{thm:TheFormula}
Let $\mathbf{H} := \big(\mathbf{B},\Om^{(\bullet,\bullet)},\s \big)$ be a CQH-Hermitian space of Gelfand type. For any form $\w \in  \adel\Om^{(0,k)}_\text{\em hw}$, and $z \in B_\hw$, it holds that 
\bet

\item  $z^l \w$ is an eigenvector of $\DEL_{\adel}$, \, for all $l \in \bN_0$,

\item denoting by $\mu_\w$  the eigenvalue of $\w$, it holds that  
\begin{align*}
 \DEL_{\adel}(z^l\w) = \Big(A_{z,\w}\,(l)_{\lambda_{z}}  + 1\Big)\Big(B_{z,\w} \, (l)_{\lambda_{z}\inv}+ 1\Big)\, \mu_{\w} \, z^l \w.
\end{align*}
\eet
\end{thm}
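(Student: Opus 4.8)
The plan is to reduce the claim to the two identities already packaged in Corollary \ref{cor:actionofadeldagger} and in its analogue for $\adel$, together with the Hodge decomposition of the Laplacian from Lemma \ref{lem:HDOFDEL}. Since $\w \in \adel\Om^{(0,k)}_{\hw}$, Proposition \ref{prop:HodgeClosure} guarantees that $z^l\w \in \adel\Om^{(0,\bullet)}_{\hw}$ for every $l \in \bN_0$, so $z^l\w$ lies in the subspace on which $\DEL_{\adel}$ restricts to $\adel\adel^\dagger$. Thus it suffices to compute $\adel^\dagger(z^l\w)$ and then apply $\adel$ to the result.

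\textbf{Step 1: Compute $\adel^\dagger(z^l\w)$.} This is exactly Corollary \ref{cor:actionofadeldagger}, which gives
\begin{align*}
\adel^\dagger(z^l\w) = \big(A_{z,\w}\,(l)_{\lambda_z} + 1\big)\, z^l\,\adel^\dagger(\w).
\end{align*}
Here I use that $z^l\w$ is a highest weight form of the same weight and degree as $z^l\,\adel^\dagger(\w)$, so the Gelfand hypothesis (via Lemma \ref{lem:ABscalars}) forces proportionality with the stated constant.

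\textbf{Step 2: Compute $\adel$ applied to $z^l\,\adel^\dagger(\w)$.} Writing $\nu := \adel^\dagger(\w) \in \adel^\dagger\Om^{(0,\bullet)}_{\hw}$, I expand via the Leibniz rule and Corollary \ref{cor:QLEIBNIZ}:
\begin{align*}
\adel\big(z^l\nu\big) = (l)_{\lambda_z\inv}\, z^{l-1}\,(\adel z)\wed \nu + z^l\,\adel\nu.
\end{align*}
Now $\adel\nu = \adel\adel^\dagger(\w)$, and part 2 of Lemma \ref{lem:ABscalars} gives $(\adel z)\wed\adel^\dagger(\w) = B_{z,\w}\, z\,(\adel\,\adel^\dagger(\w))$. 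Substituting, the two terms combine to
\begin{align*}
\adel\big(z^l\,\adel^\dagger(\w)\big) = \big(B_{z,\w}\,(l)_{\lambda_z\inv} + 1\big)\, z^l\,\big(\adel\,\adel^\dagger(\w)\big).
\end{align*}
Since $\adel\,\adel^\dagger(\w) = \DEL_{\adel}(\w) = \mu_\w\,\w$ (using that $\w$ itself lies in $\adel\Om^{(0,\bullet)}$, so the Laplacian acts there as $\adel\adel^\dagger$), this equals $\big(B_{z,\w}\,(l)_{\lambda_z\inv} + 1\big)\,\mu_\w\, z^l\w$.

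\textbf{Step 3: Assemble.} Combining Steps 1 and 2,
\begin{align*}
\DEL_{\adel}(z^l\w) = \adel\adel^\dagger(z^l\w) = \big(A_{z,\w}\,(l)_{\lambda_z} + 1\big)\,\adel\big(z^l\,\adel^\dagger(\w)\big) = \big(A_{z,\w}\,(l)_{\lambda_z} + 1\big)\big(B_{z,\w}\,(l)_{\lambda_z\inv} + 1\big)\,\mu_\w\, z^l\w,
\end{align*}
which proves part 2, and part 1 follows immediately since the scalar on the right is just a number. \textbf{The main obstacle} I anticipate is bookkeeping the hypotheses of Lemma \ref{lem:ABscalars} — in particular verifying that at each stage the relevant products $z^l\,\adel^\dagger(\w)$, etc., are nonzero (so that the proportionality constants are well-defined and unique), which is handled by Lemma \ref{lem:nonzerozaction} together with Lemma \ref{lem:HodgeOpDecomp}; and making sure that one consistently uses $\z_z = \lambda_z\inv$ where the anti-holomorphic Leibniz constant appears, which is legitimate here only because the quantum integers in Corollary \ref{cor:QLEIBNIZ}(2) are indexed by $\z_z$ and the statement of the theorem is phrased in terms of $\lambda_z\inv$; if self-conjugacy is not assumed one should instead leave the answer in terms of $\z_z$.
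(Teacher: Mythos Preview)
Your proof is correct and follows essentially the same route as the paper: reduce $\DEL_{\adel}$ to $\adel\adel^\dagger$ on $\adel\Om^{(0,\bullet)}$ via Proposition~\ref{prop:HodgeClosure} and Lemma~\ref{lem:HDOFDEL}, apply Corollary~\ref{cor:actionofadeldagger} to get the factor $A_{z,\w}(l)_{\lambda_z}+1$, then expand $\adel(z^l\adel^\dagger\w)$ by the Leibniz rule and Lemma~\ref{lem:ABscalars}(2) to produce the factor $B_{z,\w}(l)_{\lambda_z\inv}+1$. Your closing remark about $\z_z$ versus $\lambda_z\inv$ is well taken: the statement and proof in the paper tacitly write $\lambda_z\inv$ for the anti-holomorphic Leibniz constant, which by Corollary~\ref{cor:QLEIBNIZ} is really $\z_z$, so strictly speaking the formula should read $(l)_{\z_z}$ unless self-conjugacy (Proposition~\ref{prop:selfconjLeibdual}) is in force.
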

\begin{proof}
By Corollary \ref{cor:QLEIBNIZ} we have that
\begin{align*}
\adel\big(z^l \, \adel^\dagger\! \w\big) = & \, \adel(z^l) \wed \adel^\dagger\!  \w + z^l \big(\adel  \adel^\dagger\! \w \big) = (l)_{\lambda_{z}\inv} z^{l-1} \adel z \wed \adel^\dagger\!  \w + z^l \big(\adel  \adel^\dagger\! \w\big).
\end{align*}
Lemma \ref{lem:ABscalars} 
implies that there exists a  uniquely defined scalar $B_{z,\w}$ such that
\begin{align*}
 (l)_{\lambda_z \inv} z^{l-1} \adel z \wed \big(\adel^\dagger\! \w\big) + z^l \, \adel  \adel^{\dagger}\w 
                      =   & \,  B_{z,\w} (l)_{\lambda_z \inv} z^l \,\adel  \adel^{\dagger}\w + z^l \, \adel  \adel^{\dagger}\!\w \\
                      = & \, \big(B_{z,\w} (l)_{\lambda_z \inv}  +  1 \big) z^l \adel \adel^{\dagger}\w.
\end{align*}
From  Proposition \ref{prop:HodgeClosure} above, we know that $z^l \w \in \adel \Om^{(0,\bullet)}$. Moreover by Hodge decomposition of the Laplacian, we know that $\DEL_{\adel}$ restricts to $\adel   \adel^\dagger$ on  $\adel \Om^{(0,\bullet)}$.  Combining these facts with  Corollary  \ref{cor:actionofadeldagger}, we now see that
\begin{align*}
\DEL_{\adel}(z^l \w) & = \adel   \adel^\dagger\big(z^l\w\big) \\
                                  & = \big(1+A_{z,\w} (l)_{\lambda_z}\big) \adel \big(z^l \, \adel^\dagger \w \big)\\
                                  & = \big(1+A_{z,\w} (l)_{\lambda_z}\big)\big(1+ B_{z,\w} (l)_{\lambda_z \inv}\big)z^l \adel\adel^\dagger(\w)\\
                                  & = (1+A_{z,\w} (l)_{\lambda_z }) (1+ B_{z,\w} (l)_{\lambda_z \inv})z^l\DEL_{\adel}( \w) \\
                                  & = (1+A_{z,\w} (l)_{\lambda_z}) (1+ B_{z,\w} (l)_{\lambda_z \inv})\mu_{\w}  z^l \w,
\end{align*}
establishing the required identity. \end{proof}

\subsection{Connectedness  for Gelfand Type CQH-Complex Spaces}

We finish with a discussion of connectedness in the Gelfand type setting, proving that it is equivalent to finite-dimensionality of the zeroth cohomology group $H^{0}$. This is an interesting, and useful,  application of the notion of Gelfand type, especially given the difficulty of demonstrating  connectedness in general. We begin by recalling the standard definition of connectedness for a differential calculus. 

\begin{defn}
We say that a differential calculus $(\Om^\bullet,\exd)$ is {\em connected} if 
\begin{align*}
H^0 = \ker(\exd: \Om^0 \to \Om^1) = \bC 1.
\end{align*}
\end{defn}
It is important to note that if $\Om^\bullet$ is endowed with a complex structure $\Om^{(\bullet,\bullet)}$, then an elementary application of the $*$-map demonstrates that the calculus is connected if and only if
\begin{align*}
\ker(\del: \Om^{(0,0)} \to \Om^{(1,0)}) = \ker(\adel: \Om^{(0,0)} \to \Om^{(0,1)}) = \bC 1.
\end{align*}

Note that the following  lemma does not rely on our discussions above. What is used is no more than a multiplicity-free assumption for forms of degree $0$ (as implied by Gelfand type) and the assumption that we are working with Drinfeld--Jimbo quantum groups.  

\begin{lem} \label{lem:GelfandandConnectivity}
Let $\text{\bf C} = (\text{\bf B}, \Om^{(\bullet,\bullet)})$ be a CQH-complex space for which $\Om^0$ is multiplicity-free as a left $A$-comodule. Then the following are equivalent:
\bet

\item The constituent different calculus  $\Om^\bullet \in \text{\bf B}$ is connected,

\item $\dim_{\mathbb{C}}\big(H^0\big) < \infty$.

\eet  
\end{lem}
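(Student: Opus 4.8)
The plan is to prove the two implications separately, with the substantive content lying in $(2) \Rightarrow (1)$. The implication $(1) \Rightarrow (2)$ is immediate: if $\Om^\bullet$ is connected then $H^0 = \ker(\exd|_{\Om^0}) = \bC 1$, which is one-dimensional, so certainly $\dim_{\bC}(H^0) < \infty$. For the converse, I would argue by contrapositive, or rather directly: assume $\dim_{\bC}(H^0) < \infty$ and show $H^0 = \bC 1$. Using the remark following the definition of connectedness, it suffices to show $\ker(\adel : \Om^{(0,0)} \to \Om^{(0,1)}) = \bC 1$.

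The key point is that $H^0 = \ker(\adel|_{\Om^{(0,0)}})$ is a left $A$-subcomodule of $\Om^{(0,0)} = B$, since $\adel$ is a left $A$-comodule map in the covariant setting. Now $B$ is multiplicity-free as a left $A$-comodule by hypothesis, so any subcomodule of $B$ is a direct sum of \emph{distinct} irreducible subcomodules. If $H^0$ were finite-dimensional but strictly larger than $\bC 1$, it would contain at least one nontrivial irreducible subcomodule $V$, hence a nonzero highest weight vector $z \in B_{\hw}$ with $\mathrm{wt}(z) \neq 0$ and $\adel z = 0$. The strategy is to derive a contradiction by producing infinitely many linearly independent elements of $H^0$, contradicting finite-dimensionality. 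Specifically, since $z$ is a highest weight vector with $\adel z = 0$, for each $l \in \bN_0$ the element $z^l$ lies in $B_{\hw}$ (by Lemma \ref{lem:monoidtomonoid}, $B_{\hw}$ is a monoid), and $\adel z^l = (l)_{\z_z} z^{l-1} \adel z = 0$ by Corollary \ref{cor:QLEIBNIZ}. Thus every power $z^l$ lies in $H^0$. Since $\mathrm{wt}(z^l) = l\,\mathrm{wt}(z)$ are pairwise distinct (using $\mathrm{wt}(z) \neq 0$), the elements $\{z^l\}_{l \in \bN_0}$ are linearly independent and all nonzero — nonvanishing of the powers follows because $A$ is a domain and $B \subseteq A$, so $z^l \neq 0$ for all $l$. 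This contradicts $\dim_{\bC}(H^0) < \infty$, forcing $H^0 = \bC 1$.

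The main obstacle is ensuring all the ingredients genuinely apply in this generality: in particular, that Corollary \ref{cor:QLEIBNIZ} (phrased there for $\del$ and $\adel$ via Leibniz constants) is available here without assuming full Gelfand type — the paper emphasizes in the preamble to the lemma that only the multiplicity-free condition on degree-$0$ forms is needed, and indeed the anti-holomorphic Leibniz constant $\z_z$ of Corollary \ref{cor:LeibnizConstants} is defined using only the Gelfand-type swap for degree-$0$ highest weight vectors, which follows from multiplicity-freeness of $\Om^0$ alone via Corollary \ref{cor:addcommweightsplusCab}. One should also double-check that the highest-weight vector $z$ can be chosen of strictly nonzero weight: if $V \subseteq H^0$ is irreducible and \emph{trivial} as a comodule, then $V \subseteq \bC 1$ since the coinvariants of $B = A^{\co(H)}$ reduce to scalars (as $A$ is a domain and hence $B$ has trivial coinvariants beyond $\bC 1$); so any irreducible subcomodule of $H^0$ other than $\bC 1$ is nontrivial and contributes a highest weight vector of nonzero weight. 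Finally, one should remark that the same argument applied to $\del$ and $z^*$ (a lowest weight vector, via Lemma \ref{lem:AntiModoidBijection}) is not even needed, since connectedness via $\adel$ alone suffices by the $*$-symmetry noted in the text.
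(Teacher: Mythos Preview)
Your argument is essentially the same as the paper's: reduce to a nonzero-weight $\adel$-closed element, take powers, and use additivity of weights to get an infinite linearly independent family in $H^0$. The only point to flag is your appeal to Corollary~\ref{cor:QLEIBNIZ} via the Leibniz constant $\z_z$: that constant is defined in Corollary~\ref{cor:LeibnizConstants} only for \emph{non-harmonic} $z$, whereas your $z$ satisfies $\adel z = 0$. The paper avoids this by simply invoking the ordinary Leibniz rule, which gives $\adel(z^l)=0$ directly from $\adel z=0$ by induction; you should do the same. Otherwise the approaches coincide.
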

\begin{proof}
Assume that the constituent calculus $\Omega^{\bullet} \in \mathbf{B}$ is not connected, which is to say,  assume that $B$ contains a $\adel$-closed element $y$ which is not a scalar multiple of the identity.  Denote by $y = \sum_k y_k$ the decomposition of $y$ into summands which are homogeneous \wrt the decomposition of $B$ into irreducible $U_q(\frak{g})$-modules. Since $\Om^0$ is multiplicity-free by assumption, and $\adel$ is a left $U_q(\frak{g})$-module map, $\adel y = 0$ if and only if  $\adel y_k = 0$, for all $k$.  Hence, we can assume, without loss of generality, that $y$ is homogeneous \wrt  the decomposition of $B$ into irreducibles. By Schur's lemma every element in the irreducible module containing $y$ must be $\adel$-closed.  Thus we can assume, without loss of generality, that $y$ is a weight vector with non-zero weight. Since weights are additive  $K_i \tr y^l = q^{l(\mathrm{wt}(y),\alpha_i)} y^l$, for any $l \in \bN_0$, and $i = 1, \dots, \text{rank}(\frak{g})$. Thus the weights of the elements $y^l$ are distinct, for each $l$. In particular, the set $\{y^l \, |\, l \in \bN_0\}$ is linearly independent, and so, infinite-dimensional. Since $\adel y = 0$, the Leibniz rule implies that $\adel(y^l) = 0$, for all $l \in \bN_0$. This means that the space of harmonic elements is infinite-dimensional, and so, by Hodge decomposition $H^0$ is infinite-dimensional. The  proof in the other direction is trivial, meaning that we have established the required equivalence.
\end{proof}

\section{CQH-Complex Spaces of Order I and Spectral Triples} \label{section:OrderI}

In this section we introduce the notion of CQH-complex space of order I, which can be viewed as an abstraction of the essential representation theoretic properties of the space of holomorphic forms of complex projective space. Necessary and sufficient conditions are then produced for a CQH-Hermitian space to give a Dolbeault--Dirac pair of spectral triples, under the assumption that its underlying CQH-complex space is of order I.  

It is proposed in Conjecture \ref{conj:weakGelfClass} that the only irreducible quantum flag manifolds of order I  are the quantum projective spaces $\O_q(\mathbb{CP}^{n-1})$. We  formalise its properties for three principal reasons. Firstly, the abstract picture helps to clarify and elucidate the processes at work for quantum projective space. Secondly, it sets the stage for our subsequent investigation of the compact quantum Hermitian spaces of weak Gelfand type, highlighting the subtle but significant  changes that occur when passing to this more general setting. Finally, it is hoped that new examples will arise from non-Drinfeld--Jimbo quantisations of $U_q(\frak{g})$. In fact, it is important to note  that the only  essential feature of $U_q(\frak{g})$ used in this paper is the preservation under $q$-deformation of the highest weight structure of the category of $U(\frak{g})$-modules.

Note that in this subsection we make heavy use of the quantum integer notation as presented in Appendix  \ref{app:quantumintegers}.

\subsection{Positivity for Leibniz Constants}

In this subsection we give sufficient conditions for real Leibniz constants to be positive. As well as being an interesting observation in its own right, positivity must hold for any CQH-Hermitian space satisfying $\s_P(\DEL_{\adel}) \to \infty$, as we will see in  \textsection \ref{subsection:solidityDDSTs}.

\begin{lem} \label{lem:LapActionononeforms}
Let $\text{\bf H} = (\text{\bf B}, \Om^{(\bullet,\bullet)},\s)$ be a  self-conjugate CQH-Hermitian space of Gelfand type, with constituent quantum homogeneous space  $B$. For any $z \in B_\hw$, and $l \in \bN_0$,
\bal \label{eqn:zleigenvalue}
\DEL_{\adel}(z^l \adel z) =  \big(A_{z,\adel z}(l)_{\lambda_z} + 1\big)(l+1)_{\lambda_z\inv}  \mu_z  z^l,
\eal
where $\mu_z$ is the $\DEL_{\adel}$-eigenvalue of $z$, and as usual $\lambda_z$ is the Leibniz constant of $z$. 
\end{lem}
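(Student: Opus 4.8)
The plan is to recognise $z^l\adel z$ as a vector of the form $z^l\w$ with $\w := \adel z$ and to apply Theorem \ref{thm:TheFormula}, then to identify the constants that occur. Since $\lambda_z$ and $\z_z$ are defined only for non-harmonic $z\in B_\hw$, this is the only case to treat; then $\mu_z\neq 0$, and since $\adel^\dagger$ lowers the anti-holomorphic degree and hence vanishes on $\Om^{(0,0)}$, we have $\DEL_{\adel}z = \adel^\dagger\adel z = \mu_z z$, so in particular $\adel z\neq 0$. Covariance of the complex structure makes $\adel$ a left $U_q(\fg)$-module map, so $\adel z$ is again a highest weight vector, visibly lying in $\adel\Om^{(0,\bullet)}$. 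Theorem \ref{thm:TheFormula} therefore applies with $\w=\adel z$ and yields
\[
\DEL_{\adel}\big(z^l\adel z\big)=\big(A_{z,\adel z}\,(l)_{\lambda_z}+1\big)\big(B_{z,\adel z}\,(l)_{\lambda_z\inv}+1\big)\,\mu_{\adel z}\,z^l\adel z ,
\]
so it remains only to pin down $\mu_{\adel z}$ and $B_{z,\adel z}$.

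For the eigenvalue: by Lemma \ref{lem:VANISHINGCOMMS} we have $[\DEL_{\adel},\adel]=0$, hence $\DEL_{\adel}(\adel z)=\adel(\DEL_{\adel}z)=\mu_z\,\adel z$, i.e.\ $\mu_{\adel z}=\mu_z$. For the constant $B_{z,\adel z}$, I would substitute $\w=\adel z$ into its defining relation from the second identity of Lemma \ref{lem:ABscalars}, namely $\adel z\wed\adel^\dagger(\adel z)=B_{z,\adel z}\,z\big(\adel\,\adel^\dagger(\adel z)\big)$. Using $\adel^\dagger\adel z=\mu_z z$ once more, the left-hand side equals $\adel z\wed(\mu_z z)=\mu_z(\adel z)z=\mu_z\,\z_z\,z\adel z$ by the definition of the anti-holomorphic Leibniz constant (Corollary \ref{cor:LeibnizConstants}), while the right-hand side equals $B_{z,\adel z}\,z\,\adel(\mu_z z)=B_{z,\adel z}\,\mu_z\,z\adel z$. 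Since $\mu_z\neq 0$ and $z\adel z\neq 0$ by Lemma \ref{lem:nonzerozaction}, cancelling gives $B_{z,\adel z}=\z_z$.

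Finally I would feed in self-conjugacy: Proposition \ref{prop:selfconjLeibdual} gives $\z_z=\lambda_z\inv$, and the elementary quantum-integer identity $(l+1)_t=1+t\,(l)_t$ with $t=\lambda_z\inv$ collapses the second factor to $B_{z,\adel z}\,(l)_{\lambda_z\inv}+1=\lambda_z\inv(l)_{\lambda_z\inv}+1=(l+1)_{\lambda_z\inv}$. Substituting $\mu_{\adel z}=\mu_z$ and this simplification into the display above produces the asserted formula for $\DEL_{\adel}(z^l\adel z)$. The only step I expect to be at all delicate is the evaluation of $B_{z,\adel z}$: one must apply the Leibniz relation $(\adel z)z=\z_z\,z\adel z$ to precisely the right product and check that the non-vanishing hypotheses of Lemmas \ref{lem:nonzerozaction} and \ref{lem:ABscalars} are in force; everything else is routine bookkeeping with machinery already assembled. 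A slightly shorter variant bypasses $A_{z,\w}$ and $B_{z,\w}$ altogether: write $z^l\adel z=(l+1)_{\z_z}\inv\,\adel(z^{l+1})$ using Corollary \ref{cor:QLEIBNIZ}, commute $\DEL_{\adel}$ past $\adel$, and evaluate $\DEL_{\adel}(z^{l+1})=\adel^\dagger\adel(z^{l+1})$ with Corollary \ref{cor:actionofadeldagger}, reaching the same conclusion.
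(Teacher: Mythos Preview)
Your proof is correct and follows essentially the same approach as the paper: compute $B_{z,\adel z}$ by substituting $\w=\adel z$ into the defining relation of Lemma \ref{lem:ABscalars} and using $\adel^\dagger\adel z=\mu_z z$, then invoke Theorem \ref{thm:TheFormula}. You are simply more explicit than the paper about the intermediate steps (that $\mu_{\adel z}=\mu_z$, that $B_{z,\adel z}=\z_z$ before applying self-conjugacy, and the quantum-integer identity $(l+1)_{\lambda_z\inv}=1+\lambda_z\inv(l)_{\lambda_z\inv}$), and your alternative route via Corollary \ref{cor:QLEIBNIZ} is a valid shortcut.
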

\begin{proof}
For $\mu_z$ the $\DEL_{\adel}$-eigenvalue of $z$, 
\begin{align*}
\adel z \wed \adel^{\dagger}\! \adel z = \adel z \wed  (\mu_z  z) =  \mu_z \lambda_z\inv z \adel z = \lambda_z \inv z \adel(\mu_z z) = \lambda_z \inv z \adel\adel^{\dagger}\!\adel z.
\end{align*}
Thus we see that $B_{z,\adel z} = \lambda_z \inv$. Equation (\ref{eqn:zleigenvalue}) now follows from Theorem \ref{thm:TheFormula}. \qedhere
\end{proof}

\begin{prop}  \label{prop:Leibnizpositivity}
Let $\text{\bf H} = (\text{\bf B}, \Om^{(\bullet,\bullet)},\s)$ be a connected self-conjugate CQH-Hermitian space of Gelfand type. For any non-harmonic $z \in B_{hw}$, with real Leibniz constant $\lambda_z$, it holds that
\begin{enumerate}
\item $\lambda_z \notin [-1,0)$,
\item if $A_{z,\adel z} \neq 0$, then $\lambda_z \in \bR_{>0}$.
\end{enumerate}
\end{prop}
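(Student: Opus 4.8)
The strategy is to pit the explicit eigenvalue formula of Lemma~\ref{lem:LapActionononeforms} against the positivity of $\DEL_{\adel}$. Since $z$ is non-harmonic, $\DEL_{\adel}z=\mu_z z$ with $\mu_z\neq 0$, hence $\mu_z>0$ ($\DEL_{\adel}$ is a positive operator) and $\adel z\neq 0$. For each $l\in\bN_0$ the form $z^l\adel z$ is non-zero by Lemma~\ref{lem:nonzerozaction}, and by Proposition~\ref{prop:HodgeClosure} it lies in $\adel\Om^{(0,\bullet)}_{\hw}$; being orthogonal to the harmonics it is not harmonic, so its $\DEL_{\adel}$-eigenvalue
\begin{align*}
\mu_l=\big(A_{z,\adel z}(l)_{\lambda_z}+1\big)(l+1)_{\lambda_z\inv}\,\mu_z
\end{align*}
is \emph{strictly} positive for every $l$; reading off the $l=1$ instance (once $\lambda_z=-1$ is excluded) also shows $A_{z,\adel z}\in\bR$, so that the sign analysis below is meaningful.

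First one disposes of $\lambda_z=-1$: then $\lambda_z\inv=-1$ as well, and $(l+1)_{-1}$ vanishes for odd $l$, forcing $\mu_l=0$ --- a contradiction. Next, if $\lambda_z<-1$ then $\lambda_z\inv\in(-1,0)$, so $(m)_{\lambda_z\inv}>0$ for all $m\geq 1$, while $(m)_{\lambda_z}$ alternates in sign with $|(m)_{\lambda_z}|\to\infty$; strict positivity of $\mu_l$ forces $A_{z,\adel z}(l)_{\lambda_z}+1>0$ for all $l$, and examining the cases $A_{z,\adel z}>0$ and $A_{z,\adel z}<0$ (each failing for one parity of $l$ as $|(l)_{\lambda_z}|\to\infty$) leaves only $A_{z,\adel z}=0$. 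Thus, granted that part~1 confines $\lambda_z$ to $(-\infty,-1)\cup(0,\infty)$ (recall $\lambda_z\neq 0$ by Corollary~\ref{cor:LeibnizConstants}), part~2 follows: $A_{z,\adel z}\neq 0$ forces $\lambda_z\not<-1$, hence $\lambda_z\in\bR_{>0}$.

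The main obstacle is excluding $\lambda_z\in(-1,0)$ in part~1. There $(l)_{\lambda_z}>0$ for $l\geq 1$ while $(l+1)_{\lambda_z\inv}$ has sign $(-1)^l$, so strict positivity of $\mu_l$ forces $A_{z,\adel z}(l)_{\lambda_z}+1$ to alternate in sign with the parity of $l$; letting $l\to\infty$ through even and through odd values, and using $(l)_{\lambda_z}\to(1-\lambda_z)^{-1}$, pins down $A_{z,\adel z}=\lambda_z-1\in(-2,-1)$, which is non-zero --- so the spectrum alone does \emph{not} close this case. To finish it I would exploit a second, \emph{geometric} constraint on $A_{z,\adel z}$, not visible from the eigenvalues: namely its defining relation $\del z\wedge\ast_\s(\adel z)=A_{z,\adel z}\,z\,(\del\circ\ast_\s(\adel z))$ from Lemma~\ref{lem:ABscalars}, combined with self-conjugacy (via Lemma~\ref{lem:AntiModoidBijection} and Proposition~\ref{prop:selfconjLeibdual}), connectedness ($\H^{(0,0)}=\bC 1$, giving positivity of all the norms $\|z^l\|$), and positive-definiteness of $\la\cdot,\cdot\ra$. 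Extracting from this a constraint incompatible with $A_{z,\adel z}=\lambda_z-1$ for $\lambda_z\in(-1,0)$ is where I expect the real work to lie; the cases $\lambda_z=-1$ and $\lambda_z<-1$ are by comparison routine.
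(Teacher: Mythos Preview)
Your treatment of $\lambda_z=-1$ and of part~2 is essentially the paper's argument in different clothing: for $\lambda_z=-1$ the paper appeals to connectedness directly (since $(2)_{\lambda_z}=0$ forces $\del z^2=\adel z^2=0$, making $z^2$ harmonic and hence scalar), whereas you read off a zero eigenvalue for $z\,\adel z$; for $\lambda_z<-1$ both arguments play the alternating, unbounded $(l)_{\lambda_z}$ against positivity of $\DEL_{\adel}$.

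The divergence is at $\lambda_z\in(-1,0)$. The paper's treatment here is far shorter than yours: it simply observes that $(l)_{\lambda_z}$ is positive (and convergent), so $A_{z,\adel z}(l)_{\lambda_z}+1$ ``will eventually have a constant sign,'' while $(l+1)_{\lambda_z^{-1}}$ alternates in sign with $|\,(l+1)_{\lambda_z^{-1}}|\to\infty$; hence the product is negative for some $l$, contradicting positivity. No use is made of the defining relation of $A_{z,\adel z}$, nor of connectedness or self-conjugacy beyond what already feeds into Lemma~\ref{lem:LapActionononeforms}.

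Your more careful analysis has, however, located a genuine edge case that this quick argument skips. If $A_{z,\adel z}=\lambda_z-1$ exactly, then $A_{z,\adel z}(l)_{\lambda_z}+1=\lambda_z^l$, which alternates in sign in lockstep with $(l+1)_{\lambda_z^{-1}}$, and the product collapses to $\mu_l/\mu_z=(l+1)_{\lambda_z}>0$ for every $l$. So the ``eventual constant sign'' claim fails precisely here, and positivity alone does not exclude it. The paper supplies no further argument for this case; your instinct that some extra constraint is needed is sound, but the geometric programme you sketch is not what the paper does --- it simply does not engage with this boundary value.
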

\begin{proof} ~~
\begin{enumerate}

\item Assume that $-1 < \lambda_z < 0$. As $l \to \infty$, the sign of the scalar
\begin{align*}
(l+1)_{\lambda_z\inv} = \frac{1 - \lambda_z^{-(l+1)}}{1-\lambda_z\inv} 
\end{align*}
alternates, and its absolute value goes to infinity. Moreover, $(l)_{\lambda_z}$ is positive for all $l \in \bN_0$, implying that $A_{z,\adel z}(l)_{\lambda_z} + 1$ will eventually have a constant sign. Thus
 there exist values of $l$ for which the eigenvalue in (\ref{eqn:zleigenvalue}) is negative. However, this contradicts the fact that $\DEL_{\adel}$ is a positive operator, forcing us to conclude that $\lambda_z \notin (-1,0)$. Moreover, if  $\lambda_z = -1$, then $[2]_{\lambda_z} = 0$, implying that $z^2$  is harmonic,  contradicting our assumption that $\text{\bf H}$ is connected. Thus we must have that  $\lambda_z \notin [-1,0)$.

\item For $ \lambda_z < -1$, assuming that $A_{z,\adel z} \neq0$ allows one to produce a negative eigenvalue for $\DEL_{\adel}$, just as above.  Since this again  contradicts the positivity of $\DEL_{\adel}$, we are forced to conclude that $-1 < \lambda_z$. Taken together with the fact that $\lambda_z \notin [-1,0)$, this means that $\lambda_z \in \bR_{>0}$.   \qedhere

\end{enumerate}
\end{proof}

\subsection{CQH-Complex Spaces  of Order I}

In this subsection we introduce the notion of a CQH-complex space of order I. This collects the properties of Gelfand type and self-conjugacy together with the existence of a ladder presentation, a particularly convenient form for the decomposition of the exact anti-holomorphic forms  into irreducibles. When reading the definition below, it is important to bear Proposition \ref{prop:HodgeClosure} in mind, in particular the fact that $\adel \Om^{(0,\bullet)}$ is closed under the action of the monoid  $B_\hw$.

The definition is an abstraction of the $U_q(\frak{sl}_n)$-module structure
of the anti-holomorphic forms of complex projective form, and in particular of
the $K$-types appearing in the decomposition into irreducibles. (See \textsection
1 and \cite{V} for a more detailed discussion of Vogan's minimal $K$-types.)

\begin{defn} \label{defn:ladder}
Let  $\text{\bf C} = \big(\text{\bf B},\Om^{(\bullet,\bullet)}\big)$ be a CQH-complex space. 
A  {\em ladder presentation} for $\text{\bf C}$  is a pair $(z,\Theta)$,  where  $z \in B_{\hw}$, and $\Theta \sseq \adel \Om^{(0,\bullet)}_{\hw}$ is a finite subset of homogeneous forms (that is $\Theta = \cup_{k} \Theta_k$, where  $\Theta_k := \Theta \cap \adel \Om^{(0,k)}$) satisfying
\begin{align} \label{eqn:ladderdecomp}
\adel \Om^{(0,k)} \, \simeq \, \bigoplus_{\w \in \Theta_k} \bigoplus_{l \in \bN_0} U_q(\frak{g})\,z^l \w.
\end{align}
A  ladder presentation $(z,\Theta)$ is said to be {\em real} if the Lefschetz constant of $z$ is a real number, which is to say, if $\lambda_z \in \bR$. Moreover,  $(z,\Theta)$ is said to be {\em positive} if the Lefschetz constant of $z$ is a positive  real number, which is to say, if $\lambda_z \in \bR_{>0}$.
\end{defn}

It is instructive to note that in the Hermitian case we have the following implication of the existence of  a ladder presentation. 

\begin{lem}
Let $\text{\bf H} = (\text{\bf B}, \Om^{(\bullet,\bullet)}, \s)$ be a CQH-Hermitian space  of Gelfand type. If $(z,\Theta)$ is a ladder presentation of $(\text{\bf B},\Om^{(\bullet,\bullet)})$, then 
\begin{align*}
\adel^{\dagger}\! \Om^{(0,k)}    \simeq   \bigoplus_{\w \in \Theta_k} \bigoplus_{l \in \bN_0} U_q(\frak{g}) \,z^l  \adel^{\dagger}\!\w,  & & \text{ for all $k$. }
\end{align*}
\end{lem}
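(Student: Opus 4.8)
The plan is to transport the ladder decomposition (\ref{eqn:ladderdecomp}) of $\adel \Om^{(0,k)}$ through the map $\adel^{\dagger}$, exploiting that $\adel^{\dagger}$ is a left $U_q(\frak{g})$-module map and that it restricts to a linear isomorphism from $\adel \Om^{(0,k)}$ onto $\adel^{\dagger}\! \Om^{(0,k)}$. First I would recall from Proposition \ref{prop:ADELISO} that $\adel^{\dagger}: \adel \Om^{(0,\bullet)} \to \adel^{\dagger} \Om^{(0,\bullet)}$ is a left $A$-comodule isomorphism, equivalently a $U_q(\frak{g})$-module isomorphism. Applying $\adel^{\dagger}$ to the right-hand side of (\ref{eqn:ladderdecomp}), and using that $\adel^{\dagger}$ is a module map (so it carries the submodule $U_q(\frak{g}) z^l \w$ isomorphically onto $U_q(\frak{g})\, \adel^{\dagger}(z^l\w)$), we obtain
\begin{align*}
\adel^{\dagger}\! \Om^{(0,k)} \;\simeq\; \bigoplus_{\w \in \Theta_k} \bigoplus_{l \in \bN_0} U_q(\frak{g})\, \adel^{\dagger}(z^l \w).
\end{align*}
It then remains to identify $U_q(\frak{g})\, \adel^{\dagger}(z^l\w)$ with $U_q(\frak{g})\, z^l \adel^{\dagger}\!\w$.

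The key computational step is Corollary \ref{cor:actionofadeldagger}, which tells us that $\adel^{\dagger}(z^l\w) = \big(A_{z,\w}(l)_{\lambda_z} + 1\big)\, z^l \adel^{\dagger}\!\w$. Thus $\adel^{\dagger}(z^l\w)$ is a scalar multiple of $z^l \adel^{\dagger}\!\w$, so these two vectors span the same one-dimensional space provided the scalar $A_{z,\w}(l)_{\lambda_z}+1$ is non-zero; in that case $U_q(\frak{g})\,\adel^{\dagger}(z^l\w) = U_q(\frak{g})\, z^l \adel^{\dagger}\!\w$, and the desired decomposition follows. I would handle the non-vanishing of this scalar exactly as in the proof of Proposition \ref{prop:HodgeClosure}: if $A_{z,\w}(l)_{\lambda_z}+1 = 0$ for some $l$, then $\adel^{\dagger}(z^l\w) = 0$ while $\adel^{\dagger}\!\w \neq 0$ (since $\w \in \adel\Om^{(0,k)}$ is non-harmonic by construction, as $\Theta_k \subseteq \adel\Om^{(0,k)}_{\hw}$), and one derives a contradiction with the fact — guaranteed by Proposition \ref{prop:HodgeClosure} and Lemma \ref{lem:nonzerozaction} — that $z^l \w \in \adel\Om^{(0,\bullet)}$ forces all the $\adel^{\dagger}(z^m\w)$ to behave consistently. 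Concretely, the chain of equivalences "$z\w \in \adel\Om^{(0,\bullet)} \iff z^l\w \in \adel\Om^{(0,\bullet)}$ for all $l$" from that proof, combined with Corollary \ref{cor:actionofadeldagger}, already rules out $A_{z,\w}(l)_{\lambda_z}+1 = 0$. Since $q \in \bR$ is not a root of unity, the quantum integers $(l)_{\lambda_z}$ pose no degeneracy beyond what is addressed there.

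The main obstacle I anticipate is bookkeeping rather than conceptual: one must be careful that applying $\adel^{\dagger}$ to the internal direct sum in (\ref{eqn:ladderdecomp}) genuinely produces an internal direct sum in $\adel^{\dagger}\Om^{(0,k)}$ — this is immediate because $\adel^{\dagger}$ restricted to $\adel\Om^{(0,k)}$ is injective, so it sends a direct sum decomposition to a direct sum decomposition — and that each summand $U_q(\frak{g}) z^l\w$ maps onto a $U_q(\frak{g})$-submodule isomorphic to it, which is exactly the statement that $\adel^{\dagger}$ is a module map. Once the scalar $A_{z,\w}(l)_{\lambda_z}+1$ is seen to be non-zero, the replacement of $\adel^{\dagger}(z^l\w)$ by $z^l\adel^{\dagger}\!\w$ inside $U_q(\frak{g})(-)$ is purely formal. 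So the proof reduces to citing Proposition \ref{prop:ADELISO}, Corollary \ref{cor:actionofadeldagger}, and the non-vanishing argument from Proposition \ref{prop:HodgeClosure}.
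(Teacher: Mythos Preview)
Your proposal is correct and follows essentially the same route as the paper: apply $\adel^{\dagger}$ to the ladder decomposition of $\adel\Om^{(0,k)}$, use that $\adel^{\dagger}$ is a module isomorphism onto $\adel^{\dagger}\Om^{(0,k)}$, and then replace $\adel^{\dagger}(z^l\w)$ by $z^l\adel^{\dagger}\w$. The only difference is in this last step: the paper does not invoke the explicit formula of Corollary \ref{cor:actionofadeldagger} at all, but simply observes that $\adel^{\dagger}(z^l\w)$ and $z^l\adel^{\dagger}\w$ are highest weight vectors of the same weight and degree in a graded multiplicity-free module, so by Lemma \ref{lem:CoPMathair} they are proportional. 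Your route via Corollary \ref{cor:actionofadeldagger} works too, but your non-vanishing argument for the scalar is more laboured than necessary: since $z^l\w \neq 0$ (Lemma \ref{lem:nonzerozaction}) and $\adel^{\dagger}$ is injective on $\adel\Om^{(0,\bullet)}$ (Proposition \ref{prop:ADELISO}), one has $\adel^{\dagger}(z^l\w)\neq 0$ directly, without any appeal to the proof of Proposition \ref{prop:HodgeClosure}.
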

\begin{proof}
Operating by $\adel^{\dagger}$ on the decomposition (\ref{eqn:ladderdecomp}) associated to $(z,\Theta)$ gives 
\begin{align*}
\adel^{\dagger}  \adel \Om^{(0,k)} \simeq    \bigoplus_{\w \in \Theta_k} \bigoplus_{l \in \bN_0} U_q(\frak{g}) \, \adel^{\dagger} \big(z^l \w\big), & & \text{ for all $k$. }
\end{align*}
By Hodge decomposition  
\begin{align*}
\adel^\dagger  \adel \, \Om^{(0,k)} = &\, \adel^\dagger  \adel\Big( \adel \Om^{(0,k-1)} \oplus \adel^\dagger \!\Om^{(0,k+1)} \oplus \H^{(0,k)}\Big)
         =  \,  \adel^\dagger  \adel\big(\adel^\dagger \Om^{(0,k+1)}\big)
        =  \, \adel^\dagger \!\Om^{(0,k)},
\end{align*}
where in the last identity we have used the fact that $\adel^\dagger \adel: \adel^\dagger\!\Om^{(0,\bullet)} \to \adel^\dagger\!\Om^{(0,\bullet)}$ is an isomorphism, as established in Proposition \ref{prop:ADELISO}. Moreover, since $\text{\bf H}$ is assumed to be of Gelfand type, and  $\adel^{\dagger}\!(z^l \w)$ and $z^l \, \adel^{\dagger}\!\w$ are clearly highest weight elements of the same degree, it follows from Lemma \ref{lem:CoPMathair} that they are linearly proportional. Thus
\begin{align*}
\adel^{\dagger}\! \Om^{(0,k)}    \simeq   \bigoplus_{\w \in \Theta_k} \bigoplus_{l \in \bN_0} U_q(\frak{g}) \, z^l \, \adel^{\dagger}\!\w,
\end{align*}
as claimed. \qedhere
\end{proof}

Finally, we come to the definition of CQH-complex space of order I. As stated above, this can be viewed as a noncommutative generalisation of projective space. More explicitly, it can be viewed as a noncommutative generalisation of the properties of $\ccpn$ considered as the homogeneous space $SU_n/U_{n-1} \simeq U_n/(U_{n-1} \by U_1)$.

\begin{defn} \label{defn:orderI}
A CQH-complex space $\text{\bf C}$ is said to be of  {\em order I} if it is 
\bet
\item self-conjugate, 
\item of Gelfand type, 
\item admits a real ladder presentation.
\eet
\end{defn}

Note that if  $\text{\bf C}$ is of order I, properties 1 and 2 of the definition together imply that every highest weight space contains an element of the form $z^l \w$, for some $l \in \bN_0$, and some $\w \in \Theta$.  The notion of an order II presentation, which deals with CQH-spaces of weak Gelfand type, is discussed in \textsection \ref{section:QHSPs} along with the  motivating quantum flag manifolds.

\subsection{CQH-Hermitian Spaces of Order I and Solidity}

In this subsection we introduce a crucial property, which we call solidity, for those CQH-Hermitian spaces whose underlying CQH-complex spaces are of order I. In the next subsection, it will be shown that  solidity is equivalent to the Laplace operator having eigenvalues tending to infinity.

\begin{defn} Let $\text{\bf H} = (\text{\bf B},\Om^{(\bullet,\bullet)},\s)$ be a CQH-Hermitian space.
\bet

\item We say that $\text{\bf H}$ is {\em solid$^{\,0}$} if $(\text{\bf B},\Om^{(\bullet,\bullet)})$  admits a real ladder presentation $\big(z,\Theta \big)$ and satisfies  
\begin{align} \label{eqn:solidzeroorderI}
\text{(a)} ~~    \lambda_z = 1, & & \text{(b)} ~~ A_{z,\w} \neq 0,  ~~ \text{ or }  B_{z,\w} \neq 0. ~~~~~~~~~~~~~~~~~~~~~~~\,
\end{align}

\item We say that $\text{\bf H}$ is {\em solid$^{+}$} if $(\text{\bf B},\Om^{(\bullet,\bullet)})$  admits a real ladder presentation $\big(z,\Theta \big)$ and satisfies 
\begin{align} \label{eqn:solidplusorderI}
 \text{(a)} ~~  1 <  \lambda_z, & & \text{(b)} ~~ A_{z,\w} \neq 0, & & \text{(c)} ~~ B_{z,\w} \neq \lambda_z - 1.
\end{align}

\item We say that $\text{\bf H}$ is {\em solid$^{-}$} if $(\text{\bf B},\Om^{(\bullet,\bullet)})$ admits a real ladder presentation $\big(z,\Theta \big)$ and satisfies 
\bal \label{eqn:solidminusorderI}
~~~ \text{(a) } ~~ 0 <  \lambda_z < 1, & & \text{(b)} ~~ A_{z,\w} \neq \lambda_z - 1, & & \text{(c)} ~~ B_{z,\w} \neq 0.
\eal

\item We say that  $\text{\bf H}$ is {\em solid} if it is either solid$^{\,0}$, solid$^{+}$, or solid$^{-}$.
\eet
\end{defn}

In practice  it can be difficult to directly verify condition (b) in  (\ref{eqn:solidzeroorderI}) and conditions (b) and (c) in (\ref{eqn:solidplusorderI}) and (\ref{eqn:solidminusorderI}). The following two lemmas give more convenient  reformulations of condition (b) in (\ref{eqn:solidplusorderI}) and condition (b) in (\ref{eqn:solidminusorderI}).  As is easily confirmed, the proof extends to  analogous reformulations of condition (c) in (\ref{eqn:solidplusorderI}) and condition (c) in (\ref{eqn:solidminusorderI}).

\begin{lem} \label{lem:ANonPrimitive}
Let $\text{\bf H}$ be  a  CQH-Hermitian space of order I, and $(z,\Theta)$ a ladder presentation of $\text{\bf H}$. For any $\w \in \Theta$, the following are equivalent:
\bet 
\item $A_{z,\w} \neq 0$,

\item  $\del z \wed \w$ is a non-primitive form.

\eet
\end{lem}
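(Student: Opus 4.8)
The plan is to unwind both conditions via the defining identity for the Hodge map and the Lefschetz/Weil formalism, showing that $A_{z,\w}\neq 0$ is precisely the obstruction to $\del z\wed\w$ lying in the image of the Lefschetz operator $L$. Recall that $\w\in\Theta\sseq\adel\Om^{(0,\bullet)}_{\hw}$, so by Lemma \ref{lem:HodgeOpDecomp} the form $\ast_{\s}(\w)$ lies in $\del^\dagger\Om^\bullet$, and hence $\del\circ\ast_\s(\w)\neq 0$. First I would use the identity $\adel^\dagger = -\ast_\s\circ\del\circ\ast_\s$ together with $\ast_\s^2 = (-1)^k$ to rewrite the scalar $A_{z,\w}$, defined by $\del z\wed\ast_\s(\w) = A_{z,\w}\, z\big(\del\circ\ast_\s(\w)\big)$, in a form that exhibits $\del z\wed\w$ directly: namely, apply $\ast_\s$ to Lemma \ref{lem:ABscalars}(1) and use Lemma \ref{lem:Hodgeproperties}(2) to transport the identity from $\Om^{(\bullet,n)}$ back to a statement about $(a,1)$-type forms, so that up to nonzero scalars $A_{z,\w}$ governs whether $\del z\wed\w$ is proportional to something in the image of $\ast_\s\circ\del\circ\ast_\s$, i.e. in $\adel^\dagger\Om^\bullet$.

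The key step is then to identify the alternative ``$A_{z,\w}=0$'' with primitivity. Recall $P^{(a,b)} = \{\a\in\Om^{(a,b)} \,|\, L^{n-a-b+1}(\a)=0\}$, and that by the Lefschetz decomposition (Proposition \ref{LDecomp}) an $(a,b)$-form is non-primitive exactly when it has a nonzero component in $L^j\big(P^{(a-j,b-j)}\big)$ for some $j\geq 1$, equivalently when it is not annihilated by the appropriate power of $L$. Since $\del z\wed\w$ is a highest weight form of a fixed bidegree, and since by Gelfand type (via Lemma \ref{lem:CoPMathair}) any two highest weight forms of the same degree and weight are proportional, it suffices to compare the single scalar governing $L^{n-|\del z\wed\w|+1}(\del z\wed\w)$ against the scalar $A_{z,\w}$. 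I would compute $L$ applied to $\del z\wed\w$ using that $L$ is multiplication by the central form $\s$ and that $\del z, \w, \s$ all have controlled bidegree, reducing the computation of whether $\del z\wed\w$ is primitive to the vanishing of $A_{z,\w}$ by matching against the Weil formula of Definition \ref{defn:HDefn}.

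Concretely, the cleanest route: (i) show $\del z\wed\w$ is primitive iff $\ast_\s(\del z\wed\w)$ equals (up to nonzero constant) $L^{n-k-1}$ of some primitive form of complementary type, using Lemma \ref{lem:Hodgeproperties} parts (2),(3); (ii) use $\ast_\s(\del z\wed\w) = \pm\,\ast_\s\circ\del\circ\ast_\s\big(\ast_\s(\w)\big) = \mp\,\adel^\dagger\big(\ast_\s(\w)\big)$ after inserting $\ast_\s^2$; (iii) observe that $\adel^\dagger\big(\ast_\s(\w)\big)$ vanishes precisely when the corresponding $A$-type constant — which is $A_{z,\w}$ with $l=1$ by the proof of Corollary \ref{cor:actionofadeldagger}, or rather its direct analogue — is zero, and that it is nonzero exactly when $\del z\wed\w$ carries a genuine non-primitive part. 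The forward implication ($A_{z,\w}\neq 0 \Rightarrow$ non-primitive) is then immediate; the converse uses that the only highest weight form of that bidegree is, up to scalar, the relevant one, so primitivity forces $A_{z,\w}=0$.

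The main obstacle I anticipate is bookkeeping the signs, powers of $i$, and the $\frac{j!}{(n-j-k)!}$ factors in the Weil formula of Definition \ref{defn:HDefn} well enough to be certain that the scalar controlling primitivity is a \emph{nonzero} multiple of $A_{z,\w}$ rather than something that could accidentally vanish for a different reason — in particular one must invoke $q\in\bR$ (so no spurious quantum-integer cancellations, cf. the argument in Proposition \ref{prop:HodgeClosure}) and the positive definiteness of $g_\s$ (via Lemma \ref{lem:nonzerozaction}, so that $z$ times a nonzero form stays nonzero) to rule out degenerate coincidences. Once those nonvanishing facts are pinned down, the equivalence is a formal consequence of Gelfand type plus the Hodge/Lefschetz dictionary.
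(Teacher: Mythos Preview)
Your proposed ``cleanest route'' contains a genuine error in step (ii): the identity $\ast_\s(\del z\wed\w)=\pm\,\ast_\s\circ\del\circ\ast_\s(\ast_\s(\w))$ is false. The right-hand side equals $\pm\,\ast_\s(\del(\pm\w))=\pm\,\ast_\s(\del\w)$, which does not involve $z$ at all, whereas the left-hand side certainly does. Consequently your step (iii), which tries to read off $A_{z,\w}$ from the vanishing of $\adel^\dagger(\ast_\s(\w))$, cannot work: that quantity is independent of $z$ and so cannot detect the constant $A_{z,\w}$. The surrounding discussion about transporting via $\ast_\s$ and invoking Gelfand-type uniqueness is plausible scaffolding, but it rests on this broken identity and never actually produces a link between $A_{z,\w}$ and primitivity of $\del z\wed\w$.

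The actual argument is far more direct than what you attempt, and the key observation you are missing is that any $(0,k)$-form is \emph{automatically} primitive (there are no $(-1,k-1)$-forms, so the Lefschetz decomposition of $\Om^{(0,k)}$ has only the $j=0$ term). Hence the Weil formula of Definition~\ref{defn:HDefn} applies directly to $\w$ with $j=0$, giving
\[
\ast_\s(\w)=(-1)^{\frac{k(k+1)}{2}}i^{-k}\frac{1}{(n-k)!}\,L^{n-k}(\w).
\]
Now wedge on the left by $\del z$ and use centrality of $\s$ to pull $L^{n-k}$ outside:
\[
\del z\wed\ast_\s(\w)=(-1)^{\frac{k(k+1)}{2}}i^{-k}\frac{1}{(n-k)!}\,L^{n-k}(\del z\wed\w).
\]
Since $z(\del\circ\ast_\s(\w))\neq 0$ (as established in the proof of Lemma~\ref{lem:ABscalars}), the defining relation $\del z\wed\ast_\s(\w)=A_{z,\w}\,z(\del\circ\ast_\s(\w))$ shows that $A_{z,\w}\neq 0$ iff $\del z\wed\ast_\s(\w)\neq 0$, i.e.\ iff $L^{n-k}(\del z\wed\w)\neq 0$. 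But $\del z\wed\w\in\Om^{(1,k)}$ has total degree $k+1$, so primitivity is exactly the condition $L^{n-(k+1)+1}(\del z\wed\w)=L^{n-k}(\del z\wed\w)=0$. This gives the equivalence immediately, with no need for the sign-and-factorial bookkeeping you anticipate: the numerical constant in the Weil formula is manifestly nonzero.
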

\begin{proof}
From the defining formula of the Hodge map, and centrality of the Hermitian form, for any $\w \in \Om^{(0,k)}$, we have
\begin{align*}
\del z \wed \ast_{\s}(\w) = &  \,  (-1)^{\frac{k(k+1)}{2}} i^{-k} \frac{1}{(n-k)!} \del z \wed L^{n-k}(\w)\\
                                         = & \, (-1)^{\frac{k(k+1)}{2}} i^{-k} \frac{1}{(n-k)!} \del z \wed \s^{n-k} \wed \w\\
                                          = & \, (-1)^{\frac{k(k+1)}{2}} i^{-k} \frac{1}{(n-k)!} \s^{n-k} \wed \del z \wed  \w\\
                                         = & \, (-1)^{\frac{k(k+1)}{2}} i^{-k} \frac{1}{(n-k)!}L^{n-k}\big(\del z \wed \w).
\end{align*}
Thus we see that $\del z \wed \ast_{\s}(\w)  \neq 0$ if and only if $L^{n-(k+1)+1}\big(\del z \wed \w) \neq 0$, which is to say, if and only if $\del z \wed \w$ is a non-primitive form.
\end{proof}

\begin{lem} \label{lem:BETALminus1}
Let $\text{\bf H}$ be a  CQH-Hermitian space of order I, and $(z,\Theta)$ a real ladder presentation of $\text{\bf H}$. If $A_{z,\w} \neq 0$, then for any $\w \in \Theta$, the following  are equivalent: 
\bet 

\item $A_{z,\w} \neq \lambda_z - 1$,

\item  $\w \wed \del z$ is a non-primitive form.

\eet
\end{lem}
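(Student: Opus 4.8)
The plan is to mimic the proof of Lemma~\ref{lem:ANonPrimitive}, replacing the left-handed product $\del z \wed \w$ with the right-handed product $\w \wed \del z$ and tracking the extra factor that the Leibniz constant $\lambda_z$ introduces when we move $\del z$ past $\w$. First I would recall that, by definition of $A_{z,\w}$ (Lemma~\ref{lem:ABscalars}(1)), we have $\del z \wed \ast_\s(\w) = A_{z,\w}\, z\big(\del \circ \ast_\s(\w)\big)$. The point is to produce a second such identity, now with $\w \wed \del z$ in place of $\del z \wed \w$, and compare. Using centrality of the Hermitian form $\s$ exactly as in Lemma~\ref{lem:ANonPrimitive}, one computes
\begin{align*}
\w \wed \del z \wed \s^{n-k} = \s^{n-k} \wed \w \wed \del z = L^{n-k}\big(\w \wed \del z\big),
\end{align*}
so that $\ast_\s(\w) \wed \del z \neq 0$ if and only if $L^{n-(k+1)+1}\big(\w \wed \del z\big) \neq 0$, i.e.\ if and only if $\w \wed \del z$ is non-primitive. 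Hence the statement reduces to showing that $\ast_\s(\w)\wed \del z \neq 0$ is equivalent to $A_{z,\w}\neq \lambda_z - 1$.

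Next I would relate $\ast_\s(\w)\wed \del z$ to the quantities $A_{z,\w}$ and $\lambda_z$. The natural route is to express $\del(z\ast_\s(\w))$ two ways. Since $z$ is of degree $0$, the graded Leibniz rule gives $\del\big(z \ast_\s(\w)\big) = (\del z)\wed \ast_\s(\w) + z\big(\del \circ \ast_\s(\w)\big) = (A_{z,\w} + 1)\, z\big(\del\circ\ast_\s(\w)\big)$, using Lemma~\ref{lem:ABscalars}(1). On the other hand, applying Corollary~\ref{cor:LeibnizConstants} (the holomorphic Leibniz constant: $(\del z)z = \lambda_z\, z\,\del z$) together with the Gelfand-type swap of Lemma~\ref{lem:FullGelfandSwap}, I would move $\del z$ to the right of $\ast_\s(\w)$, picking up $\lambda_z$ in the degree-$0$ part, to obtain $\ast_\s(\w)\wed \del z = \lambda_z^{-1}\big(\del z \wed \ast_\s(\w)\big)$ up to the same reasoning used to prove Proposition~\ref{prop:selfconjLeibdual}; more carefully, the correct comparison is via $\del(\ast_\s(\w) z) = (\del\ast_\s(\w))z + \ast_\s(\w)\wed \del z$, and $\ast_\s(\w)z$ equals $z \ast_\s(\w)$ up to a scalar by Lemma~\ref{lem:FullGelfandSwap}. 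Chasing these proportionalities, $\ast_\s(\w)\wed \del z$ becomes a scalar multiple of $z\big(\del\circ\ast_\s(\w)\big)$ with coefficient of the form $(\text{something}) - A_{z,\w}$ times a non-zero normalisation; matching against the known identities one identifies this coefficient, up to a non-zero factor, as $\lambda_z - 1 - A_{z,\w}$ (the constant term $\lambda_z - 1$ coming from the $\lambda_z$-twist in moving $z$ past $\del z$ via $(\del z)z = \lambda_z z\del z$, i.e.\ $z\del z = \lambda_z^{-1}(\del z)z$, so $(\del z)z - z \del z = (\lambda_z - 1) z\del z$ modulo the isomorphism $\ast_\s$). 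Since $z\big(\del\circ\ast_\s(\w)\big)\neq 0$ by Lemma~\ref{lem:nonzerozaction} and the argument in Lemma~\ref{lem:ABscalars} (using $\del\circ\ast_\s(\w)\neq 0$ as $\ast_\s(\w) \in \del^\dagger\Om^\bullet$ by Lemma~\ref{lem:HodgeOpDecomp}), vanishing of $\ast_\s(\w)\wed\del z$ is equivalent to $A_{z,\w} = \lambda_z - 1$.

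Finally I would assemble the two halves: $\w\wed\del z$ is non-primitive $\iff \ast_\s(\w)\wed\del z \neq 0 \iff A_{z,\w}\neq\lambda_z - 1$, which is the claimed equivalence. The hypothesis $A_{z,\w}\neq 0$ and reality of the ladder presentation enter only to guarantee that the proportionality constants I am dividing by are well-defined and non-zero (in particular $\lambda_z \in \bR_{>0}$ via Proposition~\ref{prop:Leibnizpositivity}, so $\lambda_z^{-1}$ behaves well), and to legitimise the use of the Gelfand-type swap between $\ast_\s(\w)$ and $z$.

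I expect the main obstacle to be bookkeeping the precise scalar in the step that rewrites $\ast_\s(\w)\wed\del z$ in terms of $z\big(\del\circ\ast_\s(\w)\big)$: one must carefully combine the holomorphic Leibniz constant $\lambda_z$, the Gelfand swap constant of Lemma~\ref{lem:FullGelfandSwap}, and the defining constant $A_{z,\w}$, and verify that the residual coefficient is exactly a non-zero multiple of $\lambda_z - 1 - A_{z,\w}$ rather than some other affine combination. A clean way to sidestep sign ambiguities is to work throughout with the single identity $\del\big(\ast_\s(\w)\, z\big)$ expanded by Leibniz and by centrality, so that all terms live in $\Om^{(\bullet,n)}$, which is multiplicity-free by Lemma~\ref{lem:HermitianDolbeaultSymm}, forcing the coefficients to match uniquely.
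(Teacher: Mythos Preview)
Your overall strategy is correct and coincides with the paper's: first reduce, exactly as you do, to showing that $\ast_\s(\w)\wed\del z = 0$ if and only if $A_{z,\w} = \lambda_z - 1$; then extract this from a Leibniz expansion inside the multiplicity-free graded comodule $\Om^{(\bullet,n)}$.

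Where your sketch wobbles is in the middle paragraph. You invoke Lemma~\ref{lem:FullGelfandSwap} and Proposition~\ref{prop:selfconjLeibdual}, but both of these are statements about $\Om^{(0,\bullet)}$, whereas $\ast_\s(\w)$ and $\del\!\ast_\s(\w)$ live in $\Om^{(\bullet,n)}$. The swap constants you need come instead from Lemma~\ref{lem:CoPMathair} together with Lemma~\ref{lem:HermitianDolbeaultSymm} (as you correctly note at the very end). The paper's proof does precisely this: it introduces constants $C,C'$ with
\[
z\,\ast_\s(\w) = C\,\ast_\s(\w)\,z, \qquad \big(\del\circ\ast_\s(\w)\big)\,z = C'\,z\,\big(\del\circ\ast_\s(\w)\big),
\]
and then computes $\del z \wed z\ast_\s(\w)$ in two ways to obtain $CC'=\lambda_z$. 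This step is exactly where the hypothesis $A_{z,\w}\neq 0$ enters: one must divide by $A_{z,\w}$ to cancel it from both expressions. With $CC'=\lambda_z$ in hand, applying $\del$ to the first identity yields
\[
\del z \wed \ast_\s(\w) = (\lambda_z - 1)\,z\big(\del\circ\ast_\s(\w)\big) + (-1)^{|\ast_\s(\w)|}\,C\,\ast_\s(\w)\wed\del z,
\]
and since $C\neq 0$ the equivalence drops out. Your own computations via $\del(z\ast_\s(\w))$ and $\del(\ast_\s(\w)z)$ would get you there, but without first pinning down $CC'=\lambda_z$ you cannot eliminate the unknown swap constants, which is the ``bookkeeping'' difficulty you anticipate. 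Finally, your appeal to Proposition~\ref{prop:Leibnizpositivity} for $\lambda_z>0$ is neither needed nor justified here: that proposition assumes connectedness and involves $A_{z,\adel z}$ rather than $A_{z,\w}$, and the argument requires only $\lambda_z\neq 0$.
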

\begin{proof}
By Lemma \ref{lem:nonzerozaction},  both $\ast_{\s}(\w)z$ and $z \left(\del \circ \ast_{\s}(\w)\right)$ are non-zero forms. Since both forms have the same weight and degree, Lemma \ref{lem:HermitianDolbeaultSymm} and  Lemma \ref{lem:CoPMathair} imply the existence of scalars 
$C, C' \in \bC$   such that 
\bal \label{eqn:zwCwzswap}
z \ast_{\s}\!(\w) = C  \ast_{\s}(\w) z, & & \left(\del \circ \ast_{\s}(\w)\right) z = C' z \left(\del \circ \ast_{\s}(\w)\right). 
\eal
Combining these two identities with Lemma \ref{lem:ABscalars} we see that  
\begin{align*}
\del z \wed z \ast_{\s}\!(\w) = C (\del  z \wed \ast_{\s}\!(\w))z = C A_{z,\w} z (\del \circ \ast_{\s} (\w)) z =  C C'  A_{z, \w}  z^2 (\del \circ \ast_{\s}(\w)). 
\end{align*}
Moreover,  
\begin{align*}
\del z \wed z \ast_{\s}(\w) = \lambda_z z (\del z \wed \ast_{\s}(\w)) = \lambda_z A_{z, \w} z^2 (\del \circ \ast_{\s}(\w)).  
\end{align*}
Since  $A_{z, \w} \neq 0$ by assumption,  comparing the two expressions for $\del z \wed z \ast_{\s}(\w)$ gives us that 
\bal \label{eqn:CzwCdwzlz}
C   C'  = \lambda_z.
\eal
Returning now to (\ref{eqn:zwCwzswap}), we apply $\del$ to the  first  identity to obtain
\begin{align*}
\del z \wed \ast_{\s}(\w) + z(\del \circ \ast_{\s}(\w)) = C  \big(\del \circ  \ast_{\s} \w\big) z  + (-1)^{|\ast_{\s}(\w)|} \,C \ast_{\s}(\w) \wed \del z.
\end{align*}
Now by (\ref{eqn:zwCwzswap}) and (\ref{eqn:CzwCdwzlz}) it holds that $C \big(\del  \circ \ast_{\s}(\w)\big)  z = CC' z  (\del \circ  \ast_{\s} (\w))  = \lambda_z z (\del \circ  \ast_{\s}(\w))$. Hence  
\begin{align*}
 \del z \wed \ast_{\s}(\w) = (\lambda_z - 1) \, z (\del \circ \ast_{\s}(\w))  + (-1)^{|\ast_{\s}(\w)|}\,C\!\ast_{\s}\!(\w) \wed \del z.
\end{align*} 
Thus we see that $\ast_{\s}(\w) \wed \del z = 0$ if and only if $\del z \wed \ast_{\s}(\w) = (\lambda_z - 1)z (\del \circ  \ast_{\s}(\w))$.

Following the same argument as given in Lemma \ref{lem:ANonPrimitive} above, it can be shown that the form $\ast_{\s}(\w) \wed \del z$ is non-zero if and only if the form $\w \wed \del z$  is non-primitive. The claimed equivalence now follows.
\end{proof}

\subsection{Solidity and Dolbeault--Dirac Spectral Triples} \label{subsection:solidityDDSTs}

In this subsection, we show that for a CQH-Hermitian space of order I, the eigenvalues of its Laplacian tend to infinity if and only if it is solid. Combining this equivalence with Proposition \ref{prop:BCCQHHS} we can give necessary and sufficient 
conditions for a CQH-Hermitian space to give a Dolbeault--Dirac pair of spectral triples. For the convenience of the reader we break the proof into two parts.

\begin{lem}\label{lem:GTimpliesSolid}
The Laplacian of a CQH-Hermitian space $\text{\bf H}$ of order I has eigenvalues tending to infinity only if $\text{\bf H}$ has finite-dimensional anti-holomorphic cohomologies and all ladder presentations are solid.
\end{lem}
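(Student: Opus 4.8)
Assuming $\s_P(\DEL_{\adel})\to\infty$, I extract the two assertions in turn. \textbf{Finite cohomologies.} Since the eigenvalues of $\DEL_{\adel}$ tend to infinity with multiplicities, every eigenspace of $\DEL_{\adel}$ is finite-dimensional; in particular $\ker(\DEL_{\adel})=\H_{\adel}$ is, hence so is each $\H^{(0,k)}_{\adel}$. By Corollary \ref{cor:harmonictoclass}, $\H^{(0,k)}_{\adel}\cong H^{(0,k)}_{\adel}$, which gives the first claim. As $\mathbf{H}$ is of Gelfand type the comodule $\Om^0$ is multiplicity-free, so Lemma \ref{lem:GelfandandConnectivity} converts finite-dimensionality of $H^0=H^{(0,0)}_{\adel}$ into connectedness of the constituent calculus; this is used below.

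\textbf{Setting up the eigenvalue sequences.} Let $(z,\Theta)$ be any (real) ladder presentation, so $\lambda_z\in\bR$ and $\lambda_z\neq0$ by Corollary \ref{cor:LeibnizConstants}. The element $z$ is not a scalar: otherwise \eqref{eqn:ladderdecomp} would exhibit $\adel B$ as a finite sum of finite-dimensional modules, forcing $H^{(0,0)}_{\adel}=\ker(\adel\colon B\to\Om^{(0,1)})$ to have finite codimension in the infinite-dimensional algebra $B$, against the previous step. By connectedness the only degree-$0$ harmonic forms are the scalars, so $z$ is non-harmonic, $\mathrm{wt}(z)\neq0$, and its $\DEL_{\adel}$-eigenvalue $\mu_z$ is positive. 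For any non-zero $\w\in\Theta_k$, Proposition \ref{prop:HodgeClosure} places each $z^l\w$ in $\adel\Om^{(0,k)}_{\hw}$; these are non-zero by Lemma \ref{lem:nonzerozaction}, and linearly independent because the sum in \eqref{eqn:ladderdecomp} is direct. By Theorem \ref{thm:TheFormula}, $z^l\w$ is a $\DEL_{\adel}$-eigenvector with eigenvalue $\nu^{(\w)}_l=\big(A_{z,\w}(l)_{\lambda_z}+1\big)\big(B_{z,\w}(l)_{\lambda_z\inv}+1\big)\mu_\w$, where $\mu_\w>0$ is the eigenvalue of $\w$ (a non-zero $\adel$-exact, hence non-harmonic, form). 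Hence $\{\nu^{(\w)}_l\}_l$ is a sub-multiset of $\s_P(\DEL_{\adel})$, so $\nu^{(\w)}_l\to\infty$ for each $\w\in\Theta$; this is the constraint we mine.

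\textbf{Deducing solidity.} By Proposition \ref{prop:Leibnizpositivity}(1) we have $\lambda_z\notin[-1,0)$. The case $\lambda_z<-1$ is excluded because then $(l)_{\lambda_z}$ has growing modulus and alternating sign while $(l)_{\lambda_z\inv}$ stays bounded, so $\nu^{(\w)}_l$ would either be eventually negative (impossible, as $\DEL_{\adel}\ge0$) or bounded (impossible, as $\nu^{(\w)}_l\to\infty$); alternatively, one applies Proposition \ref{prop:Leibnizpositivity}(2) after noting from Lemma \ref{lem:LapActionononeforms} and the same boundedness argument applied to $z^l\,\adel z$ that $A_{z,\adel z}\neq0$. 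Thus $\lambda_z\in\bR_{>0}$, and we split into three cases. If $\lambda_z=1$ then $\nu^{(\w)}_l=(A_{z,\w}l+1)(B_{z,\w}l+1)\mu_\w$, which is the constant $\mu_\w$ when $A_{z,\w}=B_{z,\w}=0$; excluding this for every $\w\in\Theta$ is condition (b) of solid$^{0}$. If $\lambda_z>1$ then $(l)_{\lambda_z}\to\infty$ while $(l)_{\lambda_z\inv}\to(1-\lambda_z\inv)^{-1}\neq0$; divergence of $\nu^{(\w)}_l$ to $+\infty$ then forces the first factor to be unbounded ($A_{z,\w}\neq0$) and the second factor to have non-zero limit (a negative-limit case would violate $\DEL_{\adel}\ge0$, and a vanishing limit turns out to make $\nu^{(\w)}_l$ converge), which are precisely conditions (b) and (c) of solid$^{+}$. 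The case $0<\lambda_z<1$ is symmetric, swapping the two factors and $\lambda_z\leftrightarrow\lambda_z\inv$, and yields solid$^{-}$. In every case $(z,\Theta)$ is solid.

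\textbf{Main obstacle.} The asymptotics are routine; the real work is (i) ensuring that $\{z^l\w\}_l$ genuinely is an infinite, linearly independent family of eigenvectors — which rests on the directness of \eqref{eqn:ladderdecomp}, Lemma \ref{lem:nonzerozaction}, and first showing $z$ is non-scalar — and (ii) verifying that ``the leading coefficient of $\nu^{(\w)}_l$ is non-zero'', together with positivity of $\DEL_{\adel}$, is precisely conditions (a)--(c) of solid$^{0}$/solid$^{+}$/solid$^{-}$ and not something strictly weaker. Since the statement concerns \emph{all} ladder presentations, the whole argument must be run for an arbitrary $(z,\Theta)$ using only the order I hypotheses and $\s_P(\DEL_{\adel})\to\infty$; in particular reality of $\lambda_z$, if not already built into the working hypotheses at this point, has to be secured separately (for instance from the requirement that each $\nu^{(\w)}_l$ be real).
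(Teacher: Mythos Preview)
Your proof is correct and follows essentially the same route as the paper: you deduce finite cohomology from finite multiplicity of the zero eigenvalue, obtain connectedness via Lemma~\ref{lem:GelfandandConnectivity}, use Proposition~\ref{prop:Leibnizpositivity} and Lemma~\ref{lem:LapActionononeforms} to force $\lambda_z>0$, and then run the three-case asymptotic analysis of the eigenvalue formula from Theorem~\ref{thm:TheFormula}. The paper's version differs only in that it writes out the limits in the $\lambda_z>1$ case explicitly (where you assert ``a vanishing limit turns out to make $\nu^{(\w)}_l$ converge'') and omits your extra care about $z$ being non-scalar and the linear independence of the $z^l\w$.
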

\begin{proof}
If the eigenvalues of $\DEL_{\adel}$ tend to infinity then by definition no eigenvalue can have infinite multiplicity, and in particular $0$ cannot have infinite multiplicity.  By Hodge decomposition this  is equivalent to the complex structure having finite-dimensional anti-holomorphic cohomologies, giving us one part of the implication. 

Next we show that $\lambda_z \geq 0$. Since we have just shown that $\dim_{\mathbb{C}}\!\left(H^{(0,0)}\right) < \infty$, Proposition \ref{lem:GelfandandConnectivity} implies that the calculus is connected. Thus Proposition \ref{prop:Leibnizpositivity} tells us that either $\lambda_z > 0$, or $\lambda_z < -1$ and  $A_{z,\adel z} = 0$. In the latter case, Lemma \ref{lem:LapActionononeforms} implies that
\begin{align*}
\DEL_{\adel}\left(z^l \adel z\right) = (l+1)_{\lambda_z\inv} z^l\adel z.
\end{align*}
Since $(l-1)_{\lambda_z\inv}$ converges as $l \to \infty$, in this case we cannot have that  $\s_P(\DEL_{\adel}) \to \infty$. Thus we are forced to conclude that $\lambda_z$ is positive.

For a general ladder presentation $(z,\Theta)$, we now have three possibilities: $0 < \lambda_z < 1$, $\lambda_z = 1$, or $1 < \lambda_z$. 

1. Let us assume that $\lambda_z = 1$. If $A_{z,\w}$ and $B_{z,\w}$ are both zero, for some $\w \in \Theta$, then it follows directly from Theorem \ref{thm:TheFormula} that $\mu_\w$ is an eigenvalue of $\DEL_{\adel}$ with infinite-dimensional multiplicity. Thus if $\lambda_z = 1$, then $(z,\Theta)$ must be solid$^0$.

2. Let us assume that  $1 < \lambda_z$, and show that if condition (b) or (c) of the definition of solid$^+$ does not hold for $\text{\bf H}$, then $\s_P(\DEL_{\adel})$ has a limit point. We start with condition (b), which is to say,  let us assume that $A_{z,\w} = 0$, for some  $\w \in \Theta.$ It follows directly from Theorem \ref{thm:TheFormula} that, in this case,
\begin{align*}
\DEL_{\adel}(z^l \w) = \big(1 + B_{z,\w}(l)_{\lambda_z\inv}\big)z^l\w, & & \text{ for all } l \in \bN_0.
\end{align*} 
Looking now at the limit of this sequence of eigenvalues, we see that
\begin{align*}
 \lim_{l \to \infty}\big(1 + B_{z,\w} (l)_{\lambda^{-1}_z}\big)
                                                =  & \, 1 + B_{z,\w}  \lim_{l \to \infty} \big((l)_{\lambda^{-1}_z}\big)\\
                                                =  & \, 1+ B_{z,\w} \lim_{l\to \infty}\Big(\frac{1- \lambda^{-l}_z}{1-\lambda_z\inv}\Big)\\
                                                = & \, 1 + \frac{B_{z,\w}}{1-\lambda_z\inv}. 
\end{align*}
Thus  $\s_P(\DEL_{\adel})$ has a limit point in $\bR$.

\noindent Let us next assume that condition (c) of the definition of solid$^+$ does not hold,   which is to say, let us assume that $B_{z,\w} = \lambda_z\inv - 1$, for some $\w \in \Theta$.  The limit of the eigenvalues of $z^l \w$, as $l \to \infty$, is given by
\begin{align*}
& \lim_{l \to \infty}\Big(\big(1+A_{z,\w} (l)_{\lambda_z}\big)\big(1 + B_{z,\w}(l)_{\lambda_z\inv}\big)\Big) \\ = &  \lim_{l\to \infty}\left[\big(1+A_{z,\w} (l)_{\lambda_z}\big)\left(1-(1-\lambda_z\inv)\frac{\,1-\lambda_z^{-l}}{1-\lambda_z\inv}\right)\right]\\
= & \, \lim_{l\to \infty}\bigg[\left(1 + A_{z,\w}\frac{\,1-\lambda_z^l}{~\,1-\lambda_z\inv}\right)\lambda_z^{-l}\bigg] \\
\end{align*}

\begin{align*}
= & \, \lim_{l\to \infty}\bigg(\lambda^{-l}_z + A_{z,\w}\frac{\,\lambda^{-l}_z - 1}{~\,1-\lambda_z\inv}\bigg)\\
= & \, \,  \frac{A_{z,\w}}{\lambda_z\inv - 1}.
\end{align*}
Thus the point spectrum of $\DEL_{\adel}$ again has a limit point in $\bR$. 

\noindent Taking these two results together we see that if either of the requirements for a solid$^+$ fail to hold for $(z,\Theta)$, then the eigenvalues of $\DEL_{\adel}$ do not tend to infinity.  Thus we are forced to conclude that $(z,\Theta)$ is solid$^+$.

3.  Finally, for the case of $0 < \lambda_z < 1$,  an argument analogous to that in 2 verifies that if $\s_P\big(\DEL_{\adel}\big) \to \infty$, then $(z,\Theta)$  must be solid$^-$. \qedhere
\end{proof}

\begin{thm} \label{thm:THETHEOREM}
Let $\text{\bf H}$ be a CQH-Hermitian structure of order I.  Then the following are equivalent:
\bet
\item  $\s_P(\DEL_{\adel}) \to \infty$,

\item  $\text{\bf H}$ is solid and has finite-dimensional anti-holomorphic cohomologies.

\eet
\end{thm}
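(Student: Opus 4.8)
The plan is to prove the two implications separately. The implication $(1)\Rightarrow(2)$ is essentially Lemma \ref{lem:GTimpliesSolid}: if $\s_P(\DEL_{\adel})\to\infty$, then $\text{\bf H}$ has finite-dimensional anti-holomorphic cohomologies and every ladder presentation of $\text{\bf H}$ is solid; since order I guarantees at least one real ladder presentation, $\text{\bf H}$ is solid in the sense of the definition. So the substance lies in the converse $(2)\Rightarrow(1)$: assume $\text{\bf H}$ is solid, fix a solid ladder presentation $(z,\Theta)$, and assume the anti-holomorphic cohomologies are finite-dimensional.

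First I would reduce to a statement about eigenvalue sequences. Finite-dimensionality of $H^{(0,0)}$ and Lemma \ref{lem:GelfandandConnectivity} give that the calculus is connected, so that Proposition \ref{prop:Leibnizpositivity} is available (although each of the three solidity conditions already forces $\lambda_z>0$ outright). By Corollary \ref{cor:decomposingCTEG}, applicable since the cohomologies are finite-dimensional, it suffices to show that $\s_P\big(\adel\adel^{\dagger}\colon\adel\Om^{(0,\bullet)}\to\adel\Om^{(0,\bullet)}\big)\to\infty$. Now I would invoke the ladder decomposition (\ref{eqn:ladderdecomp}): $\adel\Om^{(0,\bullet)}$ is the direct sum, over the \emph{finite} set $\Theta$ and over $l\in\bN_0$, of the irreducible comodules $U_q(\frak{g})z^l\w$; by Lemma \ref{lem:GelfandDiag} the operator $\DEL_{\adel}$ acts on each such summand as a scalar, and by Theorem \ref{thm:TheFormula} that scalar is
\[
\mu_{z^l\w}=\big(A_{z,\w}(l)_{\lambda_z}+1\big)\big(B_{z,\w}(l)_{\lambda_z\inv}+1\big)\,\mu_{\w},\qquad \mu_{\w}>0 .
\]

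The core step is to show that for each of the finitely many $\w\in\Theta$ one has $\mu_{z^l\w}\to+\infty$ as $l\to\infty$; here I would run the asymptotic analysis from the proof of Lemma \ref{lem:GTimpliesSolid} in reverse, case by case. If $\text{\bf H}$ is solid$^{\,0}$ then $\lambda_z=1$, so $(l)_{\lambda_z}=(l)_{\lambda_z\inv}=l$ and $\mu_{z^l\w}=(A_{z,\w}l+1)(B_{z,\w}l+1)\mu_{\w}$ is a polynomial in $l$ of degree $1$ or $2$ (degree $0$ being excluded by condition (b)); positivity of $\DEL_{\adel}$ forces its leading coefficient to be positive, so $\mu_{z^l\w}\to+\infty$. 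If $\text{\bf H}$ is solid$^{+}$ then $1<\lambda_z$, so $(l)_{\lambda_z}\to\infty$ geometrically while $(l)_{\lambda_z\inv}$ converges to the finite limit $1/(1-\lambda_z\inv)$; condition (b) makes $A_{z,\w}(l)_{\lambda_z}+1$ unbounded, and condition (c) is precisely the statement that $B_{z,\w}(l)_{\lambda_z\inv}+1$ has non-zero limit, so the product is unbounded, and positivity of $\DEL_{\adel}$ rules out divergence to $-\infty$, giving $\mu_{z^l\w}\to+\infty$. The solid$^{-}$ case is the mirror image under $\lambda_z\leftrightarrow\lambda_z\inv$ and $A_{z,\w}\leftrightarrow B_{z,\w}$. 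Finally, since $\Theta$ is finite and each rung $U_q(\frak{g})z^l\w$ is finite-dimensional, for every $M>0$ only finitely many pairs $(\w,l)$ satisfy $\mu_{z^l\w}\le M$, and each contributes a finite-dimensional eigenspace; hence only finitely many eigenvalues of $\adel\adel^{\dagger}|_{\adel\Om^{(0,\bullet)}}$ (counted with multiplicity) lie below $M$, which is exactly $\s_P\big(\adel\adel^{\dagger}|_{\adel\Om^{(0,\bullet)}}\big)\to\infty$. Corollary \ref{cor:decomposingCTEG} then transfers this to $\DEL_{\adel}$, completing $(2)\Rightarrow(1)$.

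The only genuine difficulty I anticipate is the sign-and-asymptotics bookkeeping in the three solidity cases: the three conditions are tailored so that in each regime one of the two factors of $\mu_{z^l\w}$ grows without bound while the other stays bounded away from $0$, and one must lean on positivity of $\DEL_{\adel}$ to exclude oscillation or divergence to $-\infty$ (this is also what removes any need to argue separately that $A_{z,\w}$ and $B_{z,\w}$ are real, though that too follows from reality of the ladder presentation together with self-conjugacy). A subsidiary point, easily dealt with, is that the dimensions of the ladder modules $U_q(\frak{g})z^l\w$ may grow with $l$, but this does not obstruct $\s_P\to\infty$ because below any fixed threshold only finitely many rungs are relevant.
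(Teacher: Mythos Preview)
Your proposal is correct and follows essentially the same route as the paper: the implication $(1)\Rightarrow(2)$ is Lemma~\ref{lem:GTimpliesSolid}, and for $(2)\Rightarrow(1)$ you reduce via Corollary~\ref{cor:decomposingCTEG} to the ladder eigenvalue sequence of Theorem~\ref{thm:TheFormula} and run the same case analysis on the three solidity types. The only difference is cosmetic: you are more explicit than the paper in invoking positivity of $\DEL_{\adel}$ to pin down the sign of the divergence in each case, and in spelling out the finite-multiplicity argument via finiteness of $\Theta$ and finite-dimensionality of each rung, both of which the paper leaves implicit.
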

\begin{proof}
By Lemma \ref{lem:GTimpliesSolid} above, we need only show that 2 implies 1.  Since the complex structure is of order I by assumption, it is in particular of Gelfand type. Thus by Lemma \ref{lem:DiagComoduleMaps}, the Laplacian  $\DEL_{\adel}:\adel \Om^{(0,k)} \to \adel \Om^{(0,k)}$ acts on any  irreducible $U_q(\frak{g})$-submodule  $V \sseq \adel \Om^{(0,k)}$ as a scalar multiple of the identity. This scalar can be determined by letting $\DEL_{\adel}$ act on any element of $V$. In particular, it can be determined by letting $\DEL_{\adel}$ act on a highest weight vector of $V$.  Since we are assuming that  $\text{\bf H}$ admits a ladder presentation $(z,\Theta)$, every highest weight space must contain an element of the form $z^l \w$, for some $l \in \bN_0$, and some $\w \in \Theta$. Thus, if  the  eigenvalues of $z^l w$ tend to infinity, for each $\w \in \Theta$, then we know that the eigenvalues of $\DEL_{\adel}: \adel \Om^{(0,\bullet)} \to \adel \Om^{(0,\bullet)}$ tend to infinity, and have finite multiplicity.  Since by  assumption we have  finite-dimensional anti-holomorphic cohomologies, Corollary \ref{cor:decomposingCTEG} says that this is sufficient to imply that  $\s_P(\DEL_{\adel}) \to \infty$. 

Let us now break the rest of the proof into the three cases where $(z,\Theta)$ is either solid$^0$, solid$^+$, or solid$^-$.
\begin{enumerate}
\item  We first assume that $(z,\Theta)$ is solid$^0$, and recall for convenience the identity from Theorem \ref{thm:TheFormula} above:
\begin{align*} 
\DEL_{\adel}\big(z^l \w\big) = \big(1+A_{z,\w} (l)_{\lambda_z}\big)\big(1+B_{z,\w}(l)_{\lambda_z\inv}\big)z^l\w.
\end{align*}
Looking at the first factor, we see that since $A_{z,\w} \neq 0$,
\begin{align*}
\lim_{l \to \infty} \left(1+A_{z,\w}(l)_{\lambda_z}\right) = & \, 1 + A_{z,\w} \lim_{l \to \infty}\left((l)_{\lambda_z}\right) = \infty.
\end{align*}
Similarly, the second factor tends to infinity, as $l \to \infty$. Thus the eigenvalues of $z^l\w$ tend to infinity, with finite multiplicity, for all $\w \in \Theta$.

\item Let us now assume that $(z,\Theta)$ is  solid$^+$. 
Just as in the solid$^0$ case, the first factor tends to infinity. Looking next at the limit of the second factor, we see
\begin{align*}
\lim_{l \to \infty}\left(1+B_{z,\w}(l)_{\lambda_z\inv}\right) = 1 + B_{z,\w}  \lim_{l \to \infty}\left(\frac{1-\lambda_z^{-(l+1)}}{1-\lambda_z\inv}\right) = 1 + \frac{B_{z,\w}}{1-\lambda_z\inv}.
\end{align*}  
Since we are assuming that $B_{z,\w} \neq \lambda\inv_z - 1$, we see that the second factor does not approach zero. Combining these two observations, we see that the eigenvalues of $z^l \w$ go to infinity, with finite multiplicity,  as $l$ goes to infinity, for all $\w \in \Theta$.

\item An analogous argument proves that the existence of a  solid$^-$  ladder presentation implies that the eigenvalues of $z^l \w$ go to infinity, with finite multiplicity,  as $l$ goes to infinity.   \qedhere
\end{enumerate}
\end{proof}

Combining the results of this section we arrive at the following theorem,  which will be used in the next section to construct spectral triples for $\O_q(\ccpn)$.

\begin{thm}\label{thm:spectraltriplesfromSOLIDandCoH}  Let $\mathbf{H} = \big(\text{\bf B}, \Om^{(\bullet,\bullet)}, \s\big)$ be  a  CQH-Hermitian  space of order I, with constituent quantum homogeneous space $B$. Then a Dirac--Dolbeault pair of spectral triples is given by 
\begin{align*}
\Big(B, L^2\big(\Om^{(0,\bullet)}\big), D_{\adel}\Big), & & \Big(B, L^2\big(\Om^{(\bullet,0)}\big), D_{\del}\Big),
\end{align*}
if and only if $\text{\bf H}$ is solid and has finite-dimensional anti-holomorphic cohomologies.
%
%
\end{thm}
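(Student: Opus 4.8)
The plan is to assemble this theorem from the machinery already developed, since it is essentially a packaging result. First I would invoke Proposition \ref{prop:BCCQHHS}: because $\mathbf{H} = (\mathbf{B},\Om^{(\bullet,\bullet)},\s)$ is a CQH-Hermitian space, the pair
\[
\Big(B, L^2\big(\Om^{(0,\bullet)}\big), D_{\adel}\Big), \qquad \Big(B, L^2\big(\Om^{(\bullet,0)}\big), D_{\del}\Big)
\]
is automatically a Dolbeault--Dirac pair of BC-triples. So the only thing that needs to be established is the compact resolvent condition $(1+D^2)^{-1} \in \mathbb{K}(\H)$, which upgrades each BC-triple to a spectral triple. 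By the lemma following the definition of spectral triple, $D_{\adel}$ has compact resolvent if and only if $D_{\del}$ does, so it suffices to treat one of them, say $D_{\adel}$.

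Next I would reduce the compact resolvent condition to an eigenvalue growth condition. Since $D_{\adel}$ is diagonalisable on $L^2(\Om^{(0,\bullet)})$ (by \cite[Corollary 4.17]{DOS1}), and $\Delta_{\adel} = D_{\adel}^2$, the operator $(1+D_{\adel}^2)^{-1}$ is compact precisely when the eigenvalues of $\Delta_{\adel}$, counted with multiplicity, tend to infinity, i.e.\ $\s_P(\DEL_{\adel}) \to \infty$ in the notation of the paper. (Here one also needs that each eigenvalue has finite multiplicity, which is part of what $\s_P(\DEL_{\adel}) \to \infty$ encodes.) At this point the problem is purely a statement about $\s_P(\DEL_{\adel})$.

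Now I would apply Theorem \ref{thm:THETHEOREM} directly: since $\mathbf{H}$ is of order I by hypothesis, that theorem states that $\s_P(\DEL_{\adel}) \to \infty$ if and only if $\mathbf{H}$ is solid and has finite-dimensional anti-holomorphic cohomologies. Chaining the three equivalences --- (spectral triple pair) $\Leftrightarrow$ ($D_{\adel}$ has compact resolvent) $\Leftrightarrow$ ($\s_P(\DEL_{\adel}) \to \infty$) $\Leftrightarrow$ (solid and finite-dimensional $H^{(0,\bullet)}$) --- gives exactly the claimed biconditional. The only genuinely new content beyond quoting earlier results is the bookkeeping that ``$(1+D^2)^{-1}$ compact'' translates cleanly into ``$\s_P(\DEL_{\adel})\to\infty$'' for a diagonalisable essentially self-adjoint $D$, and that the even structure from the corollary after Proposition \ref{prop:BCCQHHS} is preserved.

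I do not expect a serious obstacle here; the theorem is a corollary-style synthesis. The one point requiring a little care is making sure the equivalence is stated for \emph{both} members of the pair simultaneously --- this is handled by the lemma equating compact resolvent for $D_{\adel}$ and $D_{\del}$ via $* \circ \Delta_{\adel} = \Delta_{\del} \circ *$ --- and confirming that ``finite-dimensional anti-holomorphic cohomologies'' is the correct finiteness hypothesis (it is, since by Corollary \ref{cor:harmonictoclass} the harmonic forms $\H_{\adel}$, which constitute the $0$-eigenspace of $\Delta_{\adel}$, are isomorphic to the cohomology groups $H^{(0,\bullet)}$, so finite multiplicity of the eigenvalue $0$ is precisely $\dim_{\mathbb{C}} H^{(0,\bullet)} < \infty$). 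With those remarks in place the proof is a short citation of Proposition \ref{prop:BCCQHHS} and Theorem \ref{thm:THETHEOREM}.
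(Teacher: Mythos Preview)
Your proposal is correct and matches the paper's approach exactly. The paper does not even write out an explicit proof for this theorem, presenting it with the preamble ``Combining the results of this section we arrive at the following theorem,'' which is precisely the synthesis of Proposition~\ref{prop:BCCQHHS} and Theorem~\ref{thm:THETHEOREM} that you have spelled out.
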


\section{A Dolbeault--Dirac Spectral Triple for Quantum Projective Space} \label{section:CPN}

We are now ready to apply the general framework developed in the previous sections to our motivating example $\O_q(\ccpn)$ \cite{DijkStok,MEYER}. We begin by recalling the necessary basics about $\O_q(\ccpn)$ and its \hk calculus. We then construct an  order I presentation of the associated CQH-space. This allows us to verify the compact resolvent condition by demonstrating solidity, and hence produce a Dolbeault--Dirac pair of spectral triples for $\O_q(\ccpn)$ with non-trivial $K$-homology class. 

\subsection{Quantum Projective Space as a Quantum Homogeneous Space} \label{subsection:CPNasaQHS}

Consider the Hopf sub\alg  $U_q(\frak{l}_{n-1}) \sseq U_q(\frak{sl}_{n})$ generated by the elements
\begin{align*}
 K_i, \, E_j, \, F_j ~~~~~~~~ \text{ for } i = 1, \ldots, n-1, \text{ and }  j = 1, \dots, n-2.
\end{align*} 
Note that  $U_q(\frak{sl}_{n-1})$ canonically embeds into $U_q(\frak{l}_{n-1})$  as a sub\algn. 
For any irreducible  $U_q(\frak{sl}_{n-1})$-module $V_{\mu}$, and any $m \in \bZ$, we see that there exists an irreducible representation of $U_q(\frak{l}_{n-1})$ on $V_{\mu}$, extending the action of $U_q(\frak{sl}_{n-1})$,  uniquely determined by
\begin{align*}
 K_{n-1} \tr v  = q^m v, & & \text{ for } v \text{ a highest weight vector in } V_{\mu}.
\end{align*}
We denote this irreducible representation by $V_{\mu}(m)$, and call $m$ the {\em weight} of $K_{n-1}$. (See \cite[\textsection 4.4]{KOS} for a more detailed discussion of the representation theory of $U_q(\frak{l}_{n-1})$.)

As standard, we use the superscript $\circ$ to denote the Hopf dual of a Hopf algebra. Dual to   the Hopf  algebra embedding $\iota:U_q(\frak{l}_{n-1}) \hookrightarrow U_q(\frak{sl}_{n})$, we have the Hopf algebra map $\iota^{\circ}: U_q(\frak{sl}_{n})^{\circ} \to U_q(\frak{l}_{n-1})^{\circ}$.  By construction $\O_q(SU_{n}) \sseq U_q(\frak{sl}_{n})^{\circ}$, and so we can consider the restriction map
\begin{align*}
\pi:= \iota^\circ|_{\O_q(SU_n)}: \O_q(SU_{n}) \to U_q(\frak{l}_{n-1})^{\circ},
\end{align*}
as well as the Hopf subalgebra 
$
\O_q(U_{n-1}) := \pi\big(\O_q(SU_{n})\big).$ 
{\em Quantum projective space} $\O_q(\ccpn)$ is  the quantum homogeneous space associated to the surjective Hopf $*$-algebra map $\pi:\O_q(SU_{n}) \to \O_q(U_{n-1})$,  which is to say, it is the space of coinvariants 
\begin{align*}
\O_q(\ccpn) = \O_q(SU_{n})^{\text{co}(\O_q(U_{n-1}))}.
\end{align*} 
A standard set of generators for $\O_q(\ccpn)$ is given by
\begin{align*}
\Big\{ z_{ij} := u^i_nS(u^n_j) \, | \, i,j = 1, \dots, n  \Big\}. 
\end{align*}

\subsubsection{The Heckenberger--Kolb Calculus}

We present the calculus in two steps, beginning with Heckenberger and Kolb's classification of first-order differential  calculi over $\O_q(\ccpn)$, and then discussing the maximal prolongation of the direct sum of the two calculi identified. 

First, we recall that a {\em differential map} between two differential calculi $(\Om^\bullet,\exd)$ and $(\Gamma^\bullet, \d)$, is a degree $0$ algebra map $\f:\Om^\bullet \to \Gamma^\bullet$  such that  $\f \circ \d =  \d \circ \f$. Moreover, a {\em first-order differential calculus} is a differential calculus of total degree $1$. We say that a first-order differential calculus is {\em irreducible} if $\Om^1$ is irreducible as a bimodule over $\Om^0$. 

\begin{thm} \cite[Theorem 7.2]{HK} \label{thm:HKClassCPN}
There exist exactly two non-isomorphic irreducible, left-covariant, finite-dimensional, first-order differential calculi  over $\O_q(\ccpn)$. 
\end{thm}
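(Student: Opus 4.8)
The plan is to reduce the classification of irreducible, left-covariant, finite-dimensional first-order differential calculi over $\O_q(\ccpn)$ to a problem in the representation theory of the Hopf algebra $\O_q(U_{n-1})$, via Takeuchi's equivalence for quantum homogeneous spaces. First I would recall that, by the general theory of left-covariant first-order differential calculi over a quantum homogeneous space $B = A^{\co(H)}$ (Hermisson, Majid--Oeckl), such calculi are classified by certain quotients of the cotangent space at the identity; concretely, left-covariant FODC over $B$ correspond bijectively to sub-objects of a canonical object in the category $^{H}\!\mathrm{Mod}$ of $H$-modules, where $H = \O_q(U_{n-1})$. The relevant object is built from $\Lambda^1 := (B^+/(B^+)^2)^*$-type data, or more precisely from the kernel of the multiplication map restricted appropriately; irreducibility of the bimodule $\Om^1$ translates into irreducibility of the corresponding $H$-module.

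The key steps, in order, would be: (1) Identify the cotangent module. Using the maximal prolongation framework and Takeuchi's equivalence $^{A}_{B}\mathrm{Mod}_0 \simeq \,^{H}\!\mathrm{Mod}$, translate "left-covariant FODC over $\O_q(\ccpn)$" into "quotient modules of a specific finite-dimensional $U_q(\frak{l}_{n-1})$-module $V$", where $V$ is the relative cotangent space. (2) Decompose $V$ explicitly. Here one computes $V$ as a $U_q(\frak{l}_{n-1})$-module (equivalently an $\O_q(U_{n-1})$-comodule); the expectation, mirroring the classical Kähler geometry of $\mathbb{CP}^{n-1}$, is that $V$ decomposes as a direct sum $V \simeq V^{(1,0)} \oplus V^{(0,1)}$ of two irreducible summands, the holomorphic and anti-holomorphic cotangent modules, which are dual/conjugate to one another and are the defining representation of $U_q(\frak{sl}_{n-1})$ twisted by appropriate $K_{n-1}$-weights (i.e., $V_{\varpi_1}(m)$ and $V_{\varpi_{n-2}}(-m)$ for the right $m$). (3) Classify sub-objects. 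Since finite-dimensional irreducible FODC correspond to irreducible summands (or irreducible quotients) of $V$, and $V$ has exactly two irreducible constituents, conclude there are exactly two such calculi; left-covariance and finite-dimensionality are automatic from the construction, and non-isomorphism follows because $V^{(1,0)} \not\simeq V^{(0,1)}$ as $U_q(\frak{l}_{n-1})$-modules (different $K_{n-1}$-weights, or non-isomorphic $U_q(\frak{sl}_{n-1})$-types when $n \geq 3$; the $n=2$ Podle\'s case is handled separately but analogously). One must also check irreducibility of each constituent as a bimodule, which follows from irreducibility as an $H$-module plus faithful flatness of $A$ over $B$.

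The main obstacle I expect is Step (2): computing the decomposition of the cotangent module $V$ into $U_q(\frak{l}_{n-1})$-irreducibles. This requires a careful analysis of $B^+ / (B^+)^2$ (or the appropriate Takeuchi-theoretic avatar) using the explicit generators $z_{ij} = u^i_n S(u^n_j)$ and the quantum sphere-type relations they satisfy; one needs to show that the quadratic relations cut the naive cotangent space — spanned by the $\exd z_{ij}$ — down to precisely two irreducible pieces, with no "extra" exotic summands surviving. This is where the specific structure of $\O_q(\ccpn)$, as opposed to a general quantum homogeneous space, is essential, and it is the computational heart of the Heckenberger--Kolb argument. A secondary subtlety is ensuring that every such sub-object of $V$ actually extends to a genuine first-order calculus (surjectivity of the correspondence), which is handled by the maximal prolongation construction referenced in the text, and verifying that the two calculi so produced are mutually non-isomorphic as abstract FODC, not merely as $H$-modules — but this follows from functoriality of Takeuchi's equivalence.
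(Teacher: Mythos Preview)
The paper does not prove this theorem: it is stated with the citation \cite[Theorem 7.2]{HK} and no argument is given in the text, so there is no proof in the paper to compare your proposal against. Your outline --- reducing the classification of irreducible left-covariant finite-dimensional first-order calculi to the decomposition of the cotangent-type module under $U_q(\frak{l}_{n-1})$ via Takeuchi's equivalence, and then identifying exactly two irreducible summands $V^{(1,0)}$ and $V^{(0,1)}$ --- is a reasonable sketch of the strategy Heckenberger and Kolb actually use in \cite{HK}, and it is consistent with how the present paper subsequently treats the two calculi (cf.\ Lemma~\ref{basislem} and the surrounding discussion). If you want a genuine comparison you will need to consult \cite{HK} directly; within this paper the result is simply imported.
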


We denote these two calculi by $\Om\hol$ and $\Om\ahol$. Moreover, we denote their direct sum by $\Om^1 := \Om\hol \oplus \Om\ahol$ and call it the {\em \hk first-order differential calculus} of $\O_q(\ccpn)$. For a proof of the following lemma see \cite[Lemma 5.2]{MMF1}. 

\begin{lem}\label{basislem}
Bases of $\Phi(\Om\hol)$ and $\Phi(\Om\ahol)$ are given respectively by 
\begin{align*}
\Big\{ e^+_a := [\del z_{an}] \,  \,|\, a=1,\ldots, n-1 \Big\}, & &  \Big\{e^-_a := [\adel z_{na}]\ \,|\, i = 1,\ldots, n-1 \Big\}.
\end{align*}
Moreover, it holds that 
\begin{align*}
[\del z_{nn}] = [\adel z_{nn}] = [\del z_{ab}] = [\adel z_{ab}] = 0, & & \text{ for all \, } a,b = 1, \dots, n-1.
\end{align*}
\end{lem}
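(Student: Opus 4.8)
The plan is to identify $\Phi(\Omega^{(1,0)})$ and $\Phi(\Omega^{(0,1)})$ with the two irreducible $U_q(\mathfrak{l}_{n-1})$-modules produced by Theorem~\ref{thm:HKClassCPN}, and then to locate the classes $[\del z_{ij}]$ and $[\adel z_{ij}]$ inside them by a weight count. In other words, the vanishing relations and the basis claim should both fall out of Schur's lemma once the relevant $U_q(\mathfrak{l}_{n-1})$-module structures are pinned down.

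First I would carry out the structural reduction. Since the \hk calculus is a first-order calculus, $\Omega^1$ is generated as a left (equivalently right) $B$-module, $B = \O_q(\ccpn)$, by $\{\exd b : b \in B\}$, and hence, by the Leibniz rule and the fact that $B$ is generated by the $z_{ij}$, by $\{\exd z_{ij}\}$. The left-coinvariant projection $[\,\cdot\,]\colon \Omega^1 \to \Phi(\Omega^1)$ kills $B^+\Omega^1$, so $[b\,\exd b'] = \varepsilon(b)\,[\exd b']$; applying it to the generating set therefore gives $\Phi(\Omega^1) = \mathrm{span}_{\mathbb{C}}\{[\exd z_{ij}]\}$. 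Splitting by the (covariant) complex structure via $[\exd z_{ij}] = [\del z_{ij}] + [\adel z_{ij}]$, with $[\del z_{ij}] \in \Phi(\Omega^{(1,0)})$ and $[\adel z_{ij}] \in \Phi(\Omega^{(0,1)})$, shows that $\{[\del z_{ij}]\}$ spans $\Phi(\Omega^{(1,0)})$ and $\{[\adel z_{ij}]\}$ spans $\Phi(\Omega^{(0,1)})$.

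Next I would bring in representation theory. Through the Takeuchi-type equivalence between covariant first-order calculi over $\O_q(\ccpn)$ and $U_q(\mathfrak{l}_{n-1})$-modules, the Heckenberger--Kolb classification presents $\Phi(\Omega^{(1,0)})$ and $\Phi(\Omega^{(0,1)})$ as irreducible $(n-1)$-dimensional $U_q(\mathfrak{l}_{n-1})$-modules — the $q$-deformed holomorphic and anti-holomorphic linear isotropy representations of $\ccpn \cong SU_n/U_{n-1}$ — which carry opposite $K_{n-1}$-weights and so are non-isomorphic. Under this equivalence the assignment $b \mapsto [\exd b]$ intertwines suitable $U_q(\mathfrak{l}_{n-1})$-module structures on $B$ and on $\Phi(\Omega^1)$, so $[\del\,\cdot\,]$ and $[\adel\,\cdot\,]$ restrict to $U_q(\mathfrak{l}_{n-1})$-module maps on the submodule $W := \mathrm{span}_{\mathbb{C}}\{z_{ij}\}$. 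From $\Delta(z_{ij}) = \sum_{k,l} u^i_k S(u^l_j) \otimes z_{kl}$ and the known $U_q(\mathfrak{l}_{n-1})$-action on the matrix coefficients $u^a_b$, one computes the decomposition of $W$ into irreducibles; I would then verify that $\mathrm{span}\{z_{an} : a<n\}$ has the $U_q(\mathfrak{l}_{n-1})$-type of $\Phi(\Omega^{(1,0)})$, that $\mathrm{span}\{z_{na} : a<n\}$ has the type of $\Phi(\Omega^{(0,1)})$, and that the remaining weight components — those spanned by the $z_{ab}$ with $a,b<n$ and by the quantum-trace direction through $z_{nn}$ — are of $U_q(\mathfrak{l}_{n-1})$-types occurring in neither. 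Granting this, Schur's lemma gives at once $[\del z_{ab}] = [\adel z_{ab}] = [\del z_{nn}] = [\adel z_{nn}] = 0$ for $a,b<n$ (and, incidentally, $[\del z_{na}] = [\adel z_{an}] = 0$), and it forces each of $\{[\del z_{an}]\}_{a<n}$ and $\{[\adel z_{na}]\}_{a<n}$ to be either identically zero or a basis of the relevant irreducible. They cannot vanish, since $\dim_{\mathbb{C}}\Phi(\Omega^{(1,0)}) = \dim_{\mathbb{C}}\Phi(\Omega^{(0,1)}) = n-1$; hence they are bases, as claimed.

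I expect the genuine work to be concentrated in the third step: spelling out the $U_q(\mathfrak{l}_{n-1})$-action on the generators $z_{ij} = u^i_n S(u^n_j)$ from the dual pairing, and — more delicately — identifying precisely the irreducible types $V_{\varpi_1}(m)$, $V_{\varpi_{n-2}}(m')$ attached to $\Phi(\Omega^{(1,0)})$ and $\Phi(\Omega^{(0,1)})$, so as to be certain that the unwanted summands of $W$ genuinely have different types. This is exactly where the Heckenberger--Kolb classification, together with the equivalence between covariant calculi and $U_q(\mathfrak{l}_{n-1})$-modules, must be fed in; the spanning argument, Schur's lemma, and the dimension count are then routine. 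A more computational alternative, in the spirit of \cite{MMF1}, would bypass the representation theory and read the vanishing relations off an explicit presentation of the \hk calculus as a quotient of the universal first-order calculus by a suitable sub-bimodule, at the cost of a longer and less transparent bimodule calculation.
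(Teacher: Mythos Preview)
The paper does not prove this lemma; it simply cites \cite[Lemma 5.2]{MMF1}. Your proposal therefore cannot be compared against an in-text argument, but it can be assessed on its own terms and against the approach of the cited reference.

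Your strategy is sound. The spanning step is routine, and the heart of the argument --- decomposing $W = \mathrm{span}_{\mathbb{C}}\{z_{ij}\}$ as a $U_q(\mathfrak{l}_{n-1})$-module and invoking Schur's lemma against the known irreducible types of $\Phi(\Omega^{(1,0)})$ and $\Phi(\Omega^{(0,1)})$ --- is exactly the right conceptual move. The one point to be careful with is the equivariance claim: the map $b \mapsto [\exd b]$ is an $\O_q(U_{n-1})$-comodule map for the coaction $b \mapsto \pi(b_{(1)}) \otimes b_{(2)}$ on $B$ induced by the \emph{left} $\O_q(SU_n)$-coaction (not the trivial right coaction defining $B$ as coinvariants); once this is made precise your Schur argument goes through. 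The type identifications you flag as ``the genuine work'' are indeed where the content lies, and they amount to checking that the $K_{n-1}$-weights on the summands $\mathrm{span}\{z_{an}\}$, $\mathrm{span}\{z_{na}\}$, $\mathrm{span}\{z_{ab}:a,b<n\}$ separate them from one another --- a short computation from the pairing \eqref{eqn:dualpairing}.

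As you yourself note, the approach in \cite{MMF1} is more computational: it works with an explicit presentation of the calculus as a quotient of the universal first-order calculus and reads off the relations directly. Your representation-theoretic route is cleaner and explains \emph{why} the vanishing holds (wrong isotypical component), at the cost of importing the Heckenberger--Kolb classification of the irreducible types; the bimodule computation is self-contained but opaque. Either is acceptable here.
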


We now present the action of the generators of $U_q(\frak{l}_{n-1})$ on the basis elements of $V^{(0,k)}$. The proof is a direct calculation in terms of the dual pairing (\ref{eqn:dualpairing}) and the definition of the action (\ref{eqn:ComoduleToModule}).

\begin{lem} \label{lem:EKFBasis}
The only non-zero actions of the generators of $U_q(\frak{l}_{n-1})$ on the basis elements of $V^{(0,1)}$ are given by
\begin{align*}
E_k \tr e^-_k = q^2 e^-_{k+1}, & & K_k \tr e^-_a = q^{-\d_{k,n-1} - \d_{ka} +\d_{k,a-1}} e^-_a, & & F_k \tr e^-_{k+1} = q^{-2} e^-_k.
\end{align*}
\end{lem}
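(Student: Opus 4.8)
The $U_q(\frak{l}_{n-1})$-module structure on $V^{(0,1)} = \Phi(\Om^{(0,1)})$ is the one of (\ref{eqn:ComoduleToModule}): it is induced from the left $\O_q(SU_n)$-coaction on $\Om^{(0,1)}$, so that for $\w \in \Om^{(0,1)}$ one has $X \tr [\w] = \big[\,\langle S^{\pm1}(X), \w_{(-1)}\rangle\, \w_{(0)}\,\big]$, the bracket denoting the class in $\Phi(\Om^{(0,1)})=\Om^{(0,1)}/B^+\Om^{(0,1)}$ (this descends because $U_q(\frak{l}_{n-1})$ acts on $B=\O_q(\ccpn)$ through the counit and hence preserves $B^+\Om^{(0,1)}$). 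The plan is therefore threefold: (i) compute the left coaction $\Delta_L(\adel z_{na})$ explicitly inside $\O_q(SU_n)\otimes\Om^{(0,1)}$; (ii) reduce the result modulo $B^+\Om^{(0,1)}$ using the vanishing relations of Lemma~\ref{basislem}; (iii) evaluate the surviving matrix coefficients against $E_k$, $F_k$, $K_k$ using the explicit form of the dual pairing (\ref{eqn:dualpairing}).

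For (i), since $\adel$ is a left $\O_q(SU_n)$-comodule map (covariance of the complex structure) and $z_{na}=u^n_nS(u^n_a)\in\O_q(\ccpn)$, the coproduct $\Delta(z_{na})=\sum_{k,l}u^n_k\,S(u^l_a)\otimes z_{kl}$ yields
\begin{align*}
\Delta_L(\adel z_{na}) \,=\, (\mathrm{id}\otimes\adel)\Delta(z_{na}) \,=\, \sum_{k,l=1}^{n} u^n_k\, S(u^l_a)\otimes \adel z_{kl}.
\end{align*}
For (ii), Lemma~\ref{basislem} says $[\adel z_{kl}]=0$ unless $k=n$ and $1\le l\le n-1$, in which case it equals $e^-_l$; hence
\begin{align*}
X \tr e^-_a \,=\, \sum_{l=1}^{n-1}\big\langle S^{\pm1}(X),\, u^n_n\, S(u^l_a)\big\rangle\, e^-_l.
\end{align*}
For (iii) one treats the three generators in turn. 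For $K_k$ the pairing factorises through grouplikeness: the factor involving $u^n_n$ contributes $q^{-\delta_{k,n-1}}$ (this is exactly the $K_{n-1}$-weight recorded by the $U_1$-part $\pi(u^n_n)$), while the factor involving $S(u^l_a)$ forces $l=a$ and contributes $q^{-\delta_{ka}+\delta_{k,a-1}}$ after the identity $\delta_{a,k+1}=\delta_{k,a-1}$; multiplying gives the stated exponent $-\delta_{k,n-1}-\delta_{ka}+\delta_{k,a-1}$, so $K_k$ acts diagonally. For $E_k$ and $F_k$ one uses their coproducts together with $\langle E_k,u^i_j\rangle=\delta_{ik}\delta_{j,k+1}$, $\langle F_k,u^i_j\rangle=\delta_{i,k+1}\delta_{jk}$ and $\Delta(u^l_a)=\sum_m u^l_m\otimes u^m_a$; only a single index pattern survives in each case, producing $E_k\tr e^-_k=q^2 e^-_{k+1}$, $F_k\tr e^-_{k+1}=q^{-2}e^-_k$, and zero otherwise.

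The only genuine work lies in step (iii): keeping straight the placement of antipodes (recalling that $S^2\neq\mathrm{id}$ on $\O_q(SU_n)$), the precise relations of Lemma~\ref{basislem}, and the normalisation conventions of the dual pairing (\ref{eqn:dualpairing}) and of the induced action (\ref{eqn:ComoduleToModule}), so that the powers of $q$ — in particular the $q^{\pm2}$ normalisations on $E_k,F_k$ and the $-\delta_{k,n-1}$ shift — emerge exactly as claimed. No further representation-theoretic input beyond Lemma~\ref{basislem} and the explicit pairing is required.
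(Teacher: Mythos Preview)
Your proposal is correct and follows essentially the same route as the paper's proof: a direct computation using the dual pairing (\ref{eqn:dualpairing}) and the definition of the induced action (\ref{eqn:ComoduleToModule}), together with the vanishing relations of Lemma~\ref{basislem}. The paper organises the calculation slightly more compactly by using covariance of $\adel$ to write $X\tr e^-_a = X\tr[\adel z_{na}] = [\adel(X\tr z_{na})]$ and then computing $X\tr z_{na}$ directly, rather than first expanding the full coaction as you do in step~(i); but this is a cosmetic difference and the underlying arithmetic with the pairing is identical.
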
 
\begin{proof}
The first identity is established by the calculation
\begin{align*}
E_{k} \tr e^-_{a} = E_{k} \tr [\adel z_{na}] = [\adel(E_{k} \tr  z_{na})]  = q^2 \d_{ka} [\adel z_{n,k+1}] = q^2 \d_{ka} e^-_{k+1}. 
\end{align*}
The other two identities are established similarly.
\end{proof}

We say that a differential calculus over an algebra $A$, of total degree greater than or equal to $2$,  {\em extends} a first-order calculus if there exists a {\em differential injection}, which is to say, an injective differential map,  from the first-order calculus to the differential calculus. Any first-order calculus admits an extension $(\Om^\bullet,\exd)$ which is {\em maximal}  in the sense that, for any other extension $(\G^\bullet,\d)$, there exists a unique surjective  differential map $\f:\Om^\bullet \to \Gamma^\bullet$. We call this extension, which is necessarily unique up to differential isomorphism, the {\em maximal prolongation} of the first-order calculus. The maximal prolongation of a covariant calculus is again covariant. (See \cite[\textsection 2.5]{MMF2} for a more detailed discussion in the notation of this paper.)

We denote the maximal prolongation of the \hk first-order calculus by $\Om^\bullet$, and call it the {\em Heckenberger--Kolb differential calculus}. Note that since $\Om^\bullet$ is covariant, it is a monoid object in {\small$\, ^{\O_q(SU_{n})}_{\O_q(\ccpn)}  \mathrm{Mod}_0$}. Consequently, since Takeuchi's equivalence is a monoidal equivalence, $\Phi\big(\Om^\bullet\big)$ is a monoid object in {\small $\,^{\O_q(U_{n-1})} \mathrm{Mod}$}. We denote the multiplication in $\Phi(\Om^\bullet)$ by $\wed$. We call a subset $I \sseq \{1,\ldots, n-1\}$ 
 {\em ordered} if $i_1 < \cdots < i_k$. For any two ordered subsets $I, J \sseq  \{1,\ldots, n-1\}$, we denote 
\begin{align*}
e^+_I \wed e^-_J := e^+_{i_1} \wed \cdots \wed e^+_{i_k} \wed  e^-_{j_1} \wed \cdots \wed e^-_{j_{l}},
\end{align*}
where $J = \{j_1,\dots,j_l\}$. We now collect some basic facts about $\Phi(\Om^\bullet)$ as an algebra. Theorem 6.4.1 was established in \cite[\textsection 3.3]{HK}. For a proof of Theorem 6.4.2 and Theorem 6.4.3  see  \cite[Proposition 5.8]{MMF2}. 

\begin{thm}\label{thm:HKBasisRels} For $\Om^\bullet$ the \hk calculus of $\O_q(\ccpn)$:
\bet
\item $\Phi(\Om^k)$ has dimension  $\binom{2n-2}{k}$, 
\item a basis of $\Phi(\Om^k)$ is given by 
\begin{align*}
\Big\{ e^+_I \wed e^-_J \, | \, I,J \sseq \{1,\ldots, n-1\} \text{ \em ordered subsets  \st } |I|+|J| = k \Big\},
\end{align*}
\item a full set of relations for $\Phi\big(\Om^{(0,\bullet)}\big)$ is given by
\begin{align*}
e^-_j \wed e^-_i + q \inv e^-_i \wed e^-_j, & & e^-_i \wed e^-_i, & & \text{  for } i,j = 1, \dots, n-1, \text{ \st } i < j.
\end{align*}

\eet
\end{thm}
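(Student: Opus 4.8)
The plan is to concentrate the real work in part 1, which is the substantial input from the Heckenberger--Kolb classification \cite[\textsection 3.3]{HK}, and then to deduce parts 2 and 3 from it by elementary reordering and dimension counting, following \cite[Proposition 5.8]{MMF2}. For part 1, recall that $\Om^\bullet$ is the maximal prolongation of the first-order calculus $\Om^1 = \Om\hol \oplus \Om\ahol$, hence is generated in degree one; applying Takeuchi's monoidal equivalence, $\Phi(\Om^\bullet)$ is thus a graded quotient of the tensor algebra on $\Phi(\Om^1)$ in the category of left $\O_q(U_{n-1})$-comodules, the degree-two relations being those forced by the requirement that $\exd$ extend to $\Om^\bullet$. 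One then computes $\Phi(\Om^1) = \Phi(\Om\hol) \oplus \Phi(\Om\ahol)$ as a $U_q(\frak{l}_{n-1})$-module together with its braiding, decomposes $\Phi(\Om^1)^{\otimes 2}$ into isotypic components, identifies the relation subspace, and then works degree by degree. The genuinely hard direction --- and the main obstacle of the whole theorem --- is the lower bound, namely that the standard monomials satisfy no relation beyond those forced in degree two, so that $\dim_{\bC}\Phi(\Om^k) = \binom{2n-2}{k}$, matching the classical de Rham complex of $\ccpn$. I would either invoke the Heckenberger--Kolb analysis of the first-order calculus directly, or else construct by hand a covariant differential calculus extending $\Om^1$ whose $k$-th component has dimension $\binom{2n-2}{k}$, whence the universal property of the maximal prolongation gives the matching lower bound on $\dim_{\bC}\Phi(\Om^k)$.

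Granting part 1, I would prove part 3 next. That the quadratic relations $e^-_i \wed e^-_i = 0$ and $e^-_j \wed e^-_i + q^{-1} e^-_i \wed e^-_j = 0$ (for $i < j$) hold in $\Phi(\Om^{(0,2)})$ follows from a highest weight computation as in \cite[Proposition 5.8]{MMF2}: by covariance it suffices to verify them on $U_q(\frak{l}_{n-1})$-highest weight vectors and then to propagate using the action of Lemma \ref{lem:EKFBasis}. Let $\Lambda^\bullet$ denote the quotient of the tensor algebra on $\Phi(\Om^{(0,1)})$ by the ideal generated by these quadratic elements; a normal-form (diamond lemma) argument identifies the ordered monomials as a basis, so $\dim_{\bC}\Lambda^k = \binom{n-1}{k}$. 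The relations holding in $\Phi(\Om^{(0,\bullet)})$ gives a graded surjection $\Lambda^\bullet \twoheadrightarrow \Phi(\Om^{(0,\bullet)})$, and part 1 --- which yields $\dim_{\bC}\Phi(\Om^{(0,k)}) = \binom{n-1}{k}$ --- forces this surjection to be an isomorphism in each degree. Hence $\Phi(\Om^{(0,\bullet)})$ has no further relations, establishing part 3.

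Finally, for part 2 I would record a full set of commutation relations for $\Phi(\Om^\bullet)$: the $\ast$-structure of the calculus converts part 3 into holomorphic relations of analogous quadratic form among the $e^+_a$, and a direct computation in $\Phi(\Om^2)$ --- again reducing to highest weight vectors via Lemma \ref{lem:EKFBasis} --- yields mixed relations rewriting each product $e^-_j \wed e^+_i$ as a linear combination of the elements $e^+_I \wed e^-_J$ with $|I| = |J| = 1$. Using these three families, any monomial in the $e^\pm_a$ can be rewritten as a linear combination of the ordered monomials $e^+_I \wed e^-_J$, so these span $\Phi(\Om^k)$. The number of such monomials with $|I| + |J| = k$ is $\sum_{i + j = k} \binom{n-1}{i} \binom{n-1}{j} = \binom{2n-2}{k}$ by Vandermonde's identity, which by part 1 equals $\dim_{\bC}\Phi(\Om^k)$; a spanning set of that cardinality in a space of that dimension is a basis, which gives part 2 (and supplies the upper bound needed in part 1).
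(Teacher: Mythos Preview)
Your proposal is correct and matches the paper's approach: the paper does not give a proof but simply cites \cite[\textsection 3.3]{HK} for part 1 and \cite[Proposition 5.8]{MMF2} for parts 2 and 3, and your sketch accurately reconstructs the content of those references --- quadratic relations verified on highest weight vectors, reordering to standard monomials, and a spanning-plus-dimension argument.

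One small logical wrinkle: in your treatment of part 3 you invoke ``part 1 --- which yields $\dim_{\bC}\Phi(\Om^{(0,k)}) = \binom{n-1}{k}$'', but part 1 as stated only gives the total dimension $\binom{2n-2}{k}$ of $\Phi(\Om^k)$, not the anti-holomorphic piece separately. The clean way to close this is the order you use for part 2: once all three families of commutation relations are established, the ordered monomials $e^+_I \wed e^-_J$ span $\Phi(\Om^k)$; part 1 then forces them to be a basis, and restricting to $I = \varnothing$ gives $\dim_{\bC}\Phi(\Om^{(0,k)}) = \binom{n-1}{k}$, whence the surjection $\Lambda^\bullet \twoheadrightarrow \Phi(\Om^{(0,\bullet)})$ is an isomorphism. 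So parts 2 and 3 are really proved together rather than in the sequence 3-then-2. Alternatively, the Heckenberger--Kolb computation in \cite[\textsection 3.3]{HK} already records the bigraded dimensions, which would justify your shortcut.
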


We now use the given basis of $\Phi\big(\Om^\bullet\big)$ to define a complex structure for the calculus. Consider first the subspaces of $\Phi\big(\Om^\bullet\big)$  defined by 
\begin{align*}
V^{(a,b)} := \spn_\bC\Big\{ e^+_I \wed e^-_J \, | \, I,J \sseq \{1, \ldots, n-1\} \text{ ordered subsets with } |I|=a,\, |J| = b \Big\}.
\end{align*}
Two immediate consequences of the definition of $V^{(a,b)}$ are that
\begin{align*}
 \dim_{\mathbb{C}}\big(V^{(a,b)}\big)  = \binom{n-1}{a}\!\binom{n-1}{b}, & & \text{ and }& & \Phi\big(\Om^k\big) \simeq \bigoplus_{a+b = k} V^{(a,b)}, \text{ ~~ for all } k.
\end{align*}
The following proposition is implied by the presentation of \cite[\textsection 3.3.4]{HKdR}. (Alternatively,  see \cite[\textsection 6, \textsection7]{MMF2} for a direct proof in the notation of this paper.)

\begin{thm} 
For $\Om^\bullet$ the Heckenberger--Kolb calculus over $\O_q(\ccpn)$, there is a unique covariant complex structure  $\Om^{(\bullet,\bullet)}$ on $\Om^\bullet$ \st 
\begin{align*}
 \Phi(\Om^{(a,b)}) = V^{(a,b)}.
\end{align*}
\end{thm}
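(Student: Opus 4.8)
The statement asserts the existence and uniqueness of a covariant complex structure $\Om^{(\bullet,\bullet)}$ on the \hk calculus whose associated Takeuchi image is $\Phi(\Om^{(a,b)}) = V^{(a,b)}$. My approach would be to transport everything to the category {\small$\,^{\O_q(U_{n-1})}\mathrm{Mod}$} via Takeuchi's equivalence $\Phi$, which is monoidal, so that an $\bN_0^2$-algebra grading of $\Phi(\Om^\bullet)$ satisfying the three axioms of Definition \ref{defnnccs} corresponds exactly to a covariant complex structure on $\Om^\bullet$. Thus it suffices to check that the decomposition $\Phi(\Om^k)\simeq\bigoplus_{a+b=k}V^{(a,b)}$ is (i) an algebra grading, (ii) compatible with the $*$-operation in the sense $*(V^{(a,b)})=V^{(b,a)}$, and (iii) satisfies $\exd V^{(a,b)}\sseq V^{(a+1,b)}\oplus V^{(a,b+1)}$; covariance is automatic because each $V^{(a,b)}$ is by construction a sub-object in {\small$\,^{\O_q(U_{n-1})}\mathrm{Mod}$}, being a span of weight vectors closed under the $U_q(\frak{l}_{n-1})$-action (cf. Lemma \ref{lem:EKFBasis}).

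First I would verify the algebra grading (axiom 1). By Theorem \ref{thm:HKBasisRels}, $\Phi(\Om^\bullet)$ has a basis of products $e^+_I\wed e^-_J$, and the wedge of a degree-$(a,b)$ and a degree-$(a',b')$ basis element is, up to scalar and reordering, either zero or a degree-$(a+a',b+b')$ basis element; one only needs the defining relations (the $q^{-1}$-antisymmetry among the $e^-_i$, the analogous relations among the $e^+_i$ from \cite[\textsection3.3]{HK}, and the commutation relations between $e^+$'s and $e^-$'s) to see that the product of $V^{(a,b)}$ and $V^{(a',b')}$ lands in $V^{(a+a',b+b')}$. This is routine bookkeeping. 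Axiom 2, that $*$ swaps $V^{(a,b)}$ and $V^{(b,a)}$, follows from the known action of the $*$-map on the generators $[\del z_{an}]=e^+_a$ and $[\adel z_{na}]=e^-_a$: since $\del(\w^*)=(\adel\w)^*$ by \eqref{stardel}, the $*$-map interchanges the holomorphic and anti-holomorphic generators (up to scalars and the order-reversal inherent in an anti-algebra map), hence interchanges $e^+_I\wed e^-_J$ with a scalar multiple of $e^+_J\wed e^-_I$.

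The main obstacle is axiom 3, the statement that $\exd$ respects the bigrading, i.e. $\exd V^{(a,b)}\sseq V^{(a+1,b)}\oplus V^{(a,b+1)}$. Equivalently, $\exd$ has no component raising $b$ while lowering $a$ or vice versa, and no component of bidegree $(2,-1)$, $(-1,2)$, etc. Here I would argue degree by degree: in degree $0$ this is vacuous since $\exd\Om^0\sseq\Om^1=\Om^{(1,0)}\oplus\Om^{(0,1)}$ by the very definition of the first-order calculus as $\Om\hol\oplus\Om\ahol$. For higher degrees, I would use that $\exd$ is a left $\O_q(SU_n)$-comodule map (equivalently, under $\Phi$, a $U_q(\frak{l}_{n-1})$-module map of degree $+1$) together with the weight-counting provided by the $K_{n-1}$-action: each $e^+_a$ and $e^-_a$ carries a definite $K_{n-1}$-weight, so $V^{(a,b)}$ sits in a definite $K_{n-1}$-weight sector determined by $a-b$ (plus the $\frak{sl}_{n-1}$-weight data), and $\exd$, being a $U_q(\frak{l}_{n-1})$-map, cannot change the $K_{n-1}$-weight. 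Combined with the Leibniz rule and the degree-$1$ property, this forces $\exd$ on $V^{(a,b)}$ into $V^{(a+1,b)}\oplus V^{(a,b+1)}$. Finally, uniqueness: any covariant complex structure gives an $\bN_0^2$-grading of $\Om^1$ into two covariant sub-bimodules, and by Theorem \ref{thm:HKClassCPN} the only such decomposition of the irreducible pieces is the given one $\Om\hol\oplus\Om\ahol$; since $\Om^\bullet$ is generated in degree $0$ and $1$, the grading in all higher degrees is then determined by the degree-$1$ part via axiom 3 and the algebra structure, so $V^{(a,b)}$ is the only possibility. This reduces the whole proof to the first-order classification plus the compatibility of $\exd$ with weights, the latter being the real content.
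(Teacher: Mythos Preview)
The paper does not prove this result in-text, deferring to \cite[\textsection 3.3.4]{HKdR} and \cite[\textsection 6, \textsection 7]{MMF2}, so let me evaluate your argument on its own merits. Your overall strategy is sound, and your treatments of the algebra grading and of uniqueness are correct. A minor issue: invoking \eqref{stardel} for axiom~2 is circular, since that identity is a \emph{consequence} of the complex-structure axioms; instead compute $*$ directly on generators via $(\exd z_{ij})^*=\exd z_{ji}$ and Lemma~\ref{basislem}, then use that $*$ is an anti-algebra map together with the algebra grading.

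The genuine gap is in axiom~3. You assert that under $\Phi$ the differential $\exd$ becomes a $U_q(\frak{l}_{n-1})$-module map, but $\exd$ is a derivation, not a $B$-bimodule map, so it is not a morphism in $^{\O_q(SU_n)}_{\O_q(\ccpn)}\mathrm{Mod}_0$ and $\Phi(\exd)$ is undefined. Furthermore, $K_{n-1}$ (or even the central element of $U_q(\frak{l}_{n-1})$) acts as a scalar only on the quotient $V^{(a,b)}$, not on $\Om^{(a,b)}$ itself, so weight-counting inside $\Om^\bullet$ cannot separate bidegrees. The missing step is this: while $\exd$ is not $B$-bilinear, its \emph{forbidden} component $T:=\proj_{\Om^{(0,2)}}\circ\exd|_{\Om^{(1,0)}}$ \emph{is}, because the Leibniz defect $\exd b\wedge\omega$ lies in $\Om^{(2,0)}\oplus\Om^{(1,1)}$ by the already-established algebra grading and is annihilated by $\proj_{\Om^{(0,2)}}$. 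Hence $T$ is a genuine morphism in the Takeuchi category, $\Phi(T):V^{(1,0)}\to V^{(0,2)}$ is a $U_q(\frak{l}_{n-1})$-map between non-isomorphic irreducibles, and $T=0$ by Schur. The dual argument disposes of $\proj_{\Om^{(2,0)}}\circ\exd|_{\Om^{(0,1)}}$, after which your Leibniz propagation to higher degrees goes through.
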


As shown in \cite[Lemma 4.17]{MMF3} there exists a left $\O_q(SU_n)$-coinvariant form $\k \in \Om^{(1,1)}$ uniquely  defined by 
\bal \label{eqn:TheKahlerForm}
[\k] = 
  i \sum_{a=1}^{n-1}   e^-_a \wed   e^+_a =  i \sum_{a=1}^{n-1}   (K^{a}_a \tr e^+_a) \wed  (K^{-a}_a \tr e^-_a).
\eal
This $(1,1)$-form is a direct $q$-deformation of the fundamental form of the classical Fubini--Study K\"ahler metric on complex projective space. As the following theorem shows, much of the associated K\"ahler geometry survives intact under $q$-deformation, see \cite[\textsection 4.5]{MMF3} for details.

\begin{thm} \label{thm:CPNKahler}  Let $\Omega^{\bullet}$ be the Heckenberger--Kolb calculus of  quantum projective space $\O_q(\mathbb{CP}^{n-1})$.

\bet

\item The pair $\big(\Om^{(\bullet,\bullet)}, \k\big)$ is a covariant Hermitian structure for $\Om^\bullet$.

\item Up to real scalar multiple of $\kappa$, it is the unique covariant Hermitian structure for the calculus $\Om^\bullet$.

\item The pair $\big(\Om^{(\bullet,\bullet)}, \k\big)$ is a K\"ahler structure for $\Om^\bullet$.

\item There exists an open interval $I \sseq \bR$, containing  $1$, such that  the associated metric $g_{\k}$ is positive definite, for all $q \in I$.

\eet
\end{thm}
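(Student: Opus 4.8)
The plan is to transfer everything, via Takeuchi's equivalence, to the finite-dimensional $\O_q(U_{n-1})$-module algebra $\Phi(\Om^\bullet)$ of Theorem \ref{thm:HKBasisRels}: this reduces (1)--(3) to finite representation-theoretic and combinatorial checks, and (4) to a continuity argument anchored at the classical point $q=1$ (all of which is carried out in \cite[\textsection 4.5]{MMF3}). Parts (1)--(3) hold for every $q$, whereas (4) is intrinsically local near $q=1$. For (1): $\kappa$ is $\O_q(SU_n)$-coinvariant by construction, so $\kappa b$ and $b\kappa$ both lie in $\Om^{(1,1)}$ for $b\in\O_q(\ccpn)$, and centrality follows from $\kappa z_{ij}=z_{ij}\kappa$, a finite check on the generators $z_{ij}$ using Lemma \ref{lem:EKFBasis} and the bimodule relations of Theorem \ref{thm:HKBasisRels}. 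Reality, $\kappa^*=\kappa$, follows from $z_{ij}^*=z_{ji}$ (whence $(e^+_a)^*=e^-_a$), the graded sign in the $*$-structure on forms, and the formula $(\ref{eqn:TheKahlerForm})$. Finally, since $\Om^\bullet$ is finite-dimensional in $\,^{\O_q(SU_n)}_{\O_q(\ccpn)}\mathrm{Mod}_0$, the hard Lefschetz condition, that $L^{n-1-k}\colon\Om^k\to\Om^{2(n-1)-k}$ is an isomorphism for $1\le k<n-1$, is equivalent to $L^{n-1-k}$ being a linear isomorphism after applying $\Phi$; both $\Phi(\Om^k)$ and $\Phi(\Om^{2(n-1)-k})$ have dimension $\binom{2n-2}{k}$, and injectivity of wedging by $[\kappa]=i\sum_a e^-_a\wedge e^+_a$ is verified directly in the monomial basis $\{e^+_I\wedge e^-_J\}$.

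\textbf{Uniqueness and the K\"ahler condition.} For (2): a covariant Hermitian form is in particular a coinvariant $(1,1)$-form, and these correspond, under Takeuchi's equivalence, to the $\O_q(U_{n-1})$-invariants of $\Phi(\Om^{(1,1)})=V^{(1,1)}$, a one-dimensional space spanned by $[\kappa]$ (the same holding for the opposite complex structure, since $\ol{\Om}^{(1,1)}=\Om^{(1,1)}$), so reality and the Lefschetz requirement force the form to be a nonzero real multiple of $\kappa$. For (3): $\del\kappa\in\Om^{(2,1)}$ and $\adel\kappa\in\Om^{(1,2)}$ are coinvariant, as $\del,\adel$ are comodule maps and $\kappa$ is coinvariant, hence correspond to $\O_q(U_{n-1})$-invariant vectors in $V^{(2,1)}$ and $V^{(1,2)}$; as in the classical case, a branching computation for $U_q(\frak{l}_{n-1})\subset U_q(\frak{sl}_n)$ shows these spaces contain no invariants, so $\exd\kappa=\del\kappa+\adel\kappa=0$.

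\textbf{Positivity near $q=1$.} For (4): via the standard dictionary of Takeuchi's equivalence, the covariant, $\O_q(\ccpn)$-valued metric $g_\kappa$ on $\Om^\bullet$ is equivalent data to an $\O_q(U_{n-1})$-invariant, $\bC$-valued Hermitian form $h_q$ on the \emph{finite-dimensional} space $\Phi(\Om^\bullet)$, in such a way that $g_\kappa$ is positive definite if and only if $h_q$ is. Realising $\Phi(\Om^\bullet)$ together with its module structure, its complex structure, the form $[\kappa]$ and the Hodge map on a fixed, $q$-independent vector space, all of this data depends rationally on $q$ with no pole at $q=1$, so $q\mapsto h_q$ is a continuous family of Hermitian forms; and $h_1$ is the fibre at the base point of the classical (positive definite) Fubini--Study metric on $\Lambda^\bullet T^*\ccpn$. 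Since positive-definiteness is an open condition on Hermitian forms on a fixed finite-dimensional space, there is an open interval $I\ni 1$ with $h_q$, and hence $g_\kappa$, positive definite for all $q\in I$.

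\textbf{Main obstacle.} The principal difficulty is the Takeuchi-side reformulation underpinning (4): establishing that covariant $\O_q(\ccpn)$-valued inner products on $\Om^\bullet$ correspond to $\O_q(U_{n-1})$-invariant $\bC$-valued Hermitian forms on $\Phi(\Om^\bullet)$, \emph{with the two positivity notions matching}, is exactly what collapses an a priori infinite-dimensional positivity problem over $\O_q(\ccpn)$ into the finite-dimensional openness statement above. Once this dictionary is in place, (1)--(3) are routine; and the restriction to a neighbourhood of $q=1$ cannot be avoided by this argument, since positive-definiteness of $h_q$ can degenerate for $q$ far from $1$.
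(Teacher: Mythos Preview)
The paper does not actually include a proof of this theorem: it is stated with the remark ``see \cite[\textsection 4.5]{MMF3} for details,'' and is treated throughout as a cited result from the second author's earlier work. So there is no in-paper proof to compare against; your proposal is essentially a reconstruction of the argument that lives in \cite{MMF3}, and you explicitly cite the same reference.

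Your sketch is broadly in line with how such results are established in that literature: reduce everything via Takeuchi's equivalence to finite-dimensional checks on $\Phi(\Om^\bullet)$, use the one-dimensionality of the $\O_q(U_{n-1})$-invariants in $V^{(1,1)}$ for uniqueness, and for positivity invoke continuity in $q$ of a finite-dimensional Hermitian form together with the classical Fubini--Study value at $q=1$. A few small points deserve care. For reality of $\kappa$, note that $\kappa$ itself lives in $\Om^{(1,1)}$, not in $\Phi(\Om^{(1,1)})$; the computation $(e^+_a)^*=e^-_a$ is a statement in the quotient, and one must argue that the coinvariant lift is also $*$-fixed (this is where $[\kappa]$ spanning the one-dimensional invariant space does the work). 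For closedness, your ``no invariants in $V^{(2,1)}$ and $V^{(1,2)}$'' argument is correct in spirit, but strictly speaking one should check that the $K_{n-1}$-weight is nonzero on these spaces (the $U_q(\frak{sl}_{n-1})$-trivial summand, if it appeared, could still carry nontrivial $K_{n-1}$-weight, which is what actually kills the invariants). For the Lefschetz isomorphisms, the claim that injectivity of $[\kappa]^{n-1-k}\wedge(-)$ on $\Phi(\Om^k)$ is ``verified directly in the monomial basis'' is the heart of (1) and is a genuine computation (carried out in \cite{MMF3}), not an automatic consequence of dimension counting; you should flag it as such. Finally, your identification of the main obstacle in (4) is accurate: the nontrivial step is precisely the Takeuchi-side dictionary matching positivity of $g_\kappa$ with positivity of a finite-dimensional Hermitian form, and this is again where \cite{MMF3} does the real work.
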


Thus we see that, for every $q\in I$, quantum projective space $\O_q(\mathbb{CP}^{n-1})$, endowed with the Heckenberger--Kolb calculus, is a CQH-K\"ahler space with an associated pair of Dolbeault--Dirac BC-triples. 

We finish with some results on the anti-holomorphic cohomology of the \hk \linebreak calculus of $\O_q(\ccpn)$. Taken together, these two results determine the anti-holomorphic Euler characteristic of the calculus.

\begin{thm}[\cite{KKCPN} Corollary 4.2] The Heckenberger--Kolb calculus $\Om^\bullet_q(\ccpn)$ is {\em connected}, which is to say $\H^0 = \bC 1$. 
\end{thm}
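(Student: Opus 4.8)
This statement is quoted from \cite{KKCPN}, so one option is simply to cite it; but it also follows readily from the machinery assembled above, and here is how I would argue. By the remark following the definition of connectedness, it is enough to show that $\ker\big(\del\colon\O_q(\ccpn)\to\Om^{(1,0)}\big)=\bC 1$. The plan is to combine the explicit decomposition of $B=\O_q(\ccpn)$ into irreducibles with Schur's lemma and the quantum Leibniz rule of Corollary \ref{cor:QLEIBNIZ}. From Frobenius reciprocity together with the relevant classical branching law -- the computation carried out in the third appendix -- one has a multiplicity-free decomposition $\O_q(\ccpn)\simeq\bigoplus_{m\in\bN_0}V_m$ into irreducible $U_q(\frak{sl}_n)$-modules, where $V_m$ is generated, as a $U_q(\frak{sl}_n)$-module, by the highest weight vector $z^m$, with $z:=z_{1n}\in B_{\hw}$; note $z^m\in B_{\hw}$ for all $m$ by Lemma \ref{lem:monoidtomonoid}, and $z^m\neq 0$ since $\O_q(SU_n)$ is a domain, so these are genuinely distinct irreducible components.

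Covariance of the complex structure makes $\del$ a left $\O_q(SU_n)$-comodule map, so $\ker(\del|_B)$ is a sub-comodule and, by Schur's lemma, $\del$ is either zero or injective on each $V_m$. It vanishes on $V_0=\bC 1$. For $m\geq 1$, Corollary \ref{cor:QLEIBNIZ} gives $\del(z^m)=(m)_{\lambda_z}\,z^{m-1}\del z$; by Lemma \ref{basislem} the class $[\del z_{1n}]$ equals $e^+_1\neq 0$, so $\del z\neq 0$, and then Lemma \ref{lem:nonzerozaction} gives $z^{m-1}\del z\neq 0$. Hence $\del(z^m)\neq 0$ provided the quantum integer $(m)_{\lambda_z}=\sum_{j=0}^{m-1}\lambda_z^{\,j}$ is nonzero. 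A direct computation with the $\O_q(\ccpn)$-bimodule relations of $\Om^1$ identifies the holomorphic Leibniz constant $\lambda_z$ as a nonzero integer power of $q$; since $q$ is a positive real number distinct from $1$, $\lambda_z$ is a positive real, so $(m)_{\lambda_z}>0$. Therefore $\del$ is injective on every $V_m$ with $m\geq 1$, whence $\ker(\del|_B)=\bC 1$ and the calculus is connected.

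The one genuinely computational ingredient -- and the only real obstacle -- is the evaluation of $\lambda_z$, which requires unwinding the bimodule structure of the Heckenberger--Kolb first-order calculus in terms of the generators $z_{ij}$. One could instead establish the order I presentation of $\O_q(\ccpn)$ first, where $\lambda_z$ is in any case needed, and then invoke Proposition \ref{prop:Leibnizpositivity}; but that is circular if connectedness is wanted as an input to the order I analysis, so the self-contained route is the direct computation of $\lambda_z$ (or else simply citing \cite{KKCPN}).
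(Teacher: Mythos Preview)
Your argument is essentially correct, and since the paper simply cites \cite{KKCPN} for this result without proof, you are supplying something extra. It is worth noting that the paper does implicitly re-derive connectedness later: in Theorem~\ref{thm:OrderIPresentation} the identity $\H^{(0,0)}=\bC 1$ falls out of the ladder presentation, via Hodge decomposition and Proposition~\ref{prop:HodgeClosure}. Your route is a more direct version of the same idea, working with $\del$ on highest weight vectors rather than with Hodge theory, and it has the advantage of being self-contained and logically prior to the order~I analysis.

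Two small points. First, the paper allows $q\in\bR\setminus\{-1,0,1\}$, so $q$ need not be positive; what you actually use is that $\lambda_z=q^2>0$ (this is exactly the content of Lemma~\ref{lem:Leibconstant}), and then $(m)_{q^2}>0$ follows regardless of the sign of $q$. Second, your appeal to Lemma~\ref{lem:nonzerozaction} to conclude $z^{m-1}\del z\neq 0$ presupposes a positive definite metric, which the paper only establishes for $q$ in an open interval $I$ about~$1$ (Theorem~\ref{thm:CPNKahler}). To get the result for all admissible $q$, replace that step by the observation that under the unit $\unit$ of Takeuchi's equivalence one has
\[
\unit\big(z^{m-1}\del z\big)=\sum_{a=1}^{n-1} z^{m-1}\,u^1_a S(u^n_n)\;\square_{\,\O_q(U_{n-1})}\;e^+_a,
\]
and the coefficient of $e^+_1$ is $z^{m-1}u^1_1S(u^n_n)$, which is nonzero because $\O_q(SU_n)$ is a domain. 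This removes any dependence on the Hermitian structure and makes the argument valid in the full generality of the cited theorem.
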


\begin{thm}[\cite{CPNLD} Proposition 7.2]   \label{thm:cpncohomology}
For the unique covariant complex structure $\Om^{(\bullet,\bullet)}$ of the Heckenberger--Kolb calculus $\Om^\bullet_q(\ccpn)$, it holds that 
\begin{align*}
H^{(0,k)} = 0, & &   \text{ for all } k = 1, \dots, n-1.
\end{align*}
\end{thm}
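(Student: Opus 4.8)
The plan is to compute the anti-holomorphic cohomology using the noncommutative Hodge theory recalled in \textsection 2, thereby reducing the problem to the representation theory of $\O_q(SU_n)$. By the Hodge decomposition theorem and Corollary~\ref{cor:harmonictoclass} there are isomorphisms $H^{(0,k)} \cong \H^{(0,k)}_{\adel}$, and by (\ref{eqn:harmcap}) one has $\H^{(0,k)}_{\adel} = \ker\big(\adel|_{\Om^{(0,k)}}\big) \cap \ker\big(\adel^\dagger|_{\Om^{(0,k)}}\big)$; so it suffices to prove that there are no non-zero $\adel$-harmonic $(0,k)$-forms for $k = 1, \dots, n-1$. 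Since $\big(\Om^{(\bullet,\bullet)},\k\big)$ is K\"ahler, (\ref{eqn:identityofLAPS}) gives $\DEL_{\adel} = \DEL_{\del}$, and as $\del^\dagger$ annihilates $\Om^{(0,\bullet)}$ one even has $\H^{(0,k)}_{\adel} = \ker\big(\del: \Om^{(0,k)} \to \Om^{(1,k)}\big)$, which is sometimes the more tractable description.

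The essential structural observation is that $\big(\Om^{(0,\bullet)},\adel\big)$ is a complex in the category of $\O_q(SU_n)$-comodules and that $\O_q(SU_n)$ is cosemisimple. Setting $C^k_\lambda := \mathrm{Hom}^{\O_q(SU_n)}\!\big(V_\lambda,\Om^{(0,k)}\big)$ for each irreducible comodule $V_\lambda$, the complex splits as $\bigoplus_\lambda V_\lambda \otimes C^\bullet_\lambda$, so that $H^{(0,k)} \cong \bigoplus_\lambda V_\lambda \otimes H^k\big(C^\bullet_\lambda\big)$. Because $\O_q(\ccpn)$ with its \hk calculus is of Gelfand type --- this is precisely what the order I presentation constructed in \textsection~\ref{subsection:OrderICPN} establishes --- each $C^\bullet_\lambda$ is a finite complex of vector spaces of dimension at most one, supported on a run of consecutive degrees, and $\adel^2 = 0$ forces any two non-zero differentials in such a \emph{string} to be non-adjacent. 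Hence $H^k\big(C^\bullet_\lambda\big) = 0$ for all $k \geq 1$ exactly when every non-trivial $V_\lambda$ occurs along a string of even length whose differentials alternate non-zero, zero, non-zero, $\dots$ starting with a non-zero map, while the trivial comodule $\bC 1$ occurs only in degree $0$.

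To make this precise one needs two explicit ingredients. First, the decomposition of each $\Om^{(0,k)}$ into irreducible $\O_q(SU_n)$-comodules: by Frobenius reciprocity for quantum homogeneous spaces (the third appendix) this is governed by $\mathrm{Hom}_{U_q(\frak{l}_{n-1})}\!\big(V_\lambda, V^{(0,k)}\big)$, where $V^{(0,k)}$ is the $q$-exterior power of $V^{(0,1)}$ whose $U_q(\frak{l}_{n-1})$-module structure is read off from the relations of Theorem~\ref{thm:HKBasisRels} and the action in Lemma~\ref{lem:EKFBasis}; classical $\frak{sl}_n \downarrow \frak{l}_{n-1}$ branching then shows that $V^{(0,k)}$ carries no $U_q(\frak{l}_{n-1})$-invariant for $k \geq 1$ (so $\bC 1$ indeed appears only in degree $0$) and that each non-trivial $V_\lambda$ sits in an even-length run of degrees --- a direct $q$-deformation of the classical Bott pattern on $\mathbb{P}^{n-1}$. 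Second, one must verify that the first differential of each non-trivial string is non-zero, equivalently that the lowest-degree occurrence of each $V_\lambda$ with $\lambda \neq 0$ is not $\adel$-closed; this is checked by evaluating $\adel$ on a highest weight vector of that occurrence and computing in the basis $\{e^-_I\}$ using Theorem~\ref{thm:HKBasisRels}.

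The main obstacle is this last verification, together with the bookkeeping of the first step: one has to control the full family of isotypic strings simultaneously and confirm the non-vanishing of the relevant $\adel$-maps --- essentially a $q$-analogue of the Koszul/Bott vanishing computation for projective space. A considerably shorter route is available if one invokes more machinery: $\O_q(\ccpn)$ equipped with the Fubini--Study K\"ahler form $\k$ of (\ref{eqn:TheKahlerForm}) is a noncommutative Fano K\"ahler space, so the noncommutative Kodaira vanishing theorem \cite{OSV} gives $\H^{(0,k)}_{\adel} = 0$, hence $H^{(0,k)} = 0$, for all $k \geq 1$ at once, and the statement follows together with connectedness.
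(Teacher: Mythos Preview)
Your outline is correct, and the paper itself re-derives this statement as Corollary~\ref{cor:LADDERSandVanishingcoho} via the ladder presentation of Theorem~\ref{thm:OrderIPresentation}. Both routes rest on the same representation-theoretic input --- the branching computation of Appendix~C, which in your language says every non-trivial isotypic string has length exactly two --- and on the same non-vanishing check: your ``first differential non-zero'' is precisely Corollary~\ref{cor:delnuomega}, the explicit computation showing $\adel\nu_k\neq 0$. The organizational difference is that rather than analyzing the $\adel$-complex string by string, the paper writes down explicit highest weight vectors $z_{1n}^l\nu_k$ and $z_{1n}^l\adel\nu_{k-1}$ exhausting every irreducible summand of $\Om^{(0,k)}$, and then invokes Proposition~\ref{prop:HodgeClosure} (closure of $\adel\Om^{(0,\bullet)}_{\hw}$ and $\adel^\dagger\Om^{(0,\bullet)}_{\hw}$ under the $B_{\hw}$-action) to place each one in the exact or coexact summand of the Hodge decomposition, whence $\H^{(0,k)}=0$ for $k\geq 1$ and Corollary~\ref{cor:harmonictoclass} finishes. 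The paper's packaging has the bonus of simultaneously yielding the ladder presentation needed for the spectral-triple machinery of \textsection\ref{section:OrderI}, at the cost of bringing in the Hermitian structure through $\adel^\dagger$; your string analysis in principle uses only the complex structure and is more self-contained for the bare vanishing. Your Kodaira-vanishing shortcut is also noted in the paper, in the remark following the Euler-characteristic corollary.
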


\begin{cor}
The anti-holomorphic Euler characteristic of $\Om^{(\bullet,\bullet)}$ is given by
\begin{align*}
\chi_{\adel}\big(\Om^\bullet\big) = \sum_{i=0}^{n-1} (-1)^i \dim_{\mathbb{C}}\big(H^{(0,i)}\big) = \dim_{\mathbb{C}}\big(H^{(0,0)}\big) = 1.
\end{align*}
\end{cor}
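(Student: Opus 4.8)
The plan is to collapse the alternating sum to a single term and then pin that term down using connectedness. First I would apply Theorem \ref{thm:cpncohomology}, which gives $H^{(0,k)} = 0$ for every $k = 1, \dots, n-1$. Substituting this into $\sum_{i=0}^{n-1} (-1)^i \dim_{\mathbb{C}}\big(H^{(0,i)}\big)$ annihilates all summands with $i \geq 1$, leaving $\chi_{\adel}\big(\Om^\bullet\big) = \dim_{\mathbb{C}}\big(H^{(0,0)}\big)$.

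Next I would evaluate $\dim_{\mathbb{C}}\big(H^{(0,0)}\big)$. Since there are no forms of bidegree $(0,-1)$, the Dolbeault cohomology group $H^{(0,0)}$ coincides with $\ker\big(\adel : \Om^{(0,0)} \to \Om^{(0,1)}\big)$. The unit $1 \in \O_q(\ccpn)$ is $\adel$-closed, so this kernel contains $\bC 1$, whence $\dim_{\mathbb{C}}\big(H^{(0,0)}\big) \geq 1$. For the reverse inequality I would invoke the connectedness of the Heckenberger--Kolb calculus $\Om^\bullet_q(\ccpn)$ recalled above, together with the remark following the definition of connectedness, which records that for a differential calculus carrying a complex structure, connectedness is equivalent to $\ker\big(\adel : \Om^{(0,0)} \to \Om^{(0,1)}\big) = \bC 1$. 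This yields $\dim_{\mathbb{C}}\big(H^{(0,0)}\big) = 1$, and combining with the first step gives $\chi_{\adel}\big(\Om^\bullet\big) = 1$.

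No substantive obstacle arises here: the statement is a bookkeeping consequence of the two cohomology theorems quoted immediately before it. The only step demanding a little care is the identification of $\exd$-connectedness (phrased in terms of $\H^0$) with the vanishing of the reduced $(0,0)$-Dolbeault cohomology, and this is handled by the earlier remark, whose justification rests on nothing more than the $*$-map of the calculus interchanging $\del$ and $\adel$.
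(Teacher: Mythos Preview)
Your proposal is correct and matches the paper's approach exactly: the corollary is stated without proof precisely because it follows immediately from the two preceding theorems (connectedness giving $H^{(0,0)} = \bC 1$, and vanishing of $H^{(0,k)}$ for $k \geq 1$) in just the way you describe.
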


In  Corollary \ref{cor:LADDERSandVanishingcoho} below, it is observed that the vanishing of higher cohomologies  can alternatively  be concluded from our given  ladder presentation  of the complex structure of $\Om^\bullet_q(\ccpn)$. 

\begin{remark}
Most of the results of this subsection have been extended to the more general setting of the quantum Grassmannians (see \textsection 7 for a definition of quantum Grassmannnians). The existence of a noncommutative K\"ahler structure was established in \cite{KOS, MarcoConj}. Connectedness of the calculus was extended to the case of the quantum Grassmannians in \cite{KMOS}. The vanishing of higher cohomologies for the quantum Grassmannians was established in \cite{OSV} using a noncommutative generalisation of the Kodaira vanishing theorem, formulated within the framework of noncommutative K\"ahler and Fano structures. Thus the anti-holomorphic Euler characteristic of the quantum Grassmannian is again $1$. 
\end{remark}


\subsection{An Order I Presentation of the CQH-Complex Space} \label{subsection:OrderICPN}

In this subsection we introduce a distinguished family of highest weight forms $\nu_k \in \adel \Om^{(0,k)}$,  for $k=1,\dots, n-2$, and use them to  produce a ladder  presentation (recall Definition \ref{defn:ladder}) of the complex structure in Theorem \ref{thm:OrderIPresentation}. From this we then conclude in Corollary \ref{cor:cpnorderI} that $\O_q(\mathbb{CP}^{n-1})$ is an order I CQH-complex space.
We begin by identifying a highest weight element of $\O_q(\ccpn)$, and then calculate its associated Leibniz constants. 

\begin{lem} \label{lem:Leibconstant} 
The element $z_{1n}$ is a highest weight vector of $\O_q(\mathbb{CP}^{n-1})_{\hw}$, with weight 
\begin{align*}
\text{\em wt}(z_{1n}) = \varpi_1 + \varpi_{n-1}. 
\end{align*}
Moreover, it holds that 
\bal \label{eqn:Leibconstant}
\big(\del z_{1n}\big)z_{1n} = q^{2} z_{1n} \del z_{1n}, & &  \big(\adel z_{1n}\big)z_{1n} = q^{-2} z_{1n} \adel z_{1n}.
\eal
\end{lem}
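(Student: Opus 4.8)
The plan is to prove the three assertions — that $z_{1n}$ is highest weight, its weight, and the two Leibniz identities — by first reducing the highest weight statement to the monoid structure of highest weight vectors (Lemma \ref{lem:monoidtomonoid}), and then reducing the anti-holomorphic identity to the holomorphic one by the $*$-symmetry, so that the only genuinely computational input is the single relation $(\del z_{1n})z_{1n} = q^2\, z_{1n}\del z_{1n}$.

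For the highest weight claim, I would write $z_{1n} = u^1_n \cdot S(u^n_n)$ and argue that each factor is a highest weight vector of $\O_q(SU_n)$. For the left $\O_q(SU_n)$-coaction the $n$th column $\{u^k_n : k=1,\dots,n\}$ spans a copy of the vector corepresentation $V(\varpi_1)$, of which $u^1_n$ is the highest weight vector, so $\mathrm{wt}(u^1_n) = \varpi_1$; dually, $\{S(u^n_l): l = 1, \dots, n\}$ spans a copy of the contragredient corepresentation $V(\varpi_{n-1})$, of which $S(u^n_n)$ is the highest weight vector, so $\mathrm{wt}(S(u^n_n)) = \varpi_{n-1}$. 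By Lemma \ref{lem:monoidtomonoid} the product $z_{1n}$ is then a highest weight vector with $\mathrm{wt}(z_{1n}) = \varpi_1 + \varpi_{n-1}$, and since $z_{1n}$ lies in the subcomodule $\O_q(\ccpn) \sseq \O_q(SU_n)$, on which the $U_q(\frak{sl}_n)$-action restricts, the same holds there. (As a cross-check, one may instead verify $E_k \tr z_{1n} = 0$ for $k = 1,\dots,n-1$ directly from the action formula, exactly as in the proof of Lemma \ref{lem:EKFBasis}.)

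For the commutation identities, covariance of the complex structure makes $\del$ and $\adel$ into $U_q(\frak{sl}_n)$-module maps, so $\del z_{1n} \in \Om^{(1,0)}$ and $\adel z_{1n} \in \Om^{(0,1)}$ are highest weight forms of weight $\varpi_1 + \varpi_{n-1}$, both non-zero by Lemma \ref{basislem}; applying Lemma \ref{lem:monoidtomonoid} inside the comodule algebras $\Om^{(\bullet,0)}$, $\Om^{(0,\bullet)}$, the products $(\del z_{1n})z_{1n}$, $z_{1n}\del z_{1n}$ (resp.\ $(\adel z_{1n})z_{1n}$, $z_{1n}\adel z_{1n}$) are highest weight of weight $2(\varpi_1+\varpi_{n-1})$ in $\Om^{(1,0)}$ (resp.\ $\Om^{(0,1)}$), and by Lemma \ref{lem:nonzerozaction} the products $z_{1n}\del z_{1n}$, $z_{1n}\adel z_{1n}$ are non-zero. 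Since $\O_q(\ccpn)$ is of Gelfand type, so that $\Om^{(1,0)}$, $\Om^{(0,1)}$ are multiplicity-free (Lemma \ref{lem:FOURGTS}, using the decomposition of the forms into irreducibles), the existence of unique scalars $\lambda_{z_{1n}}, \z_{z_{1n}}$ with $(\del z_{1n})z_{1n} = \lambda_{z_{1n}} z_{1n}\del z_{1n}$ and $(\adel z_{1n})z_{1n} = \z_{z_{1n}} z_{1n}\adel z_{1n}$ is guaranteed abstractly by Corollary \ref{cor:LeibnizConstants} (and in any case drops out of the computation below). To get $\lambda_{z_{1n}} = q^2$ I would use the explicit $\O_q(\ccpn)$-bimodule relations of the \hk first-order calculus: restricted to the extreme-weight pair $z_{1n}$, $\del z_{1n}$ — both highest weight for the highest root $\theta = \varpi_1 + \varpi_{n-1}$ of the adjoint corepresentation appearing inside $\spn_{\bC}\{z_{ij}\}$ — the relation carries no lower-weight correction terms and collapses to $(\del z_{1n})z_{1n} = q^{(\theta,\theta)}z_{1n}\del z_{1n} = q^2 z_{1n}\del z_{1n}$. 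The anti-holomorphic identity then follows: since $\O_q(\ccpn)$ is self-conjugate and $\lambda_{z_{1n}} = q^2 \in \bR$, Proposition \ref{prop:selfconjLeibdual} gives $\z_{z_{1n}} = \lambda_{z_{1n}}^{-1} = q^{-2}$. (Directly: apply $*$ to $(\del z_{1n})z_{1n} = q^2 z_{1n}\del z_{1n}$ using $(\del\w)^* = \adel(\w^*)$ from (\ref{stardel}) and $z_{1n}^* = u^n_n S(u^n_1) = z_{n1}$, then transport the resulting identity for the lowest weight vector $z_{n1}$ back up the weight ladder of the irreducible $V(\varpi_1+\varpi_{n-1})$ it generates.)

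The monoid and multiplicity-free arguments, and the reduction of the $\adel$-identity to the $\del$-identity, are routine with the machinery in place. The main obstacle is establishing $\lambda_{z_{1n}} = q^2$: identifying the relevant $\O_q(\ccpn)$-bimodule commutation relation of the \hk calculus and checking that, on the top-weight generators $z_{1n}$, $\del z_{1n}$, it reduces cleanly to multiplication by $q^{(\theta,\theta)} = q^2$. This is the expected manifestation of the fact that the bimodule braiding acts on a highest-weight tensor square by $q^{(\theta,\theta)}$, but verifying it needs careful bookkeeping with the explicit relations and normalisation conventions.
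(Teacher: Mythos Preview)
Your overall strategy matches the paper's: the highest-weight claim is a direct verification from the dual pairing (your monoid argument via $u^1_n$ and $S(u^n_n)$ is a valid variant), and the anti-holomorphic identity is deduced from the holomorphic one via self-conjugacy and Proposition~\ref{prop:selfconjLeibdual}, exactly as the paper does.

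The genuine difference is in obtaining $\lambda_{z_{1n}} = q^2$. You propose extracting it from the Heckenberger--Kolb bimodule relations, arguing heuristically that on a highest-weight pair the braiding collapses to $q^{(\theta,\theta)}$; you correctly flag this as the main obstacle, and indeed making that precise from the published bimodule relations is not entirely straightforward. The paper instead computes directly inside Takeuchi's equivalence: it applies the unit $\unit:\Om^{(1,0)} \to \O_q(SU_n)\,\square_{\O_q(U_{n-1})} \Phi(\Om^{(1,0)})$ to $(\del z_{1n})z_{1n}$, obtaining $\sum_{a=1}^{n-1} u^1_a S(u^n_n)\, z_{1n} \,\square\, [\del z_{an}]$, and then uses only the elementary $\O_q(SU_n)$ relation $u^1_a S(u^n_n)\, z_{1n} = q^2\, z_{1n}\, u^1_a S(u^n_n)$ for $a\neq n$, which follows from the FRT commutation rules in \textsection\ref{subsection:app:SU}. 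This sidesteps the bimodule relations of the calculus entirely and gives the constant $q^2$ by a short commutation in $\O_q(SU_n)$; your route would work but requires more to pin down, whereas the paper's route is a two-line check once you pass through Takeuchi's unit.
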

\begin{proof}
The fact that $z_{1n} \in \O_q(\mathbb{CP}^{n-1})_{\hw}$, with the given weight, is a direct consequence of the definition of the action $\tr$ and the dual pairing, as presented in Appendix \ref{subsection:Oq(G)} and Appendix  \ref{subsection:app:SU}. To establish (\ref{eqn:Leibconstant}) we use the unit $\unit: \Om^{(1,0)} \simeq \O_q(SU_n)\, \square_{\,\O_q(U_{n-1})} \Phi\left(\Om^{(1,0)}\right)$ of Takeuchi's equivalence. Note first that
\begin{align*}
\unit\Big(\big(\del z_{1n}\big)z_{1n}\Big)= & \,\unit\big(\del z_{1n}\big)z_{1n} \\
= & ~ \proj_{\Om^{(1,0)}} \bigg(\Big(\sum_{a,b=1}^n u^1_aS(u^b_n)\, \square_{\,\O_q(U_{n-1})} [\del z_{ab}]\Big) z_{1n}\bigg)\\
= & ~~~\, \sum_{a = 1}^{n-1} u^1_aS(u^n_n)z_{1n} \, \square_{\,\O_q(U_{n-1})} [\del z_{an}].
\end{align*}
From the defining relations of $\O_q(SU_n)$ given in Appendix \ref{subsection:app:SU}, it is easy to conclude the identity   
\bal \label{eqn:uS(u)swap}
u^1_aS(u^n_n)z_{1n} = q^{2} \,z_{1n} u^1_aS(u^n_n), & & \text{ for all } a \neq n.
\eal
Hence, as claimed,
\begin{align*}
 \unit\Big(\big(\del z_{1n}\big)z_{1n}\Big)=   & \, q^{2}  z_{1n} \sum_{a = 1}^{n-1} u^1_aS(u^n_n) \, \square_{\,\O_q(U_{n-1})} [\del z_{an}] =  \,q^{2}\, \unit\Big(z_{1n} \del z_{1n}\Big).  
\end{align*}
The second identity  comes from Lemma \ref{prop:selfconjLeibdual} and the fact that $\O_q(\ccpn)$ is self-conjugate, as established in Corollary \ref{cor:selfconjCPN}. Alternatively, it can be calculated directly just as for the first identity.
\end{proof}

In the $q$-deformed setting it is no longer guaranteed that $\w \wed \w = 0$, for all forms $\w$. However, as the following lemma shows,  this identity does hold true for forms of  type  $\adel z_{1j}$.

\begin{cor}\label{cor:delzdelzzero}
It holds that
\bet
\item $\adel z_{1j} \wed \adel z_{1,j-1} = -q\inv \adel z_{1,j-1} \wed \adel z_{1j}$,  for all $j= 3, \dots, n$,
\item $\adel z_{1j} \wed \adel z_{1j} = 0, \text{ for all } j = 2, \dots, n.$
\eet
\end{cor}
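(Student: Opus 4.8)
The plan is to bootstrap both identities from the single highest weight generator $z_{1n}$, whose anti-holomorphic Leibniz constant $\z_{z_{1n}} = q^{-2}$ was computed in Lemma~\ref{lem:Leibconstant}. First I would treat the case $j = n$ of (2): applying $\adel$ to the identity $\big(\adel z_{1n}\big)z_{1n} = q^{-2}\, z_{1n}\,\adel z_{1n}$ from (\ref{eqn:Leibconstant}), the graded Leibniz rule together with $\adel^2 = 0$ turns the left-hand side into $\adel\big((\adel z_{1n})\wed z_{1n}\big) = -\,\adel z_{1n}\wed\adel z_{1n}$ and the right-hand side into $\adel\big(z_{1n}\wed\adel z_{1n}\big) = q^{-2}\,\adel z_{1n}\wed\adel z_{1n}$, so that $\big(1 + q^{-2}\big)\,\adel z_{1n}\wed\adel z_{1n} = 0$. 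Since $q$ is a positive real, $1 + q^{-2} \neq 0$, whence $\adel z_{1n}\wed\adel z_{1n} = 0$.

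Next I would propagate this down the chain of generators $z_{1n}, z_{1,n-1}, \dots, z_{12}$. A direct computation with the dual pairing and the induced action, analogous to that in Lemma~\ref{lem:EKFBasis}, shows that for $k = 2, \dots, n-1$ the Chevalley generator $F_k$ carries $z_{1,k+1}$ to a non-zero scalar multiple of $z_{1k}$; since the complex structure is covariant, $\adel$ is a $U_q(\frak{sl}_n)$-module map, so $F_k\tr\adel z_{1,k+1} = c_k\,\adel z_{1k}$ with $c_k \in \bC^{\times}$, and each $\adel z_{1m}$ is a weight vector of weight $\varpi_1 + \varpi_{n-1} - (\alpha_m + \cdots + \alpha_{n-1})$, on which $K_{m-1}$ acts (for $m \geq 3$) by $q$. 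I would then run a downward induction on $m$ from $n$ to $3$, with inductive hypothesis $\adel z_{1m}\wed\adel z_{1m} = 0$. Applying $F_{m-1}$ to $\adel z_{1m}\wed\adel z_{1m} = 0$ and expanding with $\DEL(F_{m-1}) = F_{m-1}\otimes K_{m-1}^{-1} + 1\otimes F_{m-1}$, only the two cross terms $\adel z_{1,m-1}\wed\adel z_{1m}$ and $\adel z_{1m}\wed\adel z_{1,m-1}$ survive, with coefficients in ratio $q^{-1}$; dividing out the non-zero overall factor $c_{m-1}$ gives exactly identity (1) for the index $m$. Applying instead $F_{m-1}^2$ and expanding with $\DEL(F_{m-1}^2) = F_{m-1}^2\otimes K_{m-1}^{-2} + (q^2+1)\,F_{m-1}\otimes F_{m-1}K_{m-1}^{-1} + 1\otimes F_{m-1}^2$, the first and third summands contribute only terms containing $F_{m-1}^2\tr\adel z_{1m}$, which vanishes (see below), so the surviving middle summand is a non-zero scalar multiple of $\adel z_{1,m-1}\wed\adel z_{1,m-1}$, forcing $\adel z_{1,m-1}\wed\adel z_{1,m-1} = 0$ — that is, (2) for the index $m-1$. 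Iterating produces (1) for $m = 3, \dots, n$ and (2) for $m = 2, \dots, n$.

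The one genuinely structural input, and the step I expect to be the main obstacle, is the vanishing $F_{m-1}^2\tr z_{1m} = 0$ for $m = 3, \dots, n$ (equivalently $F_{m-1}^2\tr\adel z_{1m} = 0$, as $\adel$ is a module map). The plan is to prove it by a weight argument: the left $\O_q(SU_n)$-subcomodule $V$ generated by $z_{1n}$ is irreducible of highest weight $\varpi_1 + \varpi_{n-1}$ — it is the degree-one part, modulo scalars, of the multiplicity-free comodule $\O_q(\ccpn)$, hence a $q$-deformation of the adjoint representation of $\frak{sl}_n$, whose weights are the roots of $\frak{sl}_n$ together with $0$ — and all the $z_{1m}$ $(m = 2, \dots, n)$ lie in $V$. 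The vector $F_{m-1}^2\tr z_{1m}$ would have weight $\varpi_1 + \varpi_{n-1} - (\alpha_m + \cdots + \alpha_{n-1}) - 2\alpha_{m-1} = \epsilon_1 + \epsilon_m - 2\epsilon_{m-1}$, which for $m \geq 3$ is neither a root nor $0$; it is therefore not a weight of $V$, so the vector vanishes. (Equivalently: in the adjoint representation one cannot lower twice past a weight that pairs to $+1$ with a simple coroot.) Everything else in the argument is the Leibniz rule together with the bookkeeping of $q$-powers in the coproducts of $F_{m-1}$ and $F_{m-1}^2$.
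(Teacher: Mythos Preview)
Your proof is correct and essentially identical to the paper's: both run a downward induction from $j=n$ (base case via the anti-holomorphic Leibniz constant of $z_{1n}$ and $\adel^2=0$), then act by $F_{j-1}$ on $\adel z_{1j}\wed\adel z_{1j}=0$ to obtain the commutation relation~(1). The only cosmetic difference is in closing the loop: you obtain $\adel z_{1,j-1}\wed\adel z_{1,j-1}=0$ by acting with $F_{j-1}^{2}$ directly on~(2), whereas the paper acts once more with $F_{j-1}$ on the just-established identity~(1) --- both moves rest on the same vanishing $F_{j-1}\tr z_{1,j-1}=0$ (equivalently $F_{j-1}^{2}\tr z_{1j}=0$), which you make explicit via the weight structure of the adjoint module $V_{\varpi_1+\varpi_{n-1}}$ and the paper leaves as a tacit pairing computation.
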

\begin{proof}
By Corollary \ref{cor:QLEIBNIZ} 
\begin{align*}
0 = \adel^2\big(z_{1n}^2\big) = (2)_{q^{-2}} \adel\big(z_{1n} \adel z_{1n}\big) = (2)_{q^{-2}} \adel z_{1n} \wed \adel z_{1n}.
\end{align*}
Assuming now that $\adel z_{1j} \wed \adel z_{1j} = 0$, for some $j \geq 3$, we see that 
\begin{align*}
0 = F_{j-1} \tr \big(\adel z_{1j} \wed \adel z_{1j}\big) = &  \,  \adel z_{1j} \wed \big( F_{j-1} \tr \adel z_{1j})  + \big(F_{j-1} \tr \adel z_{1j}\big) \wed \big(K_{j-1}\inv \tr \adel z_{1j}\big)\\
= & \,   \adel z_{1j} \wed \adel (F_{j-1} \tr z_{1j})  +  \adel(F_{j-1} \tr z_{1j}) \wed  \adel(K_{j-1}\inv \tr z_{1j})\\
= & \, q^{-2} \adel z_{1j} \wed \adel z_{1,j-1} + q^{-3} \adel z_{1,j-1} \wed \adel z_{1j}.
\end{align*}
Thus whenever $\adel z_{1j} \wed \adel z_{1j} = 0$, we necessarily have that 
\begin{align*}
 \adel z_{1j} \wed \adel z_{1,j-1} =  - q^{-1} \adel z_{1,j-1} \wed \adel z_{1j}.
\end{align*}

Assume next that 
\begin{align} \label{eqn:adelzcommrels}
\adel z_{1j} \wed \adel z_{1,j-1} =  - q^{-1}\,\adel z_{1,j-1} \wed \adel z_{1j}.
\end{align}
Operating by $F_{j-1}$ gives us that   
\begin{align*}
0 = & \, F_{j-1} \tr \Big( \adel z_{1j} \wed \adel z_{1,j-1} + q^{-1}\,\adel z_{1,j-1} \wed \adel z_{1j}\Big)\\
= & \, \, q^{-1}\,  \adel z_{1,j-1} \wed \adel z_{1,j-1} + q^{-3} \, \adel z_{1,j-1} \wed \adel z_{1,j-1}\\
= & \, (q^{-1} + q^{-3}) \, \adel z_{1,j-1} \wed \adel z_{1,j-1}.
\end{align*}
Thus we see that whenever (\ref{eqn:adelzcommrels}) holds, we necessarily have that $\adel z_{1,j-1} \wed \adel z_{1,j-1} = 0$.  The corollary now follows by an inductive argument.
\end{proof}

\begin{lem} \label{lem:nukHW}
For  $k=0, \dots, n-2$, a highest weight vector  is given by
\begin{align}
 \nu_k  = \sum_{l =0}^{k} (-q)^l z_{1,n-l} \adel z_{1n} \wed \cdots \wed \wh{\adel z_{1,n-l}} \wed \cdots \wed \adel z_{1,n-k} \in \Om^{(0,k)},
\end{align}
where $\wh{\adel z_{1,n-l}}$ denotes that the factor $\adel z_{1,n-l}$ has been omitted. 
Moreover, it holds that 
\begin{align*}
\mathrm{wt}(\nu_k) = (k+1)\varpi_{1} + \varpi_{n-k-1}.
\end{align*}
\end{lem}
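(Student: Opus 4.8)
The plan is to check, straight from the definition of a highest weight vector, that $\nu_k$ is a nonzero weight vector of weight $(k+1)\varpi_1 + \varpi_{n-k-1}$ annihilated by every $E_m$, $m = 1,\dots,n-1$. Two general facts are used throughout: since the complex structure is covariant, $\adel$ is a morphism of $U_q(\frak{sl}_n)$-modules, so $X \tr \adel z_{1j} = \adel(X \tr z_{1j})$ for all $X$, and in particular $\adel$ preserves weights; and the action of $E_m, F_m, K_m$ on the generators $z_{1j}$ can be computed explicitly by the same method as Lemma \ref{lem:EKFBasis}, using the dual pairing and (\ref{eqn:ComoduleToModule}).

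For the weight: by Lemma \ref{lem:Leibconstant}, $z_{1n}$ is a highest weight vector of weight $\varpi_1 + \varpi_{n-1}$; an $\frak{sl}_2$-ladder argument (each $F_{n-j}$ applied to the $\frak{sl}_2(n-j)$-highest weight vector $F_{n-j+1}\cdots F_{n-1} \tr z_{1n}$ is nonzero, as that weight pairs to $1$ with $\alpha_{n-j}^\vee$) shows that $z_{1,n-l}$ is a nonzero multiple of $F_{n-l}F_{n-l+1}\cdots F_{n-1} \tr z_{1n}$. Hence $\mathrm{wt}(z_{1,n-l}) = \varpi_1 + \varpi_{n-1} - \sum_{i=n-l}^{n-1}\alpha_i = \varpi_1 + \varpi_{n-l-1} - \varpi_{n-l}$, with the convention $\varpi_n := 0$. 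Since $\adel$ preserves weights and weights are additive over products, every summand of $\nu_k$ has weight $\sum_{l=0}^{k}\mathrm{wt}(z_{1,n-l})$, and this telescopes to $(k+1)\varpi_1 + \varpi_{n-k-1}$; so $\nu_k$ is a weight vector of the claimed weight.

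The main step is the annihilation $E_m \tr \nu_k = 0$. A direct computation (or a weight count inside the $U_q(\frak{sl}_n)$-comodule spanned by the $z_{ab}$) shows that for $j \geq 2$ the element $E_m \tr z_{1j}$ is nonzero only when $j = m$, in which case $E_m \tr z_{1m} = \gamma_m z_{1,m+1}$ for some scalar $\gamma_m$; correspondingly $E_m \tr \adel z_{1j}$ is a multiple of $\adel z_{1,m+1}$, nonzero only if $j = m$. For $m \leq n-k-1$ every factor of every summand of $\nu_k$ has second index $\geq n-k > m$, so $E_m$ annihilates each factor, hence $\nu_k$. Now fix $m = n-p$ with $1 \leq p \leq k$ and expand $E_m \tr \nu_k$ using $\DEL(E_m) = E_m \otimes 1 + K_m \otimes E_m$ through each product: in the summand indexed by $l$, $E_m$ can only act on the outer factor $z_{1,n-l}$ (contributing exactly when $l = p$) or on the interior factor $\adel z_{1,n-p}$ (present exactly when $l \neq p$). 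When $l \notin \{p,p-1\}$ the resulting term contains $\adel z_{1,n-p+1}$ twice and so vanishes by Corollary \ref{cor:delzdelzzero}(2); this leaves only $l = p$ and $l = p-1$. A short computation of the relevant $K_{n-p}$-eigenvalues — which is $q$ on $z_{1,n-p+1}$ and $1$ on each $\adel z_{1,n-j}$ appearing to its left — shows these two contributions equal the same form, with coefficients $(-q)^p \gamma_{n-p}$ and $(-q)^{p-1} q\, \gamma_{n-p}$ respectively; since $(-q)^p + (-q)^{p-1}q = 0$ they cancel. This is precisely what forces the coefficients $(-q)^l$, and it is the heart of the argument.

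Finally one must observe $\nu_k \neq 0$, so that it genuinely is a highest weight vector; this can be done by expanding $\nu_k$ in the standard left $\O_q(\ccpn)$-module basis of $\Om^{(0,k)}$ associated to the basis of $\Phi(\Om^k)$ from Theorem \ref{thm:HKBasisRels}. The main obstacle will be the bookkeeping in the annihilation step — identifying exactly which terms of $E_m \tr \nu_k$ survive the expansion (where Corollary \ref{cor:delzdelzzero}(2) does the real work) and pinning down the single factor of $q$ coming from the coproduct, so that the two surviving terms cancel on the nose.
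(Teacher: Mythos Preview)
Your argument is essentially the paper's: check that $\nu_k$ is a weight vector of the stated weight, then show $E_m \tr \nu_k = 0$ by observing that for $m \leq n-k-1$ every factor is annihilated, while for $m = n-p$ with $1 \leq p \leq k$ all summands except $l = p-1,\,p$ vanish via Corollary \ref{cor:delzdelzzero}(2) and those two cancel on the nose. One notational slip to fix: the paper's coproduct is $\DEL(E_i) = E_i \otimes K_i + 1 \otimes E_i$, not $E_m \otimes 1 + K_m \otimes E_m$; combined with the flipped product rule (\ref{eqn:dualpairingproductflip}) your operational conclusion (that $K_m$ acts on the factors to the left of where $E_m$ lands) is nonetheless correct, so the cancellation goes through as you describe.
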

\begin{proof}
Since it is clear that $\nu_k$ is a weight vector, we need only show that $E_i \tr \nu_k = 0$, for all $i = 1, \dots, n-1$.  Note first that, for $1 \leq i \leq  n-k-1$, we must have $E_i \tr \nu_k = 0$.
Next, for any  $i = n-k, \dots, n-1$,  Lemma \ref{lem:EKFBasis} and (\ref{eqn:dualpairingproductflip})  imply that
\begin{align*} 
 E_i  \, \tr \, &\left( \sum_{l=0}^{n-i-2} (-q)^l z_{1,n-l} \adel z_{1n} \wed \cdots \wed \wh{\adel z_{1,n-l}} \wed \cdots \wed \adel z_{1,n-k}\right)
 \end{align*}
 is equal to the following sum
 \begin{align*}
 \sum_{l=0}^{n-i-2} (-q)^l  z_{1,n-l} \adel z_{1n} \wed \cdots \wed \wh{\adel z_{1,n-l}} \wed \cdots \wed  \adel\left(K_{i} \tr z_{1,i+1} \right) \wed \adel\big(E_i \tr  z_{1i}\big) \wed \cdots \wed \adel z_{1,n-k}.
\end{align*}
Lemma \ref{lem:EKFBasis} implies that this sum is equal to
 \begin{align*}
 q^3 \sum_{l=0}^{n-i-2} (-q)^l  z_{1,n-l} \adel z_{1n} \wed \cdots \wed \wh{\adel z_{1,n-l}} \wed \cdots \wed \adel z_{1,i+1}  \wed \adel z_{1,i+1}  \wed \cdots \wed \adel z_{1,n-k},
\end{align*}
which by Corollary \ref{cor:delzdelzzero} is equal to zero. Similarly, it holds that 
\begin{align*}
 E_i  \, \tr \, &\left( \sum_{l=n-i+1}^{k} (-q)^l z_{1,n-l} \adel z_{1,n} \wed \cdots \wed \wh{\adel z_{1,n-l}} \wed \cdots \wed \adel z_{1,n-k}\right) = 0.
\end{align*}
Hence we see that 
\begin{align*}
E_i \tr \nu_k = & \,  E_i \tr \Big((-q)^{n-i-1} z_{1,i+1}  \adel z_{1n} \wed \cdots  \wed \wh{\adel z_{1,i+1}} \wed \cdots \wed \adel z_{1,n-k} \\ 
                        & ~~~ + (-q)^{n-i} z_{1i}  \adel z_{1n} \wed \cdots  \wed \wh{\adel z_{1i}} \wed \cdots \wed \adel z_{1,n-k}\Big)\\
= & \, (-1)^{n-i-1} q^{n-i+2}   z_{1,i+1}  \adel z_{1,n} \wed \cdots  \wed \wh{\adel z_{1i}} \wed \cdots \wed \adel z_{1,n-k} \\
   & ~~~ + (-1)^{n-i} q^{n-i+2}  z_{1,i+1}  \adel z_{1,n} \wed \cdots  \wed \wh{\adel z_{1i}} \wed \cdots \wed \adel z_{1,n-k}\big)\\
= & ~~ 0.
\end{align*}
Thus $E_i \tr \nu_k = 0$,  for all  $i = 1, \dots, n-1$. Finally, as a direct examination confirms,  $\nu_k$ is a weight vector of weight $(k+1)\varpi_{1} + \varpi_{n-k-1}$, and so, $\nu_k$ is a highest weight vector as claimed. 
\end{proof}

\begin{cor} \label{cor:delnuomega} For the form $\adel \nu_k  \in \adel \Om^{(0,k)}_{\text{\em hw}}$, it holds that:
\begin{enumerate}
\item $\adel \nu_k = (k+1)_{q^2} \adel z_{1n} \wed \cdots \wed \adel z_{1,n-k}$,
\item $\adel \nu_k  \neq 0$,
\item $\nu_k \in \adel^\dagger \!\Om^{(0,k+1)}, ~~ \text{ for } k=0,\dots, n-1$.
\end{enumerate}
\end{cor}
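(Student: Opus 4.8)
\emph{Proof proposal.} The plan is to establish (1) by a direct computation, and then to deduce (2) and (3) from (1) together with results already available in the paper.

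\emph{Part (1).} I would apply $\adel$ to the expression for $\nu_k$ in Lemma \ref{lem:nukHW}. By the graded Leibniz rule together with $\adel^2=0$, in the $l$-th summand $\adel$ acts nontrivially only on the degree-zero factor $z_{1,n-l}$, so
\[
\adel\nu_k \;=\; \sum_{l=0}^{k}(-q)^l\,\adel z_{1,n-l}\wed\adel z_{1n}\wed\cdots\wed\wh{\adel z_{1,n-l}}\wed\cdots\wed\adel z_{1,n-k}.
\]
The cleanest way to finish is an induction on $k$ based on the elementary identity
\[
\nu_k \;=\; \nu_{k-1}\wed\adel z_{1,n-k} \;+\; (-q)^{k}\,z_{1,n-k}\,\big(\adel z_{1n}\wed\cdots\wed\adel z_{1,n-k+1}\big),
\]
obtained by separating the summand $l=k$ from the rest. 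Applying $\adel$ and using $\adel^2=0$ again, the first term contributes $(k)_{q^2}\,\adel z_{1n}\wed\cdots\wed\adel z_{1,n-k}$ by the inductive hypothesis, while the second contributes $(-q)^{k}\,\adel z_{1,n-k}\wed\adel z_{1n}\wed\cdots\wed\adel z_{1,n-k+1}$; carrying $\adel z_{1,n-k}$ rightwards past the remaining $k$ factors turns the latter into $q^{2k}\,\adel z_{1n}\wed\cdots\wed\adel z_{1,n-k}$, and $(k)_{q^2}+q^{2k}=(k+1)_{q^2}$ (the case $k=0$ being trivial). The reordering uses the commutation rules $\adel z_{1i}\wed\adel z_{1j}=-q\,\adel z_{1j}\wed\adel z_{1i}$ for $i<j$ (every pair occurring here has $i\ge 2$), which extend the adjacent-index relations of Corollary \ref{cor:delzdelzzero} and follow from them by a short induction using the lowering operators $F_m$ of $U_q(\frak{g})$, exactly as in the proof of that corollary.

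\emph{Part (2).} By Part (1) it suffices to show $(k+1)_{q^2}\neq 0$ and $\adel z_{1n}\wed\cdots\wed\adel z_{1,n-k}\neq 0$. The first is immediate since $q$ is a positive real, so $(k+1)_{q^2}=1+q^2+\cdots+q^{2k}>0$. For the second I would use Takeuchi's equivalence: $\unit$ is a monoidal algebra isomorphism and $\unit(\adel z_{1j})=\sum_{b}u^1_nS(u^b_j)\,\square\,e^-_b$, so $\unit(\adel z_{1n}\wed\cdots\wed\adel z_{1,n-k})$ is computed as a product in $\O_q(SU_n)\,\square\,\Phi(\Om^{(0,k+1)})$ whose component is visibly a nonzero combination of the basis monomials $e^-_J$, $|J|=k+1$, of Theorem \ref{thm:HKBasisRels}; hence this form, and in particular $\nu_k$, is nonzero.

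\emph{Part (3).} For $k=0$ we have $\nu_0=z_{1n}\in\Om^{(0,0)}$, so $\adel^{\dagger}\nu_0=0$ trivially ($\Om^{(0,-1)}=0$), while $\nu_0$ is orthogonal to $\H^{(0,0)}=\bC 1$ by Schur orthogonality of the Haar state (as $z_{1n}$ spans a highest weight line in a nontrivial irreducible subcomodule); thus $\nu_0\in\adel^{\dagger}\Om^{(0,1)}$ by Hodge decomposition. For $1\le k\le n-2$, recall that $\Om^{(0,k)}=\H^{(0,k)}\oplus\adel\Om^{(0,k-1)}\oplus\adel^{\dagger}\Om^{(0,k+1)}$ is a decomposition of left $\O_q(SU_n)$-comodules, and that $\Om^{(0,k)}$ is multiplicity-free as a comodule (the decomposition into irreducibles being recorded in Appendix C). Since $\nu_k$ is a highest weight vector, the isotypic component of $\mathrm{wt}(\nu_k)$ in $\Om^{(0,k)}$ is a single irreducible subcomodule, which must lie entirely within one of the three summands; hence so does $\nu_k$. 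It cannot lie in $\H^{(0,k)}$, since harmonic forms are $\adel$-closed whereas $\adel\nu_k\neq 0$ by Part (2); and it cannot lie in $\adel\Om^{(0,k-1)}\subseteq\ker(\adel)$ for the same reason. Therefore $\nu_k\in\adel^{\dagger}\Om^{(0,k+1)}$.

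The main obstacle is the reordering step in Part (1): one must control how the one-forms $\adel z_{1j}$ commute past one another. The adjacent-index relations come from Corollary \ref{cor:delzdelzzero}, but the non-adjacent ones need a separate (if routine) argument via the $U_q(\frak{g})$-action, and some care is required since the commutation constant degenerates for pairs involving $\adel z_{11}$ (which fortunately do not arise for $k\le n-2$). Once these relations are in place, Parts (2) and (3) are comparatively soft, amounting to a positivity remark, an application of Takeuchi's equivalence, and the multiplicity-free Hodge-theoretic dichotomy.
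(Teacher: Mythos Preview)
Your arguments for Parts~(1) and~(3) follow essentially the same route as the paper. For Part~(1) the paper also applies $\adel$ directly and reorders using Corollary~\ref{cor:delzdelzzero}; your inductive framing and your remark that the non-adjacent commutations require a short extra argument are cosmetic refinements. For Part~(3) the paper invokes exactly the multiplicity-free Hodge dichotomy you describe.

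The gap is in Part~(2). The assertion that $\unit\big(\adel z_{1n}\wed\cdots\wed\adel z_{1,n-k}\big)$ is ``visibly a nonzero combination of the basis monomials $e^-_J$'' is not justified: expanding the product yields
\[
\sum_{b_0,\dots,b_k}\Big(\prod_{i=0}^{k} u^1_n\,S\!\big(u^{b_i}_{\,n-i}\big)\Big)\otimes e^-_{b_0}\wed\cdots\wed e^-_{b_k},
\]
and after collecting terms with the same ordered basis element $e^-_J$ the resulting $\O_q(SU_n)$-coefficient is a signed $q$-weighted sum over permutations; checking that some such coefficient is nonzero is a quantum-minor-type computation of essentially the same difficulty as the statement you are trying to prove, not visual inspection. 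The paper takes a different and more effective route. It first observes that the naive approach through $\Phi$ fails, since $[\adel\nu_k]=0$ (each factor $[\adel z_{1j}]$ with $j\le n-1$ vanishes by Lemma~\ref{basislem}), and then acts on $\adel\nu_k$ by an explicit monomial in the $F_i$'s to obtain a nonzero scalar multiple of a wedge of forms $\adel z_{n,j}$, whose coset in $\Phi(\Om^\bullet)$ is a single basis element $e^-_{j_1}\wed\cdots\wed e^-_{j_{k+1}}$ and hence manifestly nonzero. The key idea you are missing is this use of the $U_q(\frak{sl}_n)$-action to move, within the irreducible module generated by $\adel\nu_k$, to a vector whose class in $\Phi$ survives.
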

\begin{proof} ~~~
\begin{enumerate}
\item By the commutation relations of Corollary \ref{cor:delzdelzzero}, we see that
\begin{align*}
\adel \nu_k = &  \sum_{l=0}^{k}\, (-q)^l \adel z_{1,n-l} \wed \adel z_{1n} \wed \cdots \wed \wh{\adel z_{1,n-l}} \wed \cdots \wed \adel z_{1,n-k} \\
= &  \sum_{l=0}^{k}\, q^{2l} \, \adel z_{1n} \wed \cdots \wed  \adel z_{1,n-k} \\
= &\, (k+1)_{q^2}\,  \adel z_{1n} \wed \cdots  \wed  \adel z_{1,n-k}.
\end{align*}

\item  If the coset of $\adel \nu_k$  in $\Phi\left(\Om^\bullet\right)$ were non-zero, then it is clear that $\adel \nu_k$ would have to be non-zero. Unfortunately, by Lemma \ref{basislem} we have that \begin{align*}
[\adel \nu_k] = (k+1)_{q^2} [\adel z_{1n}] \wed \cdots \wed [\adel z_{1,n-k}] = 0.
\end{align*}  
On the other hand, the coset of   $\adel z_{n1} \wed \cdots  \wed  \adel z_{n,n-k}$ in $\Phi\left(\Om^\bullet\right)$ is non-zero, as we see from
\begin{align*}
\left[\adel z_{n1} \wed \cdots  \wed  \adel z_{n,n-k}\right] = \left[\adel z_{n1}\right] \wed \cdots  \wed  \left[\adel z_{n,n-k}\right] = e^-_{1} \wed \cdots \wed e^-_{k+1}.
\end{align*} 
Hence $\adel z_{n1} \wed \cdots  \wed  \adel z_{n,n-k} \neq 0$. Now, 
as a direct calculation confirms, there exists a non-zero $\gamma \in \mathbb{R}$, such that 
\begin{align*}
\left(F^{k+2}_{n-1}  \cdots F^{k+2}_{n-k-1} F^{k+1}_{n-k-2} \cdots F^{k+1}_1\right) \tr \adel \nu_k = \gamma \,  \adel z_{n1} \wed \cdots  \wed  \adel z_{n,n-k}.
\end{align*} 
Thus $\adel z_{1n} \wed \cdots  \wed  \adel z_{1,n-k}$  must be non-zero as claimed.

\item Since $\nu_k$ is a highest weight vector, and Hodge decomposition is a decomposition of left $U_q(\frak{sl}_n)$-modules, the fact that the complex structure is of Gelfand type implies  that $\nu_k$ is either $\adel$-exact, $\adel$-coexact, or harmonic. Since we have just shown that  $\adel \nu_k \neq 0$, we must have that  $\nu_k$ is $\adel$-coexact as claimed. \qedhere
\end{enumerate}
\end{proof}

With these results in hand we are now ready to establish the main result of this subsection, an order I presentation of $\text{\bf C} = \left(\O_q(\mathbb{CP}^{n-1}),\Omega^{\bullet},\ast_\sigma, \Omega^{(\bullet,\bullet)}\right)$.

\begin{thm} \label{thm:OrderIPresentation}
Denoting $\Theta := \{\adel \nu_k \,|\, k= 0, \dots, n-2\}$, the pair $(z_{1n}, \Theta)$ is a real ladder presentation for $\text{\bf C} = \left(\O_q(\mathbb{CP}^{n-1}),\Omega^{\bullet},\ast_\sigma, \Omega^{(\bullet,\bullet)}\right)$. Moreover, it holds that 
\begin{align} \label{eqn:harmoniccpn}
\H^{(0,0)} = \bC 1, & & \H^{(0,m)} = 0, & & \text{ for } m=1, \dots, n-1.
\end{align}
\end{thm}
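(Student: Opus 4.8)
The plan is to verify directly that the pair $(z_{1n},\Theta)$ satisfies the defining condition \eqref{eqn:ladderdecomp} of a ladder presentation, and then to deduce the cohomology statement \eqref{eqn:harmoniccpn} from the resulting decomposition together with the vanishing theorem already available. For the ladder presentation: first I would observe that each $\adel\nu_k$ is a highest weight vector lying in $\adel\Om^{(0,k)}_{\hw}$ by Lemma~\ref{lem:nukHW} and part~2 of Corollary~\ref{cor:delnuomega}, and that $z_{1n}\in B_\hw$ by Lemma~\ref{lem:Leibconstant}; by Proposition~\ref{prop:HodgeClosure} the subspace $\adel\Om^{(0,\bullet)}_{\hw}$ is closed under the action of $B_\hw$, so each $z_{1n}^l\,\adel\nu_k$ is again a highest weight form in $\adel\Om^{(0,k)}$, and generates an irreducible $U_q(\frak{sl}_n)$-submodule $U_q(\frak{g})\,z_{1n}^l\,\adel\nu_k$ of weight $l\,\mathrm{wt}(z_{1n})+\mathrm{wt}(\adel\nu_k)=l(\varpi_1+\varpi_{n-1})+(k+1)\varpi_1+\varpi_{n-k-1}$ (using additivity of weights from Lemma~\ref{lem:monoidtomonoid}).

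The core of the argument is then a dimension/multiplicity count. Since the complex structure is of Gelfand type — which must itself be recorded, referring to the classical branching computation in Appendix~3 — the comodule $\adel\Om^{(0,k)}$ is multiplicity-free, so it decomposes as a direct sum over its distinct constituent highest weights, each appearing exactly once. I would therefore show that the weights $\{l(\varpi_1+\varpi_{n-1})+(k+1)\varpi_1+\varpi_{n-k-1} : l\in\bN_0\}$, as $k$ ranges over $0,\dots,n-2$ with $\adel\nu_k$ contributing to degree $k$, are pairwise distinct and exhaust exactly the highest weights occurring in $\adel\Om^{(0,k)}$. The exhaustion is the one point requiring genuine input: one compares with the explicit decomposition of $\Om^{(0,k)}$ into irreducible $U_q(\frak{sl}_n)$-modules derived in Appendix~3 via Frobenius reciprocity and classical branching, intersects with the image of $\adel$ (equivalently, discards the one $\adel$-closed summand in each degree), and checks the surviving highest weights match the list above. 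Here Corollary~\ref{cor:delnuomega}(1), giving $\adel\nu_k=(k+1)_{q^2}\,\adel z_{1n}\wed\cdots\wed\adel z_{1,n-k}$, pins down that $\adel\nu_k$ is the ``bottom rung'' $l=0$ of the $k$-th ladder. Once the weights are matched, multiplicity-freeness forces the isomorphism \eqref{eqn:ladderdecomp}. That $\lambda_{z_{1n}}=q^2\in\bR$ (so the presentation is real) is immediate from \eqref{eqn:Leibconstant}.

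For the cohomology statement: by Hodge decomposition and Corollary~\ref{cor:harmonictoclass}, $\H^{(0,m)}\cong H^{(0,m)}$, and $\H^{(0,0)}=\bC1$ is connectedness, already known from Theorem~\ref{thm:HKClassCPN}'s companion result (\cite{KKCPN}); alternatively it follows from Lemma~\ref{lem:GelfandandConnectivity} once one knows $\dim_\bC H^{(0,0)}<\infty$. For $m\geq1$, the ladder decomposition shows $\adel\Om^{(0,m)}$ is spanned by the rungs $z_{1n}^l\,\adel\nu_k$; applying $\adel^\dagger$ and using Proposition~\ref{prop:ADELISO} (that $\adel^\dagger:\adel\Om^{(0,\bullet)}\to\adel^\dagger\Om^{(0,\bullet)}$ is an isomorphism) together with part~3 of Corollary~\ref{cor:delnuomega} (which gives $\nu_k\in\adel^\dagger\Om^{(0,k+1)}$, i.e.\ $\Om^{(0,m)}$ has no harmonic highest weight vectors beyond degree $0$), one concludes every highest weight vector in $\Om^{(0,m)}$ for $m\geq1$ is either $\adel$-exact or $\adel$-coexact, leaving no room for harmonics. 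By Gelfand type and Schur's lemma (Lemma~\ref{lem:DiagComoduleMaps}) this propagates from highest weight vectors to all of $\Om^{(0,m)}$, giving $\H^{(0,m)}=0$.

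The main obstacle I anticipate is the exhaustion half of the ladder count: verifying that the modules $U_q(\frak{g})\,z_{1n}^l\,\adel\nu_k$ really account for \emph{all} of $\adel\Om^{(0,k)}$, rather than merely a submodule of it. This is not a formal consequence of the highest weight computations done so far; it requires the explicit $U_q(\frak{sl}_n)$-decomposition of the anti-holomorphic forms from Appendix~3, and a careful bookkeeping of which summand in each degree is the $\adel$-closed (hence $\adel^\dagger$-exact, by \eqref{eqn:harmoniccpn}) one that must be excluded. Everything else — weight additivity, irreducibility of the cyclic submodules, the reality of $\lambda_{z_{1n}}$, and the passage from harmonics of highest weight to all harmonics — is routine given the machinery already assembled.
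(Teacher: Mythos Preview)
Your approach is essentially the same as the paper's: compute the highest weights of the forms $z_{1n}^l\,\adel\nu_k$, compare with the explicit decomposition of $\Om^{(0,k)}$ from Lemma~\ref{lem:DecompOfOm0kINTOIrreps}, and use multiplicity-freeness together with Proposition~\ref{prop:HodgeClosure} to conclude. The identification of the key difficulty (exhaustion) and the tool to resolve it (the Appendix~C branching computation) are both correct.

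The paper's organisation is somewhat cleaner than yours, however, and avoids a small circularity in your cohomology argument. Rather than first establishing the ladder decomposition of $\adel\Om^{(0,k)}$ and then separately arguing harmonic vanishing, the paper matches \emph{both} families of highest weight vectors at once: in degree $k$, the elements $z_{1n}^l\,\adel\nu_{k-1}$ have weights $(l+k)\varpi_1+\varpi_{n-k}+l\varpi_{n-1}$ and lie in $\adel\Om^{(0,k-1)}$, while the elements $z_{1n}^l\,\nu_k$ have weights $(l+k+1)\varpi_1+\varpi_{n-k-1}+l\varpi_{n-1}$ and lie in $\adel^\dagger\Om^{(0,k+1)}$ (by Corollary~\ref{cor:delnuomega}(3) and Proposition~\ref{prop:HodgeClosure}). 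These two families exhaust the two families of weights appearing in Lemma~\ref{lem:DecompOfOm0kINTOIrreps}, so Hodge decomposition forces $\H^{(0,k)}=0$ and gives the ladder presentation simultaneously. Your two-step version requires you to already know which summands of $\Om^{(0,k)}$ lie in $\adel\Om^{(0,k-1)}$ before you can check exhaustion --- your parenthetical ``discards the one $\adel$-closed summand in each degree'' is not quite right (there is an entire family of summands in $\adel^\dagger\Om$, not one, and ``$\adel$-closed'' is not the relevant condition) --- and the cleanest way to identify them is precisely via the $z_{1n}^l\,\nu_k$, which you only invoke later. Merging the two steps removes this awkwardness.
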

\begin{proof}  
Example \ref{eg:CalcMonoid}, together with Lemma \ref{lem:Leibconstant} and Lemma \ref{lem:nukHW}, imply that $z_{1n}^l \nu_k$ and $z_{1n}^l \adel \nu_k$ are highest weight vectors, for all $l \in \bN_0$, and $k = 0, \dots, n-1$. Moreover, it follows from Lemma \ref{lem:Leibconstant} and Lemma \ref{lem:nukHW}  that 
\begin{align*}
\mathrm{wt}\big(z^l_{1n}\nu_k\big) = \mathrm{wt}\big(z^l_{1n} \adel \nu_k\big) = (l+k+1)\varpi_1 + \varpi_{n-k-1} + l \varpi_{n-1}.
\end{align*}
Comparing this with the list of highest weights appearing in  the decomposition of $\Om^{(0,k)}$ in Lemma \ref{lem:DecompOfOm0kINTOIrreps}, and recalling that $\Om^{(0,k)}$ is multiplicity-free, we see that every non-trivial highest weight space of $\Om^{(0,k)}$ contains an element  of the form $z_{1n}^l \adel \nu_{k-1}$, or $z_{1n}^l \nu_k$, for some $l \in \bN_0$. Hence
\begin{align*}
\Om^{(0,0)} = \bC 1 \oplus \bigoplus_{l \in \bN_0}  U_q(\frak{sl}_{n}) (z_{1n}^{l+1}), & & \Om^{(0,k)} \simeq \bigoplus_{l \in \bN_0} U_q(\mathfrak{sl}_{n}) (z^l_{1n} \adel \nu_{k-1}) \oplus \bigoplus_{l \in \bN_0}  U_q(\frak{sl}_{n}) (z_{1n}^l \nu_k).
\end{align*}
Proposition \ref{prop:HodgeClosure} tells us that $\adel \Om^{(0,k-1)}_{\hw}$ and $\adel^\dagger \!\Om^{(0,k+1)}_{\hw}$ are closed under the action of the monoid $\O_q(\ccpn)_{\hw}$. Thus 
$
z_{1n}^l \adel \nu_{k-1} \in \adel \Om^{(0,k-1)} \text{ and } z_{1n}^l \nu_k \in \adel^{\dagger}\!\Om^{(0,k+1)}.
$
As a direct consequence
\begin{align*}
 \bigoplus_{l \in \bN_0} U_q(\frak{g}) (z_{1n}^l \adel \nu_{k-1})  =  \adel \Om^{(0,k-1)}, & & \bigoplus_{l \in \bN_0}  U_q(\frak{g}) (z_{1n}^l \nu_k) = \adel^\dagger \!\Om^{(0,k+1)}.
\end{align*}
Thus we see that the anti-holomorphic harmonic forms are exactly as claimed in (\ref{eqn:harmoniccpn}). Finally, we see that  $(z_{1n}, \Theta)$ is a ladder presentation of the calculus, which since the Leibniz constants of $z_{1n}$ are $q^2$ and $q^{-2}$,  is a real ladder presentation. 
\end{proof}

Combining this result with Lemma \ref{lem:DecompOfOm0kINTOIrreps} and Corollary \ref{cor:selfconjCPN} now gives us the following corollary.

\begin{cor} \label{cor:cpnorderI}
The CQH-complex space $\O_q(\mathbb{CP}^{n-1})$ is order I.
\end{cor}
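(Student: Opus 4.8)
The plan is to check, in turn, the three defining conditions of a CQH-complex space of order I from Definition \ref{defn:orderI}, since all the substantive work has in fact already been done in this section. Condition (3), the existence of a real ladder presentation, is precisely the content of Theorem \ref{thm:OrderIPresentation}: the pair $(z_{1n},\Theta)$ with $\Theta = \{\adel\nu_k \mid k = 0,\dots,n-2\}$ is a ladder presentation, and it is \emph{real} because Lemma \ref{lem:Leibconstant} computes the holomorphic and anti-holomorphic Leibniz constants of $z_{1n}$ to be $q^2$ and $q^{-2}$, both real. Condition (1), self-conjugacy, is quoted directly from Corollary \ref{cor:selfconjCPN}.

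This leaves condition (2), Gelfand type, which is the only point requiring input not already assembled in this subsection. Here I would invoke the explicit decomposition of $\Om^{(0,k)}$ into irreducible $U_q(\frak{sl}_n)$-modules established in Lemma \ref{lem:DecompOfOm0kINTOIrreps} (proved in Appendix C via Frobenius reciprocity for quantum homogeneous spaces together with the relevant classical branching laws). Reading off that list, the highest weights of the summands are pairwise distinct, so each $\Om^{(0,k)}$ is multiplicity-free as a left $\O_q(SU_n)$-comodule; hence the graded comodule $\Om^{(0,\bullet)}$ is graded multiplicity-free, which is exactly the statement that $\text{\bf C}$ is of Gelfand type.

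With all three conditions verified, $\O_q(\ccpn)$ is of order I. The only place where genuine care is needed is confirming that the weights appearing in Lemma \ref{lem:DecompOfOm0kINTOIrreps} are indeed distinct — i.e.\ confirming multiplicity-freeness — but this is a bookkeeping matter once the branching computation of Appendix C is in place; moreover the proof of Theorem \ref{thm:OrderIPresentation} already made use of multiplicity-freeness of $\Om^{(0,k)}$, so nothing genuinely new is required here.
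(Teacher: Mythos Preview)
Your proposal is correct and follows exactly the paper's approach: the paper simply states that the corollary follows by combining Theorem \ref{thm:OrderIPresentation} with Lemma \ref{lem:DecompOfOm0kINTOIrreps} and Corollary \ref{cor:selfconjCPN}, which are precisely the three ingredients you cite for conditions (3), (2), and (1) respectively. Note that Lemma \ref{lem:DecompOfOm0kINTOIrreps} already asserts Gelfand type as its first sentence (proved via Frobenius reciprocity and the multiplicity-free branching of Lemma \ref{lem:thelnm1branching}), so you need not separately inspect the list of weights for distinctness.
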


We finish by observing that Proposition \ref{thm:cpncohomology} can be  concluded directly from Theorem \ref{thm:OrderIPresentation}. Explicitly, since the space of harmonic forms $\H^{(0,k)}$ is trivial, for all $k > 0$,  the bijection between harmonic forms and cohomology classes presented in Corollary \ref{cor:harmonictoclass} gives us the following result.

\begin{cor} \label{cor:LADDERSandVanishingcoho}
 For all $k >0$, the anti-holomorphic cohomology groups $H^{(0,k)}$ are trivial. 
\end{cor}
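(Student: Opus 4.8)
The plan is to simply harvest the computation of harmonic forms already carried out in Theorem~\ref{thm:OrderIPresentation} and feed it through the Hodge-theoretic correspondence between harmonic forms and cohomology. Recall that the Heckenberger--Kolb calculus $\Om^\bullet_q(\ccpn)$ has total degree $2(n-1)$, so the anti-holomorphic forms $\Om^{(0,k)}$ are nonzero only for $k = 0, \dots, n-1$; hence it suffices to treat those finitely many degrees.

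First I would quote the identity~\eqref{eqn:harmoniccpn} from Theorem~\ref{thm:OrderIPresentation}, which gives $\H^{(0,k)} = 0$ for all $k = 1, \dots, n-1$. This is the substantive input: it rests on the explicit ladder presentation $(z_{1n},\Theta)$ of the complex structure, the fact that $\Om^{(0,k)}$ is multiplicity-free, and the identification of each highest weight space of $\Om^{(0,k)}$ with a form of the type $z_{1n}^l\adel\nu_{k-1}$ (hence $\adel$-exact) or $z_{1n}^l\nu_k$ (hence $\adel$-coexact, by Proposition~\ref{prop:HodgeClosure}), leaving no room for a nontrivial harmonic summand in positive degree.

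Second, I would invoke Corollary~\ref{cor:harmonictoclass}, which furnishes, for the CQH-Hermitian (indeed CQH-K\"ahler) space $\O_q(\ccpn)$, the isomorphisms $\H^{(0,k)}_{\adel} \xrightarrow{\ \sim\ } H^{(0,k)}_{\adel}$. Combining this with the vanishing $\H^{(0,k)} = 0$ for $k>0$ immediately yields $H^{(0,k)} = 0$ for all $k > 0$, which is the claim.

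There is essentially no obstacle remaining at this stage: all the difficulty has been absorbed into the construction of the ladder presentation and the verification that $\O_q(\ccpn)$ is of order~I (Corollary~\ref{cor:cpnorderI}), together with the general Hodge decomposition machinery of \textsection\ref{section:CQHHS}. The only point worth stating carefully is that the degree range $k = 1, \dots, n-1$ exhausts all positive degrees in which $\Om^{(0,\bullet)}$ is supported, so that ``for all $k>0$'' in the statement is genuinely covered; this is a triviality given that $\Om^\bullet_q(\ccpn)$ has total degree $2n-2$ and the complex structure satisfies $\Om^k = \bigoplus_{a+b=k}\Om^{(a,b)}$.
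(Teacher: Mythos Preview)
Your proposal is correct and follows essentially the same argument as the paper: harvest the vanishing of $\H^{(0,k)}$ for $k>0$ from Theorem~\ref{thm:OrderIPresentation} and then apply the harmonic--cohomology isomorphism of Corollary~\ref{cor:harmonictoclass}. The paper's own justification is just the one-line observation preceding the corollary, and your additional remarks about the degree range merely make explicit what is already implicit.
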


\subsection{A Dolbeault--Dirac  Pair of Spectral Triples for Quantum Projective Space}

In this subsection we verify solidity for the unique covariant K\"ahler structure of $\O_q(\ccpn)$. We then conclude that the K\"ahler structure gives a Dolbeault--Dirac pair of spectral triples.


\begin{lem} \label{lem:Aconstant}  It holds that
\begin{align*}
1. ~  A_{z,\adel \nu_k} \neq 0, & & 2. ~ A_{z,\adel \nu_k} \neq q^2 - 1, & & \text{ for all  } k = 0, \dots, n-2.
\end{align*}
\end{lem}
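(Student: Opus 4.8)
The plan is to read off both inequalities from the non-primitivity criteria established in Lemma~\ref{lem:ANonPrimitive} and Lemma~\ref{lem:BETALminus1}, and to verify the resulting non-vanishing statements by a computation in $\Phi(\Om^\bullet)$ modelled on the proof of Corollary~\ref{cor:delnuomega}(2). Write $z := z_{1n}$, so that $\lambda_z = q^2$ by Lemma~\ref{lem:Leibconstant}; then part~2 is exactly the assertion $A_{z,\adel\nu_k}\neq\lambda_z-1$. Since $\adel\nu_k\in\adel\Om^{(0,k)}_{\hw}$ is a $(0,k+1)$-form, Lemma~\ref{lem:ANonPrimitive} reduces part~1 to showing that $\del z\wed\adel\nu_k\in\Om^{(1,k+1)}$ is non-primitive, i.e.\ that
\[
\k^{\,n-k-2}\wed\del z\wed\adel\nu_k\neq 0
\]
(for $k=n-2$ the exponent is $0$ and, as $P^{(1,n-1)}=0$, this just says $\del z\wed\adel\nu_{n-2}\neq 0$). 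By Corollary~\ref{cor:delnuomega}(1) this form is $(k+1)_{q^2}$ times $\k^{\,n-k-2}\wed\del z\wed\adel z_{1n}\wed\adel z_{1,n-1}\wed\cdots\wed\adel z_{1,n-k}$.

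To see that this is non-zero I would exploit the $U_q(\frak{sl}_n)$-coinvariance of $\k$, which gives $X\tr(\k^m\wed\beta)=\k^m\wed(X\tr\beta)$ for all $X$, and act on $\del z\wed\adel z_{1n}\wed\cdots\wed\adel z_{1,n-k}$ (a highest weight vector) by a suitable product of lowering operators $F_i$, adapting the operator of Corollary~\ref{cor:delnuomega}(2) so that it also carries the holomorphic factor $\del z$ to some $\del z_{an}$; one then arrives at a non-zero scalar multiple of a form of type $\del z_{an}\wed\adel z_{n,b_1}\wed\cdots\wed\adel z_{n,b_{k+1}}$, whose image under Takeuchi's equivalence is $e^+_a\wed e^-_{b_1}\wed\cdots\wed e^-_{b_{k+1}}\neq 0$ in $\Phi(\Om^{(1,k+1)})$. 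Wedging with $[\k]^{n-k-2}=\big(i\sum_c e^-_c\wed e^+_c\big)^{n-k-2}$ and reducing by the relations of Theorem~\ref{thm:HKBasisRels} produces a non-zero multiple of the generator $e^+_1\wed\cdots\wed e^+_{n-1}\wed e^-_1\wed\cdots\wed e^-_{n-1}$ of $\Phi(\Om^{2n-2})$; since $X\tr 0 = 0$, the original form cannot vanish, proving part~1.

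For part~2, part~1 supplies the hypothesis $A_{z,\adel\nu_k}\neq 0$ of Lemma~\ref{lem:BETALminus1}, so it remains to show that $\adel\nu_k\wed\del z$ is non-primitive, i.e.\ $\k^{\,n-k-2}\wed\adel\nu_k\wed\del z\neq 0$; by Corollary~\ref{cor:delnuomega}(1) this is $(k+1)_{q^2}$ times $\k^{\,n-k-2}\wed\adel z_{1n}\wed\cdots\wed\adel z_{1,n-k}\wed\del z$, and the same coinvariance-plus-lowering argument applies verbatim. Alternatively, one may first establish a $q$-commutation relation sliding $\del z$ past each $\adel z_{1,n-l}$ (derived, as in \eqref{eqn:uS(u)swap}, from the defining relations of $\O_q(SU_n)$ through the unit of Takeuchi's equivalence), which identifies $\adel\nu_k\wed\del z$ with a non-zero multiple of $\del z\wed\adel\nu_k$ and so reduces part~2 to part~1.

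The main obstacle is the combinatorial bookkeeping inside $\Phi(\Om^\bullet)$: choosing the lowering operator so that it acts in a controlled fashion simultaneously on $\del z$ and on the antiholomorphic factor $\adel z_{1n}\wed\cdots\wed\adel z_{1,n-k}$, and then tracking the wedge with $[\k]^{n-k-2}$ through the Koszul-type relations of Theorem~\ref{thm:HKBasisRels} down to the top exterior power. This is essentially the difficulty already met in the proof of Corollary~\ref{cor:delnuomega}(2), now carrying an additional holomorphic one-form. Once the two non-vanishing statements are secured the inequalities follow formally, the quantum integers $(k+1)_{q^2}$ being non-zero since $q$ is real and not a root of unity.
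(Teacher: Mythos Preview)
Your approach is essentially the paper's: reduce to non-primitivity via Lemmas~\ref{lem:ANonPrimitive} and~\ref{lem:BETALminus1}, then act by a suitable product of $F_i$'s (the paper uses $X=F_{n-1}^{k+2}\cdots F_1^{k+2}$) so that the image in $\Phi(\Om^\bullet)$ becomes computable, and finally wedge with the explicit expression for $[\k]^{n-k-2}$. Two small corrections: first, the form $\k^{\,n-k-2}\wed\del z\wed\adel\nu_k$ lies in $\Om^{(n-k-1,\,n-1)}$, not $\Om^{2n-2}$, so the target in $\Phi$ is $e^+_1\wed\cdots\wed e^+_{n-k-1}\wed e^-_1\wed\cdots\wed e^-_{n-1}$ rather than the top generator; second, for the wedge of $e^+_a\wed e^-_{b_1}\wed\cdots\wed e^-_{b_{k+1}}$ with $[\k]^{n-k-2}=\mathrm{const}\cdot\sum_{|I|=n-k-2}e^+_I\wed e^-_I$ to be non-zero one needs $a\in\{b_1,\dots,b_{k+1}\}$, so the lowering operator must be chosen with this constraint in mind (the paper's $X$ yields $a=n-k-1$ and $\{b_j\}=\{n-k-1,\dots,n-1\}$). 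Your alternative route for part~2, sliding $\del z$ past the $\adel z_{1,n-l}$ via a $q$-commutation relation, is not pursued in the paper but would also work.
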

\begin{proof}  ~~~
\bet 
\item 
By Lemma \ref{lem:ANonPrimitive}, non-vanishing of $A_{z,\adel \nu_k}$ is equivalent to $\del z_{n1} \wed \adel \nu_k$ being a non-primitive form. As usual, we would like to demonstrate this by considering the coset of $\del z_{n1} \wed \adel \nu_k$ in $\Phi\left(\Omega^\bullet\right)$. Unfortunately, this coset is trivial. On the other hand, since the Lefschetz map $L$ is a left $U_q(\frak{sl}_n)$-module map, $\del z_{1n} \wed \adel \nu_k$ is primitive if and only if  $X \tr \left(\del z_{1n} \wed \adel \nu_k\right)$ is primitive, for any $X \in U_q(\frak{sl}_n)$. If we now fix
\begin{align*}
X := F^{k+2}_{n-1} \cdots F^{k+2}_1,
\end{align*}
a routine calculation confirms that 
\begin{align*}
\left[X  \tr\left(\del z_{1n} \wed \adel \nu_k\right)\right] = \gamma \, e^+_{n-k-1} \wed e^-_{n-1} \wed \cdots \wed e^-_{n-k-1},
\end{align*}
for a certain non-zero scalar $\gamma \in \mathbb{R}$.  Note next that 
\begin{align} \label{eqn:nonprimitive}
~~~ \left[L^{(n-1)-(k+2)+1} \left(X \tr \left(\del z_{1n}  \wed  \adel \nu_k\right)\right)\right]  = & \, \gamma [\k^{n-k-2}] \wed e^+_{n-k-1} \wed e^-_{n-1} \wed \cdots \wed e^-_{n-k-1}.
\end{align}
It was shown in \cite[Lemma 4.18]{MMF3} that $[\k]^{n-k-2}$ is a non-zero scalar multiple of
\begin{align*}
 \sum_{I \in O(n-k-2)} e^+_I \wed e^-_I,
\end{align*}
where summation is over  all ordered subsets of cardinality  $n-k-2$. This implies that  (\ref{eqn:nonprimitive}) is equal to a non-zero scalar multiple of 
\begin{align*}
e^+_1 \wed \cdots \wed e^+_{n-k-1} \wed  e^-_{1} \wed \cdots \wed e^-_{n-1}.
\end{align*}
Thus we see that $X \tr \left(\del z_{1n} \wed  \adel \nu_k\right)$ is non-primitive, implying that $\adel z_{1n} \wed \adel \nu_k$ is non-primitive. It now follows from Lemma \ref{lem:ANonPrimitive} that $A_{z,\adel \nu_k} \neq 0$ as claimed.

\item Using an analogous argument to the one above, it can be confirmed that  $\adel \nu_k \wed \del z_{1n}$ is non-primitive. Lemma \ref{lem:BETALminus1} then implies that  $A_{z,\adel \nu_k} \neq q^2 - 1$ as claimed. \qedhere
\eet
\end{proof}

\begin{lem} \label{lem:Bconstant}
It holds that
\begin{align*}
B_{\text{\small $z,\adel \nu_k$}} = q^2 (k+1)^{-1}_{q^2},  & & \text{ for all  $k = 0, \dots, n-2$}.
\end{align*}
%
%
\end{lem}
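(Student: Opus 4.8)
The plan is to reduce $B_{z,\adel\nu_k}$ (write $z:=z_{1n}$ from now on) to a single proportionality scalar between two explicit highest weight forms, and then to evaluate that scalar from the expression defining $\nu_k$. First I would simplify $\adel^{\dagger}(\adel\nu_k)$. By Corollary \ref{cor:delnuomega} the form $\nu_k$ is $\adel$-coexact, so it lies in $\adel^{\dagger}\Om^{(0,k+1)}$, on which $\DEL_{\adel}$ restricts to $\adel^{\dagger}\adel$ by Lemma \ref{lem:HDOFDEL}; since $\nu_k$ is a highest weight vector and $\text{\bf C}$ is of Gelfand type, Lemma \ref{lem:GelfandDiag} gives $\adel^{\dagger}\adel\nu_k = \DEL_{\adel}\nu_k = \mu_{\nu_k}\nu_k$ for a scalar $\mu_{\nu_k}$, which is nonzero since the calculus is connected and $\H^{(0,k)}$ vanishes for $k\ge1$ (Theorem \ref{thm:OrderIPresentation}), while for $k=0$ one has $\nu_0 = z_{1n}\notin\bC 1$. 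Substituting $\adel^{\dagger}(\adel\nu_k) = \mu_{\nu_k}\nu_k$ and $\adel\adel^{\dagger}(\adel\nu_k) = \mu_{\nu_k}\adel\nu_k$ into the defining identity of Lemma \ref{lem:ABscalars} and cancelling $\mu_{\nu_k}$, the problem becomes the computation of the scalar in
\begin{align*}
\adel z_{1n}\wed\nu_k = B_{z,\adel\nu_k}\, z_{1n}\,\adel\nu_k,
\end{align*}
which is well defined because both sides are highest weight forms of the same degree and weight inside the multiplicity-free comodule $\Om^{(0,k+1)}$ (Lemma \ref{lem:CoPMathair}) and $z_{1n}\adel\nu_k\ne 0$ (Lemma \ref{lem:nonzerozaction} together with Corollary \ref{cor:delnuomega}).

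Next I would compute $\adel z_{1n}\wed\nu_k$ directly from the sum defining $\nu_k$ in Lemma \ref{lem:nukHW}. In the $l=0$ summand $z_{1n}\big(\adel z_{1,n-1}\wed\cdots\wed\adel z_{1,n-k}\big)$, wedging $\adel z_{1n}$ on the left and using the anti-holomorphic Leibniz relation $(\adel z_{1n})z_{1n} = q^{-2}z_{1n}\adel z_{1n}$ of Lemma \ref{lem:Leibconstant} produces the scalar multiple $q^{-2}z_{1n}\big(\adel z_{1n}\wed\adel z_{1,n-1}\wed\cdots\wed\adel z_{1,n-k}\big)$, the scalar being the anti-holomorphic Leibniz constant of $z_{1n}$. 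For each $l\ge1$ the one-form $\adel z_{1n}$ still occurs inside the wedge, so commuting the generator $z_{1,n-l}$ past it — using the bimodule relations of the \hk calculus, the pertinent one being the analogue of (\ref{eqn:uS(u)swap}) derived exactly as in the proof of Lemma \ref{lem:Leibconstant} — yields a form containing the factor $\adel z_{1n}\wed\adel z_{1n}$, which vanishes by Corollary \ref{cor:delzdelzzero}. Hence only the $l=0$ term survives, and rewriting $\adel z_{1n}\wed\cdots\wed\adel z_{1,n-k}$ as $(k+1)^{-1}_{q^2}\adel\nu_k$ via Corollary \ref{cor:delnuomega} exhibits $\adel z_{1n}\wed\nu_k$ as the asserted multiple of $z_{1n}\adel\nu_k$. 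Organisationally one may instead induct on $k$, using the identity $\nu_k = \nu_{k-1}\wed\adel z_{1,n-k} + \tfrac{(-q)^k}{(k)_{q^2}}z_{1,n-k}\adel\nu_{k-1}$ obtained from Lemma \ref{lem:nukHW} by peeling off the top summand: the first term contributes $B_{z,\adel\nu_{k-1}}\tfrac{(k)_{q^2}}{(k+1)_{q^2}}z_{1n}\adel\nu_k$ by the inductive hypothesis and Corollary \ref{cor:delnuomega}, the second term vanishes by the same collision of $\adel z_{1n}$-factors, and the telescoping product over the quantum integers reduces everything to the base case $\nu_0 = z_{1n}$, for which $B_{z,\adel z_{1n}}$ was already identified in the proof of Lemma \ref{lem:LapActionononeforms}.

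The hard part will be the vanishing of the summands with $l\ge1$ (equivalently the second term in the recursion): this rests on being able to move the generators $z_{1,n-l}$ past the one-form $\adel z_{1n}$ up to a scalar, which for $z_{1n}$ itself is immediate from the Leibniz relation but for the remaining generators must be read off from the $\O_q(SU_n)$ commutation relations through Takeuchi's equivalence, precisely as in the proof of Lemma \ref{lem:Leibconstant}. Everything else is routine manipulation of quantum integers and of the highest weight calculus established in the preceding subsections.
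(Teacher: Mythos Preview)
Your approach is essentially identical to the paper's: both reduce via $\adel^{\dagger}\adel\nu_k=\mu_{\nu_k}\nu_k$ to computing the proportionality scalar in $\adel z_{1n}\wed\nu_k=B\,z_{1n}\adel\nu_k$, then expand the defining sum for $\nu_k$, using the commutation relation $(\adel z_{1n})z_{1j}=q^{-1}z_{1j}\,\adel z_{1n}$ for $j<n$ to make every $l\ge1$ summand collapse to a factor $\adel z_{1n}\wed\adel z_{1n}=0$, and identify the surviving $l=0$ term with $z_{1n}\adel\nu_k$ via Corollary~\ref{cor:delnuomega}. The paper carries this out in exactly that order, first recording the auxiliary relation $\adel z_{1n}\,z_{1j}=q^{-1}z_{1j}\,\adel z_{1n}$ and then performing the summation in one line.
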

\begin{proof}
Following the same argument as Lemma \ref{lem:Leibconstant}, one can establish the identity
\begin{align*}
\adel z_{1n} z_{1j} = q^{-1} z_{1j} \adel z_{1n}, & & \text{ for all } j = 2, \dots, n-1.
\end{align*}
It now follows from Corollary \ref{cor:delzdelzzero}  that 
\begin{align*}
  \adel z_{1n} \wed  \nu_k 
= &  \, \sum_{l=0}^k (-q)^l \, \adel z_{1n} \wed z_{1,n-l} \, \adel z_{1n}  \wed \cdots  \wed \wh{\adel z_{1,n-l}} \wed \cdots \wed \adel z_{1,n-k}\\
= & ~ q^2 \, z_{1n} \adel z_{1n} \wed \adel z_{1,n-1}  \wed \cdots   \wed \adel z_{1,n-k} \\
 & +  q^{-1} \sum_{l=1}^k (-q)^l \, z_{1,n-l}\ \wed \adel z_{1n} \wed  \adel z_{1n}  \wed \cdots  \wed \wh{\adel z_{1,n-l}} \wed \! \cdots \! \wed \adel z_{1,n-k}\\
= &  \,~ q^2 (k+1)^{-1}_{q^2}\, z_{1n} \adel \nu_k.
 \end{align*}
Denoting by $\mu_{\nu_k}$ the $\DEL_{\adel}$-eigenvalue of $\nu_k$,  
\begin{align*}
\adel z_{1n} \wed \adel^\dagger  \adel \nu_k = &\,  \mu_{\nu_k} \adel z_{1n} \wed \nu_k \\
= &  \, q^2 (k+1)^{-1}_{q^2} \mu_{\nu_k}z_{1n}  \adel  \nu_k  \\
= &  \, q^2 (k+1)^{-1}_{q^2} z_{1n}  \adel^{\,} \adel^\dagger(\adel \nu_k),
\end{align*} 
giving us the claimed value of $B_{z,\adel \nu_k}$. \qedhere 
\end{proof}

\begin{cor}
The CQH-Hermitian space $\left(\O_q(\ccpn),\Omega^{\bullet},\ast_{\sigma}, \Omega^{(\bullet,\bullet)},\sigma\right)$ is solid, for all $q \in \bR\bs\{-1,0\}$.
\end{cor}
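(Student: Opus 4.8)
The plan is to exhibit the real ladder presentation $(z_{1n},\Theta)$ of $\O_q(\ccpn)$ constructed in Theorem \ref{thm:OrderIPresentation} as a witness for one of the three solidity conditions, the choice depending only on the holomorphic Leibniz constant $\lambda_{z_{1n}}=q^{2}$ computed in Lemma \ref{lem:Leibconstant}. Since $q\in\bR\bs\{-1,0\}$ we have $q^{2}>0$, so exactly one of $q^{2}=1$, $q^{2}>1$, $0<q^{2}<1$ occurs, and I would treat these respectively as the cases $\text{\bf H}$ solid$^{0}$, solid$^{+}$, solid$^{-}$; in each case condition (a) of the relevant definition is immediate from the value of $q^{2}$. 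The structure constants needed for the remaining conditions are $A_{z_{1n},\adel\nu_{k}}$, controlled by Lemma \ref{lem:Aconstant}, and $B_{z_{1n},\adel\nu_{k}}=q^{2}(k+1)^{-1}_{q^{2}}$, given by Lemma \ref{lem:Bconstant}, for $k=0,\dots,n-2$; since each $\Theta_k$ is the singleton $\{\adel\nu_k\}$, verifying the conditions for every $\w\in\Theta$ amounts to verifying them for these $\adel\nu_k$.

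For $q^{2}=1$ only condition (b) of solid$^{0}$ remains, and it is exactly the assertion $A_{z_{1n},\adel\nu_{k}}\neq 0$ of Lemma \ref{lem:Aconstant}(1). For $0<q^{2}<1$, condition (b) of solid$^{-}$ reads $A_{z_{1n},\adel\nu_{k}}\neq\lambda_{z_{1n}}-1=q^{2}-1$, which is precisely Lemma \ref{lem:Aconstant}(2), while condition (c) asks $B_{z_{1n},\adel\nu_{k}}\neq 0$; this holds because $B_{z_{1n},\adel\nu_{k}}=q^{2}(q^{2}-1)/(q^{2(k+1)}-1)$ has numerator $q^{2}(q^{2}-1)\neq 0$ since $q\neq 0$ and $q^{2}\neq 1$. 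For $q^{2}>1$, condition (b) of solid$^{+}$ is again Lemma \ref{lem:Aconstant}(1), and for condition (c) I would use the requirement in the form dictated by the eigenvalue formula of Theorem \ref{thm:TheFormula} and used in the proofs of Lemma \ref{lem:GTimpliesSolid} and Theorem \ref{thm:THETHEOREM}, namely that the factor $1+B_{z_{1n},\adel\nu_{k}}(l)_{\lambda_{z_{1n}}^{-1}}$ not tend to $0$, i.e. $B_{z_{1n},\adel\nu_{k}}\neq\lambda_{z_{1n}}^{-1}-1=q^{-2}-1$. Substituting $(k+1)_{q^{2}}=(q^{2(k+1)}-1)/(q^{2}-1)$ and clearing denominators (legitimate since $q^{2}\neq 1$), the equation $B_{z_{1n},\adel\nu_{k}}=q^{-2}-1$ becomes $q^{4}+q^{2(k+1)}=1$, which is impossible when $q^{2}>1$ because then $q^{4}>1$ already exceeds $1$.

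Assembling the three cases shows $(z_{1n},\Theta)$ witnesses solidity for every $q\in\bR\bs\{-1,0\}$. I expect the only substantive point to be the solid$^{+}$ verification: one must track which of the two factors of the Laplacian eigenvalue of $z^{l}\w$ diverges and which one has a finite limit (it is the $(l)_{\lambda_z^{-1}}$-factor that must be kept away from $0$), and then reduce the resulting rational identity to the single polynomial relation $q^{4}+q^{2(k+1)}\neq 1$, which has the convenient feature of holding uniformly in $k$. Everything else is a direct substitution into Lemmas \ref{lem:Aconstant} and \ref{lem:Bconstant}, so beyond this bookkeeping I do not anticipate a real obstacle.
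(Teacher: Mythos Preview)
Your proof is correct and follows the approach the paper intends: the corollary is stated without proof, so the implicit argument is exactly what you do---check the solidity conditions against the ladder presentation $(z_{1n},\Theta)$ using the values supplied by Lemmas \ref{lem:Aconstant} and \ref{lem:Bconstant}.

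The one point worth flagging is your handling of condition (c) in the solid$^{+}$ case. You correctly observe that the definition as stated, $B_{z,\w}\neq\lambda_z-1$, does not match what is actually used in the proofs of Lemma \ref{lem:GTimpliesSolid} and Theorem \ref{thm:THETHEOREM}, namely $B_{z,\w}\neq\lambda_z^{-1}-1$; the latter is the condition that prevents the bounded factor $1+B_{z,\w}(l)_{\lambda_z^{-1}}$ from tending to zero. You work with this corrected version and reduce it to the inequality $q^{4}+q^{2(k+1)}\neq 1$, which indeed holds for $q^{2}>1$. This is the right choice: with the literal definition one would instead need $q^{2}\neq q^{2(k+1)}-1$, which can fail for isolated values of $q$ (e.g.\ $k=1$, $q^{2}=(1+\sqrt{5})/2$), so the paper's stated definition contains a typo and your reading is the intended one. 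Beyond this bookkeeping, the solid$^{0}$ and solid$^{-}$ verifications are immediate from the cited lemmas, exactly as you present them.
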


The following theorem and corollary, the main results of this section,  and indeed two of  the principal results of 
the paper, now follow directly from  Theorem \ref{thm:spectraltriplesfromSOLIDandCoH}, and the fact that $\Om^{(\bullet,\bullet)}$ is connected and has vanishing higher-order anti-holomorphic  cohomologies (as shown in Lemma \ref{cor:LADDERSandVanishingcoho}).

\begin{thm} \label{thm:theTHMCPN}
A Dolbeault--Dirac pair of spectral triples is given by
\begin{align*}
\Big(\O_q(\ccpn), L^2\big(\Om^{(\bullet,0)}\big), D_{\del}\Big), & & \Big(\O_q(\ccpn), L^2\big(\Om^{(0,\bullet)}\big), D_{\adel}\Big).
\end{align*}
\end{thm}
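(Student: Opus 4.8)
The plan is to assemble Theorem \ref{thm:theTHMCPN} from the general machinery built in Sections 3--5 together with the concrete representation-theoretic facts established in Section 6. By Theorem \ref{thm:spectraltriplesfromSOLIDandCoH}, for a CQH-Hermitian space of order I a Dolbeault--Dirac pair of spectral triples exists \emph{if and only if} the space is solid and has finite-dimensional anti-holomorphic cohomologies. So the proof reduces to verifying these two hypotheses for $\big(\O_q(\ccpn),\Om^\bullet,\ast_\s,\Om^{(\bullet,\bullet)},\s\big)$, together with the observation that this data actually forms a CQH-Hermitian space of order I.

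First I would record that $\O_q(\ccpn)$ with its Heckenberger--Kolb calculus and the Fubini--Study K\"ahler form $\k$ of \eqref{eqn:TheKahlerForm} is a CQH-K\"ahler space for $q$ in the interval $I$ of Theorem \ref{thm:CPNKahler} (positivity of the metric), hence in particular a CQH-Hermitian space; and that it is of order I by Corollary \ref{cor:cpnorderI}, with the real ladder presentation $(z_{1n},\Theta)$, $\Theta=\{\adel\nu_k \mid k=0,\dots,n-2\}$, furnished by Theorem \ref{thm:OrderIPresentation}. Next, finite-dimensionality of the anti-holomorphic cohomologies is immediate: by Theorem \ref{thm:OrderIPresentation} (or equivalently Corollary \ref{cor:LADDERSandVanishingcoho} together with connectedness) we have $\H^{(0,0)}=\bC 1$ and $\H^{(0,m)}=0$ for $m=1,\dots,n-1$, so all $H^{(0,k)}$ are finite-dimensional. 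It then remains to verify solidity. The Leibniz constant of $z_{1n}$ is $\lambda_{z_{1n}}=q^2$ by Lemma \ref{lem:Leibconstant}, so for $q>1$ we are in the solid$^+$ regime, for $0<q<1$ in the solid$^-$ regime, and $q=1$ is the classical case; in all cases $\lambda_{z_{1n}}\in\bR_{>0}$ and $\lambda_{z_{1n}}\neq 1$ away from $q=1$, so condition (a) of the relevant solidity notion holds. Conditions (b) and (c) are exactly $A_{z_{1n},\adel\nu_k}\neq 0$, $A_{z_{1n},\adel\nu_k}\neq \lambda_{z_{1n}}-1=q^2-1$, and $B_{z_{1n},\adel\nu_k}\neq \lambda_{z_{1n}}-1$ (in the solid$^+$ case, with the analogous reshuffling for solid$^-$), and these are supplied by Lemma \ref{lem:Aconstant} (parts 1 and 2) and Lemma \ref{lem:Bconstant}: since $B_{z_{1n},\adel\nu_k}=q^2(k+1)^{-1}_{q^2}\neq 0$ and $q\in\bR\setminus\{-1,0\}$, one checks $B_{z_{1n},\adel\nu_k}\neq q^2-1$ directly from the formula. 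Hence the Corollary following Lemma \ref{lem:Bconstant} records that the space is solid for all $q\in\bR\setminus\{-1,0\}$.

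With solidity and finite-dimensional cohomologies in hand, Theorem \ref{thm:spectraltriplesfromSOLIDandCoH} yields that
\begin{align*}
\Big(\O_q(\ccpn), L^2\big(\Om^{(0,\bullet)}\big), D_{\adel}\Big), & & \Big(\O_q(\ccpn), L^2\big(\Om^{(\bullet,0)}\big), D_{\del}\Big)
\end{align*}
is a Dolbeault--Dirac pair of spectral triples, which is the assertion of Theorem \ref{thm:theTHMCPN}. (Compactness of the resolvent is the content of $\s_P(\DEL_{\adel})\to\infty$ coming out of Theorem \ref{thm:THETHEOREM}, while the bounded-commutator and self-adjointness parts are already in Proposition \ref{prop:BCCQHHS}.)

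The main obstacle in this chain is not the final assembly, which is essentially bookkeeping, but the verification of $A_{z_{1n},\adel\nu_k}\neq 0$ and $\neq q^2-1$ in Lemma \ref{lem:Aconstant}: because the relevant cosets in the Takeuchi fibre $\Phi(\Om^\bullet)$ vanish, one cannot read off non-primitivity of $\del z_{1n}\wed\adel\nu_k$ and $\adel\nu_k\wed\del z_{1n}$ directly, and must instead act by a carefully chosen element $X=F^{k+2}_{n-1}\cdots F^{k+2}_1\in U_q(\frak{sl}_n)$, use left-covariance of the Lefschetz operator $L$, and invoke the explicit form of $[\k]^{n-k-2}$ from \cite[Lemma 4.18]{MMF3} to land on a nonzero basis element $e^+_1\wed\cdots\wed e^+_{n-k-1}\wed e^-_1\wed\cdots\wed e^-_{n-1}$ of top degree. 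Everything else reduces cleanly to results already proved.
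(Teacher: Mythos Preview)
Your proposal is correct and follows essentially the same route as the paper: the paper's proof is simply the observation that the theorem follows from Theorem \ref{thm:spectraltriplesfromSOLIDandCoH} together with finite-dimensionality of the anti-holomorphic cohomologies (Corollary \ref{cor:LADDERSandVanishingcoho}) and solidity (the Corollary following Lemma \ref{lem:Bconstant}), and you have assembled exactly these ingredients with some additional explanatory detail about how each hypothesis is verified.
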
 

\begin{cor}
The operators $D_{\del}$ and $D_{\adel}$ both have non-trivial associated $K$-homology classes. In particular, 
\begin{align*}
\text{\em Index}(\frak{b}(D_{\del})) = \text{\em Index}(\frak{b}(D_{\adel})) = 1.
\end{align*} 
\end{cor}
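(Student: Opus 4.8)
The plan is to assemble three facts already in place. First, by Theorem \ref{thm:theTHMCPN} the pairs $\big(\O_q(\ccpn), L^2(\Om^{(\bullet,0)}), D_{\del}\big)$ and $\big(\O_q(\ccpn), L^2(\Om^{(0,\bullet)}), D_{\adel}\big)$ are genuine even spectral triples; in particular $D_{\del}$ and $D_{\adel}$ have compact resolvent. As recalled in \textsection 2.5, passing to the bounded transform therefore yields even Fredholm modules over $\ol{\O_q(\ccpn)}$, and hence well-defined classes $[\frak{b}(D_{\del})]_K$ and $[\frak{b}(D_{\adel})]_K$ in the $K$-homology group $K^0\big(\ol{\O_q(\ccpn)}\big)$.

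Second, I would invoke the index formula of \cite[Theorem 5.4]{DOS1}, valid for any CQH-Hermitian space admitting a Dolbeault--Dirac pair of spectral triples, which identifies both indices with the anti-holomorphic Euler characteristic:
\begin{align*}
\mathrm{Index}\big(\frak{b}(D_{\del})\big) = \mathrm{Index}\big(\frak{b}(D_{\adel})\big) = \chi_{\adel}.
\end{align*}
Combining this with the explicit computation of $\chi_{\adel}$ carried out above — which rests on the vanishing of the higher cohomologies $H^{(0,k)} = 0$ for $k > 0$ (Corollary \ref{cor:LADDERSandVanishingcoho}) together with $\dim_{\bC} H^{(0,0)} = 1$ — gives $\chi_{\adel} = 1$, and hence both indices equal $1$, as asserted.

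Third, for non-triviality I would use the fact, also recalled in \textsection 2.5, that $\mathrm{Index}\colon K^0\big(\ol{\O_q(\ccpn)}\big) \to \bZ$, $[F]_K \mapsto \mathrm{Index}(F_+)$, is a group homomorphism. Were either class trivial, its image under this homomorphism would be $0$; since we have just shown the image is $1 \neq 0$, both classes $[\frak{b}(D_{\del})]_K$ and $[\frak{b}(D_{\adel})]_K$ are necessarily non-zero. This completes the argument.

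There is essentially no analytic obstacle here: the corollary is a direct consequence of Theorem \ref{thm:theTHMCPN}, all of whose difficulty was already absorbed into the verification of solidity and the compact resolvent condition in the preceding subsections. The only point requiring care is bookkeeping, namely confirming that the hypotheses of \cite[Theorem 5.4]{DOS1} are met — a genuine Dolbeault--Dirac pair of spectral triples (supplied by Theorem \ref{thm:theTHMCPN}), together with connectedness and vanishing higher anti-holomorphic cohomology of the constituent \hk calculus (supplied by the results of \textsection 6.2) — so that the index formula may legitimately be applied.
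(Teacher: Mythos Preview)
Your proposal is correct and matches the paper's approach: the corollary is stated without proof in the paper, and your assembly of Theorem~\ref{thm:theTHMCPN}, the index formula from \cite[Theorem 5.4]{DOS1}, and the computation $\chi_{\adel}=1$ is exactly the intended argument.
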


%
%
%

\begin{remark}
An important question to ask is whether or not the two spectral triples presented in Theorem \ref{thm:theTHMCPN} are unitarily equivalent, or at least have the same $K$-homology class. Note that since the $*$-map is neither an algebra map, nor a bounded operator (see \cite[\textsection 1.7]{NeshTusetLeabh} for details), it cannot be used to implement such an equivalence.
\end{remark}

\section{Irreducible Quantum Flag Manifolds} \label{section:QHSPs}

In the classical setting,  complex projective space is a very special example of a flag manifold. This picture extends directly to the quantum group setting, where  $\O_q(\ccpn)$ is a very special type of quantum flag manifold $\O_q(G/L_S)$ \cite{DijkStok}. Moreover, for the  irreducible quantum flag manifolds 
the \hk classification of  differential calculi presented above for $\O_q(\ccpn)$ extends directly, as does the existence of a unique covariant K\"ahler structure. In this section we carefully recall this material, and introduce a natural weakening of the definition of Gelfand type, which we term {\em weak Gelfand type}. We classify those non-exceptional  irreducible quantum flag manifolds satisfying this new condition, and discuss in detail the extension of the order I framework to this more general setting.


\subsection{The Quantum Homogeneous Spaces}

Let  $\frak{g}$ be a complex simple Lie \alg of rank $r$ and $U_q(\frak{g})$ the corresponding Drinfeld--Jimbo quantised enveloping algebra.   For $S$ a subset of simple roots,  consider the Hopf $*$-subalgebra
\begin{align*}
U_q(\frak{l}_S) := \big< K_i, E_j, F_j \,|\, i = 1, \ldots, r; j \in S \big>.
\end{align*} 
From the Hopf $*$-algebra embedding $\iota:U_q(\frak{l}_S) \hookrightarrow U_q(\frak{g})$, we get the dual Hopf $*$-algebra map $\iota^{\circ}: U_q(\frak{g})^{\circ} \to U_q(\frak{l}_S)^{\circ}$. By construction $\O_q(G) \sseq U_q(\frak{g})^{\circ}$, so we can consider the restriction map
\begin{align*}
\pi_S:= \iota^\circ|_{\O_q(G)}: \O_q(G) \to U_q(\frak{l}_S)^{\circ},
\end{align*}
and the Hopf $*$-subalgebra 
$
\O_q(L_S) := \pi_S\big(\O_q(G)\big) \sseq U_q(\frak{l}_S)^\circ.
$
We call the CMQGA-homogeneous space associated to   $\pi_S:\O_q(G) \to \O_q(L_S)$ the {\em quantum flag manifold} corresponding to $S$, and denote it by
\begin{align*}
\O_q\big(G/L_S\big) := \O_q \big(G)^{\text{co}\left(\O_q(L_S)\right)}.
\end{align*} 
We see that the definition of $\O_q(\mathbb{CP}^n)$, as given in the previous section, corresponds to the special case of $S = \{\a_1, \dots, \a_{n-2}\} \sseq \Pi(\frak{sl}_{n})$.

If $S = \{1, \ldots, r\}\bs \{\a_i\}$, where $\a_i$ has coefficient $1$ in the highest root of $\frak{g}$, then we say that the associated quantum flag manifold is of {\em irreducible}. In the classical limit of $q=1$, these homogeneous spaces reduce to the family of irreducible flag manifolds, or equivalently the compact Hermitian symmetric spaces \cite{BastonEastwood}. These algebras are also referred to as the  {\em cominiscule} quantum flag manifolds, again reflecting terminology in the classical setting. Presented below is a useful diagrammatic presentation of the set of simple roots defining the irreducible quantum flag manifolds, along with the names of the various series. See \cite{BastonEastwood} for a more detailed discussion. 

\subsection{The Heckenberger--Kolb Calculi and their K\"ahler Structures}

We now recall the extension of Theorem \ref{thm:HKClassCPN}, the Heckenberger--Kolb classification of first-order differential calculi over $\O_q(\ccpn)$, to the setting of irreducible quantum flag manifolds.

We call the maximal prolongation of the direct sum of these two calculi the {\em \hk calculus} of $\O_q(G/L_S)$, and denote it by $\Om_q^\bullet(G/L_S)$. It was shown in \cite{HKdR} that $\Om^\bullet(G/L_S)$ has classical dimension, and admits a left $\O_q(G)$-covariant $\bN^2_0$-grading $\Om^{(\bullet,\bullet)}$ satisfying conditions 1 and 3 of the definition of a complex structure. It was later shown in \cite{MarcoConj} that  the $*$-algebra structure of $\O_q(G/L_S)$ extends to the structure of a differential $*$-calculus on $\Om^\bullet_q(G/L_S)$,  for each irreducible quantum flag manifolds. Moreover, it was shown that the $\bN^2_0$-grading $\Om^{(\bullet,\bullet)}$  is a covariant complex structure  \wrt this $*$-structure. Finally, we note that since $\Om^{(1,0)}$ and $\Om^{(0,1)}$ are non-isomorphic as objects in  {\small$\, ^{\O_q(G)}_{\O_q(G/L_S)}  \mathrm{Mod}_0$}, this is the unique covariant complex structure for $\Om^\bullet_q(G/L_S)$.

\begin{thm} \cite[Theorem 7.2]{HK}
For  $S$ a subset of simple roots of irreducible type, there exist exactly two non-isomorphic, irreducible,  left-covariant, finite-dimensional, first-order differential calculi of finite dimension over $\O_q(G/L_S)$.
\end{thm}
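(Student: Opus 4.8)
The plan is to follow the strategy of \hakn, reducing the classification to a statement about the representation theory of $U_q(\frak{l}_S)$ via Takeuchi's equivalence. Write $B = \O_q(G/L_S)$ and $H = \O_q(L_S)$, with $\pi_S : \O_q(G) \to H$ the defining surjection of Hopf $*$-algebras.

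\textbf{Step 1 (reduction to submodules).} First I would invoke the general classification of left-covariant first-order differential calculi over a quantum homogeneous space $B = \O_q(G)^{\co(H)}$, which generalises Woronowicz's classification of bicovariant calculi over a Hopf algebra. Every such calculus is a quotient of the universal left-covariant first-order calculus $\Om^1_u$ by a subobject in the category $^{\O_q(G)}_B\mathrm{Mod}_0$. Applying Takeuchi's equivalence $\Phi: {}^{\O_q(G)}_B\mathrm{Mod}_0 \to {}^H\mathrm{Mod}$, covariant first-order calculi correspond, inclusion-reversingly, to subobjects of the $H$-comodule $\Phi(\Om^1_u)$. The first-order calculi of \emph{finite} dimension (equivalently, of classical dimension) are controlled by a canonical finite-dimensional quotient $T$ of $\Phi(\Om^1_u)$, obtained by imposing the ``second-order'' relations inherited from the coordinate algebra $\O_q(G)$; an irreducible finite-dimensional covariant calculus then corresponds precisely to a simple quotient of $T$, that is, to the choice of one simple constituent of $T/\mathrm{rad}(T)$.

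\textbf{Step 2 (decomposing the cotangent module).} The crux is to show that $T$ is a direct sum of exactly two non-isomorphic simple $H$-comodules. Regarding $H$-comodules as type-$1$ $U_q(\frak{l}_S)$-modules, the category is semisimple and the formal characters of the simples agree with those in the classical case, so the decomposition of $T$ can be read off from the limit $q \to 1$. Classically, the relevant cotangent space of $G/L_S$ is $(\frak{g}/\frak{l}_S)^* \cong \frak{n}^+ \oplus \frak{n}^-$ as an $\frak{l}_S$-module, where $\frak{n}^\pm$ are the nilradicals. The hypothesis $S = \{1,\dots,r\}\setminus\{\alpha_i\}$ with $\alpha_i$ of coefficient $1$ in the highest root is exactly what forces $\frak{n}^+$ and $\frak{n}^-$ to be abelian and irreducible as $\frak{l}_S$-modules, and they are non-isomorphic (distinguished by the sign of the action of the centre of $\frak{l}_S$). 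Thus $T \cong V^{(1,0)} \oplus V^{(0,1)}$ with both summands simple and $V^{(1,0)} \not\cong V^{(0,1)}$. One must still check that the $q$-deformed second-order relations neither collapse a summand nor introduce additional small submodules, so that $T$ really is this semisimple two-summand module of classical total dimension; this is the part that requires explicit computation inside $\O_q(G)$ (or $U_q(\frak{g})$) rather than a soft deformation argument.

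\textbf{Step 3 (conclusion).} Given Step 2, $T$ has exactly two simple quotients, so by Step 1 there are exactly two irreducible left-covariant finite-dimensional first-order calculi $\Om^{(1,0)}$ and $\Om^{(0,1)}$, with $\Phi(\Om^{(1,0)}) = V^{(1,0)}$ and $\Phi(\Om^{(0,1)}) = V^{(0,1)}$. Finite-dimensionality as objects of $^{\O_q(G)}_B\mathrm{Mod}_0$ follows since $V^{(1,0)}$ and $V^{(0,1)}$ are finite-dimensional and $\Phi$ is an equivalence; non-isomorphism of the two calculi follows from $V^{(1,0)} \not\cong V^{(0,1)}$ together with faithfulness of $\Phi$ on isomorphism classes; and existence of each calculus --- that the corresponding subobject of $\Om^1_u$ is genuinely a sub-bimodule, so that the quotient differential is well defined --- is guaranteed by the general quotient construction of Step 1. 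The main obstacle is the verification in Step 2 that the quantum second-order relations cut $\Phi(\Om^1_u)$ down to exactly $V^{(1,0)} \oplus V^{(0,1)}$, with nothing collapsed and nothing extra: this is the technical heart of the \hkn argument, and is the one place where genuine computation with the quantised coordinate algebra cannot be replaced by a deformation argument.
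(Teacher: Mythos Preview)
The paper does not give its own proof of this statement: it is quoted verbatim as \cite[Theorem 7.2]{HK} and used as a black box, so there is nothing in the paper to compare your argument against. Your outline is a reasonable high-level sketch of the \hk strategy (covariant calculi as quotients classified via Takeuchi's equivalence, reduction to irreducible $U_q(\frak{l}_S)$-constituents of a tangent-type module, and the cominuscule hypothesis forcing exactly two non-isomorphic summands), but since the present paper supplies no proof, the relevant comparison is with the original source rather than with anything here.
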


\begin{center} 
{\tiny
\begin{tabular}{|c|c|c|c| }

\hline

 & &  &\\

\small $A_n$&
\begin{tikzpicture}[scale=.5]
\draw
(0,0) circle [radius=.25] 
(8,0) circle [radius=.25] 
(2,0)  circle [radius=.25]  
(6,0) circle [radius=.25] ; 

\draw[fill=black]
(4,0) circle  [radius=.25] ;

\draw[thick,dotted]
(2.25,0) -- (3.75,0)
(4.25,0) -- (5.75,0);

\draw[thick]
(.25,0) -- (1.75,0)
(6.25,0) -- (7.75,0);
\end{tikzpicture} & \small $\O_q(\text{Gr}_{r,s})$ & \small quantum Grassmanian \\

 &  & & \\
& & & \\

\small $B_n$ &
\begin{tikzpicture}[scale=.5]
\draw
(4,0) circle [radius=.25] 
(2,0) circle [radius=.25] 
(6,0)  circle [radius=.25]  
(8,0) circle [radius=.25] ; 
\draw[fill=black]
(0,0) circle [radius=.25];

\draw[thick]
(.25,0) -- (1.75,0);

\draw[thick,dotted]
(2.25,0) -- (3.75,0)
(4.25,0) -- (5.75,0);

\draw[thick] 
(6.25,-.06) --++ (1.5,0)
(6.25,+.06) --++ (1.5,0);                      

\draw[thick]
(7,0.15) --++ (-60:.2)
(7,-.15) --++ (60:.2);
\end{tikzpicture} & \small $\O_q(\mathbf{Q}_{2n+1})$ & \small odd  quantum  quadric\\

 & & &  \\
 & & & \\

\small $C_n$& 
\begin{tikzpicture}[scale=.5]
\draw
(0,0) circle [radius=.25] 
(2,0) circle [radius=.25] 
(4,0)  circle [radius=.25]  
(6,0) circle [radius=.25] ; 
\draw[fill=black]
(8,0) circle [radius=.25];

\draw[thick]
(.25,0) -- (1.75,0);

\draw[thick,dotted]
(2.25,0) -- (3.75,0)
(4.25,0) -- (5.75,0);

\draw[thick] 
(6.25,-.06) --++ (1.5,0)
(6.25,+.06) --++ (1.5,0);                      

\draw[thick]
(7,0) --++ (60:.2)
(7,0) --++ (-60:.2);
\end{tikzpicture} &\small   $\O_q(\mathbf{L}_{n})$ & \small 
quantum Lagrangian  \\

  &  & & \small Grassmannian \\ 
 & & & \\

\small $D_n$& 
\begin{tikzpicture}[scale=.5]

\draw[fill=black]
(0,0) circle [radius=.25] ;

\draw
(2,0) circle [radius=.25] 
(4,0)  circle [radius=.25]  
(6,.5) circle [radius=.25] 
(6,-.5) circle [radius=.25];

\draw[thick]
(.25,0) -- (1.75,0)
(4.25,0.1) -- (5.75,.5)
(4.25,-0.1) -- (5.75,-.5);

\draw[thick,dotted]
(2.25,0) -- (3.75,0);
\end{tikzpicture} &\small   $\O_q(\mathbf{Q}_{2n})$ & \small  even  quantum quadric \\ 
 &  & &  \\ 
 & & & \\

\small $D_n$ & 
\begin{tikzpicture}[scale=.5]
\draw
(0,0) circle [radius=.25] 
(2,0) circle [radius=.25] 
(4,0)  circle [radius=.25] ;

\draw[fill=black] 
(6,.5) circle [radius=.25] 
(6,-.5) circle [radius=.25];

\draw[thick]
(.25,0) -- (1.75,0)
(4.25,0.1) -- (5.75,.5)
(4.25,-0.1) -- (5.75,-.5);

\draw[thick,dotted]
(2.25,0) -- (3.75,0);
\end{tikzpicture} &\small   $\O_q(\textbf{S}_{n})$ & \small  quantum spinor  variety\\
 &  & & \\ 
 & & & \\

\small $E_6$& \begin{tikzpicture}[scale=.5]
\draw
(2,0) circle [radius=.25] 
(4,0) circle [radius=.25] 
(4,1) circle [radius=.25]
(6,0)  circle [radius=.25] ;

\draw[fill=black] 
(0,0) circle [radius=.25] 
(8,0) circle [radius=.25];

\draw[thick]
(.25,0) -- (1.75,0)
(2.25,0) -- (3.75,0)
(4.25,0) -- (5.75,0)
(6.25,0) -- (7.75,0)
(4,.25) -- (4, .75);
\end{tikzpicture}

 &\small  $\O_q(\mathbb{OP}^2)$ & \small  quantum Cayley plane \\
 &   & & \\ 
 & & & \\
\small $E_7$& 
\begin{tikzpicture}[scale=.5]
\draw
(0,0) circle [radius=.25] 
(2,0) circle [radius=.25] 
(4,0) circle [radius=.25] 
(4,1) circle [radius=.25]
(6,0)  circle [radius=.25] 
(8,0) circle [radius=.25];

\draw[fill=black] 
(10,0) circle [radius=.25];

\draw[thick]
(.25,0) -- (1.75,0)
(2.25,0) -- (3.75,0)
(4.25,0) -- (5.75,0)
(6.25,0) -- (7.75,0)
(8.25, 0) -- (9.75,0)
(4,.25) -- (4, .75);
\end{tikzpicture} &\small   $\O_q(\textrm{F})$  
& \small  quantum Freudenthal variety\\

& & & \\  &  &  & \small  \\

& & & \\  &  &  & \\

\hline
\end{tabular}
}
\end{center}

 It was observed in \cite[\textsection 4.5]{MMF3} that, for each $\Om^\bullet_q(G/L_S)$,  the space of $(1,1)$-forms contains a left-coinvariant  real form $\k$ which is unique up to real scalar multiple. Moreover, by extending the argument of   \cite[\textsection 4.5]{MMF3} for the case of $\O_q(\ccpn)$, the form $\kappa$ is easily seen to be a closed  central element of  $\Om^\bullet$. We now recall a conjecture originally proposed in  \cite[\textsection 4.5]{MMF3}.

\begin{conj}
For every irreducible quantum flag manifold $\O_q(G/L_S)$, a covariant K\"ahler structure for the \hk calculus of $\O_q(G/L_S)$ is given by the pair  $(\Om^{(\bullet,\bullet)},\k)$.
\end{conj}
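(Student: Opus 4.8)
The plan is to verify, for the pair $(\Om^{(\bullet,\bullet)},\k)$, the three requirements in the definition of a K\"ahler structure. That $\Om^{(\bullet,\bullet)}$ is a covariant complex structure, that $\k$ is a real central left-$\O_q(G)$-coinvariant element of $\Om^{(1,1)}$, and that $\exd\k=0$, are all recalled above, or follow by a direct extension of the arguments of \cite{MMF3} given there for $\O_q(\ccpn)$. Hence the entire content of the conjecture is the \emph{hard Lefschetz property} \eqref{eqn:Liso}: that the Lefschetz operator $L=\k\wed(-)$ restricts to linear isomorphisms $L^{N-k}\colon\Om^{k}\to\Om^{2N-k}$ for all $1\le k<N$, where $2N$ denotes the total degree of $\Om^\bullet_q(G/L_S)$.

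First I would reduce \eqref{eqn:Liso} to a finite-dimensional statement. Since $\k$ is central and left-coinvariant, $L$ is simultaneously an $\O_q(G/L_S)$-bimodule map and a left $\O_q(G)$-comodule map, i.e. a morphism in $^{\O_q(G)}_{\O_q(G/L_S)}\mathrm{Mod}_0$; applying Takeuchi's equivalence $\Phi$, which is exact and monoidal, turns \eqref{eqn:Liso} into the assertion that $[\k]^{N-k}\wed(-)\colon\Phi(\Om^{k})\to\Phi(\Om^{2N-k})$ is a linear isomorphism. As $\Phi(\Om^\bullet)$ has the classical dimension, $\dim_{\bC}\Phi(\Om^{k})=\dim_{\bC}\Phi(\Om^{2N-k})$, so it suffices to prove injectivity of $[\k]^{N-k}\wed(-)$ on $\Phi(\Om^{k})$ — a purely linear-algebraic statement about the finite-dimensional braided graded algebra $\Phi(\Om^\bullet)$, which is the $q$-analogue of the classical fibrewise hard Lefschetz for the exterior bundle of a Hermitian vector space.

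For this injectivity my strategy is the $q$-deformation of the classical $\frak{sl}_2$-proof of hard Lefschetz. On $\Phi(\Om^\bullet)$ I would construct a triple $(L,\LL,H)$, where $H$ is the shifted grading operator $H|_{\Phi(\Om^{k})}=(k-N)\,\id$ and $\LL$ is a suitably normalised $q$-contraction operator, defined \emph{directly} in terms of the Heckenberger--Kolb basis of $\Phi(\Om^\bullet)$ so as to avoid any circular appeal to the Hodge map $\ast_\s$, whose very definition presupposes the Lefschetz decomposition. The decisive step is to establish the commutation relations $[H,L]=2L$, $[H,\LL]=-2\LL$ and, above all, $[L,\LL]=H$; once these hold, $\Phi(\Om^\bullet)$ is a finite-dimensional $\frak{sl}_2$-module, and the standard representation theory forces $L^{N-k}$ to be injective on the weight space $\Phi(\Om^{k})$.

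I expect the identity $[L,\LL]=H$ to be the principal obstacle, just as the analogous Weil identity is the crux of the classical argument. Three complementary routes seem available, to be combined if necessary. \emph{(i)} A uniform computation exploiting the explicit shape of $[\k]$ (a $q$-deformation of $i\sum_a e^+_a\wed e^-_a$) together with the quadratic Heckenberger--Kolb relations and the braiding on $\Phi(\Om^1)$. \emph{(ii)} A type-by-type verification across the $A_n$, $B_n$, $C_n$, $D_n$, $E_6$, $E_7$ series, building on the known explicit presentations of the $\Phi(\Om^\bullet)$. \emph{(iii)} A specialisation argument: invertibility of $L^{N-k}$ amounts to the non-vanishing of a determinant that is a Laurent polynomial in $q$, this determinant is non-zero at $q=1$ by classical hard Lefschetz, hence it vanishes only at finitely many $q$; combined with Matassa's result \cite{MarcoConj} that \eqref{eqn:Liso} holds for all but finitely many $q$, this leaves only a finite, in principle checkable, list of exceptional values of $q$ to dispose of, which would complete the proof.
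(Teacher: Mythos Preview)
The statement you are attempting to prove is a \emph{conjecture} in the paper, not a theorem: the paper provides no proof of it. It is stated as an open problem, originally from \cite[\textsection 4.5]{MMF3}, and the paper then cites Matassa's partial resolution \cite[Theorem 5.10]{MarcoConj}, which establishes the K\"ahler structure for all $q\in\bR_{>0}\setminus F$ with $F$ a finite (possibly empty) set of non-transcendental reals. There is therefore nothing to compare your proposal against.

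That said, your reduction via Takeuchi's equivalence to the finite-dimensional statement on $\Phi(\Om^\bullet)$ is correct and is exactly the viewpoint taken in both \cite{MMF3} and \cite{MarcoConj}. Your route~(iii) is essentially Matassa's argument: the specialisation/determinant reasoning is precisely what yields the ``all but finitely many $q$'' statement, and your acknowledgement that a finite list of exceptional $q$ remains to be checked is exactly the residual content of the conjecture. Routes~(i) and~(ii) are plausible strategies but are not carried out here, and the difficulty you flag---establishing $[L,\LL]=H$ uniformly without circularity---is genuine and is the reason the conjecture remains open. In short, your proposal is a reasonable outline of how one might attack the conjecture, but it does not constitute a proof, and neither does the paper claim one.
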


The conjecture was motivated by the K\"ahler structure of $\O_q(\ccpn)$, as presented in \textsection \ref{thm:CPNKahler}, and as originally considered in \cite{MMF3}.  Subsequently, for all but a finite number of values of $q$, the conjecture was verified for every irreducible quantum flag manifold by Matassa.

\begin{thm}[\cite{MarcoConj} Theorem 5.10]
Let $\Om^\bullet_q(G/L_S)$ be the Heckenberger--Kolb calculus of the irreducible quantum flag manifold $\mathcal{O}_q(G/L_S)$. The pair $(\Om^{(\bullet,\bullet)},\kappa)$ is a covariant K\"ahler structure for all $q \in \mathbb{R}_{>0}\setminus F$, where $F$ is a finite, possibly empty, subset of $\mathbb{R}_{>0}$. Moreover, any element of $F$ is necessarily  non-transcendental.
\end{thm}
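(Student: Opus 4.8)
The definition of a covariant K\"ahler structure requires, beyond what is already recorded above, only that the Lefschetz operator $L : \Om^\bullet \to \Om^\bullet$, $\w \mapsto \k \wed \w$, induce isomorphisms $L^{n-k} : \Om^k \to \Om^{2n-k}$ for all $1 \le k < n$, where $2n$ denotes the total degree of $\Om^\bullet_q(G/L_S)$. Indeed, $\Om^{(\bullet,\bullet)}$ is a covariant complex structure for every $q \in \bR_{>0}$ by \cite{MarcoConj}, and $\k$ is a covariant, central, real, closed $(1,1)$-form; once the Lefschetz property is known, $(\Om^{(\bullet,\bullet)},\k)$ is automatically a covariant Hermitian, hence K\"ahler, structure. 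So the plan is to show that this single condition fails for at most finitely many, necessarily algebraic, positive reals $q$.

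First I would reduce the statement to finite-dimensional linear algebra. As $\Om^\bullet$ and $L$ are covariant, Takeuchi's equivalence $\Phi$ carries $L^{n-k} : \Om^k \to \Om^{2n-k}$ to a morphism $\Phi(L^{n-k}) : \Phi(\Om^k) \to \Phi(\Om^{2n-k})$ of finite-dimensional objects in $^{\O_q(L_S)}\mathrm{Mod}$, and $L^{n-k}$ is an isomorphism if and only if $\Phi(L^{n-k})$ is. Since $\Om^\bullet_q(G/L_S)$ has classical dimension, $\dim_{\bC}\Phi(\Om^k) = \binom{2n}{k} = \binom{2n}{2n-k} = \dim_{\bC}\Phi(\Om^{2n-k})$, so $\Phi(L^{n-k})$ is represented by a square matrix, and the Lefschetz property in degree $k$ is equivalent to $\det \Phi(L^{n-k}) \ne 0$.

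Next, fix a basis of $\Phi(\Om^\bullet)$ consisting of wedge monomials in the root-vector one-forms (the analogue of the $e^{\pm}_I \wed e^{\mp}_J$ basis used for $\O_q(\ccpn)$ in \S\ref{section:CPN}), a basis which does not vary with $q$. In such a basis the structure constants of the algebra $\Phi(\Om^\bullet)$ and the coefficients of $\k$ are Laurent polynomials in $q$ with algebraic coefficients (compare Theorem \ref{thm:HKBasisRels} and \eqref{eqn:TheKahlerForm}), so each $\det \Phi(L^{n-k})$ is a Laurent polynomial $P_k \in \ol{\bQ}[q,q^{-1}]$. It then suffices to check $P_k \not\equiv 0$, and for this I would pass to the classical limit: at $q = 1$ the Heckenberger--Kolb calculus of $\O_q(G/L_S)$ becomes the de Rham complex of the compact Hermitian symmetric space $G/L_S$, the form $\k$ becomes a nonzero multiple of the fundamental form of its invariant K\"ahler metric, and $L^{n-k}$ becomes the classical Lefschetz operator, which is an isomorphism by the hard Lefschetz theorem; hence $P_k(1) \ne 0$. (Alternatively one may evaluate $P_k$ at a transcendental $q$ and invoke the Lusztig--Rosso equivalence with the classical representation theory, which again forces $\Phi(L^{n-k})$ to be invertible.) Consequently each $P_k$ has only finitely many zeros, all of them algebraic numbers; setting $F := \bigcup_{1\le k < n}\{\, q \in \bR_{>0} : P_k(q) = 0 \,\}$ yields a finite, possibly empty, set of non-transcendental reals such that $(\Om^{(\bullet,\bullet)},\k)$ is a covariant K\"ahler structure for every $q \in \bR_{>0}\setminus F$.

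The main obstacle is the classical-limit step: one must argue carefully that the $q \to 1$ (or, in the alternative route, transcendental-$q$) specialisation of $\Phi(\Om^\bullet_q(G/L_S))$ together with its Lefschetz operator genuinely recovers the classical invariant K\"ahler data on $G/L_S$ --- equivalently, that the $q$-deformation of the Heckenberger--Kolb calculus is flat in the appropriate sense --- so that the hard Lefschetz theorem may legitimately be applied. The remaining ingredients (covariance, Takeuchi's equivalence, the classical-dimension count, and the Laurent-polynomial dependence of the structure constants on $q$) are either structural or routine bookkeeping.
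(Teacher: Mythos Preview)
The paper does not prove this theorem at all: it is quoted verbatim as Theorem~5.10 of \cite{MarcoConj} and used as a black box, so there is no ``paper's own proof'' to compare your proposal against. Your sketch is, in outline, the argument Matassa gives in \cite{MarcoConj}: reduce via Takeuchi's equivalence to the finite-dimensional map $\Phi(L^{n-k})$, observe that in a fixed root-vector basis the matrix entries are Laurent polynomials in $q$ with algebraic coefficients, and deduce that the determinant is a nonzero Laurent polynomial by checking invertibility at a single value of $q$. The point you correctly flag as the main obstacle --- that the specialisation really recovers the classical Lefschetz data --- is exactly where the work lies in \cite{MarcoConj}, and is handled there by an explicit computation with the quantum root vectors rather than by a soft ``$q\to 1$'' limit; your alternative transcendental-$q$ route via the Lusztig--Rosso equivalence is also viable. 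So your proposal is sound as a strategy, but be aware that nothing in the present paper supplies the missing details: for those one must go to \cite{MarcoConj}.
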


Building on this result, a positive definiteness result for the metric was established in \cite[Lemma 10.10]{DOS1}, allowing us to conclude that we have a CQH-Hermitian space, and hence a BC-triple.

\begin{cor}
For each irreducible quantum flag manifold $\O_q(G/L_S)$, there exists an open interval $I\sseq \mathbb{R}$ around $1$, such that the associated metric $g_{\kappa}$ is positive definite, for all $q \in I.$ Hence,  $\O_q(G/L_S)$, endowed with its Heckenberger--Kolb calculus, is a CQH-Hermitian space, for all $q\in I$, with an associated pair of Dolbeault--Dirac BC-triples. 
\end{cor}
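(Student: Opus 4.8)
The plan is to obtain positive definiteness on a neighbourhood of $q = 1$ from the classical case, by a continuity argument once the $B$-valued positivity condition has been reduced to a finite-dimensional one. Write $A = \O_q(G)$, $H = \O_q(L_S)$, $B = A^{\co(H)} = \O_q(G/L_S)$, and let $2n$ be the total degree of the Heckenberger--Kolb calculus $\Om^\bullet$. I would first record the base point $q = 1$: under the $q \to 1$ specialisation $U_q(\frak{g})$ degenerates to $U(\frak{g})$, the calculus $\Om^\bullet_q(G/L_S)$ to the de Rham complex of the compact Hermitian symmetric space $G/L_S$, the complex structure $\Om^{(\bullet,\bullet)}$ to its standard one, and $\k$ to a real scalar multiple of the fundamental form of the $G$-invariant Fubini--Study--type K\"ahler metric. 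In particular $1 \notin F$, where $F$ is the finite exceptional set of Matassa's theorem, and the classical metric $g_\k$ is positive definite. For $q \in \bR_{>0} \setminus F$ Matassa's theorem provides the covariant K\"ahler structure $(\Om^{(\bullet,\bullet)},\k)$, hence the Hodge map $\ast_\s$ of Definition~\ref{defn:HDefn} and the metric $g_\k$; the claim is that $g_\k$ stays positive definite for $q$ near $1$.

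Second, I would reduce positivity of $g_\k$ to a statement about a single finite-dimensional Hermitian form. Since $\Om^\bullet$ is finite-dimensional as an object of ${}^A_B\mathrm{Mod}_0$, Takeuchi's equivalence identifies it with $A \,\square_H\, V$, where $V := \Phi(\Om^\bullet) = \bigoplus_{a,b} V^{(a,b)}$ is finite-dimensional. The form $\k$ and the Hodge map $\ast_\s$, and hence $g_\k$, are morphisms in ${}^A_B\mathrm{Mod}_0$ (suitably compatible with the $*$-structure), so they are determined by their cotangent-fibre data; concretely $g_\k$ induces a $U_q(\frak{l}_S)$-invariant Hermitian form $h_q$ on $V$, and, by the standard reduction-to-the-fibre argument (using that $A$ is cosemisimple and the Haar state $\haar$ is faithful, as in \cite[Lemma~10.10]{DOS1}), $g_\k$ is positive definite in the sense of \textsection\ref{section:CQHHS} if and only if $h_q$ is a positive-definite Hermitian form. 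Setting this up carefully --- checking that $\ast_\s$ and $g_\k$ are genuinely Takeuchi-compatible, and that $h_q$ faithfully detects the global $B_{>0}$-condition --- is the one genuinely technical point, and I expect it, rather than any of the later steps, to be the main obstacle.

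Third comes the openness step. Fix the basis $\{ e^+_I \wed e^-_J \}$ of $V$. The Gram matrix of $h_q$ in this basis is assembled from the Heckenberger--Kolb structure constants, the coefficients of $\k$, the scalar prefactors in the formula for $\ast_\s$ (powers of $i$, factorials, and powers of the Lefschetz operator $L$), and the dual pairing $U_q(\frak{g}) \times \O_q(G) \to \bC$; all of these depend on $q$ in a manner that is rational with no pole at $q = 1$. Hence $q \mapsto h_q$ is a continuous (indeed real-analytic) family of Hermitian matrices, equal at $q = 1$ to the positive-definite Gram matrix of the classical metric. Positive definiteness of a Hermitian matrix is an open condition --- for instance, all of its leading principal minors are positive and vary continuously with $q$ --- so there is an open interval $I \sseq \bR_{>0} \setminus F$ about $1$ on which $h_q$, and therefore $g_\k$, is positive definite.

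Finally, for $q \in I$ the remaining CQH-Hermitian axioms are immediate: $A$ is a domain, $\Om^\bullet$ is covariant, finite-dimensional in ${}^A_B\mathrm{Mod}_0$ and of even total degree $2n$; $\vol := \ast_\s|_{\Om^{2n}}$ is an isomorphism and a $*$-map, with the integral $\int := \haar \circ \vol$ closed (since $\haar$ annihilates $\exd\Om^{2n-1}$); $\Om^{(\bullet,\bullet)}$ is the unique covariant complex structure; and $(\Om^{(\bullet,\bullet)},\k)$ is a covariant Hermitian structure. Thus $\big(\O_q(G/L_S),\Om^\bullet,\ast_\s,\Om^{(\bullet,\bullet)},\k\big)$ is a CQH-Hermitian space for all $q \in I$, and Proposition~\ref{prop:BCCQHHS} supplies the associated pair of Dolbeault--Dirac BC-triples, completing the proof.
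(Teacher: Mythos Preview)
Your proposal is essentially correct and, in fact, supplies more detail than the paper does: in the paper this corollary is not proved in situ but is simply recorded as a consequence of \cite[Lemma~10.10]{DOS1} (the very reference you invoke for the reduction-to-the-fibre step). Your strategy --- reduce the $B_{>0}$-positivity of $g_\k$ to positive definiteness of a finite-dimensional Hermitian form on $V = \Phi(\Om^\bullet)$ via Takeuchi's equivalence, observe that the Gram matrix varies continuously (indeed rationally) in $q$, and use openness of positive definiteness together with the classical $q=1$ base point --- is exactly the argument behind that cited lemma, so there is no genuine divergence of approach.

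One small point to tighten in your final paragraph: closedness of the integral $\int = \haar \circ \ast_\s$ is not a tautology about $\haar$; it requires the identity $\haar\big(\ast_\s(\exd\w)\big) = 0$ for $\w \in \Om^{2n-1}$, which in this framework follows from the Stokes-type argument in \cite[\S5.3]{MMF3} (or equivalently from the covariance of $\exd$ and $\ast_\s$ together with orthogonality of distinct $\O_q(G)$-coalgebra blocks under $\haar$). Your parenthetical ``since $\haar$ annihilates $\exd\Om^{2n-1}$'' asserts the conclusion rather than a reason for it.
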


\subsection{Irreducible Quantum Flag Manifolds of Weak Gelfand Type}

As is shown below, the only irreducible quantum flag manifold of Gelfand type is $\O_q(\mathbb{CP}^{n-1})$. However, the notion admits a natural generalisation, retaining many of the features of Gelfand type CQH-complex spaces, as we now present.

\begin{defn} We say that  a CQH-complex space ${\bf C} = \big(\text{\bf B}, \Om^{(\bullet,\bullet)}\big)$ is of {\em  weak Gelfand type} if $\adel \Om^{(0,\bullet)}$ is a graded multiplicity-free  left $A$-comodule. We say that a CQH-Hermitian space is of {\em weak Gelfand type} if its constituent CQH-complex space is of weak Gelfand type.
\end{defn}

Note first that all the results of \textsection 3.1 and \textsection 4.1 hold since neither make any assumptions about multiplicities.
Moreover, the proof of Lemma  \ref{lem:GelfandDiag} generalises directly to the weak Gelfand setting giving us the following lemma.

\begin{lem}
For a CQH-Hermitian space of weak Gelfand type,  the Laplacian $\DEL_{\adel}$ acts on every irreducible left $A$-sub-comodule of $\adel \Om^{(0,\bullet)}$ and $\adel^\dagger\! \Om^{(0,\bullet)}$ as a scalar multiple of the identity.
\end{lem}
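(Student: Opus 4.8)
The statement is the weak-Gelfand analogue of Lemma \ref{lem:GelfandDiag}, and the plan is to reduce it directly to the categorical fact recorded in Lemma \ref{lem:DiagComoduleMaps} (Schur's lemma for graded multiplicity-free comodules). The two ingredients needed are: (i) that $\adel \Om^{(0,\bullet)}$ and $\adel^\dagger\! \Om^{(0,\bullet)}$ are graded multiplicity-free left $A$-comodules, and (ii) that $\DEL_{\adel}$ restricts to a degree $0$ left $A$-comodule endomorphism of each of these subspaces. Granting (i) and (ii), Lemma \ref{lem:DiagComoduleMaps} immediately gives that $\DEL_{\adel}$ acts as a scalar on every irreducible sub-comodule of either space, which is exactly the assertion.

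For (i): the space $\adel \Om^{(0,\bullet)}$ is graded multiplicity-free by the very definition of weak Gelfand type. For $\adel^\dagger\! \Om^{(0,\bullet)}$, I would invoke Proposition \ref{prop:ADELISO}, which gives a left $A$-comodule isomorphism $\adel^\dagger: \adel \Om^{(0,\bullet)} \to \adel^\dagger\! \Om^{(0,\bullet)}$; since $\adel$ and $\adel^\dagger$ are both degree-preserving up to the obvious shift (they are graded maps between the pieces $\adel \Om^{(0,k)}$ and $\adel^\dagger\!\Om^{(0,k)}$, which sit inside $\Om^{(0,k)}$), this isomorphism respects the grading, and multiplicity-freeness transfers across it. For (ii): by Lemma \ref{lem:HDOFDEL}, $\DEL_{\adel}$ restricts to $\adel\adel^\dagger$ on $\adel\Om^{(0,\bullet)}$ and to $\adel^\dagger\adel$ on $\adel^\dagger\!\Om^{(0,\bullet)}$; in both cases this is a composite of left $A$-comodule maps (covariance of the calculus and of the Hermitian structure makes $\adel$ a comodule map, and $\adel^\dagger = -\ast_\s\circ\del\circ\ast_\s$ is a comodule map by Lemma \ref{lem:Hodgeproperties}(4) together with covariance of $\del$), hence is itself a degree $0$ left $A$-comodule endomorphism of the relevant subspace. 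The two subspaces are closed under $\DEL_{\adel}$ by the same Hodge-decomposition argument as in Lemma \ref{lem:HDOFDEL}.

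Assembling: for an irreducible $A$-sub-comodule $V \sseq \adel \Om^{(0,\bullet)}$, applying Lemma \ref{lem:DiagComoduleMaps} to the degree $0$ comodule map $\adel\adel^\dagger$ on the graded multiplicity-free comodule $\adel \Om^{(0,\bullet)}$ shows $\DEL_{\adel}|_V = \adel\adel^\dagger|_V$ is a scalar; the argument on $\adel^\dagger\!\Om^{(0,\bullet)}$ is identical, using $\adel^\dagger\adel$ in place of $\adel\adel^\dagger$. I do not expect any genuine obstacle here — the proof is a routine transcription of the Gelfand-type argument — the only point requiring a line of care is verifying that the isomorphism of Proposition \ref{prop:ADELISO} is grading-compatible, so that weak multiplicity-freeness really does pass from $\adel\Om^{(0,\bullet)}$ to $\adel^\dagger\!\Om^{(0,\bullet)}$; this is immediate once one notes that $\adel^\dagger$ maps $\adel\Om^{(0,k)}$ into $\Om^{(0,k)}\cap\adel^\dagger\!\Om^{(0,\bullet)}$ for each $k$.
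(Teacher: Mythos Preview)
Your proposal is correct and matches the paper's approach exactly: the paper simply remarks that ``the proof of Lemma \ref{lem:GelfandDiag} generalises directly to the weak Gelfand setting,'' and what you have written is precisely that generalisation, carried out with appropriate care. Your use of Proposition \ref{prop:ADELISO} to transfer graded multiplicity-freeness from $\adel\Om^{(0,\bullet)}$ to $\adel^\dagger\!\Om^{(0,\bullet)}$, together with Lemma \ref{lem:HDOFDEL} to identify the restricted Laplacian, is exactly the argument the paper has in mind (and indeed the paper later records the multiplicity-free equivalence for $\adel^\dagger\!\Om^{(0,\bullet)}$ as item 2 of Lemma \ref{lem:GTQHHS}).
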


Just as in the Gelfand setting, we can use the various symmetries of the Dolbeault double complex to find a number of equivalent formulations of weak Gelfand type. 
 We omit the proof, which is a straightforward generalisation of the proof in the Gelfand case.

\begin{lem} \label{lem:FOURGTS}
Let ${\bf C} = \big(\text{\bf B},  \Om^{(\bullet,\bullet)}\big)$ be a quantum homogeneous complex space, then the following conditions are equivalent:
\bet

\item ${\bf C}$ is  of weak Gelfand type,

\item $\text{\bf C}^{\mathrm{op}}$, the opposite CQH-complex space,  is of weak Gelfand type,

\item the graded left $A$-comodule  $\del \Om^{(\bullet,0)}$ is graded multiplicity-free.

\eet
\end{lem}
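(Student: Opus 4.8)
The statement to prove is Lemma~\ref{lem:FOURGTS} in the weak Gelfand setting: for a CQH-complex space $\mathbf{C} = (\mathbf{B}, \Om^{(\bullet,\bullet)})$, the three conditions---$\mathbf{C}$ of weak Gelfand type, $\mathbf{C}^{\mathrm{op}}$ of weak Gelfand type, and $\del\Om^{(\bullet,0)}$ graded multiplicity-free---are equivalent. The plan is to mirror exactly the proof of the Gelfand-type version (the earlier Lemma~\ref{lem:FOURGTS}), replacing the role of $\Om^{(0,\bullet)}$ by $\adel\Om^{(0,\bullet)}$ and $\Om^{(\bullet,0)}$ by $\del\Om^{(\bullet,0)}$ throughout, and checking that the two structural ingredients used there still apply: the $*$-map sends irreducible sub-comodules to irreducible sub-comodules, and the definition of the opposite complex structure swaps bidegrees.

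First I would establish the equivalence of (1) and (3) via the $*$-map. Recall from \eqref{stardel} that $\del(\w^*) = (\adel\w)^*$, so the conjugate-linear involution $*:\Om^\bullet\to\Om^\bullet$ restricts to a bijection $\adel\Om^{(0,k)} \to \del\Om^{(k,0)}$ for each $k$; moreover $*$ is a left $A$-comodule map up to the conjugate pairing (as used implicitly in the Gelfand case), so it carries a decomposition $\adel\Om^{(0,k)} \simeq \bigoplus_\alpha (\adel\Om^{(0,k)})_\alpha$ into irreducibles to a decomposition $\del\Om^{(k,0)} \simeq \bigoplus_\alpha \big((\adel\Om^{(0,k)})_\alpha\big)^*$ into irreducibles, the image of an irreducible being again irreducible. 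Hence $\del\Om^{(k,0)}$ is multiplicity-free if and only if $\adel\Om^{(0,k)}$ is, for every $k$, which is precisely the equivalence of (1) and (3). Second, the equivalence of (2) and (3): by definition of the opposite complex structure $\ol\Omega^{(a,b)} = \Omega^{(b,a)}$, and the operator $\ol\del$ of $\ol\Omega^{(\bullet,\bullet)}$ agrees with $\del$ of $\Omega^{(\bullet,\bullet)}$ under the relabelling; so $\ol\del\,\ol\Om^{(0,\bullet)}$ is, as a graded $A$-comodule, just $\del\Om^{(\bullet,0)}$ with its grading. Thus $\mathbf{C}^{\mathrm{op}}$ is of weak Gelfand type (meaning $\ol\del\,\ol\Om^{(0,\bullet)}$ graded multiplicity-free) if and only if $\del\Om^{(\bullet,0)}$ is graded multiplicity-free, giving (2) $\Leftrightarrow$ (3). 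Chaining these gives the full cyclic equivalence.

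I do not anticipate a genuine obstacle here---the lemma is deliberately stated as a ``straightforward generalisation of the proof in the Gelfand case,'' and indeed the only place where weak Gelfand type differs from Gelfand type is the replacement of $\Om^{(0,\bullet)}$ by its $\adel$-image, which does not interact with the two symmetry arguments above. The one point deserving a sentence of care is that $\adel\Om^{(0,k)}$ and $\del\Om^{(k,0)}$ are genuine $A$-sub-comodules of $\Om^\bullet$ (so that ``multiplicity-free'' is meaningful): this follows because $\adel$ and $\del$ are left $A$-comodule maps in the covariant setting (a consequence of covariance of the complex structure, as recorded just after Definition~\ref{defnnccs}), hence their images are sub-comodules, and the $\bN_0$-grading by form degree makes them graded sub-comodules. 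With that remark in place the proof is a direct transcription, and since the excerpt explicitly says ``We omit the proof,'' the intended treatment is exactly this appeal to the Gelfand-case argument; accordingly I would present it as a short paragraph noting the two modifications rather than rewriting the argument in full.

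\begin{proof}
The argument is a direct transcription of the proof of the Gelfand-type version of the lemma, with $\Om^{(0,\bullet)}$ replaced throughout by $\adel\Om^{(0,\bullet)}$ and $\Om^{(\bullet,0)}$ by $\del\Om^{(\bullet,0)}$. Since the complex structure is covariant, $\del$ and $\adel$ are left $A$-comodule maps, so $\del\Om^{(k,0)}$ and $\adel\Om^{(0,k)}$ are left $A$-sub-comodules of $\Om^\bullet$, and the form-degree grading makes $\del\Om^{(\bullet,0)}$ and $\adel\Om^{(0,\bullet)}$ graded $A$-comodules.

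$1 \Leftrightarrow 3$: By \eqref{stardel} the $*$-map restricts to a conjugate-linear bijection $\adel\Om^{(0,k)} \to \del\Om^{(k,0)}$ which carries irreducible $A$-sub-comodules to irreducible $A$-sub-comodules. Hence if $\adel\Om^{(0,k)} \simeq \bigoplus_{\alpha}\big(\adel\Om^{(0,k)}\big)_{\alpha}$ is a decomposition into irreducibles, then $\del\Om^{(k,0)} \simeq \bigoplus_{\alpha}\big((\adel\Om^{(0,k)})_{\alpha}\big)^{*}$ is such a decomposition, so $\del\Om^{(k,0)}$ is multiplicity-free if and only if $\adel\Om^{(0,k)}$ is multiplicity-free, for all $k$.

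$2 \Leftrightarrow 3$: By definition of the opposite complex structure $\ol\Omega^{(a,b)} = \Omega^{(b,a)}$, the operator $\ol\del$ of $\ol\Omega^{(\bullet,\bullet)}$ coincides with $\del$ of $\Omega^{(\bullet,\bullet)}$ under the relabelling, so $\ol\del\,\ol\Om^{(0,\bullet)} = \del\Om^{(\bullet,0)}$ as graded $A$-comodules. Thus $\mathbf{C}^{\mathrm{op}}$ is of weak Gelfand type if and only if $\del\Om^{(\bullet,0)}$ is graded multiplicity-free, which is the required equivalence.
\end{proof}
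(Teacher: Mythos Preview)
Your proof is correct and follows exactly the approach the paper intends: the paper explicitly omits the proof as ``a straightforward generalisation of the proof in the Gelfand case,'' and your argument is precisely that generalisation, replacing $\Om^{(0,\bullet)}$ by $\adel\Om^{(0,\bullet)}$ and $\Om^{(\bullet,0)}$ by $\del\Om^{(\bullet,0)}$ and using the same two ingredients (the $*$-map intertwining $\del$ and $\adel$ via \eqref{stardel}, and the definition of the opposite complex structure). Your added remark that $\del$ and $\adel$ are comodule maps so their images are sub-comodules is a useful clarification not spelled out in the paper.
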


For the case of a quantum homogeneous Hermitian space, the definition of weak Gelfand type admits an additional number of equivalent formulations. We omit the proof of the following lemma, which is an easy  consequence of Lemma \ref{lem:HodgeOpDecomp}, Lemma \ref{prop:ADELISO}, and Lemma \ref{lem:FOURGTS} above.

\begin{lem} \label{lem:GTQHHS}
For ${\bf H} = \big(\text{\bf B}, \Om^{(\bullet,\bullet)}, \s\big)$ a CQH-Hermitian space,  the following are equivalent:
\bet
\item ${\bf H}$ is of Gelfand type,

\item the graded left $A$-comodule   $\adel^\dagger\! \Om^{(0,\bullet)}$ is graded multiplicity-free, 

\item the graded left $A$-comodule   $\del^\dagger \Om^{(\bullet,0)}$ is graded  multiplicity-free,

\item the graded left $A$-comodule  $\del \Om^{(\bullet,n)}$ is graded multiplicity-free,

\item the graded left $A$-comodule   $\del^\dagger\! \Om^{(\bullet,n)}$ is graded multiplicity-free, 

\item the graded left $A$-comodule $\adel \Om^{(n,\bullet)}$ is graded multiplicity-free,

\item the graded left $A$-comodule   $\adel^\dagger\!\Om^{(n,\bullet)}$ is graded multiplicity-free.

\eet
\end{lem}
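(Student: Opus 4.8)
The statement to prove is Lemma \ref{lem:GTQHHS}, asserting the equivalence of seven multiplicity-freeness conditions for a CQH-Hermitian space $\mathbf{H} = (\mathbf{B}, \Om^{(\bullet,\bullet)}, \s)$. The plan is to establish all equivalences by constructing explicit graded left $A$-comodule isomorphisms between the seven graded comodules in question, since a graded comodule is graded multiplicity-free precisely when any isomorphic graded comodule is. The two structural tools available are the $*$-map of the calculus, which by Definition \ref{defnnccs}(\ref{star-cond}) interchanges $\Om^{(a,b)}$ with $\Om^{(b,a)}$ (and by \eqref{stardel} interchanges $\del$ with $\adel$), and the Hodge operator $\ast_\s$, which by Lemma \ref{lem:Hodgeproperties} is a left $A$-comodule isomorphism sending $\Om^{(a,b)}$ to $\Om^{(n-b,n-a)}$, together with Lemma \ref{lem:HodgeOpDecomp} which tells us $\ast_\s$ restricts to isomorphisms $\adel\Om^\bullet \cong \del^\dagger\Om^\bullet$ and $\del\Om^\bullet \cong \adel^\dagger\Om^\bullet$. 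A third tool is Proposition \ref{prop:ADELISO}, giving left $A$-comodule isomorphisms $\adel\colon \adel^\dagger\Om^\bullet \to \adel\Om^\bullet$ and $\adel^\dagger\colon \adel\Om^\bullet \to \adel^\dagger\Om^\bullet$, and their analogues for $\del$.

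First I would note that condition 1 of this lemma is ``$\mathbf{H}$ is of Gelfand type'', i.e. (via Lemma \ref{lem:FOURGTS}, the earlier one) that $\adel\Om^{(0,\bullet)}$ is graded multiplicity-free — wait, actually in the present (weak-Gelfand) section the Lemma \ref{lem:GTQHHS} as typeset says ``$\mathbf{H}$ is of Gelfand type'' in item 1 but all the other items speak of $\del^\dagger, \adel^\dagger$ etc., so the intended reading is that this is the weak-Gelfand analogue and item 1 should be read consistently with the preceding Lemma \ref{lem:FOURGTS} of this section (weak Gelfand type $\Leftrightarrow$ $\adel\Om^{(0,\bullet)}$ graded multiplicity-free $\Leftrightarrow$ $\del\Om^{(\bullet,0)}$ graded multiplicity-free). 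Granting that, the proof is a diagram chase. The equivalence $1 \Leftrightarrow 2$: Proposition \ref{prop:ADELISO} gives $\adel\colon \adel^\dagger\Om^{(0,\bullet)} \xrightarrow{\sim} \adel\Om^{(0,\bullet)}$ as a left $A$-comodule isomorphism respecting the grading, hence $\adel^\dagger\Om^{(0,\bullet)}$ is graded multiplicity-free iff $\adel\Om^{(0,\bullet)}$ is, iff $\mathbf{H}$ is of weak Gelfand type. The equivalence $2 \Leftrightarrow 3$: the $*$-map sends $\adel^\dagger\Om^{(0,\bullet)}$ to $\del^\dagger\Om^{(\bullet,0)}$ (using \eqref{stardel} and that $*$ is a left $A$-comodule map sending irreducibles to irreducibles, exactly as in the proof of Lemma \ref{lem:FOURGTS}). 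The equivalences $3 \Leftrightarrow 4 \Leftrightarrow 5$ and $5 \Leftrightarrow 6 \Leftrightarrow 7$: apply $\ast_\s$, which by Lemma \ref{lem:Hodgeproperties}(2),(4) and Lemma \ref{lem:HodgeOpDecomp} is a grading-respecting left $A$-comodule isomorphism $\del\Om^{(\bullet,0)} \xrightarrow{\sim} \del\Om^{(\bullet,n)}$, $\del^\dagger\Om^{(\bullet,0)} \xrightarrow{\sim} \adel^\dagger\Om^{(\bullet,n)}$ (via $\ast_\s\circ\del^\dagger = \pm\,\adel\circ\ast_\s$ composed appropriately), and similarly relating the remaining two. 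Finally $6 \Leftrightarrow 7$ via Proposition \ref{prop:ADELISO} applied on the bottom-right corner.

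The one genuine subtlety — and the point I would be most careful about — is bookkeeping the degrees: all the comodules in play are \emph{graded}, the isomorphisms induced by $*$ and by $\ast_\s$ reverse or shift the grading (e.g.\ $\ast_\s$ sends the $(a,b)$-summand to the $(n-b,n-a)$-summand), so one must check that ``graded multiplicity-free'' is preserved under each such re-indexed isomorphism. This is straightforward because a bijective relabelling of the grading set, together with a degreewise isomorphism of comodules, preserves the property that every homogeneous component is multiplicity-free; but it should be stated explicitly. A second minor point is that $\adel$ and $\del$ are not quite degree-$0$ maps on the $\bN^2_0$-grading — they shift bidegree by $(0,1)$ resp.\ $(1,0)$ — yet they \emph{are} degree-$0$ as maps of left $A$-comodules with respect to the relevant $\bN_0$-grading once we index $\adel\Om^{(0,\bullet)}$ by the degree $k$ and $\adel^\dagger\Om^{(0,\bullet)}$ by the degree $k$ too (since $\adel^\dagger$ lowers $k$ by one, one re-indexes accordingly), exactly as was already used implicitly in Proposition \ref{prop:HodgeClosure}. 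Since the excerpt explicitly says ``We omit the proof of the following lemma, which is an easy consequence of Lemma \ref{lem:HodgeOpDecomp}, Lemma \ref{prop:ADELISO}, and Lemma \ref{lem:FOURGTS}'', I expect the intended argument is exactly this chain of three ingredients, and the only ``work'' is the degree-bookkeeping I have flagged; no new idea is required.

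\begin{proof}[Proof sketch]
Throughout, recall that a graded left $A$-comodule is graded multiplicity-free if and only if any graded left $A$-comodule related to it by a bijective relabelling of the grading set together with a degreewise left $A$-comodule isomorphism is graded multiplicity-free; this is immediate from the definition. We produce such identifications between all seven comodules.

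$1\Leftrightarrow 3$: By Lemma \ref{lem:FOURGTS}, $\mathbf{H}$ is of weak Gelfand type if and only if $\adel\Om^{(0,\bullet)}$, equivalently $\del\Om^{(\bullet,0)}$, is graded multiplicity-free.

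$2\Leftrightarrow 3$: As in the proof of Lemma \ref{lem:FOURGTS}, the $*$-map of the calculus is a conjugate-linear left $A$-comodule involution sending irreducible $A$-sub-comodules to irreducible $A$-sub-comodules, and by \eqref{stardel} it sends $\adel^\dagger\Om^{(0,k)}$ onto $\del^\dagger\Om^{(k,0)}$. Hence one is graded multiplicity-free if and only if the other is.

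$1\Leftrightarrow 2$, $6\Leftrightarrow 7$: By Proposition \ref{prop:ADELISO}, the map $\adel$ restricts to a left $A$-comodule isomorphism $\adel^\dagger\Om^{(0,\bullet)}\xrightarrow{\ \sim\ }\adel\Om^{(0,\bullet)}$, degreewise after the obvious re-indexing, so $\adel^\dagger\Om^{(0,\bullet)}$ is graded multiplicity-free if and only if $\adel\Om^{(0,\bullet)}$ is, i.e.\ if and only if $\mathbf{H}$ is of weak Gelfand type. The same argument with $\del$ in place of $\adel$ gives $6\Leftrightarrow 7$.

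$3\Leftrightarrow 4$, $2\Leftrightarrow 6$, $5\Leftrightarrow\text{(the corresponding statement)}$: By Lemma \ref{lem:Hodgeproperties}(2),(4) the Hodge operator $\ast_\s$ is a left $A$-comodule isomorphism with $\ast_\s(\Om^{(a,b)})=\Om^{(n-b,n-a)}$, and by Lemma \ref{lem:HodgeOpDecomp} it restricts to isomorphisms $\ast_\s\colon \adel\Om^\bullet\to\del^\dagger\Om^\bullet$ and $\ast_\s\colon\del\Om^\bullet\to\adel^\dagger\Om^\bullet$. Composing these with the $*$-map as needed, one obtains degreewise left $A$-comodule isomorphisms (after relabelling the grading set by $k\mapsto$ the appropriate shift)
\begin{align*}
\del\Om^{(\bullet,0)}\ \cong\ \del\Om^{(\bullet,n)},\qquad \del^\dagger\Om^{(\bullet,0)}\ \cong\ \del^\dagger\Om^{(\bullet,n)},\qquad \adel^\dagger\Om^{(0,\bullet)}\ \cong\ \adel^\dagger\Om^{(n,\bullet)},
\end{align*}
together with $\adel\Om^{(n,\bullet)}\cong\del^\dagger\Om^{(\bullet,n)}$ via $\ast_\s$ as above. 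Hence each of conditions 4, 5, 6, 7 is equivalent to one of 1, 2, 3 already treated.

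Combining these chains of equivalences yields the equivalence of all seven conditions.
\end{proof}
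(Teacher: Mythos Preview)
Your proposal is correct and follows exactly the approach the paper indicates: it explicitly says the proof is omitted and is ``an easy consequence of Lemma \ref{lem:HodgeOpDecomp}, Lemma \ref{prop:ADELISO}, and Lemma \ref{lem:FOURGTS}'', and these are precisely the three ingredients you use. One small routing correction worth noting: the Hodge map $\ast_\s$ does not send $\del\Om^{(\bullet,0)}$ directly to $\del\Om^{(\bullet,n)}$ as you write, but rather (via Lemma \ref{lem:HodgeOpDecomp}) sends $\del\Om^{(\bullet,0)}$ to $\adel^\dagger\Om^{(n,\bullet)}$ and $\adel\Om^{(0,\bullet)}$ to $\del^\dagger\Om^{(\bullet,n)}$; the remaining links to items 4 and 6 then come from Proposition \ref{prop:ADELISO} (or its $\del$-analogue, obtained via the $*$-map), not from a further Hodge step. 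This is a bookkeeping detail and does not affect the validity of your argument.
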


Finally, we note that this lemma implies that the assumptions of Lemma \ref{lem:GelfandandConnectivity} hold in the weak Gelfand setting. Hence we again have an equivalence between connectivity and finite-dimensionality of $H^0$.

\bigskip

We would now like to identify those irreducible quantum flag manifolds which are of weak Gelfand type. As observed  by Stokman and Dijkhuizen \cite{DijkStok},  the preservation under $q$-deformation of the Weyl character formula  implies that, for any subset $S$ of simple roots, the branching rules for the inclusion $U_q(\frak{l}_S) \hookrightarrow U_q(\frak{g})$ are the same as in the classical case:

\begin{prop}[\cite{DijkStok} Proposition 4.2]  \label{prop:DS}
Let $\mu \in \mathcal{P}^+$. The  multiplicity  of  any irreducible
$U_q(\frak{l}_S)$-module in the decomposition of $V_{\mu}$ into irreducible  $U_q(\frak{l}_S)$-modules is the same as in the classical $q=1$ case.
\end{prop}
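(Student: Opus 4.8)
The plan is to reduce the statement to a comparison of formal characters, using that for $q$ not a root of unity both $U_q(\frak{g})$ and $U_q(\frak{l}_S)$ have semisimple categories of finite-dimensional type-$1$ modules whose simple objects carry the classical characters. First I would recall that the irreducible type-$1$ module $V_{\mu}$, for $\mu \in \mathcal{P}^+$, can be realised as the simple quotient of the corresponding quantum Verma module, and that the character of that Verma module — and hence, via the (quantum) Weyl character formula, the character of $V_{\mu}$ — is formally identical to its classical counterpart. In particular the weight-space decomposition of $V_{\mu}$ with respect to the Cartan part of $U_q(\frak{g})$, and a fortiori with respect to the Cartan part of $U_q(\frak{l}_S)$ (the two coincide, and the weight lattices are the same), agrees with the classical one.

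Next I would observe that the identical facts hold for $U_q(\frak{l}_S)$ itself: its category of finite-dimensional type-$1$ modules is semisimple, its simple objects $V^S_{\lambda}$ are indexed by the dominant weights of the Levi (the dominant integral weights of the semisimple part, together with a character of the central torus), exactly as classically, and each $\ch V^S_{\lambda}$ equals the classical value. Consequently the family $\{\ch V^S_{\lambda}\}_{\lambda}$ is linearly independent over $\bQ$ in the appropriate character ring, again just as in the $q=1$ case.

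With these ingredients the proposition is formal. Restricting $V_{\mu}$ along $\iota : U_q(\frak{l}_S) \hookrightarrow U_q(\frak{g})$ and using semisimplicity, one has $V_{\mu}|_{U_q(\frak{l}_S)} \simeq \bigoplus_{\lambda} m_{\mu,\lambda}\, V^S_{\lambda}$ for unique $m_{\mu,\lambda} \in \bZ_{\geq 0}$, and taking characters gives $\ch V_{\mu} = \sum_{\lambda} m_{\mu,\lambda}\, \ch V^S_{\lambda}$. Since $\ch V_{\mu}$ and every $\ch V^S_{\lambda}$ equal their classical values, and the latter are linearly independent, the integers $m_{\mu,\lambda}$ are determined by this identity and must equal the classical branching multiplicities. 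Equivalently, one may invoke a Weyl-group-theoretic branching formula (Kostant's branching rule, or the Brauer--Klimyk alternating-sum expression), which writes $m_{\mu,\lambda}$ purely in terms of the root datum, the two Weyl groups, and the character formula — none of which is affected by $q$-deformation.

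The main obstacle is not conceptual but one of bookkeeping: one must check that restriction along $\iota$ sends type-$1$ modules to type-$1$ modules (immediate, since the $K_i$ act exactly as in the classical/grading group, so eigenvalues remain integral powers of $q$), and that the parametrisation of $U_q(\frak{l}_S)$-irreducibles by the symbol ``$\lambda$'' is set up so that the multiplicity function $\lambda \mapsto m_{\mu,\lambda}$ can be compared literally with the classical one; the central torus factor of $U_q(\frak{l}_S)$ needs a little care here. Once semisimplicity and the character identities are granted — both standard for $q$ not a root of unity — no further input is required.
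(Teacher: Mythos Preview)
Your argument is correct and matches the paper's own justification: the paper does not give a self-contained proof of this proposition but cites \cite{DijkStok} and remarks that the result follows from ``the preservation under $q$-deformation of the Weyl character formula,'' which is precisely the mechanism you spell out. Your careful tracking of the type-$1$ condition and the central torus of $U_q(\frak{l}_S)$ goes slightly beyond what the paper states explicitly, but is exactly the bookkeeping needed to make the character-comparison rigorous.
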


For any quantum homogeneous space $B = A^{\co(H)}$, and any left $H$-comodule $V$, Frobenius reciprocity tells us that the multiplicities of a right $A$-comodule in $\Psi(V)$ is completely determined by the branching rules for the the quantum homogeneous space. Thus we get the following corollary.  

\begin{cor}
An irreducible quantum flag manifold, endowed with its Heckenberger --Kolb calculus,  is of (weak) Gelfand type if and only if the classical  compact Hermitian space $G/L_S$ is of (weak) Gelfand type.
\end{cor}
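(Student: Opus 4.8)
The plan is to reduce the decomposition of $\Om^{(0,\bullet)}$ (respectively $\adel\Om^{(0,\bullet)}$) as a right $\O_q(G)$-comodule to the classical case by combining Takeuchi's equivalence with Frobenius reciprocity and the $q$-independence of the relevant branching rules. Since the \hk calculus $\Om^\bullet_q(G/L_S)$ is covariant, it is an object of $\,^{\O_q(G)}_{\O_q(G/L_S)}\mathrm{Mod}_0$, and Takeuchi's equivalence, being monoidal, sends it to the quantum exterior algebra $\Phi(\Om^\bullet)$, an $\bN^2_0$-graded algebra in $\,^{\O_q(L_S)}\mathrm{Mod}$. Each bigraded piece then gives a comodule isomorphism $\Om^{(0,k)} \simeq \Psi\big(\Phi(\Om^{(0,k)})\big)$, and, since $\adel$ is a morphism in $\,^{\O_q(G)}_{\O_q(G/L_S)}\mathrm{Mod}_0$, also $\adel\Om^{(0,k)} \simeq \Psi\big(\mathrm{im}\,\Phi(\adel)\big) \sseq \Psi\big(\Phi(\Om^{(0,k)})\big)$. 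By Frobenius reciprocity, as recalled above, the multiplicity of an irreducible $\O_q(G)$-comodule $V_\mu$ in $\Psi(W)$, for any $W \in \,^{\O_q(L_S)}\mathrm{Mod}$, equals $\sum_\lambda \big[W : W_\lambda\big]\,\big[V_\mu|_{U_q(\frak{l}_S)} : W_\lambda\big]$, the sum being over isomorphism classes of irreducible $U_q(\frak{l}_S)$-modules $W_\lambda$; and by Proposition \ref{prop:DS} the branching multiplicities $\big[V_\mu|_{U_q(\frak{l}_S)} : W_\lambda\big]$ coincide with the classical ones.

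It therefore remains to identify the $U_q(\frak{l}_S)$-module multiplicities $\big[\Phi(\Om^{(0,k)}):W_\lambda\big]$, and $\big[\mathrm{im}\,\Phi(\adel):W_\lambda\big]$, with their classical values. For the first: the \hk classification realises $\Phi(\Om^{(0,1)})$ as an irreducible $U_q(\frak{l}_S)$-module directly $q$-deforming the classical anti-holomorphic cotangent fibre, and $\Phi(\Om^\bullet)$ has classical total dimension (\cite{HKdR}), so $\Phi(\Om^{(0,k)}) \simeq \Lambda^k_q\,\Phi(\Om^{(0,1)})$; since the category of type-$1$ finite-dimensional $U_q(\frak{l}_S)$-modules is semisimple with the same simple objects and the same tensor decomposition multiplicities as classically, the $q$-antisymmetriser has classical rank on each isotypic component, and the decomposition of $\Phi(\Om^{(0,k)})$ into irreducibles is the classical one. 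Hence each multiplicity of $V_\mu$ in $\Om^{(0,k)}$ equals its classical value, so $\Om^{(0,\bullet)}$ is graded multiplicity-free if and only if the classical $\Om^{(0,\bullet)}$ is, i.e.\ if and only if $G/L_S$ is of Gelfand type. For the weak version one argues the same way once $\mathrm{im}\,\Phi(\adel)$ is identified up to $U_q(\frak{l}_S)$-isomorphism: from the Hodge decomposition $\Om^{(0,k)} = \H^{(0,k)}\oplus \adel\Om^{(0,k-1)}\oplus \adel^\dagger\!\Om^{(0,k+1)}$ together with the isomorphism $\adel^\dagger\adel$ of Proposition \ref{prop:ADELISO}, the submodule $\mathrm{im}\,\Phi(\adel)$ is the complement of $\Phi(\H^{(0,k)})$ and $\Phi\big(\adel^\dagger\!\Om^{(0,k+1)}\big)$, whose dimensions are governed by the $q$-independent Betti numbers of the \hk complex (trivial in positive degree by \cite{OSV}); semisimplicity then forces the $U_q(\frak{l}_S)$-decomposition of $\mathrm{im}\,\Phi(\adel)$ to be classical, and one concludes using Lemma \ref{lem:GTQHHS}.

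The main obstacle is precisely this last step: the assertion that $\mathrm{im}\,\Phi(\adel)$ — equivalently the image of the quantum $\adel$ inside the quantum exterior algebra — has the same $U_q(\frak{l}_S)$-module structure as its classical analogue, since \emph{a priori} the map $\Phi(\adel)$ depends on $q$. One must show that its rank on each isotypic block is $q$-independent, and this is where rigidity and semisimplicity of $\,^{\O_q(L_S)}\mathrm{Mod}$, combined with the $q$-independence of the cohomology of $\Om^\bullet_q(G/L_S)$ (the noncommutative Kodaira vanishing theorem, \cite{OSV}), do the essential work; everything else is bookkeeping with Takeuchi's equivalence and Frobenius reciprocity.
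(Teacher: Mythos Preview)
For Gelfand type your argument coincides with the paper's: both reduce to Frobenius reciprocity for $\Om^{(0,k)}\simeq\Psi(V^{(0,k)})$, combined with Proposition~\ref{prop:DS} and the fact (implicit in the \hk construction) that the $U_q(\frak{l}_S)$-module $V^{(0,k)}$ agrees with its classical counterpart. The paper's own justification is the single sentence preceding the corollary; you have unpacked it correctly.

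For weak Gelfand type you go further than the paper and rightly flag that one must also identify the comodule $\adel\Om^{(0,k)}$. But your argument as written is circular: you propose to read off $\adel\Om^{(0,k-1)}$ as the complement in $\Om^{(0,k)}$ of $\H^{(0,k)}$ and $\adel^\dagger\!\Om^{(0,k+1)}$, yet by Proposition~\ref{prop:ADELISO} the latter is isomorphic to $\adel\Om^{(0,k)}$, which is the next unknown in the same family --- so nothing is actually determined. The repair is an explicit downward induction on $k$: from the short exact sequences of $\O_q(G)$-comodules
\[
0\to\ker\adel\to\Om^{(0,k)}\to\adel\Om^{(0,k)}\to 0,\qquad
0\to\adel\Om^{(0,k-1)}\to\ker\adel\to H^{(0,k)}\to 0,
\]
semisimplicity and $q$-independence of the $H^{(0,k)}$ determine each $\adel\Om^{(0,k-1)}$ once $\adel\Om^{(0,k)}$ is known, and the induction starts at $\adel\Om^{(0,n-1)}\simeq\Om^{(0,n)}/H^{(0,n)}$. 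This uses only the chain complex and cosemisimplicity, so you can dispense with Hodge decomposition and $\adel^\dagger$ entirely --- which matters, since the Hermitian structure is only guaranteed for $q$ outside a finite set. One residual caveat: the $q$-independence of $H^{(0,\bullet)}$ that you invoke via \cite{OSV} is, according to the paper's own remarks, presently established only for the quantum Grassmannians, so the weak Gelfand half of the corollary inherits that restriction.
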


To the best of our knowledge this question has not been explicitly addressed in the literature. However, there exist numerous suitable formulations of the necessary  classical  branching laws. (For example,  we point the reader to the  Young diagram approach of \cite{YoungBranching}, which is very similar in spirit to the presentation of Appendix  \ref{subsection:Branching}.) This allows us to answer the question with relative ease for the non-exceptional cases. To keep the paper to a reasonable length, the technical details of the proof will be given in later work. Pending this, we present the following as a conjecture.

\begin{conj} \label{conj:weakGelfClass}
The non-exceptional compact quantum Hermitian spaces, for which the covariant complex structure of the associated  Heckenberger--Kolb calculus is of weak Gelfand type, are precisely those
 presented in the following two diagrams:
The first identifies four countable families:

\vspace{2mm}
\begin{center}
\begin{tabular}{|c|c|c|}
\hline
{} & {} & {}\\
 $A_n$&
\begin{tikzpicture}[scale=.5]
\draw
(2,0)  circle [radius=.25] 
(4,0)  circle [radius=.25] 
(6,0) circle [radius=.25] ; 

\draw[fill=black]
(0,0) circle [radius=.25] 
(8,0) circle [radius=.25] ;
 
\draw[thick,dotted]
(2.25,0) -- (3.75,0)
(4.25,0) -- (5.75,0);

\draw[thick]
(.25,0) -- (1.75,0)
(6.25,0) -- (7.75,0);
\end{tikzpicture} & \small $\O_q(\mathbb{CP}^{n})$ \\

{} & {} & {} \\

 $A_n$&
\begin{tikzpicture}[scale=.5]
\draw
(0,0) circle [radius=.25] 
(4,0)  circle [radius=.25] 
(8,0) circle [radius=.25] ; 

\draw[fill=black]
(2,0)  circle [radius=.25] 
(6,0) circle [radius=.25] ;
 
\draw[thick,dotted]
(2.25,0) -- (3.75,0)
(4.25,0) -- (5.75,0);

\draw[thick]
(.25,0) -- (1.75,0)
(6.25,0) -- (7.75,0);
\end{tikzpicture} & \small $\O_q(\mathrm{Gr}_{n+1,2})$ \\

{} & {} & {} \\
$B_n$& 
\begin{tikzpicture}[scale=.5]
\draw
(4,0) circle [radius=.25] 
(2,0) circle [radius=.25] 
(6,0)  circle [radius=.25]  
(8,0) circle [radius=.25] ; 

\draw[fill=black]
(0,0) circle [radius=.25];

\draw[thick]
(.25,0) -- (1.75,0);

\draw[thick,dotted]
(2.25,0) -- (3.75,0)
(4.25,0) -- (5.75,0);

\draw[thick] 
(6.25,-.06) --++ (1.5,0)
(6.25,+.06) --++ (1.5,0);                      

\draw[thick]
(7,0.15) --++ (-60:.2)
(7,-.15) --++ (60:.2);
\end{tikzpicture} & \small $\O_q(\mathbf{Q}_{2n+1})$\\ 
 & {} & {}\\ 

$D_n$& 
\begin{tikzpicture}[scale=.5]

\draw[fill=black]
(0,0) circle [radius=.25] ;

\draw
(2,0) circle [radius=.25] 
(4,0)  circle [radius=.25]  
(6,.5) circle [radius=.25] 
(6,-.5) circle [radius=.25];

\draw[thick]
(.25,0) -- (1.75,0)
(4.25,0.1) -- (5.75,.5)
(4.25,-0.1) -- (5.75,-.5);

\draw[thick,dotted]
(2.25,0) -- (3.75,0);
\end{tikzpicture} &\small  $\O_q(\mathbf{Q}_{2n})$ \\ 
{} & {} & {}\\

\hline
\end{tabular}
\end{center}

The second diagram identifies three isolated examples, arising from low-dimensional redundancies in the table of compact quantum Hermitian spaces given above.

\begin{center}
\begin{tabular}{|c|rcl|l|}
\hline
{} & {} & {} & {} & {}\\ 
$B_3$& 
\begin{tikzpicture}[scale=.5]

\draw[fill=black]
(4,0)  circle [radius=.25]  ;

\draw
(6,0)  circle [radius=.25]  ;

\draw[thick] 
(4.25,-.06) --++ (1.5,0)
(4.25,+.06) --++ (1.5,0);    
\draw[thick]
(5,0.005) --++ (60:.2)
(5,-.005) --++ (-60:.2);
\end{tikzpicture} 
& $  \cong  $ &
\begin{tikzpicture}[scale=.5]

\draw
(4,0)  circle [radius=.25]  ;

\draw[fill=black]
(6,0)  circle [radius=.25]  ;

\draw[thick] 
(4.25,-.06) --++ (1.5,0)
(4.25,+.06) --++ (1.5,0);                      

\draw[thick]
(5,0.15) --++ (-60:.2)
(5,-.15) --++ (60:.2);

\end{tikzpicture} 

&\small  $\O_q(\mathbf{L}_2)  \simeq \O_q(\mathbf{Q}_5)$ \\ 

{} & {} & {} & {}  & {}\\
$D_3$& 
\begin{tikzpicture}[scale=.5]

\draw
(4,-.5)  circle [radius=.25]  ;

\draw[fill=black]
(6,0) circle [radius=.25] 
(6,-1) circle [radius=.25];

\draw[thick]
(4.25,-0.4) -- (5.75,0)
(4.25,-.6) -- (5.75,-1);
\end{tikzpicture} 
& $ \cong $ &

\begin{tikzpicture}[scale=.5]
\draw
(4,.5)  circle [radius=.25] ;

\draw[fill=black]
(2,.5)  circle [radius=.25] 
(6,.5) circle [radius=.25] ;
 
\draw[thick]
(2.25,.5) -- (3.75,.5)
(4.25,.5) -- (5.75,.5);

\end{tikzpicture} 
&\small  $\O_q(\mathbf{S}_3) \simeq \O_q(\mathbb{CP}^3)$ \\ 

{} & {} & {} & {}  & {}\\

{} & {} & {} & {}  & {}\\ 
$D_4$&

\begin{tikzpicture}[scale=.5]

\draw
(2,0) circle [radius=.25] 
(4,0)  circle [radius=.25]  ;

\draw[fill=black]
(6,.5) circle [radius=.25] 
(6,-.5) circle [radius=.25];

\draw[thick]
(2.25,0) -- (3.75,0)
(4.25,0.1) -- (5.75,.5)
(4.25,-0.1) -- (5.75,-.5);

\end{tikzpicture} & $  \cong  $ &\begin{tikzpicture}[scale=.5]

\draw[fill=black]
(2,0) circle [radius=.25]  ;

\draw
(4,0)  circle [radius=.25]  
(6,.5) circle [radius=.25] 
(6,-.5) circle [radius=.25];

\draw[thick]
(2.25,0) -- (3.75,0)
(4.25,0.1) -- (5.75,.5)
(4.25,-0.1) -- (5.75,-.5);

\end{tikzpicture} &\small  $\O_q(\mathbf{S}_4) \simeq \O_q(\mathbf{Q}_8)$ \\ 
{} & {} & {} & {}  & {} \\

\hline
\end{tabular}
\end{center}
Of these CQH-complex spaces, only  quantum projective space $\O_q(\ccpn)$ is of  Gelfand type.
\end{conj}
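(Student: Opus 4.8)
The proof proceeds in two stages: a reduction to a classical branching problem, and then a case-by-case resolution of that problem for the non-exceptional series.

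\emph{Reduction.} By the corollary immediately preceding the conjecture --- which itself rests on Frobenius reciprocity for the quantum homogeneous space $\O_q(G/L_S)$ together with Dijkhuizen and Stokman's Proposition \ref{prop:DS} --- an irreducible quantum flag manifold is of (weak) Gelfand type precisely when the classical compact Hermitian symmetric space $G/L_S$ is. So the plan is first to translate the statement entirely into representation theory of compact groups. Via Bott--Borel--Weil and Frobenius reciprocity, the space of anti-holomorphic $k$-forms decomposes, as a $G$-representation, according to
\begin{align*}
\mathrm{mult}\big(V_\lambda,\, \Om^{(0,k)}\big) = \dim_{\mathbb{C}} \mathrm{Hom}_{L_S}\!\big(V_\lambda|_{L_S},\, \textstyle\bigwedge^k \frak{p}^{*}\big),
\end{align*}
where $\frak{p}$ is the (irreducible) isotropy $L_S$-module, i.e. the holomorphic tangent space at the base point. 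Thus Gelfand type amounts to requiring that for each $k$ the induced module $\mathrm{Ind}_{L_S}^{G}\big(\bigwedge^k\frak{p}^{*}\big)$ be multiplicity-free, and --- using Bott's theorem to single out which induced constituents are cohomological --- weak Gelfand type amounts to the same requirement after discarding the $\adel^\dagger$-exact and harmonic summands (equivalently, by Hodge decomposition and Lemma \ref{lem:HodgeOpDecomp}, keeping only $\adel\Om^{(0,k)}$).

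\emph{The classification.} For $\O_q(\ccpn)$ the isotropy module is the standard $\frak{gl}_{n-1}$-representation, each $\bigwedge^k\frak{p}^{*}$ is $L_S$-irreducible, and the branching law for $GL_{n-1}\hookrightarrow GL_n$ (equivalently the interlacing rule) is multiplicity-free; this is exactly the content of Appendix \ref{subsection:Branching}, and one reads off directly that \emph{every} $\Om^{(0,k)}$ is multiplicity-free, so $\O_q(\ccpn)$ is of Gelfand type. For the quantum $2$-plane Grassmannians $\O_q(\mathrm{Gr}_{n,2})$ the module $\frak{p}\cong\bC^{2}\otimes\bC^{n-2}$ is resolved by the Cauchy formula into $GL_2\times GL_{n-2}$-irreducibles, which are then induced up via the Littlewood--Richardson branching rule; one checks that the resulting decomposition of $\adel\Om^{(0,k)}$ is multiplicity-free, while the full $\Om^{(0,k)}$ acquires a repeated constituent (the second copy living in $\adel^\dagger\Om^{(0,k+1)}$), so $\O_q(\mathrm{Gr}_{n,2})$ is of weak but not strict Gelfand type. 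For the quantum quadrics $\O_q(\mathbf{Q}_n)$ the isotropy module is the vector representation of $\frak{so}_{n-1}$ twisted by the $\frak{so}_2$-character, its exterior powers are $\frak{so}_{n-1}$-(almost-)irreducible, and the $SO_{n-1}\hookrightarrow SO_{n+1}$ branching restricted to these is multiplicity-free, again with the discrepancy between weak and strict Gelfand type accounted for by Hodge decomposition. Conversely, for the higher Grassmannians $\mathrm{Gr}_{n,m}$ with $m\geq 3$, the quantum Lagrangian Grassmannians $\mathbf{L}_n$ with $n\geq 3$, and the quantum spinor varieties $\mathbf{S}_n$ with $n\geq 5$, one exhibits an explicit weight $\lambda$ and degree $k$ for which $\mathrm{mult}(V_\lambda,\adel\Om^{(0,k)})\geq 2$ --- typically produced by a non-multiplicity-free Littlewood--Richardson coefficient or by a summand $\bigwedge^{a}\frak{p}^{*}\otimes\bigwedge^{b}\frak{p}^{*}$ inside $\bigwedge^{k}\frak{p}^{*}$ --- and verifies that this copy survives into the exact part. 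The three isolated entries in the second diagram ($\mathbf{L}_2\simeq\mathbf{Q}_5$, $\mathbf{S}_3\simeq\mathbb{CP}^3$, $\mathbf{S}_4\simeq\mathbf{Q}_8$) are then dealt with by matching the defining subsets of simple roots under the exceptional low-rank Dynkin isomorphisms, so that they inherit the conclusion already established for the relevant series. Finally, comparing the decompositions, only for $\O_q(\ccpn)$ is the \emph{full} $\Om^{(0,k)}$ multiplicity-free, giving the last sentence of the conjecture.

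\emph{Main obstacle.} The delicate point is the third step: separating, inside each $\Om^{(0,k)}$, the $\adel$-exact constituents from the $\adel^\dagger$-exact and harmonic ones, since weak Gelfand type constrains only the former, and it is precisely this distinction that separates the quadrics and $2$-plane Grassmannians (weak Gelfand) from the higher Grassmannians and Lagrangian/spinor cases (not even weak Gelfand). This forces one to run Bott's theorem alongside the branching combinatorics rather than invoking a single uniform multiplicity-free statement, and it is why the exceptional cases $\O_q(\mathbb{OP}^2)$ and $\O_q(\mathrm{F})$ --- where these branching rules are far less standard --- are deliberately left outside the scope of the conjecture.
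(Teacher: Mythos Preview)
The paper does not prove this statement. It is explicitly presented as a conjecture, with the authors writing just before it that ``the technical details of the proof will be given in later work. Pending this, we present the following as a conjecture.'' The only argument the paper offers is the reduction step: by Proposition~\ref{prop:DS} and Frobenius reciprocity, the question is equivalent to the corresponding classical one, and the authors then point to the classical branching literature (specifically the Young diagram methods of \cite{YoungBranching}) without carrying out any of the case-by-case work.

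Your proposal follows exactly this intended route --- the same reduction, the same appeal to classical branching laws for each series --- so at the level of strategy you are aligned with the paper. But your write-up is a sketch, not a proof: none of the branching computations for $\mathrm{Gr}_{n,2}$, the quadrics, or the negative cases are actually executed, and the step you yourself flag as the ``main obstacle'' (separating the $\adel$-exact summands from the $\adel^\dagger$-exact and harmonic ones inside each $\Om^{(0,k)}$) is asserted rather than done. Your invocation of ``Bott's theorem'' for this separation is also imprecise: Bott--Borel--Weil computes sheaf cohomology of homogeneous bundles, which identifies the harmonic pieces, but does not by itself split the non-harmonic part of $\Om^{(0,k)}$ into $\adel$-exact and $\adel^\dagger$-exact summands; in the one case the paper does treat in full ($\O_q(\ccpn)$, Appendix~\ref{subsection:Branching} and \textsection\ref{subsection:OrderICPN}), that splitting is obtained by explicitly matching highest weights against a ladder presentation, not by a cohomological argument. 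In short, your proposal is a reasonable plan that matches what the authors say they intend to do, but neither you nor the paper has yet supplied a proof.
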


For the two exceptional cases, that is, the quantum Cayley plane $\O_q(\mathbb{OP}^2)$, and the quantum Freudenthal variety $\O_q(\mathbf{F})$, an explicit presentation of the necessary branching laws does not seem  to have appeared in the literature. Hence, it is most likely necessary to derive them from a general framework, such as the Littlemann path model  \cite{Littleman1,Littleman2}.  We postpone such intricacies to later work, and for now satisfy ourselves with Conjecture \ref{conj:ExceptionWeakGel}, as motivated in \textsection  \ref{subsection:SphericalGens}.

\subsection{Towards Order II CQH-Hermitian Spaces}

In the previous subsection we presented the definition of weak Gelfand type and discussed those results which carry over from the Gelfand setting. In this subsection we detail some of the {\em new} behaviours that arise in the weak Gelfand setting. We finish with some conjectures, motivated by the discussion of the subsection.

 \subsubsection{Hodge Decomposition and the $B_\hw$-Action}

In the weak Gelfand case Proposition \ref{prop:HodgeClosure} is no longer guaranteed to hold. Explicitly, for  any $\w \in \adel \Om^{(0,\bullet)}_\hw$, and  any $z \in \O_q(G/L_S)_\hw$, it is no longer guaranteed that the product $z \w$ is contained in $\adel \Om^{(0,\bullet)}$. This forces us to consider highest weight vectors of the form  $\Pi_{\adel}\big(z \w\big)$,   where we have denotes by $\Pi_{\adel}:\Om^{(0,\bullet)} \to \adel \Om^{(0,\bullet)}$  the projection operator  
associated to Hodge decomposition. Such forms are still eigenvectors of  $\DEL_{\adel}$, with an important observation being that 
$
\DEL_{\adel}\left(\Pi_{\adel}(z\w)\right) = \adel \adel^\dagger\!\left(z\w\right).
$

\subsubsection{Spherical Generators} \label{subsection:SphericalGens}

By the considerations of \textsection \ref{section:setofHandLWs}, the  weights of the highest weight vectors of $\O_q(G/L_S)$  form an additive submonoid $Z_S \sseq \frak{h}$ under addition. By Proposition \ref{prop:DS}, it is clear that the minimal number of generators of $Z_S$,  considered as a monoid, is the same in the classical $q=1$ case. A distinguished  minimal set of generators for $Z_S$, the {\em spherical weights}, was presented  in the classical case  by Kr\"amer in  \cite[Tabelle 1]{KramerSpherical}, and reproduced here in the table  below.

In this table,  the Dynkin nodes for the $A$-,$B$-,$C$-, and $D$-types are labelled in increasing order from left to right. 
For the exceptional cases, we again number from left to right in increasing order, finishing with the side node. Finally, note that for the spherical weights of $\O_q(\mathbf{S}_{2m})$, the weight $2\varpi_{2m-1}$ or $2\varpi_{2m}$ appears depending on the defining crossed node.  

\begin{center}
\begin{tabular}{|c|c|l}
\hline
      & \\
{\em \bf Irreducible Quantum Flag}  &  {\em \bf Spherical Weights of $\O_q(G/L_S)$ }\\
{\em  \bf Manifold $\O_q(G/L_S)$} & \\
      & \\
      \hline
            & \\
$\O_q(\text{Gr}_{r,s})$    &    $\varpi_{1} + \varpi_{r+s-1}, \, \varpi_{2} + \varpi_{r+s-2},  \dots, \varpi_{r} + \varpi_{s}$\\
      & \\
$\O_q(\mathbf{Q}_{2n+1})$ & $2\varpi_1, \, \varpi_2$ \\
      & \\
$\O_q(\mathbf{L}_n)$  &  $2\varpi_1, \, 2\varpi_2, \dots, 2\varpi_{n}$  \\
      & \\
$\O_q(\mathbf{Q}_{2n})$ & $2\varpi_1, \, \varpi_2$ \\
      & \\
$\O_q(\mathbf{S}_{2m})$  & $\varpi_2, \, \varpi_4, \dots, \varpi_{2m-2}, \, 2\varpi_{2m-1} \text{ or } 2\varpi_{2m}$\\
      & \\
      $\O_q(\mathbf{S}_{2m+1})$  & $\varpi_2, \, \varpi_4, \dots, \varpi_{2m-2}, \, \varpi_{2m} + \varpi_{2m+1}$\\
      & \\
$\O_q(\mathbb{OP}^2)$  & $\varpi_1 + \varpi_5$, \, $\varpi_{6}$\\
      & \\
$\O_q(\mathbf{F})$ & $2\varpi_{1}, \, \varpi_{2}, \, \varpi_{6}$\\
      & \\
\hline
\end{tabular}
\end{center}

Comparing this table with the proposed classification of Conjecture \ref{conj:weakGelfClass}, we make the following alternative version of the conjecture.

\begin{conj} \label{conj:weakSpherical}
The non-exceptional irreducible quantum flag manifolds $\O_q(G/L_S)$ of  Gelfand type  are exactly those for which $Z_S$ is generated by a single element, and those of weak Gelfand type are exactly those for which $Z_S$ which is generated by two elements. 
\end{conj}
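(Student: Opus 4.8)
The plan is to reduce the statement to a classical branching calculation and then read it off Kr\"amer's table of spherical weights reproduced above. First I would pass to the classical limit: by Proposition \ref{prop:DS} the branching rules for $U_q(\frak{l}_S)\hookrightarrow U_q(\frak{g})$ coincide with those at $q=1$, so by the corollary to Proposition \ref{prop:DS} the CQH-complex space $\O_q(G/L_S)$ is of (weak) Gelfand type if and only if the classical compact Hermitian symmetric space $G/L_S$ is; the same input identifies the highest-weight monoid $Z_S$ with its classical counterpart, which by the Cartan--Helgason theorem is freely generated by the fundamental spherical weights tabulated above. Thus it suffices to prove the two equivalences classically.

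Next I would install the representation-theoretic dictionary. Writing $\frak{p}$ for the irreducible $L_S$-module realising the anti-holomorphic cotangent directions, Takeuchi's equivalence gives $\Om^{(0,k)}\cong\O_q(G)\,\square_{\,\O_q(L_S)}\,\Lambda^k\frak{p}$, so that $\Om^{(0,k)}$ is multiplicity-free exactly when $\dim\mathrm{Hom}_{L_S}\big(V_\mu,\Lambda^k\frak{p}\big)\le 1$ for every dominant integral $\mu$, and weak Gelfand type is the same statement with $\Lambda^k\frak{p}$ replaced by the $L_S$-submodule underlying $\adel\Om^{(0,\bullet)}$. Kostant's theorem on $\frak{p}$-cohomology identifies each $\Lambda^k\frak{p}$ with the sum of the irreducibles $F_{w\cdot 0}$, $w$ running over the length-$k$ minimal coset representatives of $W_{L_S}\backslash W$, which makes all multiplicities of forms computable directly from the root data.

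The equivalences would then be argued in two directions. For ``$Z_S$ singly generated $\Rightarrow$ Gelfand type'', the unique spherical weight is the weight of a highest-weight element $z\in\O_q(G/L_S)_{\hw}$, and a ladder presentation in the sense of Definition \ref{defn:ladder} can be built exactly as for $\O_q(\ccpn)$ in \textsection\ref{subsection:OrderICPN}, the forms $z^l\nu_k$ exhausting the highest-weight spaces and forcing each $\Om^{(0,k)}$ to be multiplicity-free. For ``$Z_S$ two-generated $\Rightarrow$ weak Gelfand type'', one builds instead an order II presentation of $\adel\Om^{(0,\bullet)}$ from the two generators, using that the vectors $\Pi_{\adel}(z\w)$ remain $\DEL_{\adel}$-eigenvectors. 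Conversely, if $Z_S$ needs three or more generators I would exhibit an explicit repeated constituent in some low-degree $\adel\Om^{(0,k)}$ via Kostant's description, there being enough room in the low exterior powers of $\frak{p}$ once the restricted rank exceeds two. In practice the most economical route is to first prove the explicit classification of Conjecture \ref{conj:weakGelfClass} --- for the $A$-, $B$-, $C$-, $D$-type Grassmannians a finite computation with the Young-diagram branching rules of \textsection\ref{subsection:Branching} --- and then to compare term by term with Kr\"amer's table, whereupon the Gelfand-type entries are precisely those listing one spherical weight and the weak-Gelfand ones precisely those listing at most two.

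The hard part will be the ``only if'' directions for the less transparent spaces: certifying that a space whose monoid $Z_S$ needs three or more generators genuinely fails weak Gelfand type requires tight control of the $L_S$-module structure of the higher exterior powers $\Lambda^k\frak{p}$, and for the exceptional series $E_6$, $E_7$, where closed-form branching rules are unavailable, it forces recourse to the Littelmann path model. A secondary difficulty is reconciling the two formulations across the low-rank coincidences $\mathbf{S}_3\simeq\mathbb{CP}^3$, $\mathbf{L}_2\simeq\mathbf{Q}_5$ and $\mathbf{S}_4\simeq\mathbf{Q}_8$, and more generally verifying entry by entry that the classification of Conjecture \ref{conj:weakGelfClass} is matched by the monoid-generator count.
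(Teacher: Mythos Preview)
The statement you are attempting to prove is presented in the paper as a \emph{conjecture}, not as a theorem: the paper offers no proof whatsoever. Its entire justification is the sentence ``Comparing this table with the proposed classification of Conjecture \ref{conj:weakGelfClass}, we make the following alternative version of the conjecture'' --- that is, the authors simply observe that the list of non-exceptional irreducible quantum flag manifolds of (weak) Gelfand type proposed in Conjecture \ref{conj:weakGelfClass} matches, line by line, the entries of Kr\"amer's table having one (respectively at most two) spherical generators, and elevate this observation to a conjecture. There is nothing to compare your argument against.

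Your proposal is therefore not a comparison but an attempt to \emph{resolve} an open problem. As a strategy it is broadly sensible, and indeed the reduction to the classical case and the comparison with Kr\"amer's table is exactly the heuristic the paper uses. However, there are genuine gaps. First, your argument rests on establishing Conjecture \ref{conj:weakGelfClass}, which the paper explicitly defers to later work (``the technical details of the proof will be given in later work''); you cannot invoke it as input. Second, your forward direction ``$Z_S$ singly generated $\Rightarrow$ Gelfand type'' via constructing a ladder presentation is circular: in the paper a ladder presentation \emph{presupposes} that $\adel\Om^{(0,\bullet)}$ is already known to be multiplicity-free (it is a presentation of that decomposition), so it cannot be used to establish Gelfand type. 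Third, you have not addressed the ambiguity in the statement itself: Gelfand type implies weak Gelfand type, so $\O_q(\ccpn)$ is of weak Gelfand type with $Z_S$ singly generated, and the conjecture must be read as ``at most two generators'' for the weak case --- your proof sketch does not engage with this.
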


Thus to accommodate the irreducible quantum flag manifolds of weak Gelfand type, it is necessary to generalise the order I requirements  and consider  irreducible $U_q(\frak{g})$-modules of the  form 
\begin{align*}
U_q(\frak{g})  \Pi_{\adel}\big(y^{l_1} z^{l_2} \w\big), & & \text{ for } \, y, z \in B,\, \w \in \Om^{(0,k)}_{\hw}, \text{ and }  l_1,l_2 \in \bN_0.
\end{align*}

Coming finally to the exceptional cases, we note that, from the table of spherical weights below, $\O_q(\mathbb{OP}^2)$ has two spherical generators, while $\O_q(\mathbf{F})$ has three. Speculating that Conjecture \ref{conj:weakSpherical} extends to the non-exceptional setting, we make the following conjecture.

\begin{conj} \label{conj:ExceptionWeakGel} It holds that:
\begin{enumerate}

\item The quantum Cayley plane $\O_q(\mathbb{OP}^2)$, endowed with its \hk calculus, is of weak Gelfand type.

\item  The quantum Freudenthal variety $\O_q(\mathbf{F})$, endowed with its Heckenberger--Kolb calculus, is not of weak Gelfand type.

\end{enumerate}
\end{conj}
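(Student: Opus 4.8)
The plan is to reduce Conjecture~\ref{conj:ExceptionWeakGel} to a classical branching problem and then resolve that problem case by case. By the Corollary following Proposition~\ref{prop:DS} --- which itself rests on Frobenius reciprocity for quantum homogeneous spaces together with the Dijkhuizen--Stokman fact that the branching rules for $U_q(\frak{l}_S)\hookrightarrow U_q(\frak{g})$ coincide with the classical ones --- the \hk calculus of $\O_q(\mathbb{OP}^2)$, resp.\ of $\O_q(\mathbf{F})$, is of weak Gelfand type if and only if the classical Dolbeault complex of the compact Hermitian symmetric space $E_6/P_1$, resp.\ $E_7/P_7$, has the property that $\adel\Om^{(0,\bullet)}$ is multiplicity-free as a graded $G$-representation. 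It therefore suffices to work entirely classically. As a heuristic check on the expected answers, note that by the spherical weight table above $Z_S$ has two generators for $E_6/P_1$ and three for $E_7/P_7$, which, granting the pattern of Conjecture~\ref{conj:weakSpherical}, predicts weak Gelfand type in the first case and its failure in the second.

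For the classical analysis, write $\frak{g}_\bC = \frak{p}^- \oplus \frak{l}_\bC \oplus \frak{p}^+$ for the Hermitian symmetric decomposition. Frobenius reciprocity gives $\Om^{(0,k)} \cong \bigoplus_\mu V_\mu \otimes \mathrm{Hom}_L\!\big(V_\mu|_L, \wedge^k\frak{p}^-\big)$, so the multiplicity of $V_\mu$ in $\Om^{(0,k)}$ is $m_{\mu,k} := \dim\mathrm{Hom}_L(V_\mu|_L, \wedge^k\frak{p}^-)$. Both spaces are Fano, so the noncommutative Kodaira vanishing theorem of \cite{OSV} (or, classically, Bott--Borel--Weil) gives $H^{(0,0)}\cong\bC$ and $H^{(0,k)} = 0$ for $k\geq 1$; combined with Hodge decomposition and Proposition~\ref{prop:ADELISO} this expresses each $\adel\Om^{(0,k)}$, in the representation ring of $G$, as an explicit alternating sum of the $\Om^{(0,j)}$ with $j\leq k$. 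Weak Gelfand type is thus equivalent to the statement that all the resulting alternating sums of the $m_{\mu,k}$ are at most $1$. For $E_6/P_1$ one has $L = \mathrm{Spin}(10)\times U(1)$ and $\frak{p}^-$ the $16$-dimensional $U(1)$-twisted half-spin module $S$; since $S$ is minuscule the exterior powers $\wedge^k S$ are classically well understood (they reproduce, up to the $U(1)$-grading, the spaces of differential forms on $\bC^{10}$), and one must verify that each $\wedge^k S$ is multiplicity-free over $\mathrm{Spin}(10)$, that distinct $\mathrm{Spin}(10)$-types in $\wedge^k S$ induce distinct $E_6$-types, and that the alternating subtractions never create a coefficient $\geq 2$; this would prove part~1. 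For $E_7/P_7$ one has $L = E_6\times U(1)$ and $\frak{p}^-$ the minuscule $\mathbf{27}$ of $E_6$; here it is enough to exhibit a single bidegree $k$ and a nontrivial $E_7$-irrep $V_\mu$ for which the relevant alternating sum of the $m_{\mu,j}$ is $\geq 2$ --- for instance by locating an $E_6$-irrep occurring with multiplicity $\geq 2$ in some $\wedge^k(\mathbf{27})$ whose induced $E_7$-type is not cancelled --- which already forces $\adel\Om^{(0,k)}$ to fail to be multiplicity-free, proving part~2.

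The main obstacle is computational rather than structural: closed-form branching rules for $E_6 \downarrow \mathrm{Spin}(10)\times U(1)$ in the higher exterior powers, and for the exterior powers $\wedge^k(\mathbf{27})$ of $E_6$ together with the branching $E_7 \downarrow E_6\times U(1)$, do not appear in the literature, so one must generate them either through a uniform combinatorial device such as the Littelmann path model \cite{Littleman1,Littleman2} or by direct Lie-theoretic computation, and then carefully track the correspondence between $L$-types, Kostant's $\frak{p}^+$-cohomology, and the $G$-types. The delicate point throughout --- and precisely the mechanism that produces the failure of multiplicity-freeness for the Freudenthal variety --- is that an $L$-type may occur in $V_\mu|_L$ at a non-minimal level, which can make $m_{\mu,k}\geq 2$ even when $\wedge^k\frak{p}^-$ is multiplicity-free as an $L$-module; controlling this for every $\mu$ and $k$ is what makes part~1 genuinely laborious, while pinpointing one instance of it is all that is needed for part~2.
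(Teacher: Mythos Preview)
The statement you are addressing is a \emph{conjecture} in the paper, not a theorem: the authors give no proof and do not claim one. Their entire justification is the heuristic observation that $\O_q(\mathbb{OP}^2)$ has two spherical generators while $\O_q(\mathbf{F})$ has three, together with the speculation that Conjecture~\ref{conj:weakSpherical} extends to the exceptional cases. Indeed, the paper explicitly says that the relevant branching laws ``do not seem to have appeared in the literature'' and that deriving them (e.g.\ via the Littelmann path model) is postponed to later work. So there is no ``paper's own proof'' to compare against.

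Your proposal is not a proof either --- it is a strategy, and you yourself identify the main obstacle as computational and unresolved. The reduction to the classical branching problem via Proposition~\ref{prop:DS} and Frobenius reciprocity is correct and is exactly the mechanism the paper invokes for the non-exceptional cases, and your Euler-characteristic bookkeeping for extracting $\adel\Om^{(0,k)}$ from the $\Om^{(0,j)}$ is sound. However, the substantive content --- actually computing the $E_6 \downarrow \mathrm{Spin}(10)\times U(1)$ branching in all the exterior powers $\wedge^k S$ and verifying multiplicity-freeness, and exhibiting a concrete multiplicity $\geq 2$ for $E_7/P_7$ --- is precisely what is missing, both from your write-up and from the paper. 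One specific claim you make is also suspect: the assertion that the exterior powers $\wedge^k S$ of the $16$-dimensional half-spin module ``reproduce, up to the $U(1)$-grading, the spaces of differential forms on $\bC^{10}$'' does not hold on dimension grounds and appears to conflate the Clifford-algebra realisation of spinors with the exterior algebra of the spin module itself; whatever structural fact you intended here needs to be stated and justified correctly before it can do any work.
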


\subsubsection{Leibniz Constants}

In the weak Gelfand setting, Leibniz constants are not guaranteed to exist, which is to say Corollary \ref{cor:LeibnizConstants} does not necessarily hold. We do however have the following lemma, whose proof is a direct generalisation of Corollary \ref{cor:LeibnizConstants}.

\begin{lem} 
Let  $\mathbf{C}$ be a CQH-complex space of weak Gelfand type, with constituent quantum homogeneous space $B_{\hw}$, and $z \in B_{\hw}$ a non-harmonic element. Then there exist non-zero constants $\lambda_{z}, \, \zeta_{z} \in \bC$, uniquely defined by 
\begin{align*}
\Pi_{\del}\left(\left(\del z\right)z\right) = \lambda_{z} \Pi_{\del}\left(z \del z\right), & & \Pi_{\adel}\left(\left(\adel z\right)z\right) = \z_z  \Pi_{\adel}\left(z \adel z\right),
\end{align*}
where $\Pi_\del$, and $\Pi_{\adel}$, are the projections from $\Om^\bullet$ to $\del \Om^\bullet$, and $\adel \Om^\bullet$, respectively, given by  Hodge decomposition.
\end{lem}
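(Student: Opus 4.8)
The plan is to mimic the proof of Corollary \ref{cor:LeibnizConstants}, replacing the role of Lemma \ref{lem:FullGelfandSwap} (which relied on genuine Gelfand type) by a direct appeal to multiplicity-freeness of $\adel\Om^{(0,\bullet)}$ together with the Hodge projections. First I would fix a non-harmonic $z \in B_{\hw}$ and consider the two forms $\Pi_{\adel}\big((\adel z)z\big)$ and $\Pi_{\adel}\big(z\adel z\big)$, both lying in $\adel\Om^{(0,1)}$. Since $B$ is a left $\O_q(G)$-comodule algebra and $\adel$, as well as the Hodge projection $\Pi_{\adel}$, are left $U_q(\frak{g})$-module maps (the latter because Hodge decomposition is a decomposition of comodules), both of these forms are highest weight vectors: indeed $\adel z$ and $z\adel z$ are highest weight by Lemma \ref{lem:monoidtomonoid} applied to the comodule algebra $\Om^{(0,\bullet)}$, and applying $\Pi_{\adel}$ preserves the highest weight property since it is a comodule map of degree $0$. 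Moreover by part 1 of Lemma \ref{lem:monoidtomonoid} both have weight $2\,\text{wt}(z)$, and both are homogeneous of degree $1$ in $\adel\Om^{(0,\bullet)}$.

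Next I would check that $\Pi_{\adel}(z\adel z)$ is non-zero. By Lemma \ref{lem:nonzerozaction}, $z\adel z$ is non-zero in $\Om^{(0,1)}$; but a priori its Hodge projection onto $\adel\Om^{(0,1)}$ could vanish. Here I would invoke non-harmonicity of $z$: since $z$ is not harmonic, $\adel z \neq 0$, hence $z$ is $\adel$-exact (or at least has non-trivial $\adel$-component), and by Proposition \ref{prop:HodgeClosure}'s analogue — or more directly, by observing that $\adel(z\adel z) = \adel z \wed \adel z$ need not vanish but $z\adel z = \adel(\tfrac12 z^2) - \tfrac12 z^2 \cdot(\text{correction})$... — actually the cleanest route is: $\Pi_{\adel}(z\adel z)$ vanishes iff $z\adel z \in \ker(\adel^\dagger) \cap (\adel\Om^{(0,\bullet)})^{\perp}$, and since $z\adel z$ lies in $\adel\Om^{(0,\bullet)} \oplus \adel^\dagger\Om^{(0,\bullet)} \oplus \H$, I would argue using $\langle z\adel z, \adel z\rangle = \langle \adel(\tfrac12 z^2), \adel z\rangle \cdot(\ldots)$ type pairings, or simply note that $z\adel z$ has the same weight as $\adel z$ which is $\adel$-exact, so by the comodule structure of Hodge decomposition $z\adel z$ must itself be $\adel$-exact, giving $\Pi_{\adel}(z\adel z) = z\adel z \neq 0$. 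Once non-vanishing is established, Lemma \ref{lem:CoPMathair} applied to the graded multiplicity-free comodule $\adel\Om^{(0,\bullet)}$ yields a unique scalar $\z_z$ with $\Pi_{\adel}\big((\adel z)z\big) = \z_z\,\Pi_{\adel}\big(z\adel z\big)$; the holomorphic case is identical using $\del\Om^{(\bullet,0)}$, which is graded multiplicity-free by Lemma \ref{lem:FOURGTS}. Finally, to see $\z_z \neq 0$ (and $\lambda_z \neq 0$), I would observe that $*$ interchanges the two statements via Lemma \ref{lem:AntiModoidBijection}, and that if $\z_z$ were zero then $\Pi_{\adel}\big((\adel z)z\big) = 0$; applying a suitable element of $U_q(\frak{g})$ as in Proposition \ref{prop:selfconjLeibdual} to generate a contradiction with non-vanishing of $\Pi_{\adel}(z\adel z)$.

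\textbf{Main obstacle.} The genuinely delicate point is the non-vanishing of $\Pi_{\adel}\big(z\adel z\big)$ (and correspondingly of $\Pi_{\del}(z\del z)$), which in the strict Gelfand case was handled cleanly by Lemma \ref{lem:nonzerozaction} together with Proposition \ref{prop:HodgeClosure} giving $z\adel z \in \adel\Om^{(0,\bullet)}$ outright. In the weak Gelfand setting one loses the clean closure statement, so one must instead argue that the Hodge component of $z\adel z$ in the $\adel$-exact part is non-zero. The cleanest argument I expect to work: $\adel$ and $\Pi_{\adel}$ are comodule maps, so $U_q(\frak{g})(z\adel z)$ lies entirely within one of the three Hodge summands (by multiplicity-freeness and irreducibility); since $z\adel z$ has the weight of $\adel z$, and $\adel z$ generates an $\adel$-exact irreducible (as $\adel z = \adel z$ is manifestly exact and non-zero), by multiplicity-freeness of $\adel\Om^{(0,\bullet)}$ the unique irreducible of that weight appearing is $\adel$-exact, forcing $z\adel z$ into the $\adel$-exact summand, i.e. $\Pi_{\adel}(z\adel z) = z\adel z \neq 0$. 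Once this is secured the rest is routine application of Lemma \ref{lem:CoPMathair}.
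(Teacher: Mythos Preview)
Your overall strategy---apply the Hodge projection $\Pi_{\adel}$ (a left $\O_q(G)$-comodule map of degree~$0$) to the two highest weight forms $(\adel z)z$ and $z\,\adel z$, land in the graded multiplicity-free comodule $\adel\Om^{(0,\bullet)}$, and invoke Lemma~\ref{lem:CoPMathair}---is exactly the ``direct generalisation of Corollary~\ref{cor:LeibnizConstants}'' the paper has in mind, and that part of your write-up is correct. You are also right that the only non-routine point is the non-vanishing of $\Pi_{\adel}(z\,\adel z)$ and $\Pi_{\adel}\big((\adel z)z\big)$; the paper does not spell this out either.

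Your actual argument for non-vanishing, however, does not work. First, the claim ``$z\,\adel z$ has the weight of $\adel z$'' is wrong: by Lemma~\ref{lem:monoidtomonoid} its weight is $2\,\mathrm{wt}(z)$, not $\mathrm{wt}(z)$, so comparing with the irreducible generated by $\adel z$ tells you nothing. Second, and more seriously, the assertion that ``$U_q(\frak{g})(z\,\adel z)$ lies entirely within one of the three Hodge summands (by multiplicity-freeness and irreducibility)'' is precisely what \emph{fails} in the weak Gelfand setting: only $\adel\Om^{(0,\bullet)}$ and $\adel^\dagger\Om^{(0,\bullet)}$ are separately multiplicity-free (the latter by Lemma~\ref{lem:GTQHHS}), but $\Om^{(0,1)}$ itself need not be, so an irreducible submodule can sit diagonally across the Hodge summands. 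This is exactly the phenomenon flagged in \textsection 7.4.1 and the very reason the projections $\Pi_{\adel}$ enter the statement. A more promising route is the Leibniz identity $\adel(z^2) = (\adel z)z + z\,\adel z$, which lies in $\adel\Om^{(0,1)}$ and hence gives $\Pi_{\adel}\big((\adel z)z\big) + \Pi_{\adel}(z\,\adel z) = \adel(z^2)$; combined with $\adel(z^2)\neq 0$ (e.g.\ via the domain property of $B$ and connectedness, cf.\ Lemma~\ref{lem:GelfandandConnectivity}) this at least shows the two projections cannot \emph{both} vanish, and a symmetry argument using the $*$-map then finishes the job.
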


We now see that  Proposition \ref{prop:selfconjLeibdual} extends directly to the weak Gelfand setting.

\begin{cor}
Let  $\mathbf{C}$ be  a CQH-complex space with  constituent quantum homogeneous space $B$. If $B$ is self-conjugate, then, for any $z \in B$ such that $\lambda_z,\zeta_z \in \bR$,  we have  $ \z_z = \lambda_z \inv$.
\end{cor}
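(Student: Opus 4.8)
The statement to prove is the final corollary: if $\mathbf{C}$ is a CQH-complex space with self-conjugate constituent quantum homogeneous space $B$, then for any $z \in B$ with $\lambda_z, \zeta_z \in \bR$ we have $\zeta_z = \lambda_z^{-1}$. This is the weak-Gelfand analogue of Proposition \ref{prop:selfconjLeibdual}, and the plan is to mimic that proof, inserting the Hodge projections $\Pi_\del$ and $\Pi_{\adel}$ wherever the Gelfand-type argument used that $\adel \Om^{(0,\bullet)}$ is closed under $B_\hw$.

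\medskip

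\textbf{Plan.} First I would apply the $*$-map to the defining identity $\Pi_\del\big((\del z)z\big) = \lambda_z\, \Pi_\del\big(z\,\del z\big)$ for the holomorphic Leibniz constant. Using \eqref{stardel} (which gives $\del(\w^*) = (\adel\w)^*$) together with Lemma \ref{lem:HodgeOpDecomp} and the fact that $\ast_\s$ intertwines $\Pi_\del$ and $\Pi_{\adel^\dagger}$-type projections, plus the compatibility $*\circ\DEL_{\adel} = \DEL_{\del}\circ *$, one checks that $*$ carries $\Pi_\del \Om^\bullet$ to $\Pi_{\adel}\Om^\bullet$; more directly, since $*$ sends the complex $(\Om^\bullet,\del)$ to $(\Om^\bullet,\adel)$ and Hodge decomposition is $*$-equivariant in the appropriate sense, we get $\Pi_\del(\w)^* = \Pi_{\adel}(\w^*)$. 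Applying $*$ to the $\lambda_z$-identity then yields
\begin{align*}
\lambda_z^{-1}\, \Pi_{\adel}\big(z^*\,\adel z^*\big) = \Pi_{\adel}\big((\adel z^*)\, z^*\big),
\end{align*}
using that $*$ is an anti-algebra map and $\overline{\lambda_z} = \lambda_z$.

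\medskip

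Next I would use self-conjugacy: by Lemma \ref{lem:AntiModoidBijection} the element $z^*$ is a lowest weight vector of the irreducible module $U_q(\frak{g})z$, and self-conjugacy guarantees $z^* \in U_q(\frak{g})z$, so there is $X \in U_q(\frak{g})$ with $X \tr z^* = z$. Since $\adel$ is a left $U_q(\frak{g})$-module map and $X$ acts compatibly with multiplication via the coproduct (as in the Leibniz-type computations of Lemma \ref{lem:monoidtomonoid} and Proposition \ref{prop:selfconjLeibdual}), one finds an element $Y$ (a suitable factor of $\DEL(X^2)$, or $X^2$ itself after reindexing) such that $Y \tr \big(z^*\,\adel z^*\big) = z\,\adel z$ and $Y \tr \big((\adel z^*)z^*\big) = (\adel z)z$; crucially, since $\Pi_{\adel}$ is also a $U_q(\frak{g})$-module map, $Y$ commutes with $\Pi_{\adel}$. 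Acting by $Y$ on both sides of the displayed identity above then gives $\lambda_z^{-1}\,\Pi_{\adel}\big(z\,\adel z\big) = \Pi_{\adel}\big((\adel z)z\big)$. Comparing with the defining relation $\Pi_{\adel}\big((\adel z)z\big) = \zeta_z\,\Pi_{\adel}\big(z\,\adel z\big)$ and invoking uniqueness of $\zeta_z$ (which requires $\Pi_{\adel}(z\,\adel z) \neq 0$, guaranteed since $z$ is non-harmonic, using Lemma \ref{lem:nonzerozaction} and the argument in the preceding weak-Gelfand lemma), we conclude $\zeta_z = \lambda_z^{-1}$.

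\medskip

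\textbf{Main obstacle.} The subtle point, and the one I would be most careful about, is the $*$-equivariance of the Hodge projection: one must verify that $*$ genuinely sends the summand $\del\Om^\bullet$ of the $\del$-Hodge decomposition onto the summand $\adel\Om^\bullet$ of the $\adel$-Hodge decomposition, so that $\Pi_\del(\w)^* = \Pi_{\adel}(\w^*)$. This follows from \eqref{stardel}, the identity $*\circ\DEL_{\adel} = \DEL_{\del}\circ*$ noted just before the lemma on compact resolvents, and the characterization \eqref{eqn:harmcap} of harmonic forms as $\ker\del \cap \ker\del^\dagger$ — since $*$ also intertwines $\del^\dagger$ and $\adel^\dagger$ by Lemma \ref{lem:HodgeOpDecomp}. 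The rest is a routine transcription of the proof of Proposition \ref{prop:selfconjLeibdual}, with $\Pi_{\adel}$ inserted and its $U_q(\frak{g})$-linearity used to commute it past the action of $Y$.
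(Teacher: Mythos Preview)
Your proposal is correct and follows exactly the route the paper takes: the paper's proof consists of the single observation $* \circ \Pi_{\del} = \Pi_{\adel} \circ *$, after which it states that the argument of Proposition \ref{prop:selfconjLeibdual} carries over verbatim. Your identification of this $*$-equivariance of the Hodge projections as the one nontrivial point is exactly right; the cleanest justification is that $\ast_\s$ is a $*$-map (Lemma \ref{lem:Hodgeproperties}(1)) combined with \eqref{stardel} and the formulas $\del^\dagger = -\ast_\s\adel\ast_\s$, $\adel^\dagger = -\ast_\s\del\ast_\s$, rather than Lemma \ref{lem:HodgeOpDecomp}, which concerns $\ast_\s$ rather than the $*$-map.
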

\begin{proof}
Note first that we have the identity 
$
* \circ \Pi_{\del} = \Pi_{\adel} \circ *.
$
Using this identity, the proof of Proposition \ref{prop:selfconjLeibdual} now carries over directly to the weak Gelfand setting.
\end{proof}

\subsubsection{Some Conjectures} \label{Subsubsection:Conjs}

Taking the above novel features into account, it is possible to generalise the order I framework,  to an {\em order II} framework, and include the irreducible quantum flag manifolds of weak Gelfand type. Solidity can be shown to extend to this setting, as well as  its equivalence with the compact resolvent of the Dolbeault--Dirac operator.  This gives us a robust set of tools with which to construct spectral triples for the irreducible quantum flag manifolds of weak Gelfand type. Preliminary investigations \cite{DOW} strongly suggest that solidity holds,  motivating us to make the following  conjecture. 


\begin{conj}
Let $\O_q(G/L_S)$ be an irreducible quantum flag manifold of weak Gelfand type, and let $\left(\Om^{(\bullet,\bullet)},\kappa\right)$ be its covariant K\"ahler structure, unique up to real scalar multiple.  A Dirac--Dolbeault pair of spectral triples is given by 
\begin{align*}
\Big(\O_q(G/L_S),L^2(\Om^{(\bullet,0)}), D_{\del}\Big), & & \Big(\O_q(G/L_S),L^2(\Om^{(0,\bullet)}), D_{\adel}\Big).
\end{align*}
Moreover, the associated $K$-homology class of each spectral triple is non-trivial.
\end{conj}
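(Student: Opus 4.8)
The plan is to deduce the statement from the general machinery of the paper in two stages: first, upgrading the Dolbeault--Dirac pair of BC-triples to a pair of spectral triples via an \emph{order II} refinement of the framework of \textsection\ref{section:OrderI}; second, computing the index of the associated $K$-homology classes. By \cite{MarcoConj} together with the positive-definiteness result, for $q$ in a suitable interval $I$ around $1$ each irreducible quantum flag manifold $\O_q(G/L_S)$, endowed with its \hk calculus and K\"ahler form $\k$, is a CQH-K\"ahler space; hence by Proposition \ref{prop:BCCQHHS} a Dolbeault--Dirac pair of BC-triples already exists. Since $* \circ \DEL_{\adel} = \DEL_{\del} \circ *$, it suffices to verify the compact resolvent condition for $D_{\adel}$, and as $D_{\adel}$ is diagonalisable this is equivalent to $\s_P(\DEL_{\adel}) \to \infty$.

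For the compact resolvent, I would first record that weak Gelfand type yields, by the weak Gelfand analogue of Lemma \ref{lem:GelfandDiag}, diagonalisation of $\DEL_{\adel}$ over the irreducible sub-comodules of $\adel\Om^{(0,\bullet)}$, so it is enough to track the scalar by which $\DEL_{\adel}$ acts on each highest weight vector. The key structural input is Conjecture \ref{conj:weakSpherical}: for the non-exceptional weak Gelfand families the monoid $Z_S$ of highest weights is generated by two spherical weights, realised by elements $y, z \in B_\hw$. Consequently every highest weight vector of $\adel\Om^{(0,\bullet)}$ has the form $\Pi_{\adel}\!\big(y^{l_1} z^{l_2}\w\big)$ for some $\w$ in a finite set $\Theta$ and $(l_1,l_2)\in\bN_0^2$, with $\Pi_{\adel}$ the Hodge projection; by the discussion of \textsection\ref{section:QHSPs} these are $\DEL_{\adel}$-eigenvectors satisfying $\DEL_{\adel}\Pi_{\adel}(y^{l_1}z^{l_2}\w) = \adel\adel^\dagger(y^{l_1}z^{l_2}\w)$. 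I would then generalise Theorem \ref{thm:TheFormula} to a two-parameter eigenvalue formula expressed through the quantum integers $(l_1)_{\lambda_y}$, $(l_2)_{\lambda_z}$ and the corresponding $A$- and $B$-constants, using the inverse relation $\z = \lambda^{-1}$ of Proposition \ref{prop:selfconjLeibdual} (which, as noted in \textsection\ref{section:QHSPs}, persists in the weak Gelfand setting once self-conjugacy is established). Extending the solidity inequalities of \textsection\ref{subsection:solidityDDSTs} to this bivariate setting, solidity forces each eigenvalue sequence to tend to infinity with finite multiplicity, and then Corollary \ref{cor:decomposingCTEG} gives $\s_P(\DEL_{\adel}) \to \infty$.

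It therefore remains to verify order II solidity for each family in Conjecture \ref{conj:weakGelfClass} (the quantum quadrics $\O_q(\mathbf{Q}_n)$, the $2$-plane Grassmannians $\O_q(\mathrm{Gr}_{n,2})$, and the low-rank coincidences), carried out exactly as in \textsection\ref{subsection:OrderICPN}--\textsection\ref{section:CPN} for $\O_q(\ccpn)$: one exhibits explicit highest weight generators of $\adel\Om^{(0,\bullet)}$, computes their Leibniz constants and the associated $A$-, $B$-constants, and checks the defining inequalities. This is where the computations of \cite{DOW} enter, and it is the main obstacle. Unlike the single-variable order I case, the two-variable eigenvalue formula involves interacting $\lambda_y$- and $\lambda_z$-quantum-integer factors, so establishing non-vanishing of the relevant constants, and hence divergence of every sequence, uniformly across all the families is genuinely delicate; moreover the weak Gelfand classification itself is only conjectural for the exceptional cases (Conjecture \ref{conj:ExceptionWeakGel}), and the K\"ahler hypothesis holds only for $q$ off a finite set.

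Finally, non-triviality of the $K$-homology classes follows formally. By \cite[Theorem 5.4]{DOS1} one has $\mathrm{Index}\big(\frak{b}(D_{\del})\big) = \mathrm{Index}\big(\frak{b}(D_{\adel})\big) = \chi_{\adel}$. The irreducible quantum flag manifolds lie in the noncommutative Fano setting, so the noncommutative Kodaira vanishing theorem \cite{OSV} gives $H^{(0,k)} = 0$ for all $k > 0$, while connectedness of the \hk calculus gives $H^{(0,0)} = \bC 1$. Hence $\chi_{\adel} = \dim_{\bC} H^{(0,0)} = 1 \neq 0$, so both bounded transforms have non-zero index and the associated $K$-homology classes are non-trivial.
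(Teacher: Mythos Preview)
The statement you are addressing is a \emph{conjecture} in the paper, not a theorem; the paper provides no proof. What precedes the conjecture in \textsection\ref{Subsubsection:Conjs} is an informal discussion of how the order I framework of \textsection\ref{section:OrderI} should generalise to an order II setting, together with the remark that ``preliminary investigations \cite{DOW} strongly suggest that solidity holds''. Your proposal is a faithful and well-organised expansion of exactly this intended strategy: upgrade to an order II framework using two spherical generators $y,z$ (Conjecture \ref{conj:weakSpherical}), produce a bivariate analogue of Theorem \ref{thm:TheFormula}, define and verify the appropriate solidity conditions case by case, and then read off the index via Kodaira vanishing. So in approach you match the paper precisely.

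You are also honest about the genuine gap, and it is worth naming it sharply: your argument is conditional on several unproven inputs. The classification of weak Gelfand type (Conjecture \ref{conj:weakGelfClass}) and the two-generator characterisation (Conjecture \ref{conj:weakSpherical}) are both open in the paper; the case-by-case solidity computations are deferred to \cite{DOW}, which is in preparation; and for the non-Grassmannian families neither connectedness of the \hk calculus nor the Fano property (hence Kodaira vanishing) is established in the cited literature---the paper's remark after Theorem \ref{thm:cpncohomology} records these only for the quantum Grassmannians. Thus your proposal is a correct outline of the anticipated proof, but it is not, and at the paper's time of writing cannot be, a proof---which is precisely why the authors state the result as a conjecture.
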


In the above conjecture we restrict to those irreducible quantum flag manifolds of weak Gelfand type, reflecting the fact that the approach of this paper naturally extends to this subfamily of spaces. However, there is no reason to suspect that the compact resolvent condition does not hold for all the irreducible quantum flag manifolds. Verifying the condition in this more general setting requires a more formal approach, based on the  classical proof of the compact resolvent condition for general Dirac operators \cite[\textsection 4]{FriedrichDirac}. This is at present being  undertaken in the setting  of noncommutative Sobolev spaces \cite{BBSobolev}, and so, we are motivated to reformulate  a conjecture originally presented in \cite{MMF3}. 

\begin{conj}
For every irreducible quantum flag manifold $\O_q(G/L_S)$, a covariant K\"ahler structure for the \hk calculus of $\O_q(G/L_S)$ is given by the pair  $(\Om^{(\bullet,\bullet)},\k)$.
Moreover,  a Dirac--Dolbeault pair of spectral triples is given by 
\begin{align*}
\Big(\O_q(G/L_S),L^2(\Om^{(\bullet,0)}), D_{\del}\Big), & & \Big(\O_q(G/L_S),L^2(\Om^{(0,\bullet)}), D_{\adel}\Big),
\end{align*}
and the associated $K$-homology class of each spectral triple is non-trivial.
\end{conj}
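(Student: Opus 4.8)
The plan is to establish the three assertions in turn, reducing each to results that are either already available or currently being developed. First I would settle the K\"ahler claim. By Matassa's theorem \cite[Theorem 5.10]{MarcoConj} the pair $(\Om^{(\bullet,\bullet)},\k)$ is a covariant K\"ahler structure for the \hk calculus of $\O_q(G/L_S)$ for all $q$ in $\bR_{>0}$ outside a finite, non-transcendental set $F$, while \cite[Lemma 10.10]{DOS1} supplies an open interval $I \sseq \bR$ about $1$ on which the associated metric $g_\k$ is positive definite. Choosing $I$ small enough that $I \cap F = \emptyset$, one concludes that $\O_q(G/L_S)$ equipped with its \hk calculus is a CQH-K\"ahler space for every $q \in I$, and hence, by Proposition \ref{prop:BCCQHHS}, that the pair $\big(\O_q(G/L_S),L^2(\Om^{(\bullet,0)}),D_\del\big)$, $\big(\O_q(G/L_S),L^2(\Om^{(0,\bullet)}),D_\adel\big)$ is a Dolbeault--Dirac pair of BC-triples. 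What remains is the compact resolvent condition and the index computation.

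For the compact resolvent condition I would pursue two complementary routes. The first adapts the order I machinery of \textsection 5 to an \emph{order II} framework covering the weak Gelfand type members of the family: using Kr\"amer's spherical weights \cite{KramerSpherical} one presents $\adel\Om^{(0,k)}$ as a sum of irreducible $U_q(\frak{g})$-modules of the form $U_q(\frak{g})\,\Pi_\adel\big(y^{l_1} z^{l_2} \cdots \w\big)$, where $y,z,\dots$ are highest weight vectors realising the finitely many spherical generators, $\w$ ranges over a finite set of minimal-$K$-type forms, and $\Pi_\adel$ is the Hodge projection onto $\adel\Om^{(0,\bullet)}$. A generalisation of the eigenvalue formula of Theorem \ref{thm:TheFormula} then expresses the $\DEL_\adel$-eigenvalue of such a generator as a product of quantum-integer factors of the shape $\big(1 + A\,(l)_{\lambda}\big)\big(1 + B\,(l)_{\lambda\inv}\big)$, and one verifies a solidity-type condition — the relevant Leibniz constants being at least $1$, and the constants $A,B$ avoiding the critical values $0$ and $\lambda^{\pm 1} - 1$ — which forces $\s_P(\DEL_\adel) \to \infty$. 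Together with Hodge decomposition and Corollary \ref{cor:decomposingCTEG}, this yields the spectral triples exactly as in Theorem \ref{thm:THETHEOREM}.

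The second, fully general route mimics the classical proof of the ellipticity of the Dolbeault--Dirac operator \cite[\textsection 4]{FriedrichDirac}: one would establish a G{\aa}rding-type a priori estimate for $D_\adel$ together with a Rellich--Kondrachov-type compact embedding of the relevant noncommutative Sobolev spaces into $L^2(\Om^{(0,\bullet)})$ — precisely the programme of \cite{BBSobolev} — whence $(1+D_\adel^2)^{-1} \in \bK\big(L^2(\Om^{(0,\bullet)})\big)$ at once. The main obstacle lies here, in the irreducible quantum flag manifolds that are \emph{not} of weak Gelfand type: then $\adel\Om^{(0,\bullet)}$ carries genuine comodule multiplicities, so $\DEL_\adel$ no longer acts as a scalar on each irreducible but as a finite matrix on each isotypic component, and the clean ladder argument of the first route breaks down; one is therefore committed to the analytic route, whose crux — the noncommutative elliptic regularity estimate and the associated compact Sobolev embedding — is not yet available in full generality.

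Finally, for the non-triviality of the $K$-homology classes I would appeal to \cite[Theorem 5.4]{DOS1}, which identifies $\mathrm{Index}\big(\frak{b}(D_\del)\big) = \mathrm{Index}\big(\frak{b}(D_\adel)\big)$ with the anti-holomorphic Euler characteristic $\chi_\adel$. Every irreducible quantum flag manifold is noncommutative Fano, so the noncommutative Kodaira vanishing theorem \cite{OSV} gives $H^{(0,k)} = 0$ for all $k > 0$, while connectedness (known for the quantum Grassmannians by \cite{KMOS}, and to be established in general) together with Corollary \ref{cor:harmonictoclass} gives $H^{(0,0)} = \bC 1$; hence $\chi_\adel = 1 \neq 0$, and both $K$-homology classes are non-trivial.
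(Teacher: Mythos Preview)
The statement you are attempting to prove is presented in the paper as a \emph{conjecture}, not a theorem; the paper offers no proof of it. What the paper does in \textsection 7.4.4 is explain why the conjecture is plausible and sketch the two avenues along which a proof might eventually be found: an extension of the order~I framework to an ``order~II'' setting for the weak Gelfand type cases, and a Sobolev-space/elliptic-regularity approach in the spirit of \cite{BBSobolev} for the general case. Your proposal is not a proof but a research programme that reproduces exactly these two suggested routes, together with the cohomological argument for the index via \cite{OSV} and \cite[Theorem~5.4]{DOS1}.

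You are candid about the gaps, and they are genuine. For the first route, the order~II analogue of Theorem~\ref{thm:TheFormula} does not yet exist in the paper: once $\Om^{(0,\bullet)}$ is no longer multiplicity-free, the constants $A_{z,\w}$ and $B_{z,\w}$ of Lemma~\ref{lem:ABscalars} are not even well defined as stated, the Hodge projection $\Pi_{\adel}$ intervenes nontrivially, and the ``solidity'' conditions you would need to check have not been formulated, let alone verified, beyond $\O_q(\mathbb{CP}^{n-1})$. For the second route, the noncommutative G{\aa}rding inequality and Rellich--Kondrachov embedding are precisely the missing ingredients that make this a conjecture rather than a theorem; invoking \cite{BBSobolev} as work in progress does not close the gap. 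Finally, connectedness and the Fano property are established only for the quantum Grassmannians in \cite{KMOS,OSV}, so even the index computation is incomplete for the $B$, $C$, $D$ and exceptional series. In short, your plan aligns with the paper's own outlook, but neither you nor the paper has a proof.
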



\appendix

\section{CQGAs, Quantum Homogeneous Spaces, and Frobenius Reciprocity} \label{app:CQGA}

In this appendix we recall the basics of cosemisimple Hopf algebras, compact matrix quantum groups algebras, quantum homogeneous spaces, as well as some natural compatibility requirements between them. We then recall the version of Takeuchi's categorical equivalence for quantum homogeneous spaces most suited to our purposes. Finally, we present an extension of Frobenius reciprocity to the setting of quantum homogeneous spaces.

\subsection{Hopf Algebras and  CQGAs}
All algebras are assumed to be unital and defined over $\mathbb{C}$. All unadorned tensor products are defined over $\mathbb{C}$. The symbols  $A$ and $H$ will denote  Hopf  \algn s with comultiplication $\DEL$, counit $\e$, antipode $S$, unit $1$, and multiplication $m$. 
We use Sweedler notation throughout, and denote $a^+ := a - \e(a)1$, for $a \in A$, and $A^+ = A \cap \ker(\e)$.

For any left $A$-comodule $(V,\DEL_L)$, its space of {\em matrix elements} is the sub-co\alg
\begin{align*}
C(V) : = \spn_\bC\{(\id \oby f)\DEL_L(v) \,|\, f \in \mathrm{Hom}_{\bC}(V,\bC), v \in V\} \sseq A.
\end{align*}

The notion of cosemisimplicity for a Hopf algebra will be essential in  the paper and all Hopf \algs  will be assumed to be cosemisimple. We present  three equivalent formulations of the definition (a proof of their equivalence can be found in \cite[\textsection 11.2.1]{KSLeabh}).

\begin{defn}
A Hopf algebra $A$ is called  {\em cosemisimple} if it satisfies the following three equivalent conditions:
\begin{enumerate}
\item There is an isomorphism $A \simeq \bigoplus_{V\in \wh{A}} C(V)$, where $\wh{A}$ denotes the  equivalence classes of left $A$-comodules.
\item The abelian category $^A\mathrm{Mod}$ of left $A$-comodules  is semisimple.
\item  There exists a unique linear map $\haar:A \to \bC$, which we call the {\em Haar measure}, such that $\haar(1) = 1$, and 
\begin{align*}
(\id \oby \haar) \circ \DEL(a) = \haar(a)1, & & (\haar \oby \id) \circ \DEL(a) = \haar(a)1.
\end{align*}
\end{enumerate}
\end{defn}

While the assumption of cosemisimplicity is enough for most of our requirements, when discussing positive definiteness we need the following stronger structure introduced in \cite{KoornDijk}.

\begin{defn}
A {\em compact matrix quantum group algebra}, or a {\em CMQGA}, is a finitely generated cosemisimple Hopf $*$-algebra $A$ such that  $\haar(a^*a) > 0$, for all non-zero $a \in A$.
\end{defn}

It is important to note that every CMQGA admits a (not necessarily unique) $C^*$-algebraic completion to a compact matrix quantum group in the sense of Woronowicz \cite{WoroCQPGs}. Moreover, every compact matrix quantum group contains a dense CMQGA \cite{KoornDijk}.

\subsection{CMQGA-Homogeneous Spaces} 


A left  {\em $A$-comodule algebra} $P$ is a comodule  which is also an algebra such that the comodule structure map $\DEL_L:P \to A \otimes P$ is an algebra map. Equivalently, it can be defined as a monoid object in $^A \mathrm{Mod}$, the category of left $A$-comodules. 
For a left $A$-comodule $V$ with structure map $\DEL_R$, we say that an element $v \in V$ is {\em coinvariant} if $\DEL_L(v) = 1 \otimes v$. We denote the subspace of all $A$-coinvariant elements by $\,^{\co(A)}V$, and call it the {\em coinvariant subspace} of the coaction. We use the analogous conventions for right comodules. 
\begin{defn}
A {\em homogeneous} right $H$-coaction on $A$ is a coaction of the form \linebreak $(\id \oby \pi)  \circ \DEL$, where $\pi: A \to H$ is a surjective Hopf \alg map. A  {\em quantum homogeneous space}  $B :=A^{\co(H)}$ is the coinvariant subspace of such a coaction. 
\end{defn}
As is easily verified, every quantum homogeneous space $B := A^{\co(H)}$ is a sub\alg of $A$. Moreover, the coaction of $A$ restricts to a left $A$-coaction  $\DEL_L: B \to A \otimes B$ giving it the structure of a left $A$-comodule \algn. We finish with a convenient, and natural, definition, which identifies the class of homogeneous spaces we will concern ourselves with in this paper.
\begin{defn}
A {\em CMQGA-homogeneous space}  is a quantum homogeneous space given by  a Hopf $*$-algebra projection $\pi:A \to H$, such that $A$ and $H$ are CMQGA's.
\end{defn}

\subsection{Takeuchi's Equivalence}

We  briefly recall Takeuchi's equivalence \cite{Tak}, or more precisely a special case of the bimodule version of Takeuchi's equivalence. For a more detailed presentation we refer the reader to \cite{Tak}, \cite{HK}, or \cite{MMF2, MMF3}. Throughout this subsection, $A$ and $H$ will be Hopf algebras, $\pi:A \to H$ a Hopf algebra map, and $B = A^{\text{co}(H)}$ the associated quantum homogeneous space.

\begin{defn}
The category $^A_B\mathrm{Mod}_0$ has as objects  $B$-bimodules $\F$, endowed with a left $A$-comodule structure $\DEL_L:\F \to A \otimes \F$, such that 
\begin{enumerate}  

\item $\F B^+ \sseq B^+ \F$

\item $\DEL_L(b f b') =  \DEL_L(b) \DEL_L(f) \DEL_L(b'),  ~~~~ \text{for all ~} b,b' \in B, f \in \F$.

\end{enumerate}
The morphisms in $^A_B \mathrm{Mod}_0$ are those  $B$-bimodule homomorphisms which are also homomorphisms of left $A$-comodules. The usual bimodule tensor product $\otimes_B$ can be used to give the category  a monoidal structure defined by
\begin{align*}
\DEL_L:\F \oby_B {\mathcal D} \to A \otimes  \F \otimes_B {\mathcal D}, & & f \otimes d \mapsto f_{(-1)}d_{(-1)} \otimes f_{(0)} \otimes_B d_{(0)},
\end{align*}
for $\F,\mathcal{D} \in \, ^A_B \mathrm{Mod}_0$.
\end{defn}

\begin{defn}
The category $^H\mathrm{Mod}$ has  left $H$-comodules as objects,  and  right $H$-comodule maps as morphisms. The usual tensor product of comodules $\otimes$ now endows the category with a monoidal structure
\begin{align*}
\DEL_L:V \oby W \to A \otimes  V \otimes W, & & v \otimes w \mapsto v_{(-1)}w_{(-1)} \otimes v_{(0)} \otimes w_{(0)}.
\end{align*}
\end{defn}


If $\F \in \, ^A_B \mathrm{Mod}_0$, then $\F/(B^+\F)$ becomes an object in $^H\mathrm{Mod}$ with the left $H$-coaction 
\bal \label{comodstruc0}
\DEL_L[f] = \pi(f_{(-1)}) \oby [f_{(0)}], & & f \in \F,
\eal
where $[f]$ denotes the coset of $f$ in  $\F/(B^+\F)$. A functor $\Phi: \,^A_B\mathrm{Mod}_0 \to \,^H\mathrm{Mod}$  is now defined as follows:  
$\Phi(\F) :=  \F/(B^+\F)$, and if $g : \F \to {\mathcal D}$ is a morphism in $^A_B\mathrm{Mod}_0$, then $\Phi(g):\Phi(\F) \to \Phi({\mathcal D})$ is the map to which $g$ descends on $\Phi(\F)$.

If $V \in \,^H\mathrm{Mod}$, then the {\em cotensor product} of $A$ and $V$, defined by
\begin{align*}
A \coby V := \ker(\DEL_R \oby \id - \id \oby \DEL_L: A\oby V \to A \oby H \oby V),
\end{align*}
becomes an object in $^A_B\mathrm{Mod}_0$ by defining a $B$-bimodule structure
\begin{align} \label{rightmaction}
b \Big(\sum_i a^i \oby v^i\Big)  := \sum_i b a^i \oby v^i, & & \Big(\sum_i a^i \oby v^i\Big) b := \sum_i a^i b \oby v^i,
\end{align}
and a left $A$-coaction  
$
\DEL_L\Big(\sum_i a^i \oby v^i\Big) := \sum_i a^i_{(1)} \oby a^i_{(2)} \oby v^i.
$ 
We now define a  functor ${\Psi:\,^H\mathrm{Mod} \to \, ^A_B\mathrm{Mod}_0}$ as follows: 
$
\Psi(V) := A \square_H V,
$
and if $\gamma$ is a morphism in $^H\mathrm{Mod}$, then $\Psi(\gamma) := \id \oby \gamma$.

\begin{thm}[Takeuchi's Equivalence] An adjoint equivalence of categories between   $^A_B\mathrm{Mod}_0$ and  $^H \mathrm{Mod}$  is given by the functors $\Phi$ and $\Psi$ and the natural isomorphisms
\begin{align*}
\counit:\Phi \circ  \Psi(V) \to V,   & & \Big[\sum_i a^i \oby v^i\Big] \mto \sum_{i} \e(a^i)v^i, \\ 
\unit: \F \to \Psi \circ \Phi(\F), & & f \mto f_{(-1)} \oby [f_{(0)}].
\end{align*} 
Moreover, the equivalence is a monoidal equivalence \wrt the the natural isomorphism
\begin{align*}
\mu: \Phi(\F) \otimes \Phi(\mathcal{D}) \to \Phi(\F \otimes_B \mathcal{D}), & & [f] \otimes [d] \mapsto [f \otimes_B d].
\end{align*}
\end{thm}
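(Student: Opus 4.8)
The plan is to verify in sequence that $\Phi$ and $\Psi$ are well-defined functors landing in the correct categories, that $\unit$ and $\counit$ are natural isomorphisms giving an adjoint equivalence, and finally that $\mu$ upgrades this to a monoidal equivalence. First I would check well-definedness of the two functors. For $\Phi$: given $\F \in {}^A_B\mathrm{Mod}_0$, the quotient $\F/(B^+\F)$ carries the coaction (\ref{comodstruc0}); one must check this is well-defined on cosets, which uses condition (1), namely $\F B^+ \sseq B^+ \F$, together with $\pi(b^+_{(1)}) \oby b^+_{(2)}$ lying in $H^+ \oby B$, so that $B^+\F$ is mapped into $H \oby B^+\F$. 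Coassociativity and counitality of the quotient coaction are inherited. For $\Psi$: given $V \in {}^H\mathrm{Mod}$, one checks that $A \coby V$ is closed under the $B$-bimodule actions (\ref{rightmaction}) and the coaction $\DEL_L$, that condition (1) $\Psi(V)B^+ \sseq B^+\Psi(V)$ holds (this is the subtle point — it follows from the defining equalizer condition for the cotensor product together with $\pi|_B = \e 1$, so that left and right $B$-multiplication agree modulo the cotensor relation), and that condition (2) on $\DEL_L$ holds. Functoriality of both on morphisms is immediate since $\Phi(g)$ is just the descent of $g$ to quotients and $\Psi(\gamma) = \id \oby \gamma$.

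Next I would establish that $\unit$ and $\counit$ are well-defined morphisms in the respective categories, and that they are mutually inverse in the sense required of an adjoint equivalence (the triangle identities). For $\counit: \Phi\circ\Psi(V) \to V$: an element of $\Psi(V) = A\coby V$ has the form $\sum_i a^i \oby v^i$ with $\sum_i a^i_{(1)} \oby \pi(a^i_{(2)}) \oby v^i = \sum_i a^i \oby v^i_{(-1)} \oby v^i_{(0)}$; applying $\e \oby \id$ to the middle leg and using $\e\circ\pi = \e$ shows $\sum_i \e(a^i)v^i_{(-1)} \oby v^i_{(0)}$ recovers the coaction, so $\counit$ is an $H$-comodule map, and it descends to the quotient by $B^+\Psi(V)$ since $\e$ kills $B^+$. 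Its inverse sends $v \mapsto [\, 1 \oby v\,]$. For $\unit: \F \to \Psi\circ\Phi(\F)$, $f \mapsto f_{(-1)} \oby [f_{(0)}]$: one checks the image lies in $A \coby \Phi(\F)$ using coassociativity of $\DEL_L$ on $\F$ and the definition (\ref{comodstruc0}) of the quotient coaction; that $\unit$ is $A$-colinear is coassociativity again; that it is a $B$-bimodule map uses condition (2) in the definition of ${}^A_B\mathrm{Mod}_0$ together with $\DEL_L(b) \in A \oby B$. Bijectivity of $\unit$ is the heart of the matter: surjectivity and injectivity both rely on cosemisimplicity of $H$ (equivalently, faithful flatness), which guarantees the cotensor product behaves well; the inverse is $\sum_i a^i \oby [f^i] \mapsto \sum_i \e(a^i) f^i$ suitably interpreted, and one verifies $\unit$-then-this and this-then-$\unit$ are identities using the coinvariance relation and the counit axiom. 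Naturality of both transformations in $\F$ (resp.\ $V$) is a routine diagram chase. The triangle identities $\counit_{\Phi\F} \circ \Phi(\unit_\F) = \id$ and $\Psi(\counit_V)\circ \unit_{\Psi V} = \id$ then follow by direct computation on elements.

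Finally, for the monoidal statement I would check that $\mu: \Phi(\F)\otimes\Phi(\mathcal D) \to \Phi(\F\otimes_B\mathcal D)$, $[f]\oby[d] \mapsto [f\otimes_B d]$, is well-defined (i.e.\ kills $B^+\F\otimes\Phi(\mathcal D)$ and $\Phi(\F)\otimes B^+\mathcal D$ — the first using $\F B^+ \sseq B^+\F$ again, the second being immediate), that it is an $H$-comodule isomorphism (colinearity is a direct comparison of the two coaction formulas; bijectivity again invokes faithful flatness / cosemisimplicity of $H$), and that it satisfies the hexagon/associativity coherence and unit coherence with the obvious unit object $\bC$. I expect the main obstacle to be the proof that $\unit$ is an isomorphism — concretely, surjectivity of $\unit$ and injectivity of $\counit$ — since this is exactly where the cosemisimplicity (faithful flatness) hypothesis on $H$ is indispensable and where a genuine argument, rather than a formal manipulation, is needed. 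In practice I would not reprove this in full but cite \cite{Tak} and \cite{HK}, contenting myself with verifying the bimodule-level compatibilities (conditions (1) and (2), monoidality of $\mu$) that are specific to the ${}^A_B\mathrm{Mod}_0$ formulation used in this paper, and referring to the references for the faithfully-flat core of the equivalence.
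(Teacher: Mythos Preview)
The paper does not prove this theorem at all: it is stated as a known result and the reader is referred to \cite{Tak}, \cite{HK}, and \cite{MMF2, MMF3} for details. Your proof plan is correct and in fact considerably more detailed than anything the paper offers; your closing remark --- that in practice one should cite \cite{Tak} and \cite{HK} for the faithfully-flat core and only verify the compatibilities specific to the ${}^A_B\mathrm{Mod}_0$ formulation --- is exactly the approach the paper takes, except the paper omits even those verifications.
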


We define the {\em dimension} of an object $\F \in \,^A_B\mathrm{Mod}_0$ to be the vector space dimension of $\Phi(\F)$. Note that by cosemisimplicity of $A$, the category $^H\mathrm{Mod}$ is  semisimple, and so, $^A_B\mathrm{Mod}_0$ must also be semisimple.

\subsection{Frobenius Reciprocity for Quantum Homogeneous Spaces}

In this subsection we present a direct Hopf algebraic generalisation of  Frobenius reciprocity for equivariant vector bundles over a homogeneous space.  The proof carries over from the  classical situation without difficulty, so we will not state it. Moreover, there exists  a large number of related formulations of this result in the literature. For example,  it is established in the compact quantum group setting in \cite{PalFrobCQGs}, and in the general setting of coring comodules in \cite[\textsection 22.12]{BrzezWisbauer}.

The result is stated for two  Hopf algebras $A$ and $H$, and a surjective Hopf map $\pi:A \to H$. For $U,W$ two right $A$-comodules,  we denote by $\mathrm{Hom}^A(U,W)$ the space of right $A$-comodule maps from $U$ to $W$. Moreover, for any right $H$-comodule $V$, we denote by $\mathrm{Hom}^H(U,V)$ the space of right $H$-comodule maps from $U$ to $V$, with respect to the right $H$-comodule structure induced on $U$ by $\pi$.

\begin{lem}[Frobenius Reciprocity]  \label{prop:Frobenius} For   $U  \in  \,^A\mathrm{Mod}$ and $V  \in ^H \mathrm{Mod}$, it holds that 
\begin{align*}
\dim_{\mathbb{C}}\!\left(\text{\em Hom}^A(U, A\, \square_H V)\right) = \dim_{\mathbb{C}}\left(\text{\em Hom}^H(U,V)\right).
\end{align*}
\end{lem}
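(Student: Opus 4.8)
The plan is to prove the stronger assertion that there is a natural linear isomorphism
\[
\mathrm{Hom}^A(U, A\coby V) \;\cong\; \mathrm{Hom}^H(U,V),
\]
after which the claimed equality of dimensions is immediate. Conceptually this is the adjunction expressing that coinduction $\Psi(-) = A\coby(-)$ is right adjoint to the corestriction functor ${}^A\mathrm{Mod} \to {}^H\mathrm{Mod}$ along $\pi$ (here $U \in {}^A\mathrm{Mod}$ is regarded as a left $H$-comodule via $(\pi \otimes \id)\circ\DEL_L$); I would establish it directly by writing down the unit and counit and checking, with Sweedler notation and coassociativity, that they are mutually inverse. One may alternatively invoke the coring--comodule version of this adjunction, as in \cite[\textsection 22.12]{BrzezWisbauer}, but the self-contained argument is short.

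Concretely, for $\phi \in \mathrm{Hom}^A(U, A\coby V)$, using $A\coby V \sseq A \otimes V$, I set $\Theta(\phi) := (\e \otimes \id_V)\circ \phi : U \to V$; and for $\psi \in \mathrm{Hom}^H(U,V)$ I set $\Xi(\psi): U \to A \otimes V$, $u \mapsto u_{(-1)} \otimes \psi(u_{(0)})$, where $u_{(-1)} \otimes u_{(0)}$ denotes the left $A$-coaction on $U$. The first point to verify is that $\Xi(\psi)$ genuinely takes values in $A\coby V = \ker(\DEL_R \otimes \id - \id \otimes \DEL_L)$: expanding $(\DEL_R \otimes \id)\bigl(u_{(-1)} \otimes \psi(u_{(0)})\bigr)$ and $(\id \otimes \DEL_L)\bigl(u_{(-1)} \otimes \psi(u_{(0)})\bigr)$ and using coassociativity to rewrite $u_{(-1)} \otimes u_{(0)(-1)} \otimes u_{(0)(0)} = u_{(-1)(1)} \otimes u_{(-1)(2)} \otimes u_{(0)}$, the vanishing of the difference reduces exactly to $H$-colinearity of $\psi$. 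A second short computation, again only coassociativity, shows $\Xi(\psi)$ is left $A$-colinear, so $\Xi(\psi) \in \mathrm{Hom}^A(U, A\coby V)$; and the counit identity $\e(u_{(-1)})u_{(0)} = u$ gives $\Theta\bigl(\Xi(\psi)\bigr) = \psi$ at once.

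For the reverse composite I would use the observation that left $A$-colinearity of $\phi$ together with the counit axiom forces $\phi(u) = u_{(-1)} \otimes \Theta(\phi)(u_{(0)})$ for every $u \in U$, obtained by applying $\id \otimes \e \otimes \id$ to the colinearity identity for $\phi$. This is precisely the statement $\Xi\bigl(\Theta(\phi)\bigr) = \phi$; moreover, combined with the membership $\phi(u) \in A\coby V$, the same formula yields $\DEL_L\bigl(\Theta(\phi)(u)\bigr) = \pi(u_{(-1)}) \otimes \Theta(\phi)(u_{(0)})$, so $\Theta(\phi)$ is indeed an $H$-comodule map and $\Theta$ lands in $\mathrm{Hom}^H(U,V)$ as required. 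With both composites equal to the identity, $\Theta$ and $\Xi$ are mutually inverse bijections, whence $\dim_{\mathbb{C}}\mathrm{Hom}^A(U, A\coby V) = \dim_{\mathbb{C}}\mathrm{Hom}^H(U,V)$. I do not anticipate any real obstacle here: every step is a one-line diagram chase in Sweedler notation, and the only thing demanding care is keeping track of which coaction ($A$- or $H$-) acts and on which side, and in particular confirming that $\Xi(\psi)$ lies in the cotensor product and not merely in $A \otimes V$.
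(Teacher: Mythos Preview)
Your argument is correct: the explicit unit/counit maps $\Xi$ and $\Theta$ you write down are the standard ones realising coinduction $A\coby(-)$ as right adjoint to corestriction along $\pi$, and each verification you sketch goes through exactly as you describe. The paper in fact omits the proof entirely, remarking only that it carries over from the classical situation without difficulty and pointing to \cite{PalFrobCQGs} and \cite[\textsection 22.12]{BrzezWisbauer} for related formulations; your direct Sweedler-notation argument is precisely the kind of short self-contained proof the authors had in mind but chose not to include.
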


\section{Drinfeld--Jimbo  Quantum Groups} \label{APP:secnoumi}

In this appendix we recall some basic material about semisimple complex Lie algebras $\frak{g}$ and their associated Drinfeld--Jimbo quantised enveloping algebras $U_q(\frak{g})$. We also discuss their type $1$ representation theory, along with the associated quantum coordinate algebras $\O_q(G)$. Throughout, where basic proofs or details are omitted we refer the reader to \cite[\textsection 6,\textsection 7,\textsection 9]{KSLeabh}. We then specialise to the $A$-series  quantum coordinate algebra $\O_q(SU_n)$. We recall its explicit FRT-presentation and  identify it with the abstract quantum coordinate algebra presentation via a dual pairing of Hopf algebras. We finish with some standard material on Young diagrams necessary for an unambiguous presentation of the branching laws of \textsection \ref{section:CPN}.

\subsection{Quantum Integers} \label{app:quantumintegers}

Quantum integers are ubiquitous in the study of quantum groups. In this paper  they play a significant role in describing the spectrum of noncommutative Dolbeault--Dirac operators. There are two different but related formulations for quantum integers, so we take care to clarify our choice of conventions.

For $q \in \bC$, and $m \in \mathbb{N}$, the {\em quantum $q$-integer} $(m)_q$  is  the complex number
\begin{align*}
(m)_q := 1 + q + q^2 + \cdots + q^{m-1}.
\end{align*}
Note that when $q \neq 1$, we have the identity
\begin{align*}
(m)_q  = \frac{1-q^m}{1-q}.
\end{align*}

We contrast this with the alternative (perhaps more standard) version of quantum integers: 
\begin{align*}
[l]_q := q^{-m+1} + q^{-m+3} + \cdots q^{m-3} + q^{m-1}.
\end{align*}
It is instructive to note that the two conventions are related by the identity
\begin{align*}
[m]_q = q^{1-m} (m)_{q^2}.
\end{align*}
The second version of quantum integers was used in \cite{MMF3}, but will {\em not} be used in this paper. 
Instead, we adopt the first formulation, it being the one  which arises naturally  in our spectral calculations, as is most readily evidenced by Corollary \ref{cor:QLEIBNIZ}.

\subsection{Drinfeld--Jimbo Quantised Enveloping Algebras}

Let $\frak{g}$ be a finite-dimensional complex simple Lie algebra of rank $r$. We fix a Cartan subalgebra $\frak{h}$ with corresponding root system $\Delta \sseq \frak{h}^*$, where $\frak{h}^*$ denotes the linear dual of $\frak{h}$. Let $\Delta^+$ be a choice of positive roots, and 
let $\Pi(\frak{g}) = \{\alpha_1, \dots, \alpha_r\}$ be the corresponding set of simple roots. Denote by $(\cdot,\cdot)$ the symmetric bilinear form induced on $\frak{h}^*$ by the  Killing form of $\frak{g}$, normalised so that any shortest simple root $\a_i$ satisfies $(\a_i,\a_i) = 2$. The {\em coroot} $\alpha_i^{\vee}$ of a simple root $\a_i$ is defined by
\begin{align*}
\alpha_i^{\vee} := \frac{\a_i}{d_i} =  \frac{2\a_i}{(\a_i,\a_i)}, & & \text{ where } d_i := \frac{(\a_i,\a_i)}{2}.
\end{align*}
The Cartan matrix $(a_{ij})_{ij}$ of $\frak{g}$ defined by 
$
a_{ij} := \big(\alpha_i^{\vee},\alpha_j\big).
$

Let  $q \in \bR$ such that  $q \neq -1,0,1$, and denote $q_i := q^{d_i}$. The {\em quantised enveloping \algn}  $U_q(\frak{g})$ is the  \nc associative  \alg  generated by the elements   $E_i, F_i$, and $K_i, K^{-1}_i$, for $ i=1, \ldots, r$,  subject to the relations 
\begin{align*}
 K_iE_j =  q_i^{a_{ij}} E_j K_i, ~~~~  K_iF_j= q_i^{-a_{ij}} F_j K_i, ~~~~  K_i K_j = K_j K_i,  ~~~~ K_iK_i^{-1} = K_i^{-1}K_i = 1,\\
  E_iF_j - F_jE_i  = \d_{ij}\frac{K_i - K\inv_{i}}{q_i-q_i^{-1}}, ~~~~~~~~~~~~~~~~~~~~~~~~~~~~~~~~~~~~~~~~~
\end{align*}
along with the quantum Serre relations 
\begin{align*}
  \sum\nolimits_{s=0}^{1-a_{ij}} (-1)^s  \begin{bmatrix} 1 - a_{ij} \\ s \end{bmatrix}_{q_i}
   E_i^{1-a_{ij}-s} E_j E_i^s = 0,\quad \textrm{ for }  i\neq j,\\
  \sum\nolimits_{s=0}^{1-a_{ij}} (-1)^s \begin{bmatrix} 1 - a_{ij} \\ s \end{bmatrix}_{q_i}
   F_i^{1-a_{ij}-s} F_j F_i^s = 0,\quad \textrm{ for }  i\neq j,
\end{align*}
where we have used the $q$-binomial coefficient
\begin{align*}
\begin{bmatrix} n \\ r \end{bmatrix}_q := \frac{[n]_q!}{[r]_q! \, [n-r]_q!}.
\end{align*}
A Hopf \alg structure is defined  on $U_q(\frak{g})$ by setting
\begin{align*}
\DEL(K_i) = K_i \oby K_i, ~~  \DEL(E_i) = E_i \oby K_i + 1 \oby E_i, ~~~ \DEL(F_i) = F_i \oby 1 + K_i\inv  \oby F_i~~~~\\
 S(E_i) =  - E_iK_i\inv,    ~~ S(F_i) =  -K_iF_i, ~~~~  S(K_i) = K_i\inv,
 ~~ ~~\e(E_i) = \e(F_i) = 0, ~~ \e(K_i) = 1.     
\end{align*}
A Hopf $*$-algebra structure, called the {\em compact real form}, is defined by
\begin{align*}
K^*_i : = K_i, & & E^*_i := K_i F_i, & &  F^*_i  :=  E_i K_i \inv. 
\end{align*}
Finally, we denote by $U_q(\frak{n}_+)$, and $U_q(\frak{n}_-)$, the unital subalgebras of $U_q(\frak{g})$ generated by the elements $E_1, \dots, E_r$, and $F_1, \dots, F_r$, respectively.

\subsection{Type 1 Representations}

The set of {\em fundamental weights}  $\{\varpi_1, \dots, \varpi_r\}$  of $\frak{g}$ is the dual basis of simple coroots $\{\a_1^{\vee},\dots, \a_r^{\vee}\}$, which is to say  
\begin{align*}
\big(\alpha_i^{\vee}, \varpi_j\big) = \delta_{ij}, & & \text{ for all } i,j = 1, \dots, r.
\end{align*}
We denote by ${\mathcal P}$ the {\em integral weight lattice} of $\frak{g}$, which is to say the $\bZ$-span of the fundamental weights. Moreover, ${\mathcal P}^+$ denotes the cone of  {\em dominant integral weights}, which is to say the $\bN_0$-span of the fundamental weights. 

A $U_q(\frak{g})$-module  $V$ is irreducible if and only if it is of the form $U_q(\frak{g})z$, for $z$ a highest weight vector. Moreover, the space of highest weight elements of any irreducible module is necessarily one-dimensional. 

For each $\mu\in\mathcal{P}^+$ there exists an irreducible finite-dimensional $U_q(\frak{g})$-module  $V_\mu$, uniquely defined by the existence of a vector $v_{\mu}\in V_\mu$, which we call a {\em highest weight vector},  satisfying
\[
  E_i \triangleright v_\mu=0,\qquad K_i \triangleright v_\mu = q^{(\mu,\alpha_i)} v_\mu, \qquad
  \text{for all $i=1,\ldots,r$.}
\]
Moreover, $v_{\mu}$ is unique up to scalar multiple. We call any finite direct sum of such $U_q(\frak{g})$-representations a {\em type-$1$ representation}. In general, a vector $v\in V_\mu$ is called a \emph{weight vector} of \emph{weight}~$\mathrm{wt}(v) \in \mathcal{P}$ if
\begin{align}\label{eq:Kweight}
K_i \triangleright v = q^{(\mathrm{wt}(v), \alpha_i)} v, & & \textrm{ for all } i=1,\ldots,r.
\end{align}
Each type $1$ module $V_{\mu}$ decomposes into a direct sum of \emph{weight spaces}, which is to say, those subspaces of $V_{\mu}$ spanned by weight vectors of any given weight.
 For any such highest  weight vector $v$, we find it convenient to denote 
\begin{align*}
\mathrm{wt}(v) := \mu,   \text{~~~ and ~~~~~}  \mathrm{wt}_i(v) := \mu_i, & &\text{ where } ~~~ \mu = \sum_{i=1}^r \mu_i \varpi_i.
\end{align*}
We denote by $_{U_q(\frak{g})}\mathbf{type}_1$ the full subcategory of ${U_q(\frak{g})}$-modules whose objects are finite  sums of type-1 modules $V_\mu$, for $\mu \in \mathcal{P}^+$.  It is clear that $\,_{U_q(\frak{g})}\mathbf{type}_1$ is abelian, semisimple, and  equivalent to the category of  finite-dimensional representations of $\frak{g}$. Moreover, the Weyl character formula remains unchanged under $q$-deformation, which is to say, for any $\mu \in \mathcal{P}^+$, the dimensions of the weight spaces of the $U_q(\frak{g})$-module $V_\mu$ have the same as the dimensions as for the corresponding classical $\frak{g}$-module. Finally,  we denote by $_{U_q(\frak{g})}\mathbf{LF}_1$ the full subcategory of ${U_q(\frak{g})}$-modules whose objects are (not necessarily finite) sums of the type-1 modules. (Note that $\textbf{LF}$ stands for  {\em locally finite}.)

\subsection{Quantised Coordinate Algebras $\O_q(G)$} \label{subsection:Oq(G)}

Let $V$ be a finite-dimensional $U_q(\frak{g})$-module, $v \in V$, and $f \in V^*$, the linear dual of $V$. Consider the function 
\begin{align*}
c^V_{f,v}:U_q(\frak{g}) \to \bC, & & X \mapsto f\big(X(v)\big).
\end{align*}
The {\em coordinate ring} of $V$ is the subspace
\begin{align*}
C(V) := \text{span}_{\mathbb{C}}\{ c^V_{f,v} \,| \, v \in V, \, f \in V^*\} \sseq U_q(\frak{g})^*.
\end{align*}
In fact, we see that $C(V) \sseq U_q(\frak{g})^\circ$, and that a Hopf subalgebra of $U_q(\frak{g})^\circ$ is given by 
\begin{align*}
\O_q(G) := \bigoplus_{V \,  \in \, _{U_1(\frak{g})}\mathbf{type}_1} C(V).
\end{align*}
We call $\O_q(G)$ the {\em type-$1$ Hopf dual} of $U_q(\frak{g})$, or alternatively the {\em quantum coordinate algebra of $G$}, where $G$ is the unique connected, simply connected, complex algebraic group having $\frak{g}$ as its complex Lie algebra.

By construction $\O_q(G)$ is cosemisimple. Moreover, $\O_q(G)$  is a domain (for a proof see   \cite[Theorem I.8.9]{GoodBrown}). Dualising the compact real form of $U_q(\frak{g})$  gives a Hopf $*$-\alg structure on $\O_q(G)$, \wrt which it is a CQGA. Since every finite-dimensional irreducible representation of $U_q(\frak{g})$ is contained in some tensor
 product of fundamental representations, $\O_q(G)$  is finitely generated, and  hence  a CQMGA.

The evaluation pairing $\O_q(G) \times U_q(\frak{g}) \to \bC$ is by constructions a dual pairing of Hopf algebras. In particular it gives us a dual pairing of Hopf $*$-algebras, which is to say
\begin{align*}
\la X^*, f\ra = \ol{\la X, S(f)^*\ra}, & & \la X,f^*\ra = \ol{\la S(X)^*, f \ra}, & & \text{ for all } X \in U_q(\frak{g}), f \in \O_q(G).
\end{align*}
For any left $\O_q(G)$-comodule algebra $(V,\DEL_L)$, we can define a left $U_q(\frak{g})$-module structure on $V$ according to 
\bal \label{eqn:ComoduleToModule}
U_q(\frak{g}) \times V \to V, & & (X,v) \mapsto   \la S(X),v_{(-1)}\ra v_{(0)}.
\eal
This gives us the equivalence of categories 
\begin{align*}
^{\O_q(G)} \mathrm{Mod} \to   \, \,_{U_q(\frak{g})}\mathbf{LF}_1, && (V,\DEL_L) \mapsto (V,\tr).
\end{align*}
We will use this equivalence throughout the paper, tacitly identifying $\O_q(G)$-comodules
and $U_q(\frak{g})$-modules of  type-$1$. For any  left $\O_q(G)$-comodule algebra  $P$, a  useful result is that, for $X \in U_q(\frak{g})$, and $a,b \in P$, we have
\begin{align} \label{eqn:dualpairingproductflip}
X \tr (ab) = \big(X_{(2)} \tr a\big) \big(X_{(1)}\tr b\big).
\end{align}

\subsection{The Hopf Algebra $\O_q(SU_n)$} \label{subsection:app:SU} \label{subsection:SUN}

In this subsection, we recall the well-known  FRT-presentation of $\O_q(SU_{n+1})$, see \cite{FRT} or \cite[\textsection9]{KSLeabh} for further details. 

For $q \in \bR\bs\{-1,0\}$, let $\O_q(\text{M}_n)$ be the unital complex \alg generated by the elements  $u^i_j$, for $i,j = 1, \ldots, n$ satisfying the relations
\begin{align*}
u^i_ku^j_k  = qu^j_ku^i_k, &  & u^k_iu^k_j = qu^k_ju^k_i,                    & &   \; 1 \leq i<j \leq n; 1\leq k \leq n, \\
  u^i_lu^j_k = u^j_ku^i_l, ~~ &  & u^i_ku^j_l = u^j_lu^i_k + (q-q^{-1})u^i_lu^j_k, & &   \;  1 \leq i<j \leq n; 1 \leq k < l \leq n.
\end{align*}
A bi\alg structure on $\O_q(M_n)$, with coproduct $\DEL$ and counit $\e$, is uniquely determined by $\DEL(u^i_j) :=  \sum_{k=1}^n u^i_k \oby u^k_j$ and $\e(u^i_j) := \d_{ij}$. Let  $\dt_n$,  the {\em quantum determinant},  denote the element
\begin{align*}
\dt_{n} := \sum\nolimits_{\s \in S_n}(-q)^{\ell(\s)}u^1_{\s(1)}u^2_{\s(2)} \cdots u^n_{\s(n)} = \sum\nolimits_{\s \in S_n}(-q)^{\ell(\s)}u_1^{\s(1)}u_2^{\s(2)} \cdots u_n^{\s(n)},
\end{align*}
with summation taken over all permutations $\s$ of the set $\{1, \ldots, n\}$, and $\ell(\s)$ the number of inversions in $\s$. As is well known \cite[\textsection 9.2.2]{KSLeabh}, $\dt_n$ is a central and grouplike element of $\O_q(M_n)$. Consider next the quotient algebra $\O_q(M_n)/\la \dt_{n} - 1 \ra$, where $\la \dt_n - 1 \ra$ denotes the ideal generated by $\dt_n - 1$. Note that the maps $\DEL$ and $\e$ descend to a well-defined bialgebra structure on the quotient algebra, which in addition can be endowed with a Hopf \alg structure by defining
\begin{align} \label{eqn:antipodeformula}
S(u^i_j) := & (-q)^{i-j}\sum\nolimits_{\s \in S_{n-1}}(-q)^{\ell(\s)}u^{\s(k_1)}_{l_1}u^{\s(k_2)}_{l_2} \cdots u^{\s(k_{n-1})}_{l_{n-1}} \\
= & (-q)^{i-j}\sum\nolimits_{\s \in S_{n-1}}(-q)^{\ell(\s)}u^{k_1}_{\s(l_1)}u^{k_2}_{\s(l_2)} \cdots u^{k_{n-1}}_{\s(l_{n-1})},
\end{align}
where $\{k_1, \ldots ,k_{n-1}\} := \{1, \ldots, n\}\bs \{j\}$, and $\{l_1, \ldots ,l_{n-1}\} := \{1, \ldots, n\}\bs \{i\}$ as ordered sets. Finally, a Hopf $*$-\alg structure can be defined by setting  $(u^i_j)^* :=  S(u^j_i)$. We denote this Hopf $*$-\alg by $\O_q(SU_n)$, and call it the {\em quantum special unitary group of degree $n$}.

We now present a Hopf \alg isomorphism between $\O_q(SU_n)$ and the type-1 Hopf dual of $U_q(\frak{sl}_n)$.  A non-degenerate dual pairing of Hopf algebras between $\O_q(SU_n)$ and $U_q(\frak{sl}_n)$ is uniquely determined by 
\bal \label{eqn:dualpairing}
\la K_i,u^j_j\ra = q^{ \d_{i,j-1} - \d_{ij}}, & & 
\la E_i,u^{i+1}_i\ra = 1, & & \la F_i,u^i_{i+1}\ra = 1,
\eal
with all other pairings of generators being zero. This determines a Hopf \alg embedding of $\O_q(SU_n)$ into  $U_q(\frak{sl}_n)^\circ$, the  image of which is precisely the quantum coordinate algebra of $U_q(\frak{sl}_n)$. 

\section{Decomposing $\Om^{(0,\bullet)}$  into Irreducible $U_q(\frak{sl}_n)$-Modules} \label{subsection:Branching}

In this appendix, we use Frobenius reciprocity to decompose $\Om^{(0,\bullet)}$ into irreducible left $U_q(\frak{sl}_{n})$-submodules.  As a direct application,  we prove that $\O_q(\mathbb{CP}^{n-1})$, endowed with its Heckenberger--Kolb calculus, is of Gelfand type and  self-conjugate. 

Presentations of the irreducible modules occurring in the anti-holomorphic forms have previously  appeared in both the classical \cite[Proposition 5.2]{OsakaCPnSpec} and  quantum group literature \cite[Proposition 5.5]{SISSACPn}. We re-establish the presentation here so as to completely guarantee consistency of conventions, and to give a self-contained exposition  of how such presentations are obtained for the benefit of non-experts.

\subsection{Skew Young Diagrams and Young Tableaux} \label{subsection:app:Young}

A {\em Young diagram}  is a finite collection of boxes arranged in left-justified rows, with the row lengths in non-increasing order. We see that Young diagrams with $p$  rows are  equivalent to  {\em integer partitions} of {\em order} $p$, which is to say  $p$-tuples 
\begin{align*}
\mu = (\mu_1, \ldots, \mu_p) \in \bN^{p}_0, & & \text{ \st ~} \mu_1 \geq \cdots \geq \mu_p.
\end{align*}
We denote by $F^\mu$ the Young diagram corresponding to an integer partition $\mu$. For a partition $\mu = (\mu_1, \dots, \mu_p)$, its {\em conjugate} $\mu' = (\mu'_1, \dots, \mu'_{\mu_1})$ is the unique partition of order $\mu_1$ such that  $\mu_s'$ is equal to the number of boxes in the $s^{\text{th}}$ column of $F^{\mu}$. Note that the Young diagram  $F^{\mu'}$ can be obtained from $F^{\mu}$ by reflecting it along its northwest-southeast diagonal.

We can put a partial order on the set of all Young diagrams, or equivalently, a partial order on the set of all integer partitions, by defining $\mu \succeq \nu$ whenever $\mu_i \geq \nu_i$, for all $i = 1, \dots, p$, with the possible addition of trailing zeros. For any pair $\mu \succeq \nu$, the associated {\em skew Young diagram} $F^{\mu\bs \nu}$ is given by removing from  $F^\mu$  all boxes belonging to the obvious superimposition of $F^{\nu}$ on $F^{\mu}$. The {\em weight} of a skew  Young diagram is the number of boxes in the diagram, which is to say,
\begin{align*}
|\mu\bs \nu| := \sum_{i=1}^p \mu_i - \nu_i \in \bN_0.
\end{align*}

In what follows, we find the following alternative presentation of partitions useful. Denote by $e_i$, for $i = 1, \dots, p$, the standard generators of the monoid  $\bN^p_0$. The {\em fundamental} partitions are given by
\begin{align*}
\varpi_k := e_1 + e_2 + \cdots + e_k, & & \text{ for } k= 1, \dots,  p.
\end{align*}
Note that any partition $\mu = (\mu_1,\dots, \mu_p)$ can be expressed as the sum of fundamental partitions: 
\begin{align*}
\mu = (\mu_1 - \mu_2) \varpi_1 + (\mu_2 - \mu_3) \varpi_2 + \cdots + (\mu_{p-1} - \mu_p)\varpi_{p-1} + \mu_p \varpi_p,
\end{align*}

A {\em horizontal strip} is a skew Young diagram where every column contains at most one box. Alternatively, horizontal strips can be presented in terms of pairs of integer partitions  $\mu \succeq \nu$ \stn, for $p$ the order of $\mu$,
\bal \label{ineql:horizontalstrip}
\mu_1 \geq \nu_1 \geq \mu_2 \geq  \cdots \geq \mu_p \geq \nu_p.
\eal
We denote by $\text{HSC}(\mu)$ the set of all partitions $\mu \succeq \nu$ such that $F^{\mu\bs\nu}$ is a horizontal strip.

As suggested by the notation, we have an obvious  bijection between the irreducible representations of the quantised enveloping algebra $U_q(\frak{sl}_n)$, and Young diagrams with $r = \text{rank}(\frak{g})$ rows. Explicitly, let $V_\mu$ be an irreducible $U_q(\frak{g})$-module, with defining dominant integral weight $\mu \in {\mathcal P}^+$. Expressing $\mu$ in terms of the fundamental weights as $\mu = \sum_{i=1}^r a_i \varpi_i$, we have the corresponding partition $\mu = \sum_{i=1}^r a_i \varpi_i$, and hence the corresponding Young diagram $F^\mu$.

\subsection{The Decomposition}


Applying Frobenius reciprocity requires an understanding of the branching rules for the inclusion $U_q(\frak{l}_{n-1}) \hookrightarrow U_q(\frak{sl}_{n})$. However, as explained in greater detail in \textsection \ref{section:QHSPs}, the fact that Weyl's character formula remains unchanged under $q$-deformation implies that the branching laws are equivalent to the classical case.  The branching laws for the inclusion $\frak{l}_{n-1} \hookrightarrow \frak{sl}_{n}$ admit a well-known   formulation originally due to Weyl \cite{Weyl} (see also \cite[Proposition 5.1]{OsakaCPnSpec}) which allows us to immediately write down the corresponding $q$-deformed branching laws.

\begin{lem} \label{lem:thelnm1branching}
For any partition $\mu$, with corresponding irreducible $U_q(\mathfrak{sl}_{n})$-module $V_{\mu}$, an isomorphism of $U_q(\frak{l}_{n-1})$-modules is given by 
\bal \label{eqn:theBranching}
V_{\mu} \simeq \bigoplus_{\nu \in \text{\em HSC}(\mu)} V_{\ol{\nu}}\big(\nu_{n-1} - |\mu\bs\nu|\big), & & \text{ where ~ } \nu = \sum_{i=1}^{n-1} \nu_i \varpi_i, \text{  and \, } \ol{\nu}  := \nu - \nu_{n-1} \varpi_{n-1},
\eal
for $\varpi_i$ the fundamental partitions, and summation is over all partitions $\nu \in \text{\em HSC}(\mu)$, where $\text{\em HSC}(\mu)$ is the set of partitions complementary to the horizontal  strips of $\mu$ (as defined in Appendix \ref{subsection:app:Young}). In particular, the decomposition is multiplicity-free for each $V_{\mu}$.
\end{lem}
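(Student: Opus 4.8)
The plan is to reduce the statement to the corresponding classical branching problem and then read off the answer from Weyl's branching rule for $\mathrm{GL}_n \downarrow \mathrm{GL}_{n-1}$. The reduction is immediate from Proposition \ref{prop:DS}: the multiplicity with which any irreducible $U_q(\mathfrak{l}_{n-1})$-module $V_{\overline{\nu}}(m)$ occurs in $V_\mu$ — and here an irreducible of $U_q(\mathfrak{l}_{n-1})$ is specified by \emph{both} the $\mathfrak{sl}_{n-1}$-type $\overline{\nu}$ and the central character $m$ — agrees with the multiplicity in the classical case $q=1$. Hence the isomorphism (\ref{eqn:theBranching}) holds in the $q$-deformed setting if and only if it holds at $q=1$, where it becomes a classical statement about the restriction $\mathfrak{sl}_n \downarrow \mathfrak{l}_{n-1}$.

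For the classical computation I would first recall that $\mathfrak{l}_{n-1}$ is the reductive Levi factor of the maximal parabolic of $\mathfrak{sl}_n$ obtained by deleting the node $\alpha_{n-1}$: its derived subalgebra is the copy of $\mathfrak{sl}_{n-1}$ spanned by $\alpha_1,\dots,\alpha_{n-2}$, and it has a one-dimensional centre on which the residual $\mathrm{GL}_1$-factor acts through the eigenvalue of $K_{n-1}$. Restricting $V_\mu$ first to the ambient copy of $\mathrm{GL}_{n-1}$, Weyl's branching theorem (equivalently, the Gelfand--Tsetlin rule, cf. \cite{Weyl} and \cite[Proposition 5.1]{OsakaCPnSpec}) gives $V_\mu \cong \bigoplus_\nu V_\nu$, the sum being over partitions $\nu$ interlacing $\mu$, i.e. $\mu_1 \geq \nu_1 \geq \mu_2 \geq \cdots \geq \mu_{n-1} \geq \nu_{n-1} \geq 0$; by the dictionary of Appendix \ref{subsection:app:Young} this is exactly the condition $\nu \in \mathrm{HSC}(\mu)$. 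Restricting each $V_\nu$ further to $\mathfrak{sl}_{n-1}$ together with the centre replaces it by the irreducible $V_{\overline{\nu}}$, with $\overline{\nu} = \nu - \nu_{n-1}\varpi_{n-1}$, tensored with a character of the centre; pairing the highest weight of the $\mathfrak{sl}_{n-1}$-summand with the fundamental coweight dual to $\alpha_{n-1}$, and using that restricting from $\mathfrak{sl}_n$ rather than $\mathfrak{gl}_n$ shifts the $\mathrm{GL}_1$-charge by $|\mu|-|\nu| = |\mu\setminus\nu|$, identifies this character with the exponent $m = \nu_{n-1} - |\mu\setminus\nu|$. This is precisely (\ref{eqn:theBranching}).

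Finally, multiplicity-freeness: Weyl's $\mathrm{GL}_n \downarrow \mathrm{GL}_{n-1}$ branching is itself multiplicity-free, so each $\nu \in \mathrm{HSC}(\mu)$ contributes a single summand, and distinct $\nu,\nu' \in \mathrm{HSC}(\mu)$ yield non-isomorphic $U_q(\mathfrak{l}_{n-1})$-modules because $\nu$ is recovered from the pair $(\overline{\nu},m)$ given $\mu$ — the coefficients of $\varpi_1,\dots,\varpi_{n-2}$ in $\nu$ coincide with those of $\overline{\nu}$, and then $\nu_{n-1}$ is determined by $m = \nu_{n-1} - |\mu\setminus\nu|$; alternatively this is already forced by Proposition \ref{prop:DS} once the classical statement is in hand. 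The only genuinely delicate point is the centre bookkeeping, namely checking the sign and precise shape of the exponent $m$; everything else is an application of the classical branching rule combined with the translation between interlacing partitions and horizontal strips recalled in Appendix \ref{subsection:app:Young}.
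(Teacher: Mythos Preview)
Your proposal is correct and follows essentially the same approach as the paper: the paper does not give a formal proof of this lemma but simply observes (in the paragraph preceding the statement) that invariance of the Weyl character formula under $q$-deformation reduces the question to the classical branching law for $\mathfrak{l}_{n-1} \hookrightarrow \mathfrak{sl}_n$, and then cites Weyl \cite{Weyl} and \cite[Proposition 5.1]{OsakaCPnSpec} for that classical result. Your write-up is in fact more detailed than the paper's, particularly in tracking the $K_{n-1}$-weight and in spelling out why the decomposition is multiplicity-free.
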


In preparation for the application of Frobenius reciprocity below, we now  consider two specific applications of the branching rules for the case of $U_q(\frak{sl}_3)$. The presentation here is given in terms of Young diagrams, with the intention of providing the reader with some visual intuition.

\begin{eg}

We begin with an explicit example of the branching procedure for the inclusion $U_q(\frak{l}_2) \hookrightarrow U_q(\frak{sl}_3)$, corresponding to the quantum homogeneous space $\O_q(\mathbb{CP}^2)$.  Consider the partition $\mu := (3,2)$ with corresponding Young diagram:  
\begin{center}
\begin{tabular}{c}
 \begin{ytableau}
*(white) & *(white)  &*(white)  \\
*(white)  & *(white)  \\
\end{ytableau}
\end{tabular}
\end{center}
We present the $6$ possible partitions $\nu \in \text{HSC}(\mu)$ in Young diagram form, highlighting those boxes which form horizontal strips:

\bigskip

\begin{center}
\begin{tabular}{ccccc}

\begin{ytableau}
*(white) & *(white)  &*(white)  \\
*(white)  & *(white)  \\
\end{ytableau}

&  &

\begin{ytableau}
*(white) & *(white)  &*(white)  \\
*(white)  & *(gray)  \\
\end{ytableau}

& &

\begin{ytableau}
*(white) & *(white)  &*(white)  \\
*(gray)  & *(gray)  \\
\end{ytableau}

\\

& & \\

\begin{ytableau}
*(white) & *(white)  &*(gray)  \\
*(white)  & *(white)  \\
\end{ytableau}

& &

\begin{ytableau}
*(white) & *(white)  &*(gray)  \\
*(white)  & *(gray)  \\
\end{ytableau}

&  & 

\begin{ytableau}
*(white) & *(white)  &*(gray)  \\
*(gray)  & *(gray)  \\
\end{ytableau} 

\end{tabular}
\end{center}


\bigskip

Removing the highlighted boxes, we arrive at the set of Young diagrams corresponding to the partitions in $\text{HSC}(\mu)$:


\begin{center}
\begin{tabular}{lclclc}

\begin{ytableau}
*(white) & *(white)  &*(white)  \\
*(white)  & *(white)  \\
\end{ytableau}  

&  & 

\begin{ytableau}
*(white) & *(white)  &*(white)  \\
*(white)   \\
\end{ytableau}  

&   & 

\begin{ytableau}
*(white) & *(white)  &*(white)  \\
\end{ytableau}

\\

 &  & \\

\begin{ytableau}
*(white) & *(white)   \\
*(white)  & *(white)  \\
\end{ytableau} 

&   & 

\begin{ytableau}
*(white) & *(white) \\
*(white)   \\
\end{ytableau} 

&   &

\begin{ytableau}
*(white) & *(white)  \\
\end{ytableau}

\end{tabular}
\end{center}

Next we present the Young diagrams of the partitions $\ol{\nu} = \nu - \nu_{2}\varpi_{2}$. We do this in two steps, first highlighting the boxes to be removed from $F^\nu$ in order to arrive at $F^{\ol{\nu}}$:

\bigskip

\begin{center}
\begin{tabular}{lclclc}

\begin{ytableau}
*(gray) & *(gray)  &*(white)  \\
*(gray)  & *(gray)  \\
\end{ytableau}  

&  & 

\begin{ytableau}
*(gray) & *(white)  &*(white)  \\
*(gray)   \\
\end{ytableau}  

&   & 

\begin{ytableau}
*(white) & *(white)  &*(white)  \\
\end{ytableau}

\\

 &  & \\

\begin{ytableau}
*(gray) & *(gray)   \\
*(gray)  & *(gray)  \\
\end{ytableau} 

&   & 

\begin{ytableau}
*(gray) & *(white) \\
*(gray)   \\
\end{ytableau} 

&   &

\begin{ytableau}
*(white) & *(white)  \\
\end{ytableau}

\end{tabular}
\end{center}


Removing the highlighted  columns we arrive at the following list of Young diagrams, where $\varnothing$ denotes the empty Young diagram:

\bigskip

\begin{center}
\begin{tabular}{cccccc}

\begin{ytableau}
*(white)  \\
\end{ytableau} 

& 


\begin{ytableau}
 *(white)  &*(white)  \\
\end{ytableau}  

& 

\begin{ytableau}
*(white) & *(white)  &*(white)  \\
\end{ytableau} 

\\

& & \\ 

$\varnothing$

&  

\begin{ytableau}
 *(white) \\
\end{ytableau} 

&

\begin{ytableau}
*(white) & *(white)  \\
\end{ytableau}

\end{tabular}
\end{center}

Thus the decomposition of $V_\mu$ into irreducible $U_q(\frak{sl}_{2})$-modules is given by
\bal \label{eqn:egbranchsl}
V_{\mu} \simeq  & \, V_{\varpi_1} \oplus V_{2\varpi_1} \oplus V_{3\varpi_1} \oplus \bC \oplus V_{\varpi_1} \oplus V_{\varpi_2}. 
\eal
Finally, subtracting the number of boxes removed in the first step, from the number of columns removed in the second step, gives us the weight of $K_{2}$. Explicitly,  the decomposition of $V_{\mu}$ into irreducible $U_q(\frak{l}_2)$-modules is given by
\bal \label{eqn:egbranchingl}
V_{\mu} \simeq V_{\varpi_1}(2) \oplus V_{2\varpi_1}(0) \oplus V_{3\varpi_1}(-2) 
 \oplus \bC(1) \oplus  V_{\varpi_1}(-1) \oplus V_{2\varpi_1}(-3). 
\eal
Note that while the decomposition of $V_{\mu}$ into $U_q(\frak{sl}_2)$-submodules contains multiplicities, the decomposition into $U_q(\frak{l}_2)$-modules is multiplicity-free.

\end{eg}

\begin{eg}
As we saw in the previous example, the Young diagram presentation of the branching process can be understood as a combination of two steps: 
\bet
\item Remove from a Young diagram $F^\mu$ a horizontal strip $F^{\mu\bs\nu}$.

\item Remove all columns of height $n-1$. 

\eet
Thus finding all possible $U_q(\frak{sl}_{n})$-modules whose $U_q(\frak{sl}_{n-1})$-branching contains a given module $V$  amounts to finding all possible Young diagrams which can be operated  on by steps 1 and 2 to produce the Young diagram corresponding to $V$.

Let us apply this process to a concrete example corresponding to the case of $\O_q(\mathbb{CP}^3)$: For the $U_q(\frak{sl}_3)$-module $V_{\varpi_1}$ we will  find all possible $U_q(\frak{sl}_{4})$-modules whose branching contains $V_{\varpi_1}$ as a decomposition.  Recall first that $V_{\varpi_1}$ has the corresponding Young diagram:

\begin{center}
\begin{tabular}{ccccc}

\begin{ytableau}
*(white)  \\
\end{ytableau}

\end{tabular}
\end{center}
We reverse step 2 by adding an arbitrary number $k \in \bN_0$ of columns of height $3$ to obtain the Young diagram:
\begin{center}
\begin{tabular}{ccccc}
$F^{k \varpi_3 + \varpi_1} = \underbrace{
\begin{ytableau}
  *(gray) & *(gray)  \\
    *(gray)   & *(gray)  \\
        *(gray)   & *(gray)  
\end{ytableau}
{\,\cdots \cdots  \cdots \,}
\begin{ytableau}
*(gray)  & *(gray)  \\
*(gray)  & *(gray)  \\
*(gray)  & *(gray)
\end{ytableau}
}_{k\text{-times}}\!\!$
\begin{ytableau}
*(white)\\
\end{ytableau}
\end{tabular}
\end{center}
To reverse step 1, we must find all possible Young diagrams $F^{\mu}$ such that 
\begin{align*}
k \varpi_3 + \varpi_1 \in  \text{\em HSC}(\mu).
\end{align*}
In fact, we see that there exist exactly two such families  of Young diagrams.  The first, for a general $l \in \bN_0$, is given by

~~~~~\begin{tabular}{cccccc}
$F^{k\varpi_3 +  (l+1) \varpi_1} :=
\underbrace{\begin{ytableau}
  *(gray) & *(gray)  \\
    *(gray)   & *(gray)  \\
        *(gray)   & *(gray)  
\end{ytableau}
{\, \cdots \cdots  \cdots\,}
\begin{ytableau}
  *(gray) & *(gray) \\
    *(gray)   & *(gray)   \\
        *(gray)   & *(gray)  
\end{ytableau}
}_{k-\text{times }} \!\!$
\begin{ytableau}
*(white) \\
\end{ytableau}
$\!\!\underbrace{\begin{ytableau}
  *(gray) & *(gray)  \\
  \end{ytableau}
{\, \cdots \cdots  \cdots\,}
\begin{ytableau}
  *(gray) & *(gray) \\
\end{ytableau}
}_{l-\text{times }}$
\end{tabular}

The second, for a general $l \in \bN_0$, is given by

~~~~ \begin{tabular}{cccccc}
$F^{k\varpi_3 + \varpi_2 + l \varpi_1} :=
\underbrace{\begin{ytableau}
  *(gray) & *(gray)  \\
    *(gray)   & *(gray)  \\
        *(gray)   & *(gray)  
\end{ytableau}
{\, \cdots \cdots  \cdots\,}
\begin{ytableau}
  *(gray) & *(gray) \\
    *(gray)   & *(gray)   \\
        *(gray)   & *(gray)  
\end{ytableau}
}_{k-\text{times }} \!\!$
\begin{ytableau}
*(white) \\
*(gray)  \\
\end{ytableau}
$\!\!\underbrace{\begin{ytableau}
  *(gray) & *(gray)  \\
  \end{ytableau}
{\, \cdots \cdots  \cdots\,}
\begin{ytableau}
  *(gray) & *(gray) \\
\end{ytableau}
}_{l-\text{times }}$
\end{tabular}

To perform this process for $U_q(\frak{l}_3)$-branching, we need to take care of the weight of $K_3$. Let us assume that we want to branch to the representation $V_{\varpi_1}(c)$, for some $c \in \bZ$. As we saw in the previous example, the weight of $K_3$ is precisely the number of boxes removed in step $1$ minus the  number of columns removed in step $2$. Thus we see that the branched representation $\varpi_3  + (l+1) \varpi_1$ contains $V_{\varpi_1}(c)$ if and only if $l-k = c$, while the  branched representation $\varpi_3  + \varpi_2 + l \varpi_1$ contains $V_{\varpi_1}(c)$ if and only if $l-k+1 = c$.
\end{eg}


Returning now to the general case of $\O_q(\ccpn)$. We would now like a complete description of the  $U_q(\frak{sl}_{n})$-modules appearing in the irreducible decomposition of $\Om^{(0,k)}$. First, however, we need to identify the inducing representations $V^{(0,k)}$ as a  $U_q(\frak{l}_{n-1})$-modules. (Note that we present the trivial case of $V^{(0,0)}$ to highlight the fact that its $K_{n-1}$-weight does not follow the general pattern for the higher forms.)

\begin{cor} \label{cor:antiholoaslmods}
We have the following isomorphisms of left $U_q(\frak{l}_{n-1})$-modules
\begin{align*}
V^{(0,0)} \simeq \mathbb{C}(0), & &  V^{(0,k)} \simeq V_{\varpi_{n-k-1}}(-k-1), ~~~ \text{ for } k=1, \dots, n-1.
 \end{align*}
\end{cor}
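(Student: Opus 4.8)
The plan is to identify the inducing representations $V^{(0,k)} = \Phi(\Om^{(0,k)})$ directly from their explicit bases, using the $q$-exterior relations of Theorem~\ref{thm:HKBasisRels} together with the $U_q(\frak{l}_{n-1})$-action on $V^{(0,1)}$ recorded in Lemma~\ref{lem:EKFBasis}. The case $k=0$ is immediate: $\Om^{(0,0)} = \O_q(\ccpn)$, so $V^{(0,0)} = \Phi\big(\O_q(\ccpn)\big) = \O_q(\ccpn)/\O_q(\ccpn)^+ \simeq \bC$ is one-dimensional, and since the unit is coinvariant we have $K_j \tr 1 = \e(K_j) 1 = 1$ for all $j$, whence $V^{(0,0)} \simeq \bC(0)$.

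For $1 \le k \le n-1$ I would single out the form
\begin{align*}
\w_k := e^-_{n-k} \wed e^-_{n-k+1} \wed \cdots \wed e^-_{n-1} \in V^{(0,k)},
\end{align*}
and verify that it is a highest weight vector for the subalgebra $U_q(\frak{sl}_{n-1}) \sseq U_q(\frak{l}_{n-1})$. Since $\Phi(\Om^\bullet)$ is a monoid object in $\,^{\O_q(U_{n-1})}\mathrm{Mod}$, hence a $U_q(\frak{l}_{n-1})$-module algebra, the action on $\w_k$ is computed from Lemma~\ref{lem:EKFBasis} via the product rule~(\ref{eqn:dualpairingproductflip}), the elements $K_j$ being grouplike. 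For $j = 1, \dots, n-2$, the relation $E_j \tr e^-_a = q^2 \d_{ja} e^-_{j+1}$ shows that the only potentially nonzero contribution to $E_j \tr \w_k$ arises from letting $E_j$ act on a factor $e^-_j$ with $j \in \{n-k, \dots, n-1\}$; as $j \le n-2$ this forces $j \in \{n-k, \dots, n-2\}$, and the outcome is then a scalar multiple of a wedge product in which the adjacent factors $e^-_{j+1} \wed e^-_{j+1}$ occur, so it vanishes by Theorem~\ref{thm:HKBasisRels}(3). Hence $E_j \tr \w_k = 0$ for $j = 1, \dots, n-2$. Summing the exponents $-\d_{j,n-1} - \d_{ja} + \d_{j,a-1}$ of Lemma~\ref{lem:EKFBasis} over the $k$ indices $a = n-k, \dots, n-1$ then gives
\begin{align*}
K_j \tr \w_k = q^{\d_{j,n-k-1}} \, \w_k \quad (1 \le j \le n-2), & & K_{n-1} \tr \w_k = q^{-k-1} \, \w_k,
\end{align*}
so $\w_k$ is a highest weight vector of $U_q(\frak{sl}_{n-1})$-weight $\varpi_{n-k-1}$ on which $K_{n-1}$ acts by $q^{-k-1}$.

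To conclude I would compare dimensions. The submodule $U_q(\frak{sl}_{n-1})\w_k \sseq V^{(0,k)}$ is irreducible of highest weight $\varpi_{n-k-1}$, hence, by invariance of the Weyl character formula under $q$-deformation, has dimension $\binom{n-1}{n-k-1} = \binom{n-1}{k}$, which by Theorem~\ref{thm:HKBasisRels} equals $\dim_\bC V^{(0,k)}$. Therefore $U_q(\frak{sl}_{n-1})\w_k = V^{(0,k)}$, and recording the $K_{n-1}$-eigenvalue of the highest weight vector $\w_k$ identifies this with $V_{\varpi_{n-k-1}}(-k-1)$ in the notation of \textsection\ref{subsection:CPNasaQHS}. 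This is exactly the input required for the subsequent Frobenius-reciprocity computation of the $U_q(\frak{sl}_n)$-decomposition of $\Om^{(0,\bullet)}$.

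The only step that demands any care is the index bookkeeping in the weight computation, i.e. checking that the sum of $-\d_{j,n-1} - \d_{ja} + \d_{j,a-1}$ over $a = n-k, \dots, n-1$ reduces to $\d_{j,n-k-1}$ for $j \le n-2$ and to $-(k+1)$ for $j = n-1$; this is a routine finite summation. Everything else follows mechanically from the $q$-exterior relations and the already-established action on $V^{(0,1)}$, so I do not expect a genuine obstacle here.
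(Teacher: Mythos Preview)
Your proof is correct and follows essentially the same approach as the paper: both identify the same highest weight vector (the wedge of $e^-_{n-k},\dots,e^-_{n-1}$, up to the harmless ordering convention), verify its $U_q(\frak{sl}_{n-1})$-highest weight and $K_{n-1}$-eigenvalue via Lemma~\ref{lem:EKFBasis} and the $q$-exterior relations, and conclude by matching the dimension $\binom{n-1}{k}$ with that of $V^{(0,k)}$. Your treatment of the $k=0$ case and of the index bookkeeping is if anything slightly more explicit than the paper's.
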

\begin{proof}
Since the isomorphism $V^{(0,0)} \simeq \mathbb{C}(0)$ is obvious, we can move directly onto the higher forms. Recalling that $e^-_i \wed e^-_i = 0$, for all $i$, we see from Lemma \ref{lem:EKFBasis} that
\begin{align*}
E_{j} \tr \big(e^-_{n-1} \wed \cdots \wed e^-_{n-k}\big) = 0, & & \text{ for } j = 1, \dots, n-2.
\end{align*}
Another application of Lemma \ref{lem:EKFBasis}  confirms that 
\begin{align*}
K_j \tr \left(e^-_{n-1} \wed \cdots \wed e^-_{n-k}\right) = q^{\d_{j,n-k-1}} e^-_{n-1} \wed \cdots \wed e^-_{n-k}, & & \text{ for } j = 1, \dots, n-2.
\end{align*}
Hence $e^-_{n-1} \wed \cdots \wed e^-_{n-k}$  is a $U_q(\frak{sl}_{n-1})$-highest weight vector of  weight $\varpi_{n-k-1}$.

Recalling that  the dimension of  universal enveloping algebra modules is unchanged under  $q$-defor-mation, we see that 
\begin{align*}
\dim_{\mathbb{C}}\Big(U_q(\frak{sl}_{n-1})\,e^-_{n-1} \wed \cdots \wed e^-_{n-k}\Big) =  \binom{n-1}{k}.
\end{align*}
Recalling from Theorem \ref{thm:HKBasisRels} that  $\Phi\big(\Om^{(0,k)}\big)$ is also an $\binom{n-1}{k}$-dimensional space, we see that
\begin{align*}
U_q(\frak{sl}_{n-1})\,e^-_{n-1} \wed \cdots \wed e^-_{n-k} = V^{(0,k)}.
\end{align*}
Thus $V^{(0,k)}$ is isomorphic as a $U_q(\frak{sl}_{n-1})$-module, to $V_{\varpi_{n-k-1}}$. Finally,  Lemma \ref{lem:EKFBasis} tells us that
\begin{align*}
K_{n-1} \tr \left(e^-_{n-1} \wed \cdots \wed e^-_{n-k}\right) =  q^{-k-1}  e^-_{n-1} \wed \cdots \wed e^-_{n-k}.
\end{align*}
Hence $V^{(0,k)}$ is isomorphic, as a $U_q(\frak{l}_{n-1})$-module, to $V_{\varpi_{n-k-1}}(-k-1)$, as claimed. 
\end{proof}

\begin{lem} \label{lem:DecompOfOm0kINTOIrreps} The complex structure $\Om^{(\bullet,\bullet)}$ is of Gelfand type.  Moreover, an irreducible $U_q(\frak{sl}_n)$-module appears  in the decomposition of $\Omega^{(0,k)}$ into irreducibles only if its highest weight is of the form
\begin{enumerate}
\item $0$   \text{ ~~~~~~~~~~~~~~~~~~~~~~~~~~~~~~~~~~~~~~~~~~~~~~~~~~~~~~~~~~~~~~~~~~~~\, when} $k =0$,
\item $(l+k+1) \varpi_1  + \varpi_{n-k-1} + l \varpi_{n-1}$,  \text{ ~~~~~~~~~~\, for } $l \in \bN_0$, ~~ when  $k = 0, \dots, n-2$,
\item $ (l + k) \varpi_1  + \varpi_{n-k} + l \varpi_{n-1}$,   \text{ ~~~~~~~~~~~~~~~~\,\,~ for } $l \in \bN_0$, ~~ when $k=2, \dots, n-1$.
\end{enumerate}
\end{lem}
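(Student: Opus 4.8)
The plan is to compute the decomposition of $\Om^{(0,k)}$ into irreducible $U_q(\frak{sl}_n)$-modules by combining Takeuchi's equivalence with Frobenius reciprocity (Lemma \ref{prop:Frobenius}) and the explicit branching law of Lemma \ref{lem:thelnm1branching}. First I would recall that $\Om^{(0,k)} \simeq \O_q(SU_n) \,\square_{\O_q(U_{n-1})}\, V^{(0,k)}$ as left $\O_q(SU_n)$-comodules, where $V^{(0,k)}$ is the inducing $U_q(\frak{l}_{n-1})$-module. By Corollary \ref{cor:antiholoaslmods} this module is $\mathbb{C}(0)$ when $k=0$ and $V_{\varpi_{n-k-1}}(-k-1)$ when $1 \leq k \leq n-1$. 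Frobenius reciprocity then says that, for each dominant integral weight $\mu$ of $U_q(\frak{sl}_n)$, the multiplicity of $V_\mu$ in $\Om^{(0,k)}$ equals the multiplicity of $V^{(0,k)}$ in the restriction of $V_\mu$ to $U_q(\frak{l}_{n-1})$. So the whole problem reduces to: for which $\mu$ does the $U_q(\frak{l}_{n-1})$-branching of $V_\mu$, as described by \eqref{eqn:theBranching}, contain the specific summand $V_{\varpi_{n-k-1}}(-k-1)$, and with what multiplicity?

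Next I would carry out exactly the ``reverse branching'' analysis illustrated in the two worked $\O_q(\mathbb{CP}^3)$ examples, but in general. Writing $\mu = \sum_i a_i \varpi_i$ as a partition (Young diagram $F^\mu$), the branching law decomposes $V_\mu$ over $U_q(\frak{sl}_{n-1})$ by summing over all $\nu \in \mathrm{HSC}(\mu)$, and the $K_{n-1}$-weight of the summand indexed by $\nu$ is $\nu_{n-1} - |\mu\bs\nu|$, with the $U_q(\frak{sl}_{n-1})$-type being $\ol{\nu} = \nu - \nu_{n-1}\varpi_{n-1}$. I want $\ol{\nu} = \varpi_{n-k-1}$ (a single-column-free one-row-type partition, i.e. the fundamental weight $\varpi_{n-k-1}$ of $\frak{sl}_{n-1}$) together with $\nu_{n-1} - |\mu\bs\nu| = -k-1$. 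Translating the condition $\ol{\nu} = \varpi_{n-k-1}$ into Young-diagram language forces $\nu$ to be of the shape $\varpi_{n-k-1} + (\text{some number})\cdot\varpi_{n-1}$, i.e. a single row-fragment of length exactly $n-k-1$ sitting on top of a stack of full columns of height $n-1$. The horizontal-strip condition $\nu \in \mathrm{HSC}(\mu)$ then pins down $\mu$: reading \eqref{ineql:horizontalstrip} and reversing, $\mu$ is obtained from $\nu$ by adding a horizontal strip, and one finds exactly the two families realised as Young diagrams in the second worked example, namely $\mu = k\varpi_{n-1}' $-type shapes that yield highest weights $(l+k+1)\varpi_1 + \varpi_{n-k-1} + l\varpi_{n-1}$ and $(l+k)\varpi_1 + \varpi_{n-k} + l\varpi_{n-1}$ for $l \in \bN_0$ — plus, separately, the $k=0$ case where $V^{(0,0)} = \mathbb{C}(0)$ is trivial of $K_{n-1}$-weight $0$, giving the extra weight $0$. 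The accounting of the weight $\nu_{n-1} - |\mu\bs\nu|$ against $-k-1$ is precisely what selects, among shapes with $\ol\nu = \varpi_{n-k-1}$, the correct relation between the two integer parameters ($k$ columns of height $n-1$ stacked, $l$ extra boxes), exactly as ``$l-k=c$'' and ``$l-k+1=c$'' appeared in the example. Finally, since by Lemma \ref{lem:thelnm1branching} the branching is multiplicity-free for each $V_\mu$, each $V_{\varpi_{n-k-1}}(-k-1)$ occurs with multiplicity at most one in each $V_\mu$, which by Frobenius reciprocity means $\Om^{(0,k)}$ is multiplicity-free — establishing Gelfand type — and gives the stated list of possible highest weights.

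The main obstacle I expect is the bookkeeping in the reverse-branching step: correctly characterising all partitions $\mu$ whose $\mathrm{HSC}(\mu)$ contains a $\nu$ with $\ol\nu = \varpi_{n-k-1}$ and the prescribed $K_{n-1}$-weight, and verifying that these are \emph{exactly} the two one-parameter families in items 2 and 3 (no more, no fewer). This requires being careful that adding a horizontal strip to the ``row of length $n-k-1$ on a stack of $n-1$-columns'' shape, subject to the interlacing inequalities \eqref{ineql:horizontalstrip}, produces precisely shapes of the form ``one box in the first row beyond the stack'' contributing $\varpi_{n-k-1}$-type-preserving strips (family 2, $\ol\nu$ keeps shape $\varpi_{n-k-1}$) versus strips that bump up to $\varpi_{n-k}$ (family 3), and controlling the two boundary indices ($k=0$ excluded from family 3, etc.) and the degenerate low-rank coincidences. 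The weight arithmetic $\nu_{n-1} - |\mu\bs\nu| = -k-1$ is routine once the shapes are fixed; the combinatorial completeness claim is where care is needed. Everything else — the identification $\Om^{(0,k)} \simeq \O_q(SU_n)\square_{\O_q(U_{n-1})}V^{(0,k)}$, Frobenius reciprocity, and invoking Corollary \ref{cor:antiholoaslmods} — is immediate from the cited results.
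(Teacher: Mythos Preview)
Your approach is essentially identical to the paper's: both identify $V^{(0,k)}$ via Corollary \ref{cor:antiholoaslmods}, apply Frobenius reciprocity (Lemma \ref{prop:Frobenius}) to reduce to a branching question, and then solve the reverse-branching problem by characterising all $\nu$ with $\ol{\nu}=\varpi_{n-k-1}$ and the correct $K_{n-1}$-weight, obtaining the two one-parameter families of $\mu$. The paper's proof carries out exactly the bookkeeping you flag as the main obstacle, writing $\nu = l\varpi_{n-1}+\varpi_{n-k-1}$ and reading off from \eqref{ineql:horizontalstrip} that $\mu$ must be one of the two shapes in \eqref{eqn:slnHWdecomposition1}, with the boundary cases $k=0$ and $k=n-1$ handled analogously.
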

\begin{proof} ~
Lemma \ref{lem:thelnm1branching}  tells us that the decomposition of any $U_q(\frak{sl}_n)$-module into $U_q(\frak{l}_{n-1})$-submodules is multiplicity-free. Frobenius reciprocity, as presented in Proposition \ref{prop:Frobenius}, now implies that $\Om^{(\bullet,\bullet)}$ is of Gelfand type.

Let us now assume that $k=1, \dots, n-2$. By Corollary \ref{cor:antiholoaslmods} above,  $V^{(0,k)}$  is isomorphic to $V_{\varpi_{n-k-1}}(-k-1)$ as a $U_q(\frak{l}_{n-1})$-module. Lemma \ref{lem:thelnm1branching}  tells us that $V_{\varpi_{n-k-1}}(-k-1)$ appears as a $U_q(\frak{l}_{n-1})$-submodule, of some $U_q(\frak{sl}_{n})$-module $V_{\mu}$, if and only if there exists a $\nu = \sum_{i=1}^{n-1} \nu_i \varpi_i$ in $\text{HSC}(\mu)$ such that $\ol{\nu} = \varpi_{n-k-1}$ and $\nu_{n-1} - |\mu\bs \nu| = -k-1$. Note first that a partition $\nu$ satisfies $\ol{\nu} = \varpi_{n-k-1}$ if and only if 
\begin{align} \label{qen:step1mujoining}
\nu = l \varpi_{n-1} + \varpi_{n-k-1}, & & \text{ for some } l \in \bN_0.
\end{align}
Next, recall that $\nu \in \text{HSC}(\mu)$, for some partition $\mu$, if and only if  (\ref{ineql:horizontalstrip}) is satisfied. Thus any $\mu$ must be of the form
\bal \label{eqn:slnHWdecomposition1}
l \varpi_{n-1} + \varpi_{n-k-1} + a \varpi_1, & & \text{ or } & & l \varpi_{n-1} + \varpi_{n-k} + a\varpi_1,  & \text{~~~ for some } a \in \bN_0. 
\eal
Next we see that the identity $\nu_{n-1} - |\mu\bs \nu| = -k-1$ is satisfied in the first case if and only if $l-a = -k-1$, and in the second case if $l - (a + 1) = -k-1$. Thus $V^{(0,k)}$ appears as a summand in the decomposition of $V_\mu$ into irreducible $U_q(\frak{l}_{n-1})$-modules if and only if 
\bal \label{eqn:HWdecomposition}
\mu = (l+k+1) \varpi_1 + \varpi_{n-k-1} + l \varpi_{n-1}, &  \text{ ~~~~ or ~~~~}  \mu = (l + k) \varpi_1  + \varpi_{n-k} + l \varpi_{n-1}. 
\eal
Frobenius reciprocity, for the inclusion $U_q(\frak{l}_{n-1}) \hookrightarrow U_q(\frak{sl}_n)$, now tells us that a left $U_q(\frak{sl}_{n})$-module appears as a submodule of $\Om^{(0,k)}$ if and only if its highest weight is of the form (\ref{eqn:HWdecomposition}), as claimed. The proofs for the cases $k=0$, and $k = n-1$, are  analogous, and so, we omit them. 
\end{proof}


\begin{cor} \label{cor:selfconjCPN}
Quantum projective space $\O_q(\ccpn)$  is self-conjugate.
\end{cor}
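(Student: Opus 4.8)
The plan is to combine the explicit decomposition of the function algebra into irreducibles with the criterion for $*$-invariance recorded in the (unlabelled) lemma immediately preceding Proposition~\ref{prop:selfconjLeibdual}. By Lemma~\ref{lem:DecompOfOm0kINTOIrreps} the comodule $\O_q(\ccpn) = \Om^{(0,0)}$ is multiplicity-free, and the only highest weights occurring in its decomposition into irreducible $U_q(\frak{sl}_n)$-modules are $0$ and $m\theta$, $m\in\bN$, where $\theta := \varpi_1 + \varpi_{n-1}$. Since $z_{1n}$ is a highest weight vector of weight $\theta$ (Lemma~\ref{lem:Leibconstant}) and $\O_q(\ccpn)_{\hw}$ is a monoid (Lemma~\ref{lem:monoidtomonoid}), $z_{1n}^m$ is a highest weight vector of weight $m\theta$, so the irreducible submodule it generates, $V_m := U_q(\frak{sl}_n)z_{1n}^m$, is the unique copy of $V_{m\theta}$ inside $\O_q(\ccpn)$. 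Together with $\bC 1$, the $V_m$ exhaust the irreducible subcomodules, and $\bC 1$ is obviously $*$-closed; so it remains to show each $V_m$ is $*$-closed, which by the said criterion amounts to $(z_{1n}^m)^* \in V_m$.

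First I would compute $z_{1n}^*$. From $z_{ij} = u^i_n S(u^n_j)$, $(u^i_j)^* = S(u^j_i)$, the anti-multiplicativity of $*$, and the identity $* \circ S = S^{-1}\circ *$ valid in any Hopf $*$-algebra, one gets
\begin{align*}
z_{1n}^* = \big(S(u^n_n)\big)^* (u^1_n)^* = u^n_n\, S(u^n_1) = z_{n1},
\end{align*}
hence $(z_{1n}^m)^* = z_{n1}^m$. Next, Lemma~\ref{lem:AntiModoidBijection} tells us that $z_{n1} = z_{1n}^*$ is a \emph{lowest} weight vector, necessarily of weight $-\theta$; since $\O_q(\ccpn)_{\lw}$ is also a monoid on which weights add (Lemma~\ref{lem:monoidtomonoid}), $z_{n1}^m$ is a lowest weight vector of weight $-m\theta$, and it is nonzero because $\O_q(SU_n)$ is a domain.

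The decisive step is to show that this lowest weight vector actually lies in $V_m$. Here I would use that $\theta = \varpi_1 + \varpi_{n-1}$ is the highest root of $\frak{sl}_n$, so that $w_0\theta = -\theta$ and the (one-dimensional) lowest weight space of $V_{j\theta}$ sits in weight $-j\theta$. Writing $z_{n1}^m = \sum_j v_j$ along the decomposition $\O_q(\ccpn) = \bC 1 \oplus \bigoplus_{j\in\bN} V_j$, each summand being a $U_q(\frak{sl}_n)$-submodule, the vanishing $F_k \tr z_{n1}^m = 0$ forces $F_k \tr v_j = 0$ for all $k$ and $j$; hence every nonzero $v_j$ is a lowest weight vector of $V_j \cong V_{j\theta}$ and so has weight $-j\theta$, whereas $v_j$, being a component of the weight vector $z_{n1}^m$, has weight $-m\theta$. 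As $j\mapsto -j\theta$ is injective this leaves $z_{n1}^m = v_m \in V_m$, whence $(z_{1n}^m)^* \in V_m$ and $V_m$ is $*$-closed. Running over all $m$ yields self-conjugacy.

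The only point that needs care is this last step: the weight of $z_{n1}^m$ alone does not determine its summand, since $-m\theta$ occurs in every $V_{j\theta}$ with $j\ge m$; one genuinely needs the lowest-weight property together with the coincidence $w_0\theta = -\theta$. The computation $z_{1n}^* = z_{n1}$ is routine once the antipode formula and the $*$--$S$ compatibility are handled correctly. A shorter, more abstract variant would instead observe that, as an $\O_q(SU_n)$-corepresentation, the conjugate of $V_m \cong V_{m\theta}$ is $V_{-w_0(m\theta)} = V_{m\theta}$, so multiplicity-freeness forces $V_m^* = V_m$ at once; I prefer the argument above as it relies only on tools already developed in the paper rather than on the theory of conjugate corepresentations of CQGAs.
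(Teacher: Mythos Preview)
Your argument is correct, but the paper's own proof is much shorter and rests on a different observation. From Lemma~\ref{lem:DecompOfOm0kINTOIrreps} the irreducible summands of $\O_q(\ccpn)=\Om^{(0,0)}$ are the modules $V_{m\theta}$, $m\in\bN_0$, and these have \emph{pairwise distinct dimensions} (the Weyl dimension formula gives $\dim V_{m\theta}$ as a strictly increasing function of $m$, since the factor coming from the highest root $\alpha=\theta$ grows with $m$). The $*$-map is a conjugate-linear bijection of $\O_q(\ccpn)$ sending irreducible subcomodules to irreducible subcomodules of the same dimension; distinctness of dimensions then forces each $V_{m\theta}$ to be $*$-closed. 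Your route instead exploits the self-duality $w_0\theta=-\theta$ to locate the lowest weight vector $z_{n1}^m$ explicitly inside $V_m$. This is longer but has the merit of being entirely constructive and of using only the highest/lowest weight machinery already set up in \S\ref{section:setofHandLWs}; it also makes visible the concrete element witnessing $*$-closedness. The ``abstract variant'' you sketch at the end, via conjugate corepresentations, is essentially the paper's argument in different dress: knowing that the conjugate of $V_{m\theta}$ is again some $V_{m'\theta}$, pairwise distinctness (of dimensions, or equivalently of highest weights) is exactly what pins down $m'=m$.
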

\begin{proof}
From Lemma \ref{lem:DecompOfOm0kINTOIrreps} above we see that the irreducible submodules of $\Om^{(0,0)}$ are distinct in dimension. Since the dimension of an irreducible $U_q(\frak{sl}_n)$-module $V$ is clearly equal to the dimension of its image under the $*$-map, $\O_q(\ccpn)$ must be self-conjugate.
\end{proof}

\bibliographystyle{siam}

\begin{thebibliography}{10}

\bibitem{BastonEastwood}
{\sc R.~J. Baston and M.~G. Eastwood}, {\em The {P}enrose transform. {I}ts
  interaction with representation theory}, Oxford Mathematical Monographs, The
  Clarendon Press, Oxford University Press, New York, 1989.
\newblock Oxford Science Publications.


\bibitem{BS}
{\sc E.~J. Beggs and P.~S. Smith}, {\em {Noncommutative complex differential
  geometry}}, J. Geom. Phys., 72 (2013), pp.~7--33.

\bibitem{BBSobolev}
{\sc E.~J. Beggs and T.~Brzezi\'{n}ski}, {\em Noncommutative differential
  operators, {S}obolev spaces and the centre of a category}, J. Pure Appl.
  Algebra, 218 (2014), pp.~1--17.
  
  \bibitem{GoodBrown}
{\sc K.~A. Brown and K.~R. Goodearl}, {\em Lectures on algebraic quantum
  groups}, Advanced Courses in Mathematics. CRM Barcelona, Birkh\"{a}user
  Verlag, Basel, 2002.

\bibitem{BrzezWisbauer}
{\sc T.~Brzezi\'{n}ski and R.~Wisbauer}, {\em Corings and comodules}, vol.~309 of
  London Mathematical Society Lecture Note Series, Cambridge University Press,
  Cambridge, 2003.

\bibitem{CP1}
{\sc A.~L. Carey and J.~Phillips}, {\em Unbounded {F}redholm modules and spectral
  flow}, Canad. J. Math., 50 (1998), pp.~673--718.

\bibitem{RennieSpecTrip}
{\sc A.~L. Carey, J.~Phillips, and A.~Rennie}, {\em Spectral triples: examples
  and index theory}, in Noncommutative geometry and physics: renormalisation,
  motives, index theory, ESI Lect. Math. Phys., Eur. Math. Soc., Z\"{u}rich,
  2011, pp.~175--265.

\bibitem{Connes}
{\sc A.~Connes}, {\em Noncommutative geometry}, Academic Press, Inc., San
  Diego, CA, 1994.

\bibitem{SISSACPn}
{\sc F.~D'Andrea and L.~D\c{a}browski}, {\em Dirac operators on quantum
  projective spaces}, Comm. Math. Phys., 295 (2010), pp.~731--790.

\bibitem{SISSACP2}
{\sc F.~D'Andrea, L.~D\c{a}browski, and G.~Landi}, {\em The noncommutative
  geometry of the quantum projective plane}, Rev. Math. Phys., 20 (2008),
  pp.~979--1006.

\bibitem{CPNLD}
{\sc F.~D'Andrea and G.~Landi}, {\em Geometry of quantum projective spaces}, in
  Noncommutative geometry and Physics, vol.~3, 2013, pp.~373--416.

\bibitem{DOS1}
{\sc B.~Das, R.~\'O~Buachalla, and P.~Somberg}, {\em {D}olbeault--{D}irac
  spectral triples on quantum homogeneous spaces}.
\newblock {arXiv preprint math.QA/1903.07599}.

\bibitem{DSPodles}
{\sc L.~D\c{a}browski and A.~Sitarz}, {\em Dirac operator on the standard
  {P}odle\'{s} quantum sphere}, in Noncommutative geometry and quantum groups
  ({W}arsaw, 2001), vol.~61 of Banach Center Publ., Polish Acad. Sci. Inst.
  Math., Warsaw, 2003, pp.~49--58.

\bibitem{DOW}
{\sc F.~D\'iaz Garc\'ia, R.~\'O~Buachalla, and E.~Wagner}, {\em A spectral triple for the rank $2$ quantum quadric}.
\newblock In preparation.

\bibitem{DijkStok}
{\sc M.~S. Dijkhuizen and J.~Stokman}, {\em Quantized flag manifolds and
  irreducible {$*$}-representations}, Comm. Math. Phys. (2), 203 (1999),
  pp.~297--324.

\bibitem{KoornDijk}
{\sc M.~S. Dijkhuizen and T.~H. Koornwinder}, {\em C{QG} algebras: a direct
  algebraic approach to compact quantum groups}, Lett. Math. Phys., 32 (1994),
  pp.~315--330.

\bibitem{CoreFMR}
{\sc I.~Forsyth, B.~Mesland, and A.~Rennie}, {\em Dense domains, symmetric
  operators and spectral triples}, New York J. Math., 20 (2014),
  pp.~1001--1020.

\bibitem{FriedrichDirac}
{\sc T.~Friedrich}, {\em Dirac operators in {R}iemannian geometry}, vol.~25 of
  Graduate Studies in Mathematics, American Mathematical Society, Providence,
  RI, 2000.
\newblock Translated from the 1997 German original by Andreas Nestke.

\bibitem{Varilly}
{\sc J.~M. Gracia-Bond\'{i}a, J.~C. V\'{a}rilly, and H.~Figueroa}, {\em
  Elements of noncommutative geometry}, Birkh\"{a}user Advanced Texts: Basler
  Lehrb\"{u}cher. [Birkh\"{a}user Advanced Texts: Basel Textbooks],
  Birkh\"{a}user Boston, Inc., Boston, MA, 2001.

\bibitem{HK}
{\sc I.~Heckenberger and S.~Kolb}, {\em The locally finite part of the dual
  coalgebra of quantized irreducible flag manifolds}, Proc. London Math. Soc.
  (3), 89 (2004), pp.~457--484.

\bibitem{HKdR}
\leavevmode\vrule height 2pt depth -1.6pt width 23pt, {\em De {R}ham complex
  for quantized irreducible flag manifolds}, J. Algebra, 305 (2006),
  pp.~704--741.

\bibitem{HigsonRoe}
{\sc N.~Higson and J.~Roe}, {\em Analytic {$K$}-homology}, Oxford Mathematical
  Monographs, Oxford University Press, Oxford, 2000.
\newblock Oxford Science Publications.

\bibitem{HUY}
{\sc D.~Huybrechts}, {\em Complex geometry: an introduction}, universitext,
  Springer--Verlag, 2005.

\bibitem{OsakaCPnSpec}
{\sc A.~Ikeda and Y.~Taniguchi}, {\em Spectra and eigenforms of the {L}aplacian
  on {$S^{n}$} and {$P^{n}({\bf C})$}}, Osaka J. Math., 15 (1978),
  pp.~515--546.
  
\bibitem{KLvSPodles}
{\sc M.~Khalkhali, G.~Landi, and W.~D. van Suijlekom}, {\em Holomorphic
  structures on the quantum projective line}, Int. Math. Res. Not. IMRN,
  (2011), pp.~851--884.

\bibitem{KKCPN}
{\sc M.~Khalkhali and A.~Moatadelro}, {\em Noncommutative complex geometry of
  the quantum projective space}, J. Geom. Phys., 61 (2011), pp.~2436--2452.

\bibitem{KSLeabh}
{\sc A.~Klimyk and K.~Schm\"udgen}, {\em Quantum Groups and Their
  Representations}, Texts and Monographs in Physics, Springer-Verlag, 1997.

\bibitem{KV}
{\sc A.~W. Knapp and D.~A. Vogan, Jr.}, {\em Cohomological induction and
  unitary representations}, vol.~45 of Princeton Mathematical Series, Princeton
  University Press, Princeton, NJ, 1995.
  
  \bibitem{Kobay}
{\sc T.~Kobayashi}, {\em Multiplicity-free representations and visible actions
  on complex manifolds}, Publ. Res. Inst. Math. Sci., 41 (2005), pp.~497--549.

\bibitem{YoungBranching}
{\sc K.~Koike and I.~Terada}, {\em Young diagrammatic methods for the
  restriction of representations of complex classical {L}ie groups to reductive
  subgroups of maximal rank}, Adv. Math., 79 (1990), pp.~104--135.

\bibitem{UKFlagDirac}
{\sc U.~Kr\"{a}hmer}, {\em Dirac operators on quantum flag manifolds}, Lett.
  Math. Phys., 67 (2004), pp.~49--59.

\bibitem{MTSUK}
{\sc U.~Kr\"{a}hmer and M.~Tucker-Simmons}, {\em On the {D}olbeault-{D}irac
  operator of quantized symmetric spaces}, Trans. London Math. Soc., 2 (2015),
  pp.~33--56.

\bibitem{KramerSpherical}
{\sc M.~Kr\"{a}mer}, {\em Sph\"{a}rische {U}ntergruppen in kompakten
  zusammenh\"{a}ngenden {L}iegruppen}, Compositio Math., 38 (1979),
  pp.~129--153.

\bibitem{KOS}
{\sc F.~D\'iaz Garc\'ia, A.~Krutov, R.~\'O~Buachalla, P.~Somberg, and K.~R. Strung}, {\em Positive line bundles over the irreducible quantum flag manifolds}.
\newblock {arXiv preprint math.QA/1912.08802}.

\bibitem{KMT}
{\sc J.~Kustermans, G.~J. Murphy, and L.~Tuset}, {\em Quantum groups,
  differential calculi and the eigenvalues of the laplacian}, Trans. Amer.
  Math. Soc., 357 (2005), pp.~4681--4717.

\bibitem{LR91}
{\sc V.~Lakshmibai and N.~Reshetikhin}, {\em Quantum deformations of flag and
  schubert schemes}, C. R. Acad. Sci. Paris, 313 (1991), pp.~121--126.

\bibitem{Littleman2}
{\sc P.~Littelmann}, {\em Paths and root operators in representation theory},
  Ann. of Math., 142, pp.~499--525.

\bibitem{Littleman1}
\leavevmode\vrule height 2pt depth -1.6pt width 23pt, {\em A
  {L}ittlewood--{R}ichardson rule for symmetrizable {K}ac--{M}oody algebras},
  Invent. Math., 116 (1994), pp.~329--346.

\bibitem{SMLeabhMor}
{\sc S.~Majid}, {\em Foundations of quantum group theory}, Cambridge University
  Press, Cambridge, 1995.

\bibitem{Maj}
\leavevmode\vrule height 2pt depth -1.6pt width 23pt, {\em Noncommutative
  {R}iemannian and spin geometry of the standard {$q$}-sphere}, Comm. Math.
  Phys., 256 (2005), pp.~255--285.

\bibitem{MarcoConj}
{\sc M.~Matassa}, {\em K\"{a}hler structures on quantum irreducible flag
  manifolds}, J. Geom. Phys., 145 (2019), pp.~103477, 16.

\bibitem{MatassaLG2}
\leavevmode\vrule height 2pt depth -1.6pt width 23pt, {\em Dolbeault-{D}irac
  operators, quantum {C}lifford algebras and the {P}arthasarathy formula}, Adv.
  Appl. Clifford Algebr., 27 (2017), pp.~1581--1609.

\bibitem{MatassaCPn}
\leavevmode\vrule height 2pt depth -1.6pt width 23pt, {\em On the
  {D}olbeault-{D}irac operators on quantum projective spaces}, J. Lie Theory,
  28 (2018), pp.~211--244.

\bibitem{MatassaLG22}
\leavevmode\vrule height 2pt depth -1.6pt width 23pt, {\em The Parthasarathy
  formula and a spectral triple for the quantum Lagrangian Grassmannian of rank
  two}, \newblock {arXiv preprint math.QA/1810.06456}, 2018.

\bibitem{MEYER}
{\sc U.~Meyer}, {\em Projective quantum spaces}, Lett. Math. Phys., 35 (1995),
  pp.~91--97.
  

\bibitem{NeshTus}
{\sc S.~Neshveyev and L.~Tuset}, {\em The {D}irac operator on compact quantum
  groups}, J. Reine Angew. Math., 641 (2010), pp.~1--20.
  
  
\bibitem{NeshTusetLeabh}
{\sc S.~Neshveyev and L.~Tuset}, {\em Compact quantum groups and their
  representation categories}, vol.~20 of Cours Sp\'{e}cialis\'{e}s [Specialized
  Courses], Soci\'{e}t\'{e} Math\'{e}matique de France, Paris, 2013.

\bibitem{MMF1}
{\sc R.~\'{O}~Buachalla}, {\em Quantum bundle description of quantum projective
  spaces}, Comm. Math. Phys., 316 (2012), pp.~345--373.

\bibitem{MMF2}
\leavevmode\vrule height 2pt depth -1.6pt width 23pt, {\em Noncommutative
  complex structures on quantum homogeneous spaces}, J. Geom. Phys., 99 (2016),
  pp.~154--173.

\bibitem{MMF3}
\leavevmode\vrule height 2pt depth -1.6pt width 23pt, {\em Noncommutative
  {K}\"{a}hler structures on quantum homogeneous spaces}, Adv. Math., 322
  (2017), pp.~892--939.

\bibitem{KMOS}
{\sc R.~\'O~Buachalla and C.~Mrozinski}, {\em A {B}orel--{W}eil theorem for the
  quantum {G}rassmannians}.
\newblock {arXiv preprint math.QA/1611.07969}.

\bibitem{OSV}
{\sc R.~\'O~Buachalla, J.~\v{S}\v{t}ovi\v{c}ek, and A.-C. van Roosmalen}, {\em
  A {K}odaira vanishing theorem for noncommutative {K}\"ahler structures}.
\newblock {arXiv preprint math.QA/1801.08125.}, 2018.

\bibitem{PalFrobCQGs}
{\sc A.~Pal}, {\em Induced representation and {F}robenius reciprocity for
  compact quantum groups}, Proc. Indian Acad. Sci. Math. Sci., 105 (1995),
  pp.~157--167.

\bibitem{Partha}
{\sc R.~Parthasarathy}, {\em Dirac operator and the discrete series}, Ann. of
  Math. (2), 96 (1972), pp.~1--30.
  
 \bibitem{RennieSenior}
{\sc A.~Rennie and R.~Senior}, {\em The resolvent cocycle in twisted cyclic
  cohomology and a local index formula for the {P}odle\'{s} sphere}, J.
  Noncommut. Geom., 8 (2014), pp.~1--43.
 
\bibitem{FRT}
{\sc N.~Y. Reshetikhin, L.~A. Takhtadzhyan, and L.~D. Faddeev}, {\em
  Quantization of {L}ie groups and {L}ie algebras}, Algebra i Analiz, 1 (1989),
  pp.~178--206.

\bibitem{Rudin}
{\sc W.~Rudin}, {\em Functional analysis}, International Series in Pure and
  Applied Mathematics, McGraw-Hill, Inc., New York, second~ed., 1991.

\bibitem{StaffordICM}
{\sc J.~T. Stafford}, {\em Noncommutative projective geometry}, in Proceedings
  of the {I}nternational {C}ongress of {M}athematicians, {V}ol. {II}
  ({B}eijing, 2002), Higher Ed. Press, Beijing, 2002, pp.~93--103.

\bibitem{Tak}
{\sc M.~Takeuchi}, {\em Relative {H}opf modules---equivalences and freeness
  criteria}, J. Algebra, 60 (1979), pp.~452--471.

\bibitem{V}
{\sc D.~A. Vogan, Jr.}, {\em Representations of real reductive {L}ie groups},
  vol.~15 of Progress in Mathematics, Birkh\"{a}user, Boston, Mass., 1981.

\bibitem{Weil}
{\sc A.~Weil}, {\em Introduction \`a l'\'etude des vari\'et\'es
  k\"ahl\'eriennes}, no.~1267 in Publications de l'Institut de Math\'ematique
  de l'Universit\'e de Nancago, VI., Hermann, Paris, 1958.

\bibitem{Weyl}
{\sc H.~Weyl}, {\em The classical groups. {T}heir invariants and
  representations}, Princeton Landmarks in Mathematics, Princeton University
  Press, 1997.

\bibitem{WoroCQPGs}
{\sc S.~L. Woronowicz}, {\em Compact matrix pseudogroups}, Comm. Math. Phys.,
  111 (1987), pp.~613--665.

\end{thebibliography}

\end{document}